\DeclareSymbolFontAlphabet{\mathbb}{AMSb}
\DeclareSymbolFontAlphabet{\mathbbl}{bbold}
\newcommand{\prism}{{\scaleobj{1.25}{\mathbbl{\Delta}}}}
\DeclareRobustCommand{\SkipTocEntry}[5]{}
\theoremstyle{plain}
\newtheorem{theorem}{Theorem}[section]
\newtheorem{proposition}[theorem]{Proposition}
\newtheorem{lemma}[theorem]{Lemma}
\newtheorem*{claim*}{Claim}
\newtheorem{corollary}[theorem]{Corollary}
\theoremstyle{definition}
\newtheorem{Setup}[theorem]{Setup}
\newtheorem{definition}[theorem]{Definition}
\newtheorem{remark}[theorem]{Remark}
\newcommand{\ad}{\mathrm{ad}}
\newcommand{\an}{\mathrm{an}}
\newcommand{\Aut}{\mathrm{Aut}}
\newcommand{\cts}{\mathrm{cts}}
\newcommand{\cycl}{\mathrm{cycl}}
\newcommand{\dR}{\mathrm{dR}}
\newcommand{\End}{\mathrm{End}}
\newcommand{\et}{\mathrm{{\acute{e}t}}}
\newcommand{\Fsm}{\mathrm{F\text{-}sm}}
\newcommand{\Gal}{\mathrm{Gal}}
\newcommand{\GL}{\mathrm{GL}}
\newcommand{\Symvw}{{\mathcal S}_p}
\newcommand{\Hom}{\mathrm{Hom}}
\newcommand{\HT}{\mathrm{HT}}
\newcommand{\HTlog}{\mathrm{HTlog}}
\newcommand{\Hsm}{\mathrm{H\text{-}sm}}
\newcommand{\zHsm}{{z\text{-}\mathrm{H\text{-}sm}}}
\newcommand{\Id}{\mathrm{Id}}
\renewcommand{\inf}{\mathrm{inf}}
\newcommand{\id}{\operatorname{id}}
\newcommand{\Lft}{\mathcal{L}\mathrm{ft}}
\newcommand{\LS}{\mathrm{LS}}
\newcommand{\N}{\mathbb{N}}
\newcommand{\Perf}{\mathrm{Perf}}
\newcommand{\Q}{\mathbb{Q}}
\newcommand{\sm}{\mathrm{sm}}
\newcommand{\Spa}{\mathrm{Spa}}
\newcommand{\Spf}{\mathrm{Spf}}
\newcommand{\WCart}{\mathrm{WCart}}
\newcommand{\Z}{\mathbb{Z}}
\newcommand{\Pic}{\mathrm{Pic}}
\renewcommand{\O}{\mathcal{O}}
\renewcommand{\Vec}{\mathrm{Vec}}
\newcommand{\wt}{\widetilde}
\newcommand{\wtOm}{\wt{\Omega}}
\newcommand{\tf}{[\tfrac{1}{p}]}
\newcommand{\X}{{\mathcal X}}
\newcommand{\Bun}{\mathrm{Bun}}
\newcommand{\Higgs}{\mathrm{Higgs}}
\newcommand{\sBun}{\text{\normalfont v}\mathscr{B}\hspace{-0.1em}\mathrm{\textit{un}}}
\newcommand{\sHiggs}{\mathscr H\hspace{-0.1em}\mathrm{\textit{iggs}}}
\newcommand*\isomarrow{%
	\xrightarrow{\raisebox{-0.35em}{\smash{\ensuremath{\sim}}}}
}  
\newcommand{\xisomarrow}[1]{%
	\xrightarrow[\raisebox{0.3em}{\smash{\ensuremath{\sim}}}]{#1}
}
\begin{document}

	\title[The small \MakeLowercase{p}-adic Simpson correspondence in terms of moduli spaces]{The small $p$-adic Simpson correspondence in terms of moduli spaces}
	
	\author[J. Ansch\"utz, B. Heuer, A.-C. Le Bras]{Johannes Ansch\"utz, Ben Heuer, Arthur-C\'esar Le Bras}

	\begin{abstract}
          For any rigid space  over a perfectoid extension of $\Q_p$ that admits a liftable smooth formal model, we construct an isomorphism between the moduli stacks of Hitchin-small Higgs bundles and Hitchin-small v-vector bundles. This constitutes a moduli-theoretic improvement of  the small $p$-adic Simpson correspondence of Faltings, Abbes--Gros, Tsuji and Wang.  Our construction is based on the Hodge--Tate stack of Bhatt--Lurie. We also prove an analogous correspondence in the arithmetic setting of rigid spaces of good reduction over $p$-adic fields.
	\end{abstract}
	
	\maketitle

\section{Introduction}
\label{sec:introduction-1}

\subsection{The $p$-adic Simpson correspondence}
	Let $K$ be a complete algebraically closed extension of $\Q_p$. Let $\X$ be a smooth rigid space over $K$. Let $\X_v$ be the v-site of $\X$ as defined in \cite{scholze_etale_cohomology_of_diamonds}. Reinterpreted in terms of Scholze's perfectoid foundations for $p$-adic Hodge theory, the $p$-adic Simpson correspondence, pioneered independently by Deninger--Werner \cite{DeningerWerner_vb_p-adic_curves} and Faltings \cite{faltings2005p}, aims to relate the category of vector bundles on $\X_v$ to the category of Higgs bundles on $\X$. Here v-vector bundles replace Faltings' ``generalised representations'' in the rigid analytic setting.
	
	More precisely, the $p$-adic Simpson correspondence comes in two different flavours: When $\X$ is assumed to be proper,  there exists an equivalence of categories, depending on certain choices,
	\begin{equation}\label{eq:proper-p-adic-Simpson}
	\{ \text{Higgs bundles on $\X$}\}\isomarrow 	\{ \text{v-vector bundles on $\X$}\}.
	\end{equation}
	Faltings constructed this for curves in \cite{faltings2005p}, and the general case has recently been shown in \cite{heuer-proper-correspondence}.

	Second, there is also a $p$-adic Simpson correspondence for not necessarily proper $\X$, but for this one needs to restrict to ``small'' objects on either side. Here a v-vector bundle, respectively a Higgs bundle, is called ``small'' if it locally admits an integral model whose reduction is trivial modulo $p^{\alpha}$ for a certain $\alpha\in \mathbb R$ (see \Cref{def:faltings-small-v-bundles} for a precise definition).  This is inherently a statement about integral structures, and we therefore need to make additional assumptions on integral models: If $X$ is any rigid space of good reduction, Wang \cite{wang2021p} has shown that there is an equivalence of categories
	\begin{equation}\label{eq:small-p-adic-Simpson}
	 \{\text{small Higgs bundles on $\X$}\}\isomarrow\{ \text{\text{small} v-vector bundles on $\X$}\}.
	 \end{equation}
	Wang's functor is based on an earlier construction of Liu--Zhu \cite{LiuZhu_RiemannHilbert}. The first instance of such a ``small'' $p$-adic Simpson correspondence had previously been constructed by Faltings \cite{faltings2005p} for semi-stable $p$-adic schemes, then further developed by Abbes--Gros and Tsuji \cite{abbes2016p}. 
	We note that Faltings used a version of \eqref{eq:small-p-adic-Simpson} to deduce \eqref{eq:proper-p-adic-Simpson}. But we now think of \eqref{eq:proper-p-adic-Simpson} and \eqref{eq:small-p-adic-Simpson} as two logically independent statements which are both of interest in their own right, as they apply in different situations.
	
	We refer to the appendix for an overview of various instances of $p$-adic Simpson correspondences.
	\subsection{The small $p$-adic Simpson correspondence in terms of moduli spaces}\label{s:intro-small-paS-moduli}
	The first goal of this article is to use our approach via Hodge--Tate stacks from  \cite{AHLB} to construct a new relative version of the small $p$-adic Simpson correspondence that improves \eqref{eq:small-p-adic-Simpson} in various ways (see 
	\S\ref{sec:relat-prev-work}.1).
	We use this to realise the small $p$-adic Simpson correspondence as an isomorphism of moduli spaces:
	\begin{theorem}[\Cref{cor:smoothoid-case-2-comparison-of-moduli-spaces-over-the-hitchin-small-locus}]
		\label{t:intro-moduli-spaces}
		Let $K$ be any perfectoid field over $\Q_p$ and let $X$ be a smooth formal scheme over $\O_K$.  Let $\mathcal X$ be the adic generic fibre of $X$. Let $n\in \N$. Then
		any lift $\tilde{X}$ of $X$ to $A_2:=W(\O_{K^\flat})/(\ker \theta)^2$ induces an equivalence of small v-stacks
		\[
		\mathcal{S}_{\tilde{X}}\colon \sHiggs_{n}^{\Hsm} \isomarrow \sBun^{\Hsm}_n 
		\]
		between the  stacks of Hitchin-small Higgs bundles, respectively v-vector bundles, of rank $n$ on $\mathcal X$.
	\end{theorem}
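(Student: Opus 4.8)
The plan is to obtain the isomorphism of moduli stacks as a formal consequence of the relative form of the small $p$-adic Simpson correspondence constructed in the body of the paper, the only real work being to check that that construction is functorial in the test object and compatible with v-descent, so that it glues to a morphism of v-stacks which can then be tested pointwise.

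First I would spell out the two moduli v-stacks. For an affinoid perfectoid space $T$ over $K$, put $\mathcal X_T := \mathcal X \times_{\Spa(K)} T$ and $X_{\O_T} := X \times_{\Spf \O_K} \Spf \O_T$, a smooth formal scheme over $\O_T$; then $\sHiggs^{\Hsm}_n(T)$, resp. $\sBun^{\Hsm}_n(T)$, is the groupoid of rank-$n$ Hitchin-small Higgs bundles, resp. v-vector bundles, on $\mathcal X_T$. Two points are needed. (i) Both presheaves of groupoids are v-stacks: vector bundles and v-vector bundles satisfy v-descent by \cite{scholze_etale_cohomology_of_diamonds}, a Higgs field descends as a morphism, and Hitchin-smallness — being the preimage under the (analytic, resp. arithmetic) Hitchin map of a fixed sub-v-sheaf of the Hitchin base $\bigoplus_{i=1}^n H^0(\mathcal X_T, \mathrm{Sym}^i \Omega^1_{\mathcal X_T})$ — is stable under base change and v-local. (ii) The lift $\tilde X$ base-changes to a lift $\tilde X_T$ of $X_{\O_T}$ over the corresponding square-zero thickening of $\O_T$, and the Simpson functor, being built from the Hodge--Tate stack of \cite{AHLB} together with this lift, is functorial in $T$. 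Hence the relative equivalences assemble into a natural transformation of presheaves of groupoids; by the universal property of v-sheafification and (i), this yields a morphism of v-stacks $\mathcal S_{\tilde X}\colon \sHiggs^{\Hsm}_n \to \sBun^{\Hsm}_n$.

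It remains to see that $\mathcal S_{\tilde X}$ is an equivalence, and for a morphism of v-stacks it suffices to check this on $T$-points for every affinoid perfectoid $T/K$. This is exactly the content of the relative small $p$-adic Simpson correspondence applied to $\mathcal X_T$ with the lift $\tilde X_T$: it is an equivalence of categories intertwining the two Hitchin maps under the identification of Hitchin bases induced by $\tilde X_T$, hence restricts to an equivalence between the Hitchin-small loci. Concretely, full faithfulness there comes from the comparison over $\mathcal X_T$ between Higgs (Sen) cohomology and v-cohomology for Hitchin-small coefficients — the Hodge--Tate comparison attached to the splitting of $\mathcal X_T^{\HT}$ determined by $\tilde X_T$ — and essential surjectivity from local triviality of Hitchin-small v-vector bundles together with descent along this split Hodge--Tate gerbe. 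Combining, $\mathcal S_{\tilde X}$ is an equivalence of small v-stacks.

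The step I expect to be the genuine obstacle — and which is really the main theorem of the paper rather than this corollary — is establishing the relative correspondence uniformly over an arbitrary perfectoid base $T$: the Hodge--Tate stack formalism and, above all, the cohomology comparison underlying full faithfulness must be proved in families, not merely over geometric points or over $K$. At the level of the corollary itself, the subtle point is that Hitchin-smallness must simultaneously be preserved, reflected and v-local, which is precisely why one works with the Hitchin-small locus rather than Faltings' pointwise $\alpha$-smallness: the former is by construction the preimage of a fixed substack of the Hitchin base and therefore automatically glues and base-changes, whereas $\alpha$-smallness is phrased through an integral model whose bound need not behave well under v-pullback. Granting the relative correspondence with these properties, the passage to moduli stacks is formal, as above.
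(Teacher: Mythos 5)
Your proposal is correct and follows essentially the same strategy as the paper's proof of \Cref{cor:smoothoid-case-2-comparison-of-moduli-spaces-over-the-hitchin-small-locus}: base-change the lift $\tilde X$ along $\Spf(A_{\inf}(S^+))\to\Spf(A)$ for each affinoid perfectoid test object $T=\Spa(S,S^+)$, apply the relative smoothoid equivalence of \Cref{t:essential-surjectivity-geom-case} to $X\times_{\O_K}\Spf(S^+)$, and use the functoriality in $\tilde X$ to glue. Your emphasis on Hitchin-smallness being defined as the preimage of a sub-v-sheaf of the Hitchin base — hence automatically v-local and base-change compatible, unlike Faltings' pointwise smallness — is the correct reason this formal globalization works.
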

	In fact, we allow lifts of $X$ to larger rings $A_2(x)\subseteq B_\dR^+/\xi^2$ at the expense of a stronger smallness condition. This has the advantage that $A_2(x)$-lifts always exist for example when $X$ is proper.
	
	We now explain the statement of \Cref{t:intro-moduli-spaces} in more detail:  Recently, \cite{heuer-sheafified-paCS} has introduced analytic moduli spaces for both sides of the $p$-adic Simpson correspondence. Namely, working in the setting of Scholze's v-stacks \cite{scholze_etale_cohomology_of_diamonds}, for $n\in \N$, we define functors
	\begin{alignat*}{5}
		\sHiggs_{n}:&\;\Perf_K\to \mathrm{Groupoids},\quad &&S\mapsto \{\text{Higgs bundles on $\mathcal X\times S$ of rank $n$}\},\\
		\sBun_{n}:&\;\Perf_K\to \mathrm{Groupoids},\quad &&S\mapsto \{\text{v-vector bundles on $\mathcal X\times S$ of rank $n$}\}.
	\end{alignat*}
	These are small v-stacks on the categories of analytic affinoid perfectoid spaces $S$ over $K$ \cite[Thm~7.18]{heuer-sheafified-paCS}. They are equipped with natural Hitchin fibrations: Indeed, there are natural morphisms
	\[
	\begin{tikzcd}[row sep =-0.1cm,column sep = 1cm]
		\sHiggs_{n}\arrow[r,"\mathcal H"]& \mathcal A_{n} & {\sBun_{n}}\arrow[l,"\wt{\mathcal H}"']
	\end{tikzcd}
	\] of small v-stacks.
Here $\mathcal A_{n}$ is the Hitchin base of rank $n$, which is the v-sheaf defined by
	\[  \textstyle\mathcal A_{n}:\Perf_K\to \mathrm{Sets},\quad S\mapsto \bigoplus\limits_{k=1}^nH^0(\X\times S,\mathrm{Sym}^k(\Omega^1_{\X}(-1))).\]
	We can now explain our notion of ``smallness'': We first define the Hitchin-small locus $	\mathcal{A}_n^{\Hsm}\subseteq  \textstyle\mathcal A_{n}$
	\[\textstyle
	\mathcal{A}_n^{\Hsm}:\Perf_K\to \mathrm{Sets},\quad S=\Spa(R,R^+)\mapsto\bigoplus\limits_{k=1}^n p^{<\frac{k}{p-1}}\cdot H^0(X\times \Spf(R^+), \mathrm{Sym}^k(\Omega^1_{X}\{-1\})).
	\]
	Here $p^{<\frac{k}{p-1}}:=\{x\in\O_K \text{ s.t. } |x|< |p|^{\frac{k}{p-1}}\}$ and $\{-1\}$ is a Breuil--Kisin--Fargues twist (see below).
	
	If $X$ is proper, then $\mathcal A_n$ is represented by an affine rigid group, and $\mathcal A_n^\Hsm$ is then represented by an open subgroup of $\mathcal A_n$. For general $X$, they still make sense as v-sheaves. 
	
	We now define a Higgs bundle on $\X\times S$ to be Hitchin-small if its image under the Hitchin fibration $\mathcal H$ lands in $\mathcal A_n^\Hsm(S)$, and similarly for v-vector bundles. We thus set
	\[\sHiggs^{\Hsm}_n:= \sHiggs_n\times_{\mathcal A_n}\mathcal A_n^\Hsm,\quad  \sBun^{\Hsm}_n:= \sBun_n\times_{\mathcal A_n}\mathcal A_n^\Hsm.\]
	This explains the objects appearing in \Cref{t:intro-moduli-spaces}. We will moreover show that the equivalence $\mathcal S_{\wt X}$ from \Cref{t:intro-moduli-spaces} commutes with the Hitchin fibrations on both sides.
	\subsection{The small $p$-adic Simpson correspondence in the relative case}
	To construct the isomorphism $\mathcal S_{\wt X}$, we can in fact work in a more general setting:
Let $X$ be any $p$-adic formal scheme that is smooth over a perfectoid $p$-adic formal scheme $\Spf(R)$.  Let $\Omega^1_{X}:=\Omega^1_{X|R}$ be the sheaf of $p$-completed K\"ahler differentials. Let $(A,I)$ be the perfect prism associated with $R$, i.e., $A/I\cong R$. For any $R$-module $M$, we denote by $M\{1\}$ the Breuil--Kisin--Fargues twist $M\otimes_RI/I^2$. As usual, one also defines $M\{-1\}:=M\otimes_R\Hom(R\{1\},R)$. Let $\mathcal X$ be the adic generic fibre of $X$. There is in this setting a good notion of Higgs bundles on $X$ defined using the sheaf $\Omega^1_{X}\{-1\}$. In \cite{AHLB}, we have used the Hodge--Tate stack $X^\HT$of Bhatt--Lurie \cite{bhatt2022prismatization} to construct a fully faithful functor
  \[ \mathrm S_{\tilde{X}}:\{\text{Hitchin-small Higgs bundles  on } \X\}\hookrightarrow \{\text{v-vector bundles  on } \X\},\]
  natural in $\tilde{X}$.
  The remaining work towards \Cref{t:intro-moduli-spaces} lies in describing the essential image:
\begin{theorem}[\Cref{t:essential-surjectivity-geom-case}]
	\label{t:relative-small-correspondence}
	The essential image of $\mathrm S_{\tilde{X}}$  is given by the category of Hitchin-small v-vector bundles, i.e.\ we have an equivalence of categories, natural in $\X$,
	\[ \mathrm{S}_{\tilde X}:{\left\{ \begin{array}{@{}c@{}l}\text{Hitchin-small}\\\text{Higgs modules on $\X$}\end{array}\right\}}\isomarrow {\left\{ \begin{array}{@{}c@{}l}\text{Hitchin-small}\\v\text{-vector bundles on $\X$}\end{array}\right\}}.\]
\end{theorem}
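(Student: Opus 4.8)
The plan is to deduce the statement from the full faithfulness of $\mathrm S_{\tilde X}$ (already proved in \cite{AHLB}) together with an explicit local computation of $\mathrm S_{\tilde X}$ on toric charts. First I would reduce to such a chart: the fibered categories $\sHiggs_n$, $\sBun_n$, the Hitchin base $\mathcal A_n$ and the Hitchin-small locus $\mathcal A_n^\Hsm$ all satisfy étale descent in $X$, the functor $\mathrm S_{\tilde X}$ is natural in $\X$, and étale morphisms lift uniquely to $A_2$; moreover, for a fully faithful morphism of stacks both the property of factoring through a given full substack and the property of being an equivalence onto it may be tested étale-locally. Hence I may assume $X=\Spf A$ is affine with an étale map to $\Spf R\langle T_1^{\pm1},\dots,T_d^{\pm1}\rangle$, so that $X$ carries the perfectoid toric tower $X_\infty\to X$ with Galois group $\Gamma\cong\Z_p(1)^d$. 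On this chart, v-descent along $X_\infty$ presents a v-vector bundle on $\X\times S$ as a finite projective module $N$ over $\O(\X_\infty\times S)$ with a continuous semilinear $\Gamma$-action $\rho$, while a rank-$n$ Higgs bundle is a finite projective $\O(\X\times S)$-module $M$ with an integrable $\theta\colon M\to M\otimes\Omega^1_X\{-1\}$, which I expand in the basis $\{\mathrm d\log T_i\{-1\}\}_{i=1}^d$.

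Next I would build a candidate inverse by decompletion. Hitchin-smallness of a v-bundle $V\leftrightarrow(N,\rho)$ says that the coefficients of $\wt{\mathcal H}(V)$ have $p$-adic valuation $>\tfrac{k}{p-1}$ in degree $k$; since $\tfrac{1}{p-1}$ is exactly the radius of convergence of the $p$-adic logarithm, this ensures — after descending to a suitable integral lattice, which uses a comparison between Hitchin-smallness and the classical smallness condition on integral models — that the operators $\tfrac1t\log\rho(\gamma_i)$ converge, commute and are "small", where $t$ is a fixed generator of $\Z_p(1)$. Reinterpreting $\tfrac1t\,\mathrm d\log T_i$ as a section of $\Omega^1_X\{-1\}$ turns these operators into a Higgs field on a decompletion $M$ over $\O(\X\times S)$ of $N$. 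This defines a functor $T\colon V\mapsto(M,\theta)$; the same valuation bound identifies $\mathcal H(T(V))$ with the characteristic polynomial of $\tfrac1t\log\rho$, so that $T$ lands in $\sHiggs_n^\Hsm$. One then checks that $T$ is independent of the toric coordinates and of the chosen lattice, hence glues to a morphism of v-stacks $\sBun_n^\Hsm\to\sHiggs_n^\Hsm$.

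The heart of the argument is the local identification $\mathrm S_{\tilde X}\simeq T^{-1}$. For this I would compute $\mathrm S_{\tilde X}$ in coordinates: the lift $\tilde X$ to $A_2$ trivialises the Hodge--Tate gerbe $X^\HT\to X$ over the chart, and, unwinding the resulting description of $X^\HT$ through the divided powers of the ideal of $X$ inside $\tilde X$, the pullback of a Hitchin-small Higgs bundle $(M,\theta)$ along $X^\HT\to X$, evaluated on the toric tower, should be precisely the $\Gamma$-module $(M,\exp(t\theta))$. Granting this, $\mathrm S_{\tilde X}(M,\theta)$ is manifestly Hitchin-small, so $\mathrm S_{\tilde X}$ factors through $\sBun_n^\Hsm$, and the identities $T\circ\mathrm S_{\tilde X}\cong\id$ and $\mathrm S_{\tilde X}\circ T\cong\id$ follow from $\tfrac1t\log\exp=\id$ and $\exp\log=\id$. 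Full faithfulness of $\mathrm S_{\tilde X}$ and the stack property then propagate these natural isomorphisms from the chart to all of $\X$, which gives the asserted equivalence; its naturality in $\X$ is inherited from that of $\mathrm S_{\tilde X}$ and the evident functoriality of $T$.

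I expect this third step to be the genuine obstacle: reconciling the \emph{stack-theoretic} construction of $\mathrm S_{\tilde X}$ out of $X^\HT$ with the \emph{classical} exponential-of-a-Higgs-field decompletion requires carefully tracking the two Breuil--Kisin--Fargues twists, identifying the Sen operator of $X^\HT$ with the Higgs field, and checking that the period $t$ governing the $\Gamma$-action on the toric tower agrees with the one implicit in the Hodge--Tate stack — the role of the $A_2$-lift being precisely to make this matching canonical.
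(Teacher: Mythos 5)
You are following the paper's own strategy: reduce to toric charts, build the inverse by applying the $p$-adic logarithm to the twisted $\Delta$-action, and match $\mathrm{S}_{\tilde X}$ to the classical exponential decompletion in coordinates. The ``third step'' you flag as the obstacle is indeed \Cref{p:comp-LS_F-LS_prism}, proved there via an explicit computation with the period sheaf $\mathcal B_{\tilde X}$ (\Cref{l:explicit-description-of-B}), and globalizing via full faithfulness from \cite{AHLB} rather than proving $T$ independent of charts directly is also how the paper proceeds. You omit the preliminary reduction to $\Z_p^\cycl\subseteq S$, which the paper handles by v-descent along $\X_{\Q_p^\cycl}\to\X$.

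The genuine gap is earlier. You treat the existence of ``a decompletion $M$ over $\O(\X\times S)$ of $N$'' as a consequence of convergence of $\log\rho$, but this existence is exactly the content of essential surjectivity. For a general Hitchin-small $V$, one does not know a priori that the twisted invariants $M_\infty^{\Delta,\ast}$ (under $\sigma\ast v = \exp(D(\sigma)\circ\theta)^{-1}\sigma v$) form a finite projective $\O(\X)\tf$-module of the correct rank. The paper proves this in two stages: (i) by \Cref{l:locally-Faltings-small}, on some finite toric cover $\X_n\to\X$ the bundle becomes Faltings-small, so the decompletion along the open subgroup $\Delta_n$ is supplied by \Cref{t:LSF-is-equivalence} and \Cref{l:comp-Higgs-v-bundle-on-wtX}, yielding a lattice $M_n=M_\infty^{\Delta_n}$ of the right rank; (ii) the residual twisted $\Delta/\Delta_n$-action on $M_n$ is a descent datum along the finite Galois cover $\X_n\to\X$, whose effectiveness rests on \cite[Lemma~2.6.5]{heuer-sheafified-paCS}, i.e.\ ultimately on Kedlaya--Liu. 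This finite-descent argument is the crux of surjectivity and cannot be collapsed into the remark that $\tfrac1t\log\rho(\gamma_i)$ converges. You also implicitly need the $\Delta$-equivariance of $\theta$ on $M_\infty$ (this is \Cref{t:canonical-HF}, the canonical Higgs field of Rodr\'iguez Camargo), both to make $\sigma\ast$ a genuine $\Delta$-action via the cocycle identity and to descend $\theta$ to $M$; this is used tacitly but never invoked in your proposal.
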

This implies \Cref{t:intro-moduli-spaces}. We note that
 the derived construction of \cite{AHLB} immediately shows that $\mathrm{S}_{\tilde X}$ also identifies v-cohomology on the right with Dolbeault cohomology on the left.

As an easy special case of \Cref{t:relative-small-correspondence}, we can deduce the first non-trivial example of an isomorphism between the full moduli stacks, namely when $X=\mathbb P^m_{\O_K}$ for some $m\in \N$:
\begin{corollary}
	Let $X=\mathbb P^m_{\O_K}$, then for any $n \in \N$, there is a canonical equivalence of v-stacks
	\[\mathcal{S}:
	\sHiggs_{n}\isomarrow \sBun_{n}.
	\]
\end{corollary}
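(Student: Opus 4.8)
The plan is to deduce the statement from \Cref{t:intro-moduli-spaces} by checking that, for $X=\mathbb P^m_{\O_K}$, the Hitchin base $\mathcal A_n$ is the final v-sheaf, so that the Hitchin-small loci coincide with the full moduli stacks: $\sHiggs_n^{\Hsm}=\sHiggs_n$ and $\sBun_n^{\Hsm}=\sBun_n$.

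First I would note that the $p$-adic formal completion of $\mathbb P^m_{\O_K}$ is a smooth formal scheme over $\O_K$, and that it admits a lift to $A_2=W(\O_{K^\flat})/(\ker\theta)^2$, namely the $p$-adic completion of $\mathbb P^m_{A_2}$; as $\mathbb P^m$ is already defined over $\Z$, this lift is canonical. Hence \Cref{t:intro-moduli-spaces} applies and produces a canonical equivalence $\mathcal S_{\mathbb P^m_{A_2}}\colon\sHiggs_n^{\Hsm}\isomarrow\sBun_n^{\Hsm}$.

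It then remains to show that $\mathcal A_n(S)=0$ for every $S\in\Perf_K$. By definition $\mathcal A_n(S)$ is assembled from the groups $H^0(\mathbb P^{m,\an}_K\times S,\mathrm{Sym}^k(\Omega^1(-1)))$ with $1\le k\le n$; since $\mathbb P^m$ is proper, flat base change along the projection to $S$ reduces this to the classical vanishing $H^0(\mathbb P^m,\mathrm{Sym}^k\Omega^1_{\mathbb P^m})=0$ for $k\ge 1$, the Tate twist being irrelevant. The latter follows from the Euler sequence $0\to\Omega^1_{\mathbb P^m}\to\O(-1)^{\oplus(m+1)}\to\O\to 0$, which exhibits $\Omega^1_{\mathbb P^m}$ as a subbundle of $\O(-1)^{\oplus(m+1)}$; applying $\mathrm{Sym}^k$ yields an injection into $\mathrm{Sym}^k(\O(-1)^{\oplus(m+1)})$, which is a direct sum of copies of $\O(-k)$ and thus has no nonzero global sections once $k\ge 1$. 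Therefore $\mathcal A_n=\mathcal A_n^{\Hsm}=\ast$, the fibre products defining $\sHiggs_n^{\Hsm}$ and $\sBun_n^{\Hsm}$ become trivial, and $\mathcal S_{\mathbb P^m_{A_2}}$ is the desired equivalence $\mathcal S\colon\sHiggs_n\isomarrow\sBun_n$. There is no genuine obstacle in this argument; the only point deserving a word of care is that $\mathbb P^m$ admits a lift over $A_2$ itself — rather than merely over some $A_2(x)\subseteq B_\dR^+/\xi^2$ — which is clear because it descends to $\Z$.
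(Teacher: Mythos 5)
Your proposal matches the paper's proof essentially exactly: both deduce the result from \Cref{cor:smoothoid-case-2-comparison-of-moduli-spaces-over-the-hitchin-small-locus} (which is what \Cref{t:intro-moduli-spaces} points to) after using the Euler sequence $\Omega^1_{\mathbb P^m}\hookrightarrow\O_{\mathbb P^m}(-1)^{m+1}$ to show $H^0(\mathbb P^m,\mathrm{Sym}^k\Omega^1)=0$ for $k\ge 1$, so that the Hitchin base is trivial and the Hitchin-small loci exhaust the full moduli stacks. Your extra remark invoking flat base change to pass from $\mathcal A_n(K)=0$ to $\mathcal A_n(S)=0$ for all $S$ is a mild but welcome precision that the paper leaves implicit.
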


This is a non-trivial statement as there are Higgs bundles with non-vanishing Higgs field on $\mathbb P^m_{\O_K}$. But we caution that we cannot expect to have such an equivalence for general $X$, see below.

\subsection{Relation to other works on the $p$-adic Simpson correspondence}
\label{sec:relat-prev-work}    
\begin{enumerate}[label=\arabic*.]
	\item \Cref{t:relative-small-correspondence} is a generalisation of Wang's small $p$-adic Simpson correspondence \eqref{eq:small-p-adic-Simpson}, which is the main result of \cite{wang2021p}. Our result is more general in four different ways:
	\begin{enumerate}
	 \item We work in a relative situation of smooth families over a perfectoid base.
	 \item Our notion of smallness is more general. For example, it also encompasses the local systems treated by Liu--Zhu \cite{LiuZhu_RiemannHilbert}, at least when $X$ has good reduction. 
	 \item We allow a more general class of lifts of $X$ to subrings of $B_{\dR}^+/\xi^2$. For example, this means that we can remove the ``liftable'' condition of \cite{wang2021p} for proper $X$.
	\item We prove a more general functoriality statement by explaining Faltings' notion of ``twisted pullback'' in terms of the Hodge--Tate stack in \S\ref{s:twisted-pullback}.
	\end{enumerate}
	While the strategies are different,  Wang's functor can be compared to ours, see \cite[\S7]{AHLB}.
  \item When  $R=\O_K$ and $K$ is the completion of an algebraic closure of a discretely valued extension of $\Q_p$ with perfect residue field, then \Cref{t:relative-small-correspondence} is closely related to the global $p$-adic Simpson correspondence for small objects of Faltings \cite{faltings2005p}, hence also to the constructions by Abbes--Gros and Tsuji \cite{abbes2016p}\cite{Tsuji-localSimpson} which it inspired (up to different technical foundations, e.g.\ these authors work in an algebraic setup, whereas we work in the rigid analytic setting). While there are some conceptual similarities in this case, especially to Tsuji's construction (we refer to \cite[\S1.6]{AHLB} for more details on these), a key new idea is to use the Hitchin fibration $\wt{\mathcal H}$ to describe the essential image in v-vector bundles.
  
  We note that the above works can more generally deal with semi-stable $X$. The reason we restrict our attention to smooth $X$ is that the geometry of the Hodge--Tate stack is currently only understood in this case.
  \item Faltings also proves a third instance of a $p$-adic Simpson correspondence, induced by a toric chart. This ``local $p$-adic Simpson correspondence'' is more basic than \eqref{eq:small-p-adic-Simpson} in that it imposes further conditions on $\X$, e.g.\ it has to be affine. But it can in fact deal with a stronger smallness condition than the global correspondence. In  \S\ref{sec:explicit-toric-charts}, we give a comparison of \Cref{t:relative-small-correspondence} to the local correspondence in the case when $\X$ is toric.
  \item To understand our ``Hitchin smallness'' condition, it is helpful to consider the case of rank $n=1$: Following \cite{heuer-v_lb_rigid}, the correspondence \eqref{eq:small-p-adic-Simpson} is then explained by the left-exact sequence
  \[ 0\to \Pic_{\et}(\X)\to \Pic_v(\X)\xrightarrow{\HTlog} H^0(\X,\Omega^1(-1))\]
  If $\X$ is proper, this is right-exact, and a splitting yields \eqref{eq:proper-p-adic-Simpson}. But for general $\X$, it is no longer right-exact, only split over an integral sublattice of $H^0(\X,\Omega^1(-1))$, see \cite[\S5]{heuer-v_lb_rigid}. This is reflected by our notion of Hitchin-smallness: $\HTlog$ is the $K$-points of the Hitchin fibration $\wt {\mathcal H}$ for $n=1$. Using this, one can see at the example of elliptic curves with supersingular reduction that the condition on Hitchin-smallness is optimal in general. We therefore think of \Cref{t:relative-small-correspondence} as being ``optimal'' except for the good reduction assumption.
  \item The isomorphism of small moduli spaces \Cref{t:intro-moduli-spaces} has previously only been constructed in two special cases: In \cite{heuer-sheafified-paCS}, there is a relative version of the ``local correspondence'' (see 3.) in terms of toric charts. Second, the case of \Cref{t:intro-moduli-spaces} of rank $n=1$ has been treated in \cite{heuer-geometric-Simpson-Pic}. This case also shows that in  the setup of \eqref{eq:proper-p-adic-Simpson}, it is no longer true that the moduli spaces $\sHiggs_n$ and $\sBun_{v,n}$ are isomorphic if one drops the smallness condition. Instead, one can hope for a comparison up to a twist by a certain moduli space of exponentials. In \cite{HX}, such a ``twisted isomorphism of moduli stacks'' between $\sHiggs_n$ and $\sBun_{v,n}$ will be constructed in higher rank when $\X$ is a smooth projective curve. 
  From this perspective, the geometric situation for the moduli spaces in the small case \eqref{eq:small-p-adic-Simpson} is in some sense better than that in the proper case \eqref{eq:proper-p-adic-Simpson}, as we can obtain an actual equivalence of moduli spaces.
  \end{enumerate}

\subsection{The arithmetic case}
\label{sec:arithmetic-case}
The second topic of this article is a new description of \mbox{v-vector} bundles in an arithmetic setup: Let now $X$ be a smooth $p$-adic formal scheme over the ring of integers $\O_K$ of a $p$-adic field $K$. Let $C$ be the completion of an algebraic closure of $K$. Choose a uniformizer $\pi \in \mathcal{O}_K$ and let $E$ be its characteristic polynomial over the maximal unramified subfield of $K$. Let $e:=E'(\pi)$, then the different ideal of $\O_K$ is $\delta_{\O_K|\Z_p}=(e)$. Let $\X$ be the adic generic fibre of $X$. In this setting, Min--Wang have  constructed a fully faithful functor \cite[$\mathrm F_S$ in Thm 1.1]{MinWang22}
\[
\mathrm{S}_\pi \colon {\{\text{Hitchin-small Higgs--Sen modules on $\X$}\}}\hookrightarrow \{\text{v-vector bundles  on } \X\}
\]
which we have reinterpreted in \cite{AHLB} in terms of the Hodge--Tate stack $X^\HT$ of $X$. Here we define:
\begin{definition}\label{d:Higgs--Sen}
 A Higgs--Sen bundle (called an ``enhanced Higgs bundle'' in \cite{MinWang22}) on $\X$ is a 
	triple $(N,\theta_{N}, \phi_N)$ consisting of 
	\begin{enumerate}
		\item a vector bundle $N$ on $\X$,
		\item a topologically nilpotent Higgs field $\theta_N: N\to N\otimes_{\O_\X} \Omega^1_{\X|K}\{-1\}$,
		\item an $\O_\X$-linear endomorphism $\phi_N\colon N\to N$  making the following diagram commute:
		\[\begin{tikzcd}
			N  & {N \otimes_{\O_\X} \Omega^1_{\X|K}\{-1\}} \\
			N & {N\otimes_{\O_\X} \Omega^1_{\X|K}\{-1\}}
			\arrow["{\theta_{N}}", from=2-1, to=2-2]
			\arrow["{\theta_{N}}", from=1-1, to=1-2]
			\arrow["{\phi_N\otimes 1-e\cdot \id}", from=1-2, to=2-2]
			\arrow["{\phi_N}"', from=1-1, to=2-1]
		\end{tikzcd}\]
	\end{enumerate}
	A Higgs--Sen bundle $(N,\theta_{N}, \phi_N)$ is called Hitchin-small if the eigenvalues of $\phi_N$ are in   $\Z+e^{-1}\cdot \mathfrak{m}_{\overline{K}}$.
	
\end{definition}

The $\{-1\}$ indicates the Breuil-Kisin twist by the invertible $\O_K$-module $\O_K\{-1\}$ defined as the pullback along $\Spf(\O_K)\to \Spf(\O_K)^\HT$ of the natural bundle $\O_{\Spf(\O_K)^{\HT}}\{-1\}$ from \cite[Example~3.5.2]{bhatt2022absolute}. We can always use our chosen uniformiser $\pi\in \O_K$ to trivialise this twist and thus ignore $\{-1\}$ in the above. But it is better to keep it, for example to explain the naturality in $K$. In particular, it has a non-trivial effect on the Galois action when passing to extensions of $K$.

The second main result of this article is a description of the essential image of $\mathrm S_\pi$. For this we use that v-vector bundles on $\Spa(K)$ are equivalent to semi-linear $p$-adic Galois representations of $\Gal(C|K)$ on finite dimensional $C$-vector spaces. Thus v-vector bundles on $\X$ can be interpreted as rigid analytic families of $p$-adic Galois representations. Taking this perspective, we define:
\begin{definition}
	A v-vector bundle $V$ on $\X$ is called Hitchin-small if for every point $\Spa(L)\to \X$  valued in a finite extension $L|K$, the Galois representation $V_x$ is ``nearly Hodge--Tate'' in the sense of Gao \cite{gao2022hodge}, i.e.\ the Hodge--Tate--Sen weights of $V_x$ are in   $\Z+\delta_{\mathcal{O}_L/\Z_p}^{-1}\cdot \mathfrak{m}_{\overline{L}}$.
\end{definition}
\begin{theorem}[\Cref{t:essential-image-vVB-arithmetic}]
	\label{t:essential-image-vVB-arithmetic-introduction}
	\begin{enumerate}
		\item 
	The essential image of the functor $\mathrm{S}_\pi$ is given by the Hitchin-small v-vector bundles, i.e.\ we have an equivalence of categories, natural in $X$,
	\[ \mathrm{S}_\pi:{\left\{ \begin{array}{@{}c@{}l}\text{Hitchin-small}\\\text{Higgs--Sen bundles on $\X$}\end{array}\right\}}\isomarrow {\left\{ \begin{array}{@{}c@{}l}\text{Hitchin-small}\\v\text{-vector bundles on $\X$}\end{array}\right\}}.\]
	\item Every v-vector bundle on $\X$ becomes Hitchin-small after a finite extension of $K$.
	\end{enumerate}
\end{theorem}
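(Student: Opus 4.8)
The plan is to treat the two parts separately; part~(1) is the substantial one and splits into the inclusion ``essential image $\subseteq$ Hitchin-small'' and the reverse inclusion (essential surjectivity onto the Hitchin-small locus), while part~(2) is comparatively soft.

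\emph{Part (1), the inclusion ``image $\subseteq$ Hitchin-small''.} Recall from \cite{AHLB} that $\mathrm S_\pi$ factors as the equivalence between Higgs--Sen bundles on $\X$ and suitable (nuclear) modules on the Hodge--Tate stack $X^{\HT}$, followed by a pullback to v-vector bundles on $\X$, and that this is compatible with base change along $\Spf\O_L\to\Spf\O_K$. I would therefore argue pointwise: at a classical point $x\colon\Spa(L)\to\X$ with $L|K$ finite, the fibre of $\Omega^1_{\X|K}$ vanishes (as $L|K$ is finite separable), so $(N,\theta_N,\phi_N)$ restricts there to the pair $(N_x,\phi_{N,x})$, and $\mathrm S_\pi$ specialises to the functor of arithmetic Sen theory attached to $\Spf(\O_L)^{\HT}$. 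Thus the Hodge--Tate--Sen weights of $(\mathrm S_\pi(N,\theta_N,\phi_N))_x$ are the eigenvalues of $\phi_{N,x}$, read through the trivialisation of $\O_L\{-1\}$ coming from a uniformiser of $L$. The only point requiring care is to compare this trivialisation with the one given by $\pi$: these differ by a unit times an element of valuation $v_p(\delta_{\O_L/\O_K})$, and since $\delta_{\O_L/\Z_p}=\delta_{\O_L/\O_K}\cdot(e)$ this is exactly the bookkeeping that converts ``eigenvalues of $\phi_N$ lie in $\Z+e^{-1}\mathfrak{m}_{\overline{K}}$'' into ``Hodge--Tate--Sen weights of $V_x$ lie in $\Z+\delta_{\O_L/\Z_p}^{-1}\mathfrak m_{\overline L}$''. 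This is precisely the naturality in $K$ of the twist $\{-1\}$ stressed after \Cref{d:Higgs--Sen}, and it shows $\mathrm S_\pi$ lands in Hitchin-small v-vector bundles.

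\emph{Part (1), essential surjectivity.} This is the heart of the matter. As $\mathrm S_\pi$ is already known to be fully faithful, it suffices, for a Hitchin-small v-vector bundle $V$ on $\X$, to produce a Hitchin-small Higgs--Sen bundle mapping to it; an isomorphism can then be checked after a v-cover. I would reduce to \Cref{t:relative-small-correspondence}: base change along $\O_K\to\O_C$ turns $V$ into a v-vector bundle $V_C$ on the generic fibre of $X_{\O_C}$ together with a semilinear descent datum for $\Gal(C|K)$, and since $\O_C$ is perfectoid we are in the setting of \Cref{t:relative-small-correspondence} (with the lift supplied by the canonical $A_2(x)$-lift attached to $\pi$, which exists). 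The first thing to verify is that arithmetic Hitchin-smallness of $V$ forces geometric Hitchin-smallness of $V_C$, i.e.\ that the nearly Hodge--Tate condition bounds the image of $V_C$ under $\wt{\mathcal H}$ inside $\mathcal A_n^{\Hsm}$; this is a comparison of the arithmetic and geometric Hitchin fibrations that should follow from expressing the geometric Sen operator of $V_C$ through that of $V$ and the structure of $X^{\HT}$. Granting this, \Cref{t:relative-small-correspondence} produces a Higgs bundle $(N_C,\theta_{N_C})$ on $X_{\O_C}$ with $\mathrm S_{\tilde X}(N_C,\theta_{N_C})\cong V_C$. Transporting the $\Gal(C|K)$-descent datum through the equivalence $\mathrm S_{\tilde X}$ --- using its naturality in $X$ and the twisted-pullback functoriality of \S\ref{s:twisted-pullback} --- equips $(N_C,\theta_{N_C})$ with a semilinear action whose arithmetic/infinitesimal part is the endomorphism $\phi_N$; the compatibility of $\phi_N$ with $\theta_N$ in \Cref{d:Higgs--Sen}, including the factor $e$, should emerge from the interaction of the $T_{X/\O_K}\{-1\}$-direction with the arithmetic $\Gm^\sharp$-direction of $X^{\HT}$. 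Descending yields a Higgs--Sen bundle $(N,\theta_N,\phi_N)$ on $\X$ with $\mathrm S_\pi(N,\theta_N,\phi_N)\cong V$, and Hitchin-smallness of $\phi_N$ is forced by that of $V$ via the pointwise computation of the previous paragraph. (One could instead bypass \Cref{t:relative-small-correspondence} and run a direct decompletion over $X^{\HT}$ in the style of Sen theory, glued along the arithmetic Hitchin fibration, but the reduction to the geometric case seems most economical.)

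\emph{Part (2).} The Hodge--Tate--Sen weights of $V_x$, as $x\colon\Spa(L)\to\X$ ranges over all classical points, form a bounded subset of $\overline K$: the characteristic polynomial of the arithmetic Sen operator of $V$ has coefficients that are global functions on $\X$, hence bounded since $\X$ is quasi-compact, so their roots have $p$-adic valuation bounded below by some $-N_0$. Now choose a finite extension $K'|K$ that is ramified enough that $v_p(\delta_{\O_{K'}/\Z_p})>N_0$ --- possible since $v_p(\delta_{\O_F/\Z_p})\to\infty$ along $F=\Q_p(p^{1/p^m})$, and the different can only grow in the compositum with $K$. Then for every finite $L'|K'$ one has $v_p(\delta_{\O_{L'}/\Z_p})\ge v_p(\delta_{\O_{K'}/\Z_p})>N_0$ by the tower formula, so every Hodge--Tate--Sen weight $w$ of $V_{K'}$ at an $L'$-valued point, which has $v_p(w)\ge -N_0$, lies in $\delta_{\O_{L'}/\Z_p}^{-1}\mathfrak m_{\overline{L'}}\subseteq\Z+\delta_{\O_{L'}/\Z_p}^{-1}\mathfrak m_{\overline{L'}}$. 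Hence $V_{K'}$ is Hitchin-small.

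\emph{Main obstacle.} I expect the essential surjectivity in part~(1) to be the crux, and within it the two delicate points are: matching precisely the arithmetic (nearly Hodge--Tate) and geometric (small Hitchin base) smallness conditions after base change to $\O_C$, and transporting the Galois/Sen data through the geometric correspondence so as to recover exactly the compatibility diagram of \Cref{d:Higgs--Sen} with its $e$-twist --- that is, pinning down the role of the Breuil--Kisin twist $\{-1\}$ and the different in reconciling the two sides under change of the base field.
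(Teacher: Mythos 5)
Your overall blueprint is close to the paper's: establish the ``image $\subseteq$ Hitchin-small'' inclusion by controlling the Sen operator, and prove essential surjectivity by passing to $\X_C$, invoking the geometric correspondence, and Galois-descending the resulting Higgs bundle. Your argument for part (2) is essentially correct and matches the paper's. However, several key steps in part (1) are either mislocated or hand-waved, and one step is wrong as stated.

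\emph{Where Hitchin-smallness actually enters.} You expect the delicate step to be verifying that arithmetic Hitchin-smallness of $V$ yields geometric Hitchin-smallness of $h^\ast V$ on $\X_C$. This is not where the hypothesis is used: for \emph{any} v-vector bundle $V$ on $\X$, the geometric Hitchin invariant $\wt{\mathcal H}(h^\ast V)\in \mathcal A_n(C)$ is $G_K$-invariant, and because of the Tate twists in $\mathcal A_n$ the Galois invariants vanish by Tate's theorem, so $h^\ast V$ is automatically $0$-Hitchin-small and the associated Higgs bundle $(M,\theta_M)$ is nilpotent (this is \Cref{l:pullback-of-v-vector-bundle-to-OC-nilpotent}). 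The Hitchin-smallness of $V$ is instead what makes the \emph{Galois descent} of $(M,\theta_M)$ from $\X_C$ to $\X$ possible: one needs the operator $\chi_{\pi^\flat}(\sigma)^{\phi'}$ to converge in $\End_C(M)$ so that it defines a genuine $1$-cocycle (\Cref{l:1-cocycle-from-Hitchin-sm-v-VB}), and this is precisely the quantitative content of the nearly Hodge--Tate condition. Without it the twisted $\ast$-action used to extract a finite projective $\O(\X)$-submodule does not exist, so your phrase ``transporting the $\Gal(C|K)$-descent datum through the equivalence'' elides the crux of the argument.

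\emph{Two missing technical pillars.} The descent step rests on two ingredients that do not appear in your proposal. First, one must decomplete along the cyclotomic tower to a finite level $\X_{K'}$ using Shimizu/Petrov's relative Sen theory (\Cref{p:sen-theory-Shimizu-Petrov}); the Hodge--Tate stack produces Kummer-tower data, so this is not automatic. Second, one must compare the Kummer and cyclotomic normalisations of the Sen operator. The paper does this with an explicit element $z'\in 1+e(1-\zeta_p)\O_C$ satisfying $\chi_{\pi^\flat}(\sigma)\sigma(z')=\chi(\sigma)z'$, which yields $\phi_N=e\phi'$ (\Cref{l:comp-between-Kummer-and-cycl}). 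Your ``trivialisation bookkeeping'' gestures at this, but the existence of $z'$ and the identity $\phi_N=e\phi'$ are real inputs (coming from Sen theory for $\Spf\O_K$) rather than formal normalisation.

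\emph{An incorrect claim in the inclusion argument.} You argue at a classical point $x:\Spa(L)\to\X$ that ``the fibre of $\Omega^1_{\X|K}$ vanishes (as $L|K$ is finite separable)''. This is false: the fibre of $\Omega^1_{\X|K}$ at a classical point of a $d$-dimensional smooth rigid space has dimension $d$. What is true, and what the paper uses, is that the Sen operator of $V_x$ is insensitive to $\theta_N$: this comes from the structural map $X^{\HT}\to \Spf(\O_K)^{\HT}$ and the section $H\to G$, which lets one set $\theta_N=0$ without changing the Galois module $M$. The conclusion you want is captured by the commutative Hitchin diagram \Cref{cor:compairing-Hitchin-fibrations-arithm-case}, which the paper deduces from \Cref{l:comp-between-Kummer-and-cycl}; a pointwise specialisation can work, but it needs this comparison and not the incorrect vanishing of $\Omega^1$.
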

 By analogy to \S\ref{s:intro-small-paS-moduli}, the name ``Hitchin-small'' is motivated by the idea that the map
\[ \{\text{Higgs--Sen bundles on $\X$ of rank $n$}\}\to \mathbb A^n(\X), \quad (N,\theta_N,\phi_N)\mapsto \chi(\phi),\]
sending a Higgs--Sen bundle to the coefficients of the characteristic polynomial of the Sen operator, is the correct analogue of the Hitchin fibration in the arithmetic setting. Then a Higgs--Sen module on $\X$ is Hitchin-small if and only if it is sent into a certain integral sublattice of $\mathbb A^n(\X)=\O(\X)^n$. We note that the condition (3) in \Cref{d:Higgs--Sen} forces $\theta_N$ to be nilpotent, so that the usual Hitchin fibration in terms of $\theta_N$ is trivial. In other words, only $\phi_N$ has non-trivial spectral information.

\medskip 

In \cite{AHLB}, we had constructed $\mathrm{S}_\pi$ using a canonical pullback morphism $\Bun(X^\HT,\O\tf)\to \Bun(\X_v)$.  
Using \cite[Thm~6.37]{AHLB}, we can now deduce from \Cref{t:essential-image-vVB-arithmetic-introduction}.(1)  that its essential image consists precisely of the Hitchin-small objects. From  \Cref{t:essential-image-vVB-arithmetic-introduction}.(2) we therefore now obtain:
\begin{corollary}\label{c:equiv-to-Bun(Xv)}
	There is an equivalence of categories
	\[\textstyle 2\text{-}\varinjlim_{L|K} \Bun([X_{\O_L}^{\HT}/\Gal(L|K)],\O\tf)\to  \Bun(\X_v) \]
	where $L|K$ runs through the finite Galois extensions of $K$ inside $\overline{K}$.
\end{corollary}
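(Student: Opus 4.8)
The plan is to deduce \Cref{c:equiv-to-Bun(Xv)} from \Cref{t:essential-image-vVB-arithmetic-introduction} by combining three inputs: (a) the functor $\mathrm{S}_\pi$ and its interpretation via the pullback morphism $\Bun(X^\HT,\O\tf)\to \Bun(\X_v)$ from \cite{AHLB}, (b) the known structure of $\Bun(X^\HT,\O\tf)$ from \cite[Thm~6.37]{AHLB}, and (c) the stability statement \Cref{t:essential-image-vVB-arithmetic-introduction}.(2). First I would recall from \cite[Thm~6.37]{AHLB} that $\Bun(X^\HT,\O\tf)$ is equivalent to the category of Higgs--Sen bundles on $\X$ (with no smallness condition), and that under this equivalence the pullback functor $\Bun(X^\HT,\O\tf)\to \Bun(\X_v)$ restricts, on the Hitchin-small subcategory, to the functor $\mathrm{S}_\pi$ of \Cref{t:essential-image-vVB-arithmetic-introduction}.(1). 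Hence $\mathrm{S}_\pi$ identifies the Hitchin-small Higgs--Sen bundles with the Hitchin-small v-vector bundles on $\X$.

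Next I would descend along $\Gal(L|K)$. For a finite Galois extension $L|K$, base change gives $X_{\O_L}$, and by \cite[Thm~6.37]{AHLB} applied over $\O_L$ together with \Cref{t:essential-image-vVB-arithmetic-introduction}.(1), the pullback $\Bun(X_{\O_L}^{\HT},\O\tf)\to \Bun(\mathcal X_{L,v})$ identifies Hitchin-small objects on both sides; taking $\Gal(L|K)$-equivariant objects and using v-descent of vector bundles along the pro-(finite étale) cover $\mathcal X_L\to \mathcal X$ (equivalently, the identification of $\Bun([X_{\O_L}^\HT/\Gal(L|K)],\O\tf)$ with $\Gal(L|K)$-equivariant bundles on $X_{\O_L}^\HT$), I would obtain an equivalence between $\Bun([X_{\O_L}^{\HT}/\Gal(L|K)],\O\tf)$ and the category of Hitchin-small v-vector bundles on $\X$ that become Hitchin-small already over $L$ — but note that every v-vector bundle on $\mathcal X_L$ that extends to $\mathcal X$ and is trivialised appropriately will be Hitchin-small if $L$ is large enough. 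The point is that the Breuil--Kisin twist $\O_{\O_L}\{-1\}$ and the different $\delta_{\mathcal O_L/\Z_p}$ both grow with $L$, so the Hitchin-small condition over $L$ becomes weaker as $L$ grows.

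I would then pass to the $2$-colimit over finite Galois $L|K$. A v-vector bundle $V$ on $\X$ lies in the essential image of $\varinjlim_L \Bun([X_{\O_L}^\HT/\Gal(L|K)],\O\tf)$ if and only if its pullback $V_L$ to $\mathcal X_L$ is Hitchin-small for some finite Galois $L|K$; but by \Cref{t:essential-image-vVB-arithmetic-introduction}.(2) this holds for \emph{every} v-vector bundle $V$ after a suitable finite extension. Essential surjectivity onto $\Bun(\X_v)$ follows. For full faithfulness in the colimit, one checks that if two objects over $L$ become isomorphic as v-vector bundles on $\X$, then after enlarging $L$ (absorbing the Galois cocycle data) they are isomorphic in $\Bun([X_{\O_L}^\HT/\Gal(L|K)],\O\tf)$; this is a standard Galois-descent-of-morphisms argument, using that $\mathrm{Hom}$-modules in these categories are finitely generated and that the transition maps in the colimit are the evident base-change functors.

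The main obstacle I expect is bookkeeping the precise compatibility between (i) the Hitchin-small condition over $L$, phrased via Hodge--Tate--Sen weights lying in $\Z + \delta_{\mathcal O_L/\Z_p}^{-1}\mathfrak m_{\overline L}$, and (ii) the stabilisation statement \Cref{t:essential-image-vVB-arithmetic-introduction}.(2): one must verify that ``becomes Hitchin-small after some finite extension'' is exactly the union, over all finite $L$, of ``is the base change of a Hitchin-small object over $L$'', i.e.\ that no additional descent obstruction appears. Concretely, the subtle point is that a v-vector bundle on $\mathcal X_L$ which is Hitchin-small need not a priori descend to $\mathcal X$; but here it comes \emph{with} descent datum since we started from $V$ on $\X$, and one must check this descent datum is compatible with the Higgs--Sen/Hodge--Tate stack picture, i.e.\ that the equivalence of \Cref{t:essential-image-vVB-arithmetic-introduction}.(1) over $L$ is $\Gal(L|K)$-equivariant in the appropriate sense (including the twist $\O\{-1\}$, whose Galois action over $L/K$ is nontrivial as remarked after \Cref{d:Higgs--Sen}). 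Once that equivariance is in place, the colimit statement is formal.
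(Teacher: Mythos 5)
Your proposal is correct and takes essentially the same route as the paper: construct the functor from the canonical fully faithful pullback $\Bun(X_{\O_L}^{\HT},\O\tf)\to\Bun(\X_{L,v})$ by Galois descent and the $2$-colimit formalism, then obtain essential surjectivity from \Cref{t:essential-image-vVB-arithmetic} together with \Cref{l:locally-Hitchin-small-arithmetic-case}. The ``main obstacle'' you flag (Galois-equivariance of the correspondence, including the twist $\O\{-1\}$) is handled in the paper simply by working with the canonical pullback $\alpha_X^\ast$ from \eqref{eq:alpha-ast-Bun(XHT)} instead of $\mathrm S_\pi$: since $\alpha_X^\ast$ is completely canonical and natural in $X$, it is automatically $\Gal(L|K)$-equivariant, so no separate compatibility check through the Higgs--Sen picture is needed.
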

Conceptually, this means that we can think of the inverse limit of $[X_{\O_L}^{\HT}/\Gal(L|K)]$ over $L|K$ as being a geometric object whose analytic vector bundles correspond to v-vector bundles on $\X$.

\subsection{Relation to other works in the arithmetic case}\label{s:rel-other-works-arithmetic}
\begin{enumerate}
	\item  The first $p$-adic Simpson functor in the arithmetic situation has been constructed by Tsuji \cite[\S15]{Tsuji-localSimpson}: In an algebraic setting of certain log schemes over $\O_K$, Tsuji constructs a fully faithful functor which (reinterpreted in the perfectoid language of this article) goes from v-vector bundles $V$ on $\X$ to $\Gal(C|K)$-equivariant nilpotent Higgs bundles on $\X_{C}$ \cite[Thm 15.1]{Tsuji-localSimpson}. We give a rigid-analytic variant of this construction as an intermediate step towards \Cref{t:essential-image-vVB-arithmetic-introduction}. Namely, $S_\pi^{-1}(V)$ can be described  by forming the morphism of Sen modules associated to the Galois action on the Higgs field. From this perspective, the main differences to our work in this case are that we deal more generally with the rigid-analytic setting (albeit with a good reduction assumption), and the description of the essential image of $\mathrm S_{\pi}$ in terms of Hitchin-smallness is a new contribution.
	\item In the rigid analytic setting, the functor $\mathrm S_{\pi}$ has previously been described by Min--Wang \cite{MinWang22},  but they left open the description of the essential image  \cite[Remark 7.25]{MinWang22}.  Their approach via prismatic crystals is closely related to earlier work of Tian \cite{tian2021finiteness}.
	\item When $\mathbb L$ is a $\Z_p$-local system on $\X$, then $V=\mathbb L\otimes \O$ is a v-vector bundle on $\X$. In this special case, the functor $\mathrm S_\pi$ is closely related to the well-known $p$-adic Simpson functor  of Liu--Zhu \cite[\S2]{LiuZhu_RiemannHilbert} (here the setup is slightly different: there is no good reduction assumption, but $\X$ has an arithmetic model). More precisely, the base-change to $\X_C$ of the underlying Higgs bundle of $S_{\pi}^{-1}(V)$ is the nilpotent Higgs bundle associated to $\mathbb L$ by Liu--Zhu.
	\item 	Inspired by the work of Liu--Zhu, the Sen operator  associated to $\mathbb L$ in this rigid analytic setting has recently been constructed by Shimizu \cite[\S2]{Shimizu_constancy} and Petrov \cite[\S3]{petrov2020geometrically}. 
	
	From this perspective, (up to technical differences in the setups) we generalize these functors from $\Z_p$-local systems to arbitrary v-vector bundles, leading to an equivalence.
	\item In the special case of $X=\Spf(\O_K)$, \Cref{t:essential-image-vVB-arithmetic-introduction} is a statement about Sen theory \cite{sen1980continuous}: Higgs--Sen modules are then precisely the classical Sen modules. In this case, the approach via prismatic crystals has previously been studied by Min--Wang \cite{min2021hodge} and Gao \cite[Thm~1.1.5]{gao2022hodge}, who was the first to describe the essential image of prismatic crystals in v-vector bundles on $\Spa(K)_v$ in terms of the ``nearly Hodge--Tate'' condition. In \cite{analytic_HT}, we studied the case  of  $X=\Spf(\O_K)$ via $X^\HT$, and \Cref{c:equiv-to-Bun(Xv)} generalises \cite[Thm~1.2]{analytic_HT}. 
	
	From this perspective, we can regard our results in the arithmetic case as a generalisation of these results for $\Spa(\O_K)$ to analytic families of semi-linear $\Gal(C|K)$-representations.
\end{enumerate}

To the best of our knowledge, beyond the base case $X=\Spf(\O_K)$,
\Cref{t:essential-image-vVB-arithmetic-introduction} is the first instance of a $p$-adic Simpson \textit{correspondence} in the arithmetic setting over $K$, relating v-vector bundles to purely linear algebraic data in terms of an equivalence of categories. 
\subsection*{Acknowledgements}

We would like to thank Bhargav Bhatt, Hui Gao, Tongmu He, Juan \mbox{Esteban} Rodr\'iguez Camargo, Peter Scholze, Yupeng Wang, Annette Werner, Matti W\"urthen and Bogdan Zavyalov for helpful discussions. 

\

The second author was funded by the Deutsche Forschungsgemeinschaft (DFG, German Research Foundation) -- Project-ID 444845124 -- TRR 326.

\subsection*{Notations}
\label{sec:notations}

We will use the following notations.
\begin{enumerate}
\item $p$ is a prime.
\item Let $\Z_p^\cycl:=\Z_p[\zeta_{p^\infty}]^\wedge_p$ and $\Q_p^\cycl:=\Z_p^\cycl\tf$ the cyclotomic perfectoid field. We fix a primitive $p$-th root of unity $\zeta_p \in \Z_p^\cycl$ and write $\omega:=(\zeta_p-1)^{-1} \in \Q_p^\cycl$.
  \item If $R_0 \to R$ is a morphism of $p$-complete rings, we will denote by $\Omega_{R|R_0}^1$ the \textit{$p$-completion} of the module of differential forms of $R$ over $R_0$. This construction glues to define, for any morphism $X \to X_0$ of $p$-adic formal schemes, the sheaf $\Omega_{X|X_0}^1$ of $p$-completed differential forms of $X$ over $X_0$. Its $\mathcal{O}_X$-linear dual, the tangent sheaf, is denoted by $T_{X|X_0}$. The same notation will be used for adic spaces. 
  \item If $X$ is a $p$-adic formal scheme, we denote by $X^{\prism}$ (resp. $X^{\rm HT}$) the Cartier--Witt stack of $X$ (resp. the Hodge--Tate locus in the Cartier--Witt stack of $X$, or simply Hodge--Tate stack of $X$), which is denoted $\WCart_X$ (resp. $\WCart_X^{\rm HT}$) in \cite[Definition 3.1, Construction 3.7]{bhatt2022prismatization}).
\end{enumerate}

\section{Recollections on $p$-adic Simpson functors via the Hodge--Tate stack}\label{s:recollections}
This article is based on our earlier work \cite{AHLB} about the geometry of the Hodge--Tate stack of Bhatt--Lurie \cite{bhatt2022absolute}\cite{bhatt2022prismatization}. However, we will not need to analyse the Hodge--Tate stack in this work: Instead, we only rely on the resulting constructions of \cite{AHLB}, which we shall now recall.

We begin by making precise the two different technical setups:

\begin{Setup}[geometric Setup]\label{setup:geometric}
	Let $S$ be a $p$-torsionfree $p$-adic perfectoid ring. We assume that $S$ contains a $p$-th unit root $\zeta_p$. Let  $(A,I)$ be the perfect prism associated to $S$. We set
	$A_2:=A/I^2$.
	
	Let $X$ be a $p$-adic formal scheme that is smooth over $S$. We denote by $\X$ the adic generic fibre of $X$, an analytic adic space over the affinoid perfectoid space $\Spa(S\tf,S)$.
\end{Setup}
We note that $\X$ is a sheafy analytic adic space. Indeed, it is a smoothoid adic space in the sense of  \cite[\S2]{heuer-sheafified-paCS}, and we refer to there for some basic technical properties, especially \cite[Lemma~2.6]{heuer-sheafified-paCS}.

\begin{Setup}[arithmetic Setup]\label{setup:arithmetic}
	Let $K$ be a $p$-adic field, by this we mean a discretely valued non-archimedean field extension of $\Q_p$ with perfect residue field. Let $\O_K$ be the ring of integers and fix a uniformiser $\pi$. Let $C$ be the completion of an algebraic closure of $K$.
	
	 Let $X$ be a $p$-adic formal scheme that is smooth over $\Spf(\O_K)$. Let $\X$ be its adic generic fibre.
\end{Setup}

We note that we can pass from \Cref{setup:arithmetic} to \Cref{setup:geometric} by sending $X$ to the base-change $X_{\O_C}$.

\medskip

The central object of \cite{AHLB} which we use to organise the  $p$-adic Simpson correspondence is the Hodge--Tate stack of Bhatt--Lurie \cite[Construction~3.7]{bhatt2022prismatization}: Given any bounded $p$-adic formal scheme $X$, this is a functor $X^\HT$ fibred in groupoids on the category of $p$-nilpotent rings, defined in terms of ``generalized Cartier--Witt divisors''.  Instead of recalling the precise definition, which we will never need in this article, we now instead recall the key properties of $X^\HT$ which we used in \cite{AHLB} in order to define $p$-adic Simpson functors:

The functor $X^\HT$ is a stack for the fpqc-topology on $p$-nilpotent rings, and the association $X\mapsto X^\HT$ is functorial. There is a canonical morphism $X^\HT\to X$ which is natural in $X$. When $X$ is perfectoid, this map is an isomorphism.
In  \cite[Thm~1.2]{AHLB}, we used this to show that there is a canonical functor
\[\alpha^\ast_X: \mathcal{P}erf(X^\HT, \mathcal{O}\tf) \to \mathcal{P}erf(\X_v)\]
from the category of perfect complexes over the rational structure sheaf $\O\tf=\O\otimes_{\Z_p}\Q_p$ on $X^\HT$ to perfect complexes on $\X_v$. (In fact, in \cite{AHLB} there is for technical reasons a running assumption  that $\X$ is qcqs, but  the general case follows immediately from gluing, by naturality.) When $X$ is as in \Cref{setup:geometric} or \Cref{setup:arithmetic}, we showed that this functor is fully faithful \cite[Thm~1.2]{AHLB}. In particular, it restricts to a fully faithful functor
\begin{equation}\label{eq:alpha-ast-Bun(XHT)}
\alpha^\ast_X: \Bun(X^\HT,\O\tf)\to \Bun(\X_v).
\end{equation}
Here and in the following, for any ringed site $(\mathcal C,\O)$, we denote by $\Bun(\mathcal C,\O)$ the category of locally free $\O$-modules of finite rank on $\mathcal C$, and we drop $\O$ if it is clear from the context. Thus $\Bun(\X_v)=\Bun(\X_v,\O_{\X_v})$ is the category of v-vector bundles on $\X$.

\subsection{The local $p$-adic Simpson functor in the geometric setup}\label{s:recall-local-Simpson-geom}
Let now $X$ and $S$ be as in \Cref{setup:geometric}. Then the natural map $X^\HT\to X$ is a gerbe banded by $T_X^\sharp\{1\}$: Here $T_X\{1\}\to X$ is the tangent bundle of $X$ twisted by a Breuil-Kisin twist $-\otimes_{A/I}I/I^2$, and $T_X^\sharp\{1\}\to X$ is the formal group scheme defined as the PD-completion  of $T_X\{1\}$ at the identity section. Assume now first that there exists a splitting \[s:X\to X^\HT.\]
Such a splitting $s$ is induced by the datum of  a prismatic lift of $X$. For example, we will explain in \S\ref{sec:explicit-toric-charts} that a prismatic lift and hence a splitting is induced by a toric chart for $X$, which also appears in Faltings' local $p$-adic Simpson correspondence. More generally, it is given by the datum of any such splitting induces an isomorphism
\[ X^\HT\cong BT_X^\sharp\{1\}\]
where the right hand side denotes the classifying stack of $T_X^\sharp\{1\}$. By an easier special case of \cite[Thm~1.3]{AHLB}, pullback along the natural map $X\to BT_X^\sharp\{1\}$ induces an equivalence of categories
\begin{equation}\label{eq:constr-of-Higgs-field-via-Cariter-duality}
\Phi_X:\Bun(BT_X^\sharp\{1\})\isomarrow \left\{\begin{array}{c}\text{Higgs bundles $(M,\theta)$ on $X$  s.t.\ }\\ \text{$\theta$ is topologically nilpotent}\end{array}\right\}.
\end{equation}
Let us recall the construction of $\Phi_X$ from \cite[Thm~6.3, Remark~6.2]{AHLB}: The pullback of a vector bundle along $X\to BT_X^\sharp\{1\}$ is given by a vector bundle with a  $T_X^\sharp\{1\}$-action. On the level of $\O_X$-modules, this corresponds to an $\O_X$-module $M$ together with a co-action
\begin{equation}\label{eq:Cartier-duality-applied-to-Higgs}
M\to M\otimes \O(T_X^\sharp\{1\}).
\end{equation}
Let $\wtOm:=\Omega^1_{X|S}\{-1\}$, 
then via Cartier duality, \cite[Lemma~6.4]{AHLB}, this map corresponds to a morphism
\[
\widehat{\mathrm{Sym}^\bullet_X}(\wtOm^\vee)\otimes M\to M
\]
where the first term on the right is the completion of $\mathrm{Sym}^\bullet_X(\wtOm^\vee)$ at the ideal spanned by $\wtOm^\vee$. The Higgs field is now given by the $\O_X$-linear dual of the induced map $\wtOm^\vee \otimes M\to M$.

All in all, we thus obtain a commutative diagram of fully faithful functors
	\[
\begin{tikzcd}[row sep=0.1cm]
	& \Bun(X^{\HT},\O\tf)
	\arrow[ld, "\alpha_X^\ast"'] \arrow[rd, "\beta_{s}"] &                      \\
	\Bun(\mathcal X_v) \arrow[rr, "\text{$p$-adic Simpson}", dotted,<->] &                                                                    & \big\{\text{Higgs bundles on }\mathcal X\big\}
\end{tikzcd}\]
where $\beta_s$ is defined as the pullback along the isomorphism $BT_X^\sharp\{1\}\isomarrow X^\HT$ induced by $s$, followed by $\Phi_X$. By the above description of the essential image of $\beta_s$, this defines a fully faithful functor
\[\LS_s:\left\{\begin{array}{c}\text{Higgs bundles $(M,\theta)$ on $\X$  s.t.\ }\\ \text{$\theta$ is topologically nilpotent}\end{array}\right\}\hookrightarrow  \Bun(\mathcal X_v).\]
This is the local $p$-adic Simpson functor induced by a splitting of the Hodge--Tate stack (see \cite[Thm 6.29]{AHLB} for more details). Our first goal in this article in \S\ref{sec:explicit-toric-charts} below will be to compare this to Faltings' local correspondence in terms of toric charts.

\subsection{The global $p$-adic Simpson functor in the geometric setup}\label{s:recall-global-Simpson-geom}
Splittings of $X^\HT\to X$ only exist locally, and one can show that $X^\HT$ is rarely split as a gerbe when $X$ is for example proper. Inspired by the work of Faltings \cite{faltings2005p}, Abbes--Gros and Tsuji \cite{abbes2016p}, we therefore globalise the construction by comparing $X^\HT$ to the stack 
\[ \Lft_{X}\to X\]
of square-zero lifts of $X$ defined in \cite[Definition~7.5]{AHLB}. Like for $X^\HT$, we won't need the precise definition of $\Lft_{X}$ in this article. Instead, let us therefore only recall its key properties: The morphism $ \Lft_{X}\to X$ is natural in $X$, and it is a gerbe banded by the relative formal group scheme $T_X\{1\}$. This stack is  split by the datum of an $A_2$-lift $\tilde{X}$ of $X$.

More generally, it is more useful in practice to also allow a weaker datum of lifts, at the expense of a stronger convergence condition: Let $x\in S\tf$ be such that $S\subseteq xS$ and let $xS\{1\}\subseteq S\{1\}\tf$ be the image of $x\cdot:S\{1\}\to S\{1\}\tf$, then we define a square-zero thickening
\[ 0\to xS\{1\}\to A_2(x)\to S \to 0\]
as the pushout of $S\{1\}\to A_2\to S$ along $S\{1\}\to xS\{1\}$ (cf \cite[Definition~7.1]{AHLB}). Then in \cite[\S7.2]{AHLB} we more generally consider a stack
\[ \Lft_{X,x}\to X\]
which is now a gerbe banded by $xT_X\{1\}$, and the gerbe is split by the datum of an $A_2(x)$-lift $\tilde{X}$ of $X$, called an $x$-lift. More precisely, any such $x$-lift $\tilde{X}$ induces a section
\[ \rho_{\tilde X}:X\to \Lft_{X,x}.\]
 The point of this generalisation to $x$-lifts is that in contrast to prismatic lifts or $A_2$-lifts, such $x$-lifts typically also exists in global situations, e.g.\ for any proper smooth $X$.

\medskip

Let now $x\in S\tf$ be as before, set $\omega:=(1-\zeta_p)^{-1}$ and let $0\neq z\in S$ be such that $z\in S\tf^\times$ and $\omega x z \in S$. The standard setting is $x=1$ and $z=\zeta_p-1$, but it is beneficial to allow greater generality. 
Towards the global $p$-adic Simpson functor, the crucial point is now that by \cite[Proposition 7.8]{AHLB}, there are canonical and functorial morphisms of fpqc-stacks on $p$-nilpotent rings
\[ X^\HT\to \Lft_{X,x}\to z_{\ast}X^\HT\]
where $z_{\ast}X^\HT$ is the pushout of the gerbe $X^\HT\to X$ along the multiplication $z:T^\sharp_X\{1\}\to T^\sharp_X\{1\}$. Moreover, by \cite[Definition~7.9]{AHLB} there is a constant $u\in \Z_p^\times$ (conjecturally, $u=1$) such that these morphisms are linear over
\[ T^\sharp_X\{1\}\xrightarrow{ u\cdot \mathrm{can}} xT_X\{1\}\xrightarrow{\cdot u^{-1}z} T^\sharp_X\{1\}.\]
It follows from this that the stack $z_{\ast}X^\HT$ is a pushout of $\Lft_{X,x}$, and is thus also split by the datum of an $A_2(x)$-lift $\tilde{X}$. In particular, the datum of such a lift $\tilde{X}$ induces an isomorphism
\[\Phi_{\tilde X}:z_{\ast}X^\HT=BT^\sharp_X\{1\}.\]

Exactly like before, we can now use $\Phi_{\tilde X}$ to obtain first a diagram of fully faithful functors 

\begin{equation}\label{eq:def-alpha-beta}
\begin{tikzcd}[row sep=0.1cm]
	& \Bun(z_{\ast}X^\HT,\O\tf)
	\arrow[ld, "\alpha_X^\ast"'] \arrow[rd, "\beta_{\tilde{X}}"] &                      \\
	\Bun(\mathcal X_v) \arrow[rr, "\text{$p$-adic Simpson}", dotted,<->] &                                                                    & \big\{\text{Higgs bundles on }\mathcal X\big\}.
\end{tikzcd}
\end{equation}
In a second step, like in the local case discussed in \S\ref{s:recall-local-Simpson-geom}, we deduce from the description of the essential image of $\beta_{\tilde{X}}$ a fully faithful functor (see \cite[Thm 7.13]{AHLB})
\[ \mathrm{S}_{\tilde{X},z}:\left\{\begin{array}{c}\text{Higgs bundles $(M,\theta)$ on $\X$  s.t.\ }\\ \text{$\frac{1}{z}\theta$ is topologically nilpotent}\end{array}\right\}\hookrightarrow  \Bun(\mathcal X_v).\]
This functor can be computed using a period sheaf $\mathcal B_{\tilde{X}}$ that arises from the geometry of the Hodge--Tate stack: For simplicity, let $x=1$ and $z=\zeta_p-1$, then there is a Higgs module $(\mathcal B_{\tilde{X}},\Theta_{\mathcal B_{\tilde{X}}})$ on $\X_v$ (see \cite[Definition~17.7]{AHLB} for the precise definition) such that for any Higgs bundle $(M,\theta)$ on $\X$ with topologically nilpotent $\frac{1}{z}\theta$, the associated v-vector bundle is described as follows: Let $\nu:\X_v\to \X_\an$ be the natural map, then
\begin{equation}\label{eq:compute-StX-using-period-sheaf}
\mathrm{S}_{\tilde{X}}(M,\theta) \cong \ker(\mathcal{B}_{\tilde{X}} \otimes_{\mathcal{O}_{\X}}  \nu^\ast M \xrightarrow{\Theta_{\mathcal{B}_{\tilde{X}}} \otimes \mathrm{Id} + \mathrm{Id} \otimes \theta_M}  \mathcal{B}_{\tilde{X}} \otimes_{\mathcal{O}_{\X}} \nu^\ast M \otimes_{\mathcal{O}_{\X}} \nu^\ast \Omega_{\X}^1\{-1\})
\end{equation}
	is naturally isomorphic. Since $\mathcal{B}_{\tilde{X}}$ can be described explicitly, this gives a way to compute $\mathrm{S}_{\tilde{X}}$.

	The first main goal of this article is to describe the essential image of  $\mathrm{S}_{\tilde{X}}$, leading to \Cref{t:relative-small-correspondence}.
	
\subsection{The local $p$-adic Simpson functor in the arithmetic setup}\label{s:recall-local-Simpson-arith}
Let now $X$ be a smooth formal scheme over $\O_K$ as in \Cref{setup:arithmetic}. We recall some results from \cite[\S6.4]{AHLB}, specialised to the case of vector bundles: In the arithmetic setup, the Hodge--Tate stack $X^\HT \to X$ is a gerbe banded by a group scheme $G\to X$ that can be described as follows: It is a semi-direct product
\[ 0\to T^\sharp_{X}\{1\}\to G\to H\to 0\]
where $H=\mathbb G_m^\sharp$  or $H=\mathbb G_a^\sharp$ depending on whether $K$ is absolutely unramified or not. Here for any $p$-adic formal group over $X$, we denote by $-^\sharp$ the PD-completion at the identity section. If there is a global splitting $X\to X^\HT$, it follows that this splitting induces an isomorphism
\[X^\HT\cong BG.\]
The motivation for \Cref{d:Higgs--Sen} is now that the category of Higgs--Sen modules on $\X$ is canonically equivalent to the category of finite locally free $\O\tf$-modules on $BG$: Roughly speaking, the Higgs field $\theta$ comes from the normal subgroup $T^\sharp_X\{1\}$, the Sen operator $\phi$ comes from the subgroup $H$, and the compatibility between $\theta$ and $\phi$  reflects the semi-direct product structure. In particular, exactly as in the geometric case, any such local splitting induces a natural fully faithful functor
\[\mathrm{S}_{\pi}: \left\{\text{Hitchin-small Higgs--Sen bundles on $\X$}\right\}\hookrightarrow  \Bun(\mathcal X_v).\]
\subsection{The global $p$-adic Simpson functor in the arithmetic setup}\label{s:recall-global-Simpson-arith}
Finally, there is (only in the arithmetic case!) a canonical way to glue the local $p$-adic Simpson functors to a global correspondence which only depends on the choice of uniformiser $\pi$ of $\O_K$. We recall the main ideas and refer to  \cite[\S7.4]{AHLB} for details: The construction is achieved by combining the local functors with the geometric $p$-adic Simpson functor of $X_{\O_C}$ from \S\ref{s:recall-global-Simpson-geom}, using the canonical isomorphism
\[ X_{\O_C}^\HT=X^\HT\times_{\Spf(\O_K)^\HT}\Spf(\O_C).\]
Here we can employ the $p$-adic Simpson correspondence of \Cref{setup:geometric} applied to $X_{\O_C}$:
If $K$ is unramified, then there is a natural map $\O_K\to A_2$ which we can use to give a canonical lift $\tilde{X}_{\O_C}$ of $X_{\O_C}$ to $A_2$. The $p$-adic Simpson correspondence $S_{\tilde{X}_{\O_C}}$ over $\O_C$ is then naturally compatible with the functor $S_{\pi}$ in the arithmetic setting via the functor that sends a Higgs--Sen bundle $(N,\theta,\phi)$ to the base-change $(E_{\O_C},\theta_{\O_C})$ of the underlying Higgs bundle. 

In general, when $K$ is allowed to be ramified, there is only a map $\O_K\to A_2(x)$. This was one motivation to develop the global correspondence of \S\ref{s:recall-global-Simpson-geom} for ``$x$-lifts'' of $X$ to $A_2(x)$. This allows us to still obtain a canonical small $p$-adic Simpson correspondence for $X_{\O_C}$ as in \S\ref{s:recall-global-Simpson-geom}. Similar to Min--Wang's result \cite[Thm~6.4]{MinWang22}, we arrive at a fully faithful global arithmetic  $p$-adic Simpson functor
\[ \mathrm{S}_{\pi}:\{\text{Hitchin-small Higgs--Sen bundles on }\X\}\hookrightarrow  \Bun(\mathcal X_v)\]
by Galois descent from the geometric case. The second main goal of this article is to describe the essential image of this functor in \S\ref{sec:essential-image-arithmetic}.
\section{Essential Image: geometric case}

Throughout this section, we work in the geometric setting of \Cref{setup:geometric}, so $S$ is a perfectoid formal scheme and $X\to S$ is smooth morphism. Let $\X\to S^\ad_\eta$ be its adic generic fibre.
 \subsection{Explicit description of $S_{\tilde{X}}$ in terms of toric charts}\label{sec:explicit-toric-charts}
	We begin by comparing the functor $\mathrm{S}_{\tilde{X}}$ from \S\ref{s:recall-global-Simpson-geom} to the local correspondence $\mathrm{LS}^{\mathrm F}_{c}$ of Faltings \cite{faltings2005p} defined in terms of a toric chart $c$: This was constructed in \cite[p849]{faltings2005p} for smooth formal schemes\footnote{More precisely, in \cite{faltings2005p} Faltings works in an algebraic setting, but the construction generalises verbatim to the setting of formal schemes. Moreover, \cite{faltings2005p} works in a more general setting of semi-stable schemes.} over $\O_C$. We here use the smoothoid generalisation from \cite{heuer-sheafified-paCS}, which we now recall. We begin by defining global notions of smallness.
	
	Throughout this subsection, we assume that $S$ contains $\Z_p^\cycl$. For any $\alpha\in \tfrac{1}{p-1}\Z\tf_{\geq 0}$, we then denote by $p^\alpha$ any element in  $\Z_p^\cycl$ whose absolute value is equal to $|p|^\alpha$. 

    \begin{definition}
    	\label{def:faltings-small-v-bundles}
    	\begin{enumerate}
    		\item A Higgs bundle $(M,\theta_M)$ on $\X$ is called \textit{Faltings-small} if Zariski-locally on $X$, there exists a finite free integral model $\mathfrak M$ of $M$ on $X$ such that $\theta_M$ restrict to a Higgs field $\theta_{\mathfrak M}:\mathfrak M\to \mathfrak M\otimes \Omega^1_X(-1)$ which is trivial modulo $p^{2\alpha}$ for some $\alpha>\frac{1}{p-1}$. We denote the category of Faltings-small Higgs bundles on $\X$ by $\Higgs^{\Fsm}(\X)$.
    		\item 
    	A v-vector bundle $V$ on $\X$ is called \textit{Faltings-small} if Zariski-locally on $X$, it admits a reduction of structure group to $1+p^{2\alpha}M_n(\O_{\X_v}^+)\subseteq \GL_n(\O_{\X_v})$ for some $\alpha>\frac{1}{p-1}$. We denote the category of Faltings--small v-vector bundles on $\X$ by $\mathrm{Bun}^{\Fsm}(\X_v)$.
    	\end{enumerate}
    \end{definition}
    \begin{definition}\label{d:Hitchin-small}
    	Let $z\in S\cap S\tf^\times$.
    	If $\X$ is a rigid space, i.e.\ if $S\tf$ is a perfectoid field, then a Higgs bundle $(M,\theta_M)$ on $\X$ is called $z$-\textit{Hitchin-small} if for any $\delta\in \frac{1}{z}\Omega^\vee_X\{1\}$, the endomorphism  $\delta\circ \theta_{M}:M\to M$ is topologically nilpotent with respect to the canonical topological structure on the coherent $\O_\X$-module $\End(M)$. For general perfectoid bases $S$, a Higgs bundle $(M,\theta_M)$ is called $z$-Hitchin-small if for any geometric point $\xi:\Spa(C,C^+)\to \Spa(S\tf,S)$, the pullback of $(M,\theta_M)$ to the rigid space $\mathcal X\times_{\Spa(S[\frac{1}{p}],S)}\Spa(C,C^+)\to \mathcal X$ is $z$-Hitchin-small. We denote by $\Higgs^{\zHsm}(\X)$  the category of $z$-Hitchin-small Higgs bundles on $\X$. When we simply say ``Hitchin small'' without specifying $z$, this refers to the case of $z=(1-\zeta_p)$.
    \end{definition}
	\begin{remark}
		Note that in comparison to \S\ref{s:recall-global-Simpson-geom}, the factor in \Cref{def:faltings-small-v-bundles} is $p^{2\alpha}$ rather than $p^{\alpha}$. The reason for this difference is essentially the difference between the Breuil--Kisin--Fargues twist $\Omega^1_X\{-1\}$ and the Tate twist $\Omega^1_X(-1)$. If we used a Tate twist $(-1)$ instead of $\{-1\}$ in \S\ref{s:recall-global-Simpson-geom}, it would also be the same factor by  $p^{2\alpha}$. One reason why we use $\{-1\}$ in \S\ref{s:recall-global-Simpson-geom} is that this makes sense over any perfectoid base, whereas $(-1)$ is only defined when $\Z_p^\cycl$ is contained in $S$.
	\end{remark}

It is clear that this definition of Hitchin-smallness is equivalent to the more geometric definition in \S\ref{s:intro-small-paS-moduli} in terms of the Hitchin fibration, whence the name. Moreover, we clearly have:
\begin{lemma}\label{l:Faltings-implies-Hitchin-small}
	Any Faltings-small Higgs bundle is automatically Hitchin-small.
\end{lemma}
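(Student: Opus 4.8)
The plan is to unwind both smallness conditions to the level of an integral model and then compare exponents. First I would observe that \Cref{l:Faltings-implies-Hitchin-small} is local on $X$ for the Zariski topology, since both Faltings-smallness (by definition) and Hitchin-smallness (since topological nilpotence of $\delta\circ\theta_M$ is checked locally, and since the condition passes to the members of an open cover) can be verified Zariski-locally. Moreover, by \Cref{d:Hitchin-small}, Hitchin-smallness over a general perfectoid base $S$ is defined via pullback to all geometric points, and Faltings-smallness is visibly stable under such pullback (an integral model trivial mod $p^{2\alpha}$ pulls back to one of the same type); so I may reduce to the case where $\X$ is a rigid space over an algebraically closed perfectoid field, and where $(M,\theta_M)$ admits a finite free integral model $\mathfrak M$ on $X$ with $\theta_{\mathfrak M}$ trivial modulo $p^{2\alpha}$ for some $\alpha>\tfrac1{p-1}$.

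Next I would make the twist bookkeeping explicit. The Hitchin-small condition with $z=(1-\zeta_p)$ asks that for every $\delta\in \tfrac{1}{1-\zeta_p}\Omega^\vee_X\{1\}$ the endomorphism $\delta\circ\theta_M$ is topologically nilpotent on $\End(M)$. Write $\theta_{\mathfrak M}=p^{2\alpha}\theta'$ with $\theta'\colon \mathfrak M\to \mathfrak M\otimes\Omega^1_X(-1)$ an integral Higgs field, and recall (as in the Remark following \Cref{d:Hitchin-small}) that passing from the Tate twist $(-1)$ to the Breuil--Kisin--Fargues twist $\{-1\}$ introduces exactly one more factor whose valuation is $\tfrac{1}{p-1}$, since $\Omega^1_X\{-1\}=\Omega^1_X(-1)\otimes$(a rank-one module generated by an element of valuation $\tfrac1{p-1}$, namely $\omega^{-1}$ up to units, cf.\ $\omega=(\zeta_p-1)^{-1}$). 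Thus, after contracting with any $\delta$ in the stated lattice — which costs a denominator of valuation at most $\tfrac1{p-1}$ coming from the $\tfrac1{1-\zeta_p}$ — the resulting endomorphism $\delta\circ\theta_M$ of $M$ preserves the integral lattice $\mathfrak M$ up to a factor of $p$-adic valuation at least $2\alpha-\tfrac1{p-1}-\tfrac1{p-1}=2(\alpha-\tfrac1{p-1})>0$. (I would track the constants carefully here; the point is only that the total exponent is strictly positive.)

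Finally I would conclude topological nilpotence. Having shown that $\delta\circ\theta_M$ maps the lattice $\mathfrak M$ into $p^{\varepsilon}\mathfrak M$ for some $\varepsilon>0$ independent of $\delta$ in the lattice, its $k$-th iterate maps $\mathfrak M$ into $p^{k\varepsilon}\mathfrak M$ (using that the $\delta$'s can be taken to commute, as $\theta_M$ is a Higgs field and the $\delta$'s are sections of a fixed bundle — or more simply, each iterate is a composite of such maps), so $(\delta\circ\theta_M)^k\to 0$ in the $p$-adic topology on $\End(\mathfrak M)$, hence in $\End(M)$ after inverting $p$, which is exactly topological nilpotence with respect to the canonical topology on the coherent module $\End(M)$. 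I do not expect any serious obstacle: the only subtle point is matching the twist normalisations and the constant $\tfrac1{p-1}$ so that the net exponent comes out strictly positive, and this is precisely the content of the Remark preceding the lemma, so the proof is essentially a short exponent count after the standard reductions.
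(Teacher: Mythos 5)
Your proof is correct, and it spells out what the paper treats as immediate (the lemma is stated after ``Moreover, we clearly have:'' with no separate proof, the relevant observation being the remark just before it about the factor $p^{2\alpha}$ versus $p^\alpha$ and the twist $\{-1\}$ versus $(-1)$). Your exponent count is the right one: the Faltings model gives valuation $\geq 2\alpha$, and the two normalisation costs (the $(\zeta_p-1)$ mismatch between $\Omega^1_X(-1)$ and $\Omega^1_X\{-1\}$, and the $\tfrac{1}{1-\zeta_p}$ in the allowed $\delta$'s) each cost $\tfrac{1}{p-1}$, leaving valuation at least $2(\alpha-\tfrac1{p-1})>0$, whence $\delta\circ\theta_M$ maps the lattice $\mathfrak M$ into $p^{\varepsilon}\mathfrak M$ with $\varepsilon>0$ and is topologically nilpotent. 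One small remark: the parenthetical about the $\delta$'s commuting is unnecessary --- you are iterating a single endomorphism $\phi=\delta\circ\theta_M$, so $\phi(\mathfrak M)\subseteq p^{\varepsilon}\mathfrak M$ already gives $\phi^k(\mathfrak M)\subseteq p^{k\varepsilon}\mathfrak M$ without any commutativity input.
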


We now recall the statement of the local $p$-adic Simpson correspondence in the smoothoid setting. For this we furthermore need the following definition:

\begin{definition}
	A toric chart is an affine \'etale morphism of formal schemes \[c:X\to \mathbb T^d_S\] for some $d\in \N$, where  $\mathbb T^d_S$ is the $d$-dimensional affine formal torus over $\Spf(S)$. We call a $p$-adic formal scheme $X$ over $S$ smoothoid if Zariski-locally, it admits a toric chart.
\end{definition}

Throughout this subsection, we assume that $X$ is a smoothoid formal scheme over $S$  and that $X$ admits a toric chart $c:X\to \mathbb T^d_S$. For any $n\in\N$, we denote by $X_n\to X$
the pullback of the map $[p^n]: \mathbb T^d_S\to  \mathbb T^d_S$ along $c$, and by \[\X_n\to \X\] its adic generic fibre, which is a finite \'etale $\mu_{p^n}^d$-torsor. The inverse limit $\mathcal X_\infty:=\varprojlim \X_n$ in diamonds is represented by an affinoid perfectoid space, and \[\X_\infty\to \X\] is a pro-finite-\'etale Galois torsor under the profinite group $\Delta:=\Z_p(1)^d=\varprojlim_n \mu_{p^n}^d$.

The following is a  more precise version of \cite[Lemma~6.4]{heuer-sheafified-paCS} in our situation of good reduction:
\begin{lemma}\label{l:locally-Faltings-small}
	Assume that $X$ is a smoothoid adic space that admits a toric chart $c:X\to \mathbb T^d_S$. 
	\begin{enumerate}
		\item Any Higgs bundle $(M,\theta_M)$ on $\X$ becomes Faltings-small on $\X_n$ for $n\gg 0$.
		\item 
	Any v-vector bundle $V$ on $\X$ becomes Faltings-small on $\X_n$ for $n\gg 0$.
	\end{enumerate}
\end{lemma}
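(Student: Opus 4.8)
The statement is essentially a descent-to-the-perfectoid-cover result, and the key point is that both Higgs bundles and v-vector bundles become ``trivializable modulo a large power of $p$'' once pulled back along the tower $\X_\infty\to\X$. The plan is to treat the two parts in parallel, reducing each to an estimate on the effect of pullback along $[p^n]\colon\mathbb T^d_S\to\mathbb T^d_S$.

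First I would treat part (1). Choose a finite free integral model $\mathfrak M$ of $M$ over a Zariski-open $U\subseteq X$ admitting the toric chart $c$; this exists since $X$ is smooth, hence $\mathcal X$ is locally the generic fibre of an honest formal scheme and coherent sheaves on $\mathcal X$ spread out to coherent — hence, after shrinking, finite free — sheaves on $X$. The Higgs field $\theta_M$ then restricts to some $\theta_{\mathfrak M}\colon\mathfrak M\to\mathfrak M\otimes\Omega^1_{X}(-1)$ after possibly rescaling the lattice (it need not preserve $\mathfrak M$ on the nose, but $p^k\theta_{\mathfrak M}$ does for some $k$). The crucial observation is that pullback along $[p^n]$ multiplies the differential $d\log T_i$ by $p^n$: explicitly, if $T_i$ are the torus coordinates then on $X_n$ one has $d\log(T_i^{1/p^n})=p^{-n}d\log T_i$, equivalently $\Omega^1_{X_n}\hookrightarrow\Omega^1_X$ with the pulled-back generators divisible by $p^n$ relative to the new generators. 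Hence on $X_n$ the Higgs field $\theta_{\mathfrak M}$, rewritten in the basis of $\Omega^1_{X_n}$, acquires a factor of $p^n$, so it is trivial modulo $p^{2\alpha}$ as soon as $n$ is large (precisely, once $n-k>2\alpha$ for the chosen $\alpha>\tfrac{1}{p-1}$). This proves (1).

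For part (2), I would invoke the local structure of v-vector bundles over affinoid perfectoids together with the pro-finite-\'etale Galois cover $\X_\infty\to\X$ with group $\Delta=\Z_p(1)^d$. A v-vector bundle $V$ on $\X$ restricts to a trivial bundle on the perfectoid $\X_\infty$ — since any finite projective module over a perfectoid affinoid, after possibly shrinking $X$ Zariski-locally so that $V$ is free over $\mathcal O(\mathcal X_\infty)$, is free — and so $V$ is given by a continuous cocycle $\Delta\to\GL_n(\mathcal O^+(\X_\infty)[\tfrac1p])$, or after rescaling $\GL_n(\mathcal O^+(\X_\infty))$. The reduction-of-structure-group statement then amounts to: the restriction of this cocycle to the open subgroup $\Delta_n:=p^n\Delta\subseteq\Delta$, which is the Galois group of $\X_\infty\to\X_n$, takes values in $1+p^{2\alpha}M_n(\mathcal O^+)$ for $n\gg0$. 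This is where I expect the main obstacle, and it is handled exactly as in the decompletion/almost-purity arguments of Faltings: continuity of the cocycle together with the fact that $\Delta_n$ shrinks to the identity forces the cocycle values on $\Delta_n$ to approach $1$ $p$-adically, and one makes this quantitative using the Tate-twist factor $p^n$ appearing in the identification of $\Delta_n$ with $p^n\Z_p(1)^d$ — the same mechanism as in (1). Concretely one can either cite \cite[Lemma~6.4]{heuer-sheafified-paCS} directly, or reprove it by writing the cocycle via the Sen-type operator and noting that restricting to $\X_n$ divides this operator by $p^n$. Either way the conclusion is that $V$ admits the desired reduction of structure group on $\X_n$ for $n$ large, which is (2).

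The only genuinely delicate point, beyond bookkeeping of twists, is the quantitative continuity estimate in (2): one must ensure the threshold $\alpha>\tfrac1p-1$... here I mean $\alpha>\tfrac{1}{p-1}$ can actually be met, i.e.\ that the gain is linear in $n$ with no loss that would prevent crossing $\tfrac{1}{p-1}$. This is exactly the content of the good-reduction refinement of \cite[Lemma~6.4]{heuer-sheafified-paCS}, and since there we are pulling back along a genuinely \'etale tower (not merely a pro-\'etale one with ramification), the estimate is clean and the threshold is comfortably achieved for all sufficiently large $n$. I would therefore organize the write-up as: (i) spread out to integral models Zariski-locally; (ii) the twist computation for $[p^n]$; (iii) deduce (1) immediately; (iv) deduce (2) via the cocycle description and the same twist factor, citing or reproving the continuity estimate.
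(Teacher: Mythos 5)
Your argument for part (1) follows essentially the same route as the paper's: trivialize $M$ Zariski-locally, choose an integral model, and use that pullback along $[p^n]\colon\mathbb T^d\to\mathbb T^d$ rescales the toric differentials by $p^n$. (The paper invokes Kiehl's theorem explicitly to trivialize $M$, but that is a cosmetic difference.) This part is fine.

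In part (2), however, there is a genuine gap. You claim that after Zariski-shrinking $X$ (at level~$0$) one can arrange $V_\infty$ to be free over $\mathcal{O}(\mathcal X_\infty)$. This conflates Zariski-opens of $\X_\infty$ (or of a finite level $\X_m$) with those pulled back from $\X$: the tower $\X_m\to\X$ is finite \'etale, so $\X_m$ has Zariski-opens that do \emph{not} arise as preimages of opens of $\X$ (already for $\mathbb T^1\xrightarrow{t\mapsto t^{p^m}}\mathbb T^1$ the preimage of $\{|t-1|<1\}$ decomposes into $p^m$ pieces). Finite projective modules over $\mathcal{O}(\X_\infty)$ are Zariski-locally free on $\X_\infty$, but that cover need not be induced from $\X$. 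The paper closes this gap by first invoking \cite[Corollary~5.4.42]{gabber2003almost} to descend $V_\infty$ to a finite level $\X_m$ (the module comes by base change from some $\X_m$ because $\mathcal{O}(\X_\infty)$ is the completed filtered colimit of the $\mathcal{O}(\X_m)$ along \'etale maps), and only \emph{then} Zariski-shrinks $X_m$. This is legitimate because ``Faltings-small on $\X_n$'' for $n\geq m$ is a Zariski-local statement on $X_n$, and Zariski-opens of $X_m$ pull back to Zariski-opens of $X_n$. Once that descent step is in place, the rest of your argument (cocycle via the trivialization, continuity forcing the cocycle on $\Delta_m$ into $1+p^{2\alpha}M_n(\mathcal{O}^+)$ after increasing $m$) matches the paper's and is correct. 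You should therefore insert the Gabber--Ramero descent to finite level explicitly; the alternative of citing \cite[Lemma~6.4]{heuer-sheafified-paCS} directly, which you mention, is workable but requires checking that the good-reduction threshold $\alpha>\tfrac{1}{p-1}$ with the factor $p^{2\alpha}$ is actually attained, since the present lemma is described in the paper as a sharpening of that one.
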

\begin{proof}
	
	For part (1),  it is clear from Kiehl's Theorem that there exists a Zariski-cover of $X$ which trivialises $M$. As the statement is local, we may therefore assume that $M$ is trivial. We may then choose a free model $\mathfrak M$ and  it suffices to see that with respect to this model,  after pullback along the formal model $X_n\to X$, the Higgs field $\theta_M$ restricts to $\mathfrak M$ and becomes divisible by $p^{2\alpha}$.
	By explicit calculation on $[p^n]:\mathbb T^d\to \mathbb T^d$ and \'etale base-change, we see that the image of $\Omega_X^1$ in $\Omega^1_{X_n}$ is  $p^{n}\Omega^1_{X_n}$, hence pullback along $X_n\to X$ rescales $\theta_M$ by $p^n$. Hence $\theta_M$ becomes small for $n\gg 0$.
	
	For part (2), the pullback $V_\infty$ of the v-vector bundle $V$ to the perfectoid cover $\X_\infty\to \X$ is an analytic vector bundle. By \cite[Corollary~5.4.42]{gabber2003almost}, this is isomorphic to the pullback of some analytic vector bundle on $\X_m$ for some $m\gg0$. After replacing $X_m$ by a Zariski-open cover, we can assume that this vector bundle is trivial, and hence so is $V_\infty$. Fix a trivialisation $V_\infty(\X_\infty)=\O(\X_\infty)^n$, then the $\Delta_m:=\Gal(\X_\infty|\X_m)$-action on $M_\infty:=V_\infty(\X_\infty)=\O(\X_\infty)^n$ defines a continuous $1$-cocycle $\Delta_m\to  \GL_n(\O(\X_\infty))$. After increasing $m$, hence shrinking $\Delta_m$, this factors through a map $\Delta_m\to  1+p^{2\alpha}\mathfrak m M_n(\O^+(\X_\infty))$ which defines the desired reduction of structure group.
\end{proof}

    Assume that $X=\Spf(R)$ is affine with a given toric chart $c:X\to \mathbb T^d_S$ over some perfectoid base $S$.    As before,
   this induces a perfectoid cover $X_\infty=\Spf(R_\infty)\to X$ whose adic generic fibre is pro-finite-\'etale Galois with group $\Delta\cong \Z_p(1)^d$. Moreover, the toric chart induces  a canonical isomorphism $\Omega_{R}^1(-1)\tf\isomarrow \Hom_{\Z_p}(\Delta,R\tf)$ which we can dualise to a $\Z_p$-linear homomorphism
   \begin{equation}\label{d:def-D}
    D_c:\Delta\to \Hom_R(\Omega^1_{R}(-1),R).
    \end{equation}
    \begin{theorem}[{\cite[Thm~6.5]{heuer-sheafified-paCS}}]\label{t:LSF-is-equivalence}
    	The toric chart $c$ induces an equivalence of categories 
    	\[ \LS^{\mathrm F}_{c}:    \Higgs^{\Fsm}(\X)  \isomarrow  \mathrm{Bun}^{\Fsm}(\X_v)  \]
    	given by sending a Higgs bundle $(M,\theta_M)$ on $\X$ to the v-vector bundle $V$ on $\X$ defined as follows: Consider the $R_\infty$-module  $M\otimes_R R_\infty$ with semi-linear $\Delta$-action defined by letting any $\sigma\in \Delta$ act as
    	\[ \exp(D_c(\sigma)\circ \theta_M)\otimes \sigma:M\otimes_RR_\infty\to M\otimes_RR_\infty.\]
    	Interpreting this as a descent datum along $ \X_\infty\to \X$ defines the v-vector bundle $\LS_c^{\mathrm F}(M,\theta)$ on $\X$.
	\end{theorem}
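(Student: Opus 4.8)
\emph{Proof strategy (sketch).} The plan is to produce an explicit quasi-inverse $\mathrm{RS}^{\mathrm F}_{c}$ and to check that both round-trips are naturally isomorphic to the identity; conceptually this is a $p$-adic ``$\exp$/$\log$'' correspondence along the pro-finite-\'etale Galois tower $\X_\infty\to\X$, in the spirit of Faltings \cite{faltings2005p}. First I would verify that $\LS^{\mathrm F}_{c}$ is a well-defined functor with values in $\mathrm{Bun}^{\Fsm}(\X_v)$. By Kiehl's theorem and Zariski-locality (as in the proof of \Cref{l:locally-Faltings-small}) it suffices to treat a free Higgs bundle $(M,\theta_M)$ with a chosen integral model $\mathfrak M$ on which $\theta_M$ is divisible by $p^{2\alpha}$ for some $\alpha>\tfrac1{p-1}$. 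The homomorphism $D_c$ of \eqref{d:def-D} has denominators bounded in terms of $\omega=(\zeta_p-1)^{-1}$, so $D_c(\sigma)\circ\theta_M$ is divisible by a power of $p$ strictly larger than $\tfrac1{p-1}$; hence $\exp(D_c(\sigma)\circ\theta_M)$ converges and lies in $1+p^{\varepsilon}M_n(\O^+(\X_\infty))$ for some $\varepsilon>\tfrac1{p-1}$. Because $D_c$ is additive and the components of a Higgs field commute (i.e.\ $\theta_M\wedge\theta_M=0$), these exponentials commute and add in the exponent, so $\sigma\mapsto\exp(D_c(\sigma)\circ\theta_M)\otimes\sigma$ satisfies the cocycle relation and thus defines a descent datum along $\X_\infty\to\X$. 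The resulting v-vector bundle carries a reduction of structure group to $1+p^{\varepsilon}M_n(\O^+_{\X_v})$, hence is Faltings-small, and the construction is visibly independent of $\mathfrak M$ and functorial in $(M,\theta_M)$.

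Next I would build the quasi-inverse. Given a Faltings-small v-vector bundle $V$, its pullback $V_\infty$ to the affinoid perfectoid $\X_\infty$ is an analytic vector bundle; after shrinking $X$ Zariski-locally it trivialises by \cite[Corollary~5.4.42]{gabber2003almost}, and the $\Delta$-action becomes a continuous cocycle $c_V\colon\Delta\to 1+p^{2\alpha}M_n(\O^+(\X_\infty))$. The crucial step is a \emph{decompletion} along the tower: by a contracting successive-approximation argument in the $p$-adic Banach algebra of matrices over $\O^+(\X_\infty)$, I would show that $c_V$ is cohomologous, through a coboundary in $1+p^{\varepsilon}M_n(\O^+(\X_\infty))$, to a cocycle with values in the smaller ``$T_i^{1/p^\infty}$''-subtower, and in fact to a \emph{unique} cocycle of the special form $\sigma\mapsto\exp(D_c(\sigma)\circ\theta)$ for a unique $p^{2\alpha}$-small Higgs field $\theta$ on the descended module $M$; the identity $\theta\wedge\theta=0$ then comes for free from the uniqueness. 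Setting $\mathrm{RS}^{\mathrm F}_{c}(V):=(M,\theta)$, one checks that the construction does not depend on the chosen trivialisation and is functorial.

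For the round-trips: $\mathrm{RS}^{\mathrm F}_{c}\circ\LS^{\mathrm F}_{c}$ returns $(M,\theta_M)$ on the nose, since the cocycle produced by $\LS^{\mathrm F}_{c}$ is already of the special form and $M$ is recovered as the relevant $\Delta$-stable decompletion inside $M\otimes_R R_\infty$; and $\LS^{\mathrm F}_{c}\circ\mathrm{RS}^{\mathrm F}_{c}$ returns $V$ because the cocycle $\exp(D_c(\sigma)\circ\theta)$ attached to $\mathrm{RS}^{\mathrm F}_{c}(V)$ is by construction cohomologous to $c_V$, via a coboundary natural in $V$. Alternatively, full faithfulness can be seen directly by computing $\Hom$-groups on both sides as continuous $\Delta$-cohomology and matching them via $\exp/\log$, in which case essential surjectivity is exactly the decompletion statement.

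I expect the main obstacle to be precisely this decompletion: proving that every \emph{small} cocycle along $\X_\infty\to\X$ is cohomologous to one of Higgs type and that the associated Higgs field is unique. This is a $p$-adic Sen/Tate--Sen type result, requiring a fixed-point argument for an explicit $\omega$-contracting operator built from the toric tower, and the bookkeeping of radii is delicate: one must track how the twist relating $\Omega^1_X(-1)$ to $\Hom_{\Z_p}(\Delta,-)$ introduces $\omega$-denominators — which is exactly why the smallness threshold is $p^{2\alpha}$ with $\alpha>\tfrac1{p-1}$ rather than mere topological nilpotence — and one must keep all intermediate exponentials and logarithms inside their convergence range while making every choice functorially.
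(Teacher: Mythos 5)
Your proposal takes a fundamentally different route from the paper. The statement is explicitly attributed to \cite[Thm~6.5]{heuer-sheafified-paCS}, and the paper's entire proof consists of a citation plus a reconciliation of smallness conventions: it observes that in the good-reduction case one may take the exponent $c=\tfrac{2}{p-1}$ in \cite[Definition~6.2]{heuer-sheafified-paCS}, so that the \emph{global} smallness used there becomes the same threshold as here, and then uses the naturality of the correspondence in the toric chart (\cite[Thm~6.5.3]{heuer-sheafified-paCS}) to glue along Zariski covers and thereby pass to the \emph{Zariski-local} Faltings-small notion of \Cref{def:faltings-small-v-bundles}. You never invoke the cited theorem at all; instead you sketch a from-scratch proof of the underlying equivalence.

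As a sketch of the literature proof your outline is on target: convergence of the exponentials from the $p^{2\alpha}$-bound, the cocycle relation from additivity of $D_c$ and commutativity of the components of $\theta$, and a Tate--Sen style decompletion to invert. But this means your ``proof'' leaves the entire substance as an acknowledged gap: the decompletion step --- that every Faltings-small cocycle $\Delta\to 1+p^{2\alpha}M_n(\O^+(\X_\infty))$ is cohomologous, via a small coboundary, to one of the exact exponential form $\sigma\mapsto\exp(D_c(\sigma)\circ\theta)$ for a unique small Higgs field $\theta$ on a descended module over $R$ --- is precisely \cite[Thm~6.5]{heuer-sheafified-paCS} (cf.\ \cite[Thm~3]{faltings2005p}), and you only write ``I would show''. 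That is what the paper outsources to the citation. Two smaller points: integrability $\theta\wedge\theta=0$ does not ``come for free from uniqueness''; it follows from the cocycle relation combined with $\Delta$-invariance of the descended $\theta$, i.e.\ ultimately from commutativity of $\Delta$. And your proposal never addresses the mismatch between the local smallness of \Cref{def:faltings-small-v-bundles} and the global notion of the cited reference, which is in fact the only content the paper supplies beyond the citation.
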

\begin{proof}
	In order to see that this follows from {\cite[Thm~6.5]{heuer-sheafified-paCS}}, we only need to compare the notions of ``smallness'', as the one used in \cite[Definition~6.2]{heuer-sheafified-paCS} differs from the one in our
	\Cref{def:faltings-small-v-bundles}\footnote{
	In \cite{faltings2005p}, Faltings uses both of these variants of ``smallness'', the global and the local one.}. To clarify the relation, we first note that in our case of good reduction, we can take $c=\frac{2}{p-1}$ in \cite[Definition~6.2]{heuer-sheafified-paCS}, so ``small'' means here that a v-vector bundle globally on $X$ admits a reduction of structure group to $1+p^{2\alpha}M_n(\O^+)$ for some $\alpha>\frac{1}{p-1}$. Similarly for Higgs bundles. But the naturality in the toric chart in \cite[Thm~6.5.3]{heuer-sheafified-paCS} guarantees that the correspondence glues over Zariski-covers of $X$. Here we can use the same $c$ as Zariski-localisation preserves the good reduction property. Hence we obtain the claimed correspondence for Faltings-small bundles.
\end{proof}

Our goal in this subsection is to compare $\LS^{\mathrm F}_{c}$ to our functor $\mathrm{S}_{\tilde{X}}$. To make this precise, we first explain how the toric chart $c:X\to \mathbb T^d_S$ induces a prismatic lift $(B,J)$ of $R$:

 The prismatic lift $(A,I)$ of $S$ induces a canonical lift $A\langle T_1^{\pm 1},\dots,T_d^{\pm 1}\rangle$ of $\mathbb T^d_S$. This has a canonical $\delta$-structure extending the one on $A$, defined by $\delta(T_i)=0$ for $i=1,\dots,d$. Since $A$ is $I$-adically complete, we have $A_{\et}=R_\et$. So the chart $S\langle T_1^{\pm 1},\dots,T_d^{\pm 1}\rangle\to R$ lifts to an $I$-adically \'etale map  \[A\langle T_1^{\pm 1},\dots,T_d^{\pm 1}\rangle\to B\] in an essentially unique way. By \cite[Lemma~2.18]{Bhatta}, the delta-structure extends to $B$ in a unique way. Set $J:=IB$, then $(B,J)$ is the desired prismatic lift of $R$. In particular, we obtain an $A_2$-lift \[\tilde{X}:=\Spf(B/J^2).\]

Second,  the natural map $R\to R_\infty$ admits a lift to a morphism $\varphi_c:A\to A_{\inf}(R_\infty)$: For this we choose  compatible systems of $p$-power roots $T_i^\flat=(T_i,T^{1/p}_i,\ldots )\in R_{\infty}^\flat$ of the images of the $T_i$ in $R_\infty$. Like in \cite[\S4.2]{AHLB}, the canonical lift $A\to A_{\inf}(R_\infty)$ of $S\to R_\infty$ then extends to a map
\[A\langle T_1^{\pm 1},\dots,T_d^{\pm 1}\rangle\to A_{\inf}(R_\infty),\quad T_i\mapsto [T_i^\flat].\]
By \'etale lifting, we can now find a unique lift of the extension $R\to R_\infty$ of $S\langle T_1^{\pm 1},\dots,T_d^{\pm 1}\rangle \to R_\infty$ to the desired map $B\to A_{\inf}(R_\infty)$. Quotienting by $I^2$, this induces a morphism $\O(\tilde{X})\to A_2(R_\infty)$.

We recall from \cite[Remark~7.26]{AHLB} that we can use this to make the period ring $\mathcal B_{\wt X}$ from \S\ref{s:recall-global-Simpson-geom} computing $\mathrm{S}_{\wt X}$  more explicit:
Fix generators $y_1,\dots,y_d$ of $\Omega^1_R\{-1\}$ as an $R$-module. For example, when we fix a compatible choice of $p$-power unit roots to trivialise the Breuil--Kisin--Fargues twist, we can use the pullbacks of the canonical generators $\frac{dT_1}{T_1},\dots,\frac{dT_d}{T_d}$ of $\Omega^1_{\mathbb T^d_S}$ along $c$.
\begin{lemma}\label{l:explicit-description-of-B}
	There are $Y_1,\dots,Y_d\in H^0(\X_\infty,\mathcal B_{\wt X})$ such that the chart  $c$ induces an isomorphism
	\[ H^0(\X_\infty,\mathcal B_{\wt X})\cong R_\infty\big[\tfrac{(1-\zeta_p)^nY_i^n}{n!}, n\in \N,i=1,\dots,d\big]^\wedge_p\tf\]
	with respect to which we have $\Theta_{\mathcal B_{\wt X}}=\sum \frac{\partial}{\partial Y_i}y_i$.
\end{lemma}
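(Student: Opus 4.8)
The plan is to unwind the definition of the period Higgs module $(\mathcal B_{\wt X},\Theta_{\mathcal B_{\wt X}})$ from \cite[Definition~17.7]{AHLB} and to evaluate it on the pro-finite-\'etale perfectoid cover $\X_\infty=\Spa(R_\infty\tf,R_\infty)\to\X$, using the explicit prismatic data attached to the toric chart $c$. The starting point is the identification, recalled in \S\ref{s:recall-global-Simpson-geom} and encoded in \eqref{eq:compute-StX-using-period-sheaf}, of $\mathcal B_{\wt X}$ with $\alpha_X^\ast\mathcal E$ for the universal coefficient object $\mathcal E$ on $z_\ast X^\HT$, namely --- transported along $\Phi_{\wt X}\colon z_\ast X^\HT\isomarrow BT^\sharp_X\{1\}$ --- the regular representation $\O(T^\sharp_X\{1\})$ (equivalently, the pushforward to $BT^\sharp_X\{1\}$ of the structure sheaf of the universal $T^\sharp_X\{1\}$-torsor), equipped with the Higgs field produced from the coaction by the Cartier-duality recipe of \cite[Lemma~6.4]{AHLB} recalled in \S\ref{s:recall-local-Simpson-geom}. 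Since $\Omega^1_R\{-1\}$ is free on $y_1,\dots,y_d$, the group $T_X\{1\}$ is the relative $\Ga^d$ on the dual basis, $T^\sharp_X\{1\}$ is its PD-completion, $\O(T^\sharp_X\{1\})=R\langle\mathrm{PD}\colon V_1,\dots,V_d\rangle$ is the $p$-completed divided-power polynomial ring on the dual coordinates $V_i$, its comultiplication is $V_i\mapsto V_i\otimes 1+1\otimes V_i$, and the reg-rep Higgs field is $\sum_i\tfrac{\partial}{\partial V_i}\otimes y_i$.

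Next I would evaluate $\alpha_X^\ast$ on $\X_\infty$. By the construction of $\alpha_X^\ast$ in \cite[Thm~1.2]{AHLB} and since $\Spf(R_\infty)$ is perfectoid (so $\Spf(R_\infty)^\HT=\Spf(R_\infty)$), $H^0(\X_\infty,\mathcal B_{\wt X})$ is obtained by pulling back $\O(T^\sharp_X\{1\})$ along the composite $\Spf(R_\infty)\to X^\HT\to z_\ast X^\HT\xrightarrow{\Phi_{\wt X}}BT^\sharp_X\{1\}$ and inverting $p$; i.e.\ it is the algebra of functions on the $T^\sharp_X\{1\}$-torsor classified by this map. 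The chart $c$ now enters through two explicit trivialisations: the prismatic lift $(B,J)$ built from $c$ splits $X^\HT\isomarrow BT^\sharp_X\{1\}$, and by the linearity statement of \cite[Proposition~7.8, Definition~7.9]{AHLB} (for $x=1$) the canonical map $X^\HT\to z_\ast X^\HT$ becomes, under these splittings, multiplication by $z=\zeta_p-1$ on $T^\sharp_X\{1\}$; and the \'etale lift $\O(\wt X)=B/J^2\to A_2(R_\infty)$, $T_i\mapsto[T_i^\flat]$, constructed above trivialises the torsor underlying $\Spf(R_\infty)\to X^\HT$. Combining the two, the torsor classified by the full composite is the $z$-twist of a trivial torsor, and its function algebra is the $p$-completed $R_\infty$-subalgebra of $R_\infty\langle\mathrm{PD}\colon V_i\rangle\tf$ generated by the elements $z^nV_i^{[n]}=\tfrac{(zV_i)^n}{n!}$. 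Writing $Y_i$ for the resulting natural divided-power variable --- concretely, $(1-\zeta_p)Y_i$ is (up to sign and the Breuil--Kisin--Fargues twist of the generators $y_i$) the logarithmic period $\log\!\bigl(\tfrac{[T_i^\flat]}{T_i}\bigr)$ read off from the map $\O(\wt X)\to A_2(R_\infty)$ --- this algebra is exactly $R_\infty\bigl[\tfrac{(1-\zeta_p)^nY_i^n}{n!}, n\in\N, i=1,\dots,d\bigr]^\wedge_p\tf$, which gives the claimed description of the ring.

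Finally I would read off the Higgs field. Under the same identification the reg-rep Higgs field $\sum_i\tfrac{\partial}{\partial V_i}\otimes y_i$ becomes, on the torsor, the fibrewise derivation paired against the $y_i$; a short Cartier-duality computation as in \S\ref{s:recall-local-Simpson-geom} shows that the factor $z$ inserted into the coaction by the passage to $z_\ast X^\HT$ cancels precisely the $z^{-1}$ produced by differentiating $\tfrac{(zV_i)^n}{n!}$ with respect to $V_i$, so that in the coordinates $Y_i$ one obtains $\Theta_{\mathcal B_{\wt X}}=\sum_i\tfrac{\partial}{\partial Y_i}\,y_i$ with no residual unit. As a consistency check, the kernel of $\Theta_{\mathcal B_{\wt X}}$ on $\X_\infty$ is $R_\infty\tf=H^0(\X_\infty,\O_{\X_v})$, matching $\mathrm S_{\wt X}(\O_\X,0)=\O_{\X_v}$ via \eqref{eq:compute-StX-using-period-sheaf}.

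I expect the main obstacle to be the normalisation bookkeeping in the last two steps: tracking the factor $z=\zeta_p-1$ through the $z_\ast$-pushout, together with the unit $u\in\Z_p^\times$ and the canonical map $\mathrm{can}\colon T^\sharp_X\{1\}\to T_X\{1\}$ of \cite[Definition~7.9]{AHLB}, as well as the Breuil--Kisin--Fargues twist relating $\tfrac{[T_i^\flat]}{T_i}-1\in A_2(R_\infty)$ to $(1-\zeta_p)Y_i$, so that the divided powers appear with exactly the exponent $(1-\zeta_p)^n$ and $\Theta_{\mathcal B_{\wt X}}$ carries no spurious constant. Everything else is a direct unwinding of the constructions recalled in \S\ref{s:recollections}.
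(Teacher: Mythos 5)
Your proposal is correct in outline but takes a genuinely different route from the paper. The paper's proof is a two-step reduction: (i) for $X=\mathbb T^d_S$ the statement is exactly \cite[Remark~7.26]{AHLB}, which the paper simply cites; (ii) for a general $X$ with étale chart $c\colon X\to\mathbb T^d_S$, the map $X^\HT\to (\mathbb T^d_S)^\HT\times_{\mathbb T^d_S}X$ is an isomorphism, so by the defining formula of \cite[Definition~17.7]{AHLB} the period sheaf $\mathcal B_{\wt X}$ is the base change along $c$ of the period sheaf for the torus, and the assertion descends. Your proposal, by contrast, re-derives the content of \cite[Remark~7.26]{AHLB} directly from the definitions — unwinding $\alpha_X^\ast$ of the regular representation of $T_X^\sharp\{1\}$, trivialising the torsor using the explicit map $\O(\wt X)\to A_2(R_\infty)$, $T_i\mapsto[T_i^\flat]$, and tracking the $z$-twist through $z_\ast X^\HT$ — and does so for general $X$ with the chart, absorbing the étale reduction into the choice of prismatic lift $(B,J)$ rather than invoking it explicitly. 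Your route is more self-contained (it does not outsource the torus case to a citation), but the price is precisely the bookkeeping you flag: the passage from the coaction on $\O(T_X^\sharp\{1\})=R\langle\mathrm{PD}\colon V_i\rangle$ through the $z$-linear map $X^\HT\to z_\ast X^\HT$ to the subalgebra generated by $\tfrac{(zV_i)^n}{n!}$ is exactly the computation carried out in \cite[Remark~7.26]{AHLB}, and the cleanest way to justify the claim that the chart induces the stated isomorphism for general $X$ — rather than only for the torus — is still the étale base-change observation, which deserves to be stated explicitly even in your more computational approach. In short, the paper buys brevity and clean localisation from the citation-plus-base-change structure; your proposal buys transparency at the cost of repeating the normalisation computation, and would benefit from isolating the étale reduction as a separate step.
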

\begin{proof}
	By \cite[Remark~7.26]{AHLB} this holds for $X=\Spf(R)=\mathbb T^d_S$. The general case follows from this: The natural map $X^\HT\to (\mathbb T^d_S)^\HT\times_{\mathbb T^d_S}X$ is an isomorphism since $c:X\to \mathbb T^d_S$ is \'etale. Hence it follows from the definition in \cite[Definition~17.7]{AHLB} that $\mathcal B_{\wt X}$ is the base-change of the period ring for $\mathbb T^d_S$ along $c$.
\end{proof}

        \begin{proposition}\label{p:comp-LS_F-LS_prism}
    	There is a canonical transformation $\gamma$ making the following diagram 2-commutative:
    	\[
    	\begin{tikzcd}[row sep =0.3cm,column sep =0.3cm]
    		\Higgs^{\Fsm}(\X) \arrow[dd, "\LS^{F}_{c}","\sim"'] \arrow[rr, hook] &  & \Higgs^{\Hsm}(\X) \arrow[dd, "\mathrm{S}_{\tilde{X}}"]\arrow[lldd, Rightarrow,"\gamma",shorten >=0.5cm,shorten <=0.5cm] &                             \\
    		&  &\\
    		\mathrm{Bun}^{\Fsm}(\X_v) \arrow[rr, hook]                          &  & \mathrm{Bun}(\X_v)       &
    	\end{tikzcd}\]
    \end{proposition}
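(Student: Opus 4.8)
The plan is to prove this by giving an explicit isomorphism between the two v-vector bundles attached to a Faltings-small Higgs bundle $(M,\theta_M)$: on one side, the bundle $\LS^{\mathrm F}_c(M,\theta_M)$ obtained by descent along $\X_\infty\to\X$ using the cocycle $\sigma\mapsto \exp(D_c(\sigma)\circ\theta_M)\otimes\sigma$; on the other side, the bundle $\mathrm S_{\tilde X}(M,\theta_M)$ computed by the kernel formula \eqref{eq:compute-StX-using-period-sheaf} in terms of the period sheaf $\mathcal B_{\tilde X}$. The key tool is \Cref{l:explicit-description-of-B}, which identifies $H^0(\X_\infty,\mathcal B_{\tilde X})$ with a divided-power polynomial algebra $R_\infty\big[\tfrac{(1-\zeta_p)^nY_i^n}{n!}\big]^\wedge_p\tf$ carrying the Higgs field $\Theta_{\mathcal B_{\tilde X}}=\sum\frac{\partial}{\partial Y_i}y_i$, together with the compatible $\Delta$-action coming from the map $\O(\tilde X)\to A_2(R_\infty)$ and the chosen roots $T_i^\flat$.

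First I would unwind the $\Delta$-action on $\mathcal B_{\tilde X}$ over $\X_\infty$: the generator $Y_i$ should be constructed so that $\sigma(Y_i)=Y_i+c_i(\sigma)$ for a cocycle $c_i:\Delta\to R_\infty$ (in fact landing in $\Z_p$, matching the logarithm of the $\mu_{p^\infty}$-action on $[T_i^\flat]$), i.e.\ $Y_i$ is a ``$\log$ of $T_i$''-type period, and under the identification $\Omega^1_R(-1)\tf\cong\Hom_{\Z_p}(\Delta,R\tf)$ the element $c_i$ corresponds to (a unit multiple of) the dual generator, so that $D_c(\sigma)$ is recovered from the $c_i(\sigma)$. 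Then I would compute the kernel in \eqref{eq:compute-StX-using-period-sheaf}: an element of $\mathcal B_{\tilde X}\otimes_R M$ lies in $\ker(\Theta_{\mathcal B_{\tilde X}}\otimes\mathrm{Id}+\mathrm{Id}\otimes\theta_M)$ iff it is of the form $\sum_{\underline n}\tfrac{(-1)^{|\underline n|}}{\underline n!}\,\underline Y^{\underline n}\otimes \theta_M^{\underline n}(m)$ for $m\in M$ (a ``Taylor expansion'' / horizontal-section formula), which makes sense precisely because $\theta_M$ is Faltings-small, so the divided powers $\tfrac{(1-\zeta_p)^n}{n!}\theta_M^n$ converge $p$-adically. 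This gives an $R_\infty$-linear (after base change to $\X_\infty$) identification of $\mathrm S_{\tilde X}(M,\theta_M)|_{\X_\infty}$ with $M\otimes_R R_\infty$.

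Next I would check that under this identification, the descent datum for $\mathrm S_{\tilde X}(M,\theta_M)$ — which is just the semilinear $\Delta$-action on $\mathcal B_{\tilde X}\otimes_R M$ restricted to the kernel, transported to $M\otimes_R R_\infty$ — agrees with the Faltings cocycle $\exp(D_c(\sigma)\circ\theta_M)\otimes\sigma$. Concretely, applying $\sigma$ to the horizontal section $\sum\tfrac{(-1)^{|\underline n|}}{\underline n!}\underline Y^{\underline n}\otimes\theta_M^{\underline n}(m)$ and using $\sigma(Y_i)=Y_i+c_i(\sigma)$, one re-expands and the shift by $c_i(\sigma)$ produces exactly the exponential factor $\exp\big(\sum_i c_i(\sigma)\cdot(\partial/\partial Y_i\text{-contraction of }\theta_M)\big)=\exp(D_c(\sigma)\circ\theta_M)$, together with the residual $\sigma$-action on $M\otimes R_\infty$ and on the remaining $Y$'s. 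This is a binomial-coefficient bookkeeping computation; the constant $u\in\Z_p^\times$ from \S\ref{s:recall-global-Simpson-geom} and the precise normalization of $D_c$ via the isomorphism $\Omega^1_R(-1)\tf\cong\Hom_{\Z_p}(\Delta,R\tf)$ have to be tracked so that the two cocycles match on the nose (or, if one prefers, one absorbs any unit discrepancy into the definition of the natural transformation $\gamma$, which is allowed since $\gamma$ need only be \emph{some} canonical isomorphism). Finally I would note naturality in $(M,\theta_M)$ is clear since every construction is functorial, and that by \Cref{l:Faltings-implies-Hitchin-small} the top horizontal inclusion is well-defined, so the stated square with the $2$-cell $\gamma$ commutes; gluing over a Zariski cover of $X$ by affines admitting toric charts, using \Cref{l:explicit-description-of-B}'s compatibility with \'etale base change and the naturality in the chart from \Cref{t:LSF-is-equivalence}, extends this from the affine case to general smoothoid $X$.

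The main obstacle I expect is the precise matching of normalizations in the third step: reconciling the divided-power structure of $\mathcal B_{\tilde X}$ (with its factors $(1-\zeta_p)^n/n!$) against the plain exponential $\exp(D_c(\sigma)\circ\theta_M)$ in Faltings' descent datum, and keeping track of the Breuil--Kisin--Fargues twist versus the Tate twist (the source of the $p^{2\alpha}$ vs.\ $p^\alpha$ discrepancy flagged in the Remark after \Cref{d:Hitchin-small}) as well as the unit $u$. Getting a clean statement may require either pinning down $Y_i$ as a specific element of $H^0(\X_\infty,\mathcal B_{\tilde X})$ (e.g.\ via the explicit description of the period ring for $\mathbb T^d_S$ in \cite[Remark~7.26]{AHLB}) or else being content with constructing $\gamma$ as the canonical isomorphism it induces without insisting the two formulas are literally equal before choosing bases.
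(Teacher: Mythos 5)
Your proposal is correct and follows essentially the same route as the paper: it constructs an explicit $R_\infty$-linear map $\phi\colon M\otimes_R R_\infty\to M\otimes_R B$ (via Cartier duality, with the closed formula $\phi_0(m)=\prod_i\exp(\theta_iY_i)(m)$, i.e.\ your horizontal-section expansion), shows its image is exactly the kernel in \eqref{eq:compute-StX-using-period-sheaf}, and verifies the $\Delta$-action transports to Faltings' cocycle $\exp(D_c(\sigma)\circ\theta_M)\otimes\sigma$ via the translation $\sigma(Y_i)=Y_i+e_i$ --- precisely your plan. Two small comments: (i) your worry about the constant $u\in\Z_p^\times$ is unnecessary here, since $u$ is already absorbed into the construction of $\mathcal B_{\tilde X}$ and of $D_c$ via \Cref{l:explicit-description-of-B}, so the cocycles match on the nose with no residual unit to absorb into $\gamma$; (ii) your sign $(-1)^{|\underline n|}$ versus the paper's $\phi_0$ reflects the sign of $\Theta_{\mathcal B_{\tilde X}}$ --- the paper's displayed computation in \Cref{l:comp-Higgs-v-bundle-on-wtX} implicitly uses $\Theta=-\sum\partial/\partial Y_j\cdot y_j$, so one convention or the other should be fixed consistently, but this is purely notational and does not affect the argument.
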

    \begin{proof}
    	The inclusion on top exists by \Cref{l:Faltings-implies-Hitchin-small}.
    	Let $\mathcal{M}=(M,\theta_M)$ be any Faltings-small Higgs bundle on $\X$. Let
    	\[V:=\mathrm{S}_{\tilde{X}}(\mathcal M)\]
    	be the associated v-vector bundle on $\X$. We recall from \S\ref{s:recall-global-Simpson-geom} that $\mathrm{S}_{\tilde X}$ can be described using the period sheaf $(\mathcal{B}_{\tilde{X}}, \Theta_{\mathcal{B}_{\tilde{X}}})$: Let $B:=H^0(\X_\infty,\mathcal{B}_{\tilde{X}})$, then by  \eqref{eq:compute-StX-using-period-sheaf} we have a canonical identification
    	\[V(\X_\infty)= \ker(M\otimes_R B\to M\otimes_R B\otimes \Omega^1_{R}(-1)).\] As a first step, we use this to define a natural morphism
    	\[ \phi:M\otimes_RR_\infty \to M\otimes_RB \]
    	as follows: Set $\omega:=(1-\zeta_p)^{-1}$. By \Cref{l:Faltings-implies-Hitchin-small}, the Higgs field $\omega\cdot \theta_M:M\to M\otimes_R \Omega_R^1(-1)$ is topologically nilpotent.  Therefore $\theta$ determines a morphism
    	\[\widehat{\mathrm{Sym}}^{\bullet}_{R}(\omega\cdot \Omega_R^1(-1)^\vee)\otimes_R M\to M\]
    	where the completion on the left is with respect to the zero section. Like in \eqref{eq:Cartier-duality-applied-to-Higgs}, this corresponds via Cartier duality, \cite[Lemma 6.4]{AHLB}, to a morphism
    	\[\phi_0:M\to M\otimes_R B_0\]
    	where $B_0:=R\big[\tfrac{(\omega\cdot Y_i)^n}{n!}, n\in \N,i=1,\dots,d\big]^\wedge_p\tf$.
    	More precisely, using the formula defining the Cartier duality \cite[(9) in the proof of Lemma 6.4]{AHLB}, we see that $\phi_0$ can be described as follows: Using the chosen generators $y_1,\dots,y_d$, write $\theta_M=\sum \theta_iy_i$ for some $\theta_1,\dots,\theta_d\in \mathrm{End}(M)$, then
    	\begin{equation}\label{eq:expl-descr-of-phi}
    		\phi_0(m)= \prod_{i=1}^d\sum_{n=0}^\infty \frac{Y_i^n}{n!}\theta_i^n(m)=\prod_{i=1}^d\exp(\theta_iY_i)(m).
    	\end{equation}
    	Here we note that the factors commute since the $\theta_i$ commute due to the Higgs field condition.
    	
    	By the explicit description in \Cref{l:explicit-description-of-B}, we have a natural map $B_0\hookrightarrow B$. 
    	We can therefore now extend $\phi$ in an $R_\infty$-linear way to obtain the desired morphism $\phi$.
    	For this we have:
    	\begin{lemma}\label{l:comp-Higgs-v-bundle-on-wtX}
    		The image of the morphism $\phi$ agrees with $V(\X_\infty)$. Moreover, the following diagram commutes for any $\sigma\in \Delta$:
    		    	\[
    		\begin{tikzcd}
    			M\otimes_RR_\infty \arrow[d, "\exp(D_c(\gamma)\circ\theta_M)\otimes \sigma"'] \arrow[r,"\phi","\sim"'] & V(\X_\infty) \arrow[d, "\gamma"] \\
    			M\otimes_RR_\infty \arrow[r,"\phi","\sim"']                                   & V(\X_\infty).             
    		\end{tikzcd}\]
    	\end{lemma}
    	\begin{proof}
    		It is clear from \eqref{eq:expl-descr-of-phi} that $\phi$ is injective: Indeed the $R_\infty$-linear section $B\to R_\infty\tf$ given by projection to the constant coefficient composes with $\phi$ to the inclusion $M=M\otimes_RR\subseteq M\otimes_RR_\infty\tf$.
    		
    		For the first claim, it thus suffices by \cite[Lemma 7.18]{AHLB} to see that the the image of $\phi$ is in the kernel of $\theta_M\otimes 1+1\otimes \Theta:M\otimes B\to M\otimes B\otimes \Omega^1_R(-1)$. This follows from the computation
    		\[ (\theta_M\otimes 1+1\otimes \Theta)(\prod_{i=1}^d\exp(\theta_iY_i))=\sum_{j=1}^d\Big(\theta_j\prod_{i=1}^d\exp(\theta_iY_i)-\frac{\partial}{\partial Y_j}\prod_{i=1}^d\exp(\theta_iY_i)\Big) y_j=0\]
    		because $\frac{\partial}{\partial Y_i}\exp(\theta_iY_i)=\theta_i\exp(\theta_iY_i)$.
    		
    		It remains to see that the diagram commutes, for which it suffices to compare the two diagonal compositions on $M\subseteq M\otimes R_\infty$. As before, write $\theta_M=\sum \theta_iy_i$ for some $\theta_i\in \mathrm{End}(M)$, as well as
    		$D(\sigma)=\sum e_iy_i^\vee$
    		for some $e_i\in R_\infty$ in terms of the dual basis $y^\vee$ of $\Hom(\Omega^1_R\{-1\},R)$. Then
    		\[D(\sigma)\circ\theta_M=\sum_{i=1}^d e_i\theta_i\in \mathrm{End}(M).\]
    		
    		Let $m\in M$. Going first down and then to the right we calculate that
    		\begin{align*}
    			\phi(\exp(D(\sigma)\circ\theta_M)(m))=\phi(\prod_{i=1}^d\exp(e_i\theta_i))&=\prod_{i=1}^d\exp(\theta_iY_i)\exp(e_i\theta_i)
    			\intertext{whereas going first to the right and then down we obtain}
    			(\Id\otimes \sigma)(\phi(m))=(\Id\otimes \sigma)(\prod_{i=1}^d\exp(\theta_iY_i))(m)
    			&=\prod_{i=1}^d\exp(\theta_i(Y_i+e_i))(m)
    		\end{align*}
    		where in the last step we use that the action of $\sigma$ on $B$ is given by translation by $D(\sigma)$, which sends $Y_i\mapsto Y_i+e_i$ by definition of $e_i$. These two terms agree by commutativity of the $\theta_i$.
    	\end{proof}
    	By definition of $\mathrm{LS}^{\mathrm F}_c$, it follows from \Cref{l:comp-Higgs-v-bundle-on-wtX} that $\phi$ defines an isomorphism 
    	\[\phi:\LS^{\mathrm F}_c(\mathcal{M})\isomarrow V= \mathrm{S}_{\tilde{X}}(\mathcal{M}).\]
    	Since $\phi$ is clearly natural in $\mathcal{M}$, this finishes the proof of \Cref{p:comp-LS_F-LS_prism}.
    \end{proof}
 \begin{remark}
 	We note that in the proof of \Cref{p:comp-LS_F-LS_prism}, we did not use the full generality of the notion of Hitchin smallness, since we only required a factor of $p^{\frac{1}{p-1}}$, not of $p^{\frac{2}{p-1}}$. Indeed, the argument shows more generally that $\LS_c^{\mathrm{F}}$ is compatible with the local correspondence for ``$\omega$-Hitchin-small Higgs bundles'' of \cite[Theorem~6.37]{AHLB}. Indeed,  it is only in the globalisation step that the stronger notion of Hitchin-small Higgs bundles is needed in an essential way. This mirrors a similar phenomenon in Faltings' work.  To simplify the exposition, in this article, we ignore the difference in convergence conditions for the local versus the global correspondence. We therefore only use the weaker notion of Hitchin-smallness required by the global correspondence.
 \end{remark} 
   
\begin{corollary}\label{c:comp-of-LS-prism-with-HTlog}
	The local $p$-adic Simpson correspondence is compatible with the sheafified correspondence $\HTlog$ of  \cite[Thm~1.2]{heuer-sheafified-paCS} in the sense that the following diagram commutes
	\[
	\begin{tikzcd}
		\Higgs^{\Hsm}(\X) \arrow[d, "\mathrm{S}_{\tilde{X}}"]\arrow[r] & (M_n(\O)\otimes \Omega^1_{\mathcal X}(-1))\sslash\GL_n(\X)\\
		\mathrm{Bun}(\X_v) \arrow[r]& R^1\nu_{\ast}\GL_n(\X)\arrow[u,"\HTlog","\sim"']
	\end{tikzcd}
	\]
	where the categories on the right are the sets of isomorphism classes of Higgs bundles and v-vector bundles on $\X$ up to \'etale sheafification, respectively, and where the horizontal morphism are given by sending an object to its isomorphism class.
\end{corollary}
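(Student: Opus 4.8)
The plan is to reduce the statement to the compatibility already established in \Cref{p:comp-LS_F-LS_prism} together with the explicit description of $\LS^{\mathrm F}_c$ in \Cref{t:LSF-is-equivalence} and the definition of $\HTlog$ from \cite{heuer-sheafified-paCS}. First I would recall that the two vertical maps ``take the isomorphism class up to \'etale sheafification'' are by construction natural in $\mathcal X$, so it suffices to check the commutativity locally on $X$ for the \'etale (indeed Zariski) topology. Hence we may assume $X=\Spf(R)$ is affine and admits a toric chart $c\colon X\to \mathbb T^d_S$; this produces the $A_2$-lift $\tilde X$ as in \S\ref{sec:explicit-toric-charts}, and \Cref{p:comp-LS_F-LS_prism} gives a natural isomorphism $\mathrm S_{\tilde X}(\mathcal M)\cong \LS^{\mathrm F}_c(\mathcal M)$ for Faltings-small $\mathcal M$. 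Since $\HTlog$ is defined as an \'etale sheafification, and since by \Cref{l:locally-Faltings-small} every Hitchin-small Higgs bundle becomes Faltings-small after the cover $\X_n\to\X$, it is enough to prove the diagram commutes on Faltings-small objects, where we may use the explicit cocycle description of $\LS^{\mathrm F}_c$.

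The core computation is then the following: given a Faltings-small Higgs bundle $\mathcal M=(M,\theta_M)$, trivialised so that $\theta_M=\sum_i\theta_i y_i$ in terms of the chart-induced generators $y_i$ of $\Omega^1_R\{-1\}$ (equivalently $\Omega^1_R(-1)$ after the twist trivialisation), the v-vector bundle $\LS^{\mathrm F}_c(\mathcal M)$ is represented by the $\Delta$-cocycle $\sigma\mapsto \exp(D_c(\sigma)\circ\theta_M)\in 1+\mathfrak m M_n(\O(\X_\infty))$. I would then trace through the explicit formula for $\HTlog$ in \cite[Thm~1.2]{heuer-sheafified-paCS}: on such a small cocycle, $\HTlog$ is computed by differentiating the cocycle in the $\Delta$-direction, i.e.\ it sends the class of $\sigma\mapsto \exp(D_c(\sigma)\circ\theta_M)$ to the Higgs field $\theta_M\in (M_n(\O)\otimes\Omega^1_{\mathcal X}(-1))$ up to $\GL_n$-conjugation, using the identification $\Hom_{\Z_p}(\Delta,R\tf)\cong \Omega^1_R(-1)\tf$ dual to $D_c$ from \eqref{d:def-D}. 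This precisely matches the top horizontal map $\mathcal M\mapsto [\theta_M]$, so the square commutes.

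The main obstacle I anticipate is bookkeeping rather than conceptual: one must match the normalisations in the definition of $\HTlog$ (which involves the first continuous cohomology of $\Delta$ with its canonical identification with differentials, including the precise role of the element $\omega=(1-\zeta_p)^{-1}$ and the Breuil--Kisin--Fargues versus Tate twist) against the normalisation implicit in $D_c$ and in the exponential appearing in $\LS^{\mathrm F}_c$. Concretely, I would check on rank one, where $\exp(D_c(\sigma)\theta)$ reduces to a character $\Delta\to 1+\mathfrak m$, that the Kummer-theoretic identification underlying $\HTlog$ in \cite{heuer-v_lb_rigid} sends it to the class of $\theta$ with the correct sign and twist, and then invoke naturality in the chart (\cite[Thm~6.5.3]{heuer-sheafified-paCS}) to extend to all ranks and to see that the resulting identification is independent of the chart $c$ and of the lift $\tilde X$. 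Once the rank-one normalisation is pinned down, the higher-rank case and the gluing over Zariski covers are formal, completing the proof.
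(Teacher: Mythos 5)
Your overall strategy — reduce Zariski-locally to the toric chart case, pass to a toric cover $\X_n\to\X$ via \Cref{l:locally-Faltings-small} to make the Higgs bundle Faltings-small (using naturality of $\mathrm S_{\tilde X}$ and \'etale-locality of $\HTlog$), and then invoke \Cref{p:comp-LS_F-LS_prism} to identify $\mathrm S_{\tilde X}$ with $\LS^{\mathrm F}_c$ — matches the paper's proof exactly up to the final step. Where you diverge is in what happens after: you propose to ``trace through the explicit formula for $\HTlog$,'' differentiating the cocycle $\sigma\mapsto\exp(D_c(\sigma)\circ\theta_M)$, and you flag a genuine normalization bookkeeping task (the role of $\omega=(1-\zeta_p)^{-1}$, the sign, the Breuil--Kisin--Fargues vs.\ Tate twist, and the rank-one Kummer check). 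The paper short-circuits all of this by observing that $\HTlog^{-1}$ is \emph{defined} in \cite[\S5]{heuer-sheafified-paCS} precisely as the \'etale sheafification of $\LS_c^{\mathrm F}$; once you are in the Faltings-small case, \Cref{p:comp-LS_F-LS_prism} immediately gives the identity of isomorphism classes in $H^1_v(\X,\GL_n)$ with no further computation.

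Your route is not wrong — it amounts to re-proving that $\HTlog\circ\LS_c^{\mathrm F}=\mathcal H$, which is part of the content of \cite[Thm~1.2]{heuer-sheafified-paCS} — but it is unnecessary work given how $\HTlog$ is constructed there. If you were working with an independently defined $\HTlog$ (say, purely via Kummer theory and differentiation of cocycles, as in \cite{heuer-v_lb_rigid} for rank one), your careful normalization check in rank one followed by naturality in the chart would indeed be the right way to bridge the gap, so the obstacle you anticipate is real in that setting. But here you can avoid it entirely by citing the definition. Everything else in your proposal — the reduction to the affine toric case, the passage to $\X_n$, the invocation of \Cref{l:locally-Faltings-small} and \Cref{p:comp-LS_F-LS_prism} — is exactly what the paper does.
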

\begin{proof}
	Since $\HTlog$ is a morphism of sheaves on $\X_{\et}$, the statement is Zariski-local on $X$, so we may assume that $X$ admits a toric chart $c$ and $\tilde{X}$ is the lift induced by $c$.  Let $(M,\theta_M)$ be a Higgs bundle on $X$.  Then by \Cref{l:locally-Faltings-small}, the pullback of $(M,\theta_M)$ along the toric cover $f_n:\X_n\to \X$ induced by the toric chart becomes Faltings-small for $n\gg0$. Note that $f_n$ admits a formal model $X_n\to X$ which admits a lift $\tilde{f}_n:\tilde{X}_n\to \tilde{X}$ induced by the toric chart. By naturality of $\mathrm{S}_{\tilde{X}}$ in $\tilde{X}$, we can now replace $X$ by $X_n$ and thus reduce to the case that $(M,\theta_M)$ is Faltings-small.
	
	In this case, $\HTlog^{-1}$ is by definition in \cite[\S5]{heuer-sheafified-paCS} given by $\LS_c^{\mathrm F}(M,\theta_M)$. But by \Cref{p:comp-LS_F-LS_prism}, the isomorphism class of $\LS_c^{\mathrm F}(M,\theta_M)$ in $H^1_v(\X,\GL_n)$ agrees with that of $\mathrm{S}_{\tilde{X}}(M,\theta_M)$.
\end{proof}
\begin{remark}
	We note that in \cite[p850-851]{faltings2005p}, Faltings also gives a second version of the local $p$-adic Simpson correspondence for toric  $X$ over $\O_C$ defined in terms of a lift $\tilde{X}$ to $A_\inf(\O_C)/\xi^2$ (cf \S\ref{sec:relat-prev-work}). Namely, in terms of the notation used in this subsection, Faltings chooses a lift
	\[ \varphi:A/\xi^2\to A_\inf(R_\infty)/\xi^2\]
	of $A\to A_\inf(R_\infty)$. This induces for any $\sigma\in\Delta$ a derivation
	\[ A\to R_\infty,\quad a\mapsto \theta_M(\tfrac{\sigma \varphi(a)-\varphi(a)}{\xi})\]
	which factors through the reduction $A\to R$ and thus defines a $1$-cocycle
	$D:\Delta\to \Hom(\Omega^1_R,R_\infty)$.
	We observe that for any $\sigma\in \Delta$, the element $D(\sigma)$ agrees by definition with the derivation $D$ from \cite[Thm 3.13]{AHLB} computed in \cite[Lemma 4.7]{AHLB}, hence the notation. 
	
	Let now $(M,\theta_M)$ be any Faltings-small Higgs bundles on $X$. Then Faltings' second correspondence is defined by considering the $R_\infty\tf$-module $M\otimes R_\infty$ with semilinear $\Delta$-action defined by letting $\sigma\in \Delta$ act as
	\[\exp(D(\sigma)\circ \theta_M):M\to M\otimes R_\infty, \]
	and extended $R_\infty$-semilinearly. This defines a descent datum for a vector bundle along $\mathcal X_\infty\to \mathcal X$, and thus a v-vector bundle $\LS^{\mathrm F}_{\tilde{X}}(M,\theta_M)$ on $\mathcal X$. Exactly as in \Cref{p:comp-LS_F-LS_prism}, one can construct a morphism $\phi$ that describes a natural transformation $\LS^{\mathrm F}_{\tilde{X}}\Rightarrow \mathrm{S}_{\tilde{X}}$ to our functor. This reproves in particular that Faltings construction is independent of the auxiliary choice of $R_\infty$ and $\varphi$.
\end{remark}

Lastly, we need the canonical Higgs field on $V$ first described by Rodr\'iguez Camargo \cite{camargo2022geometric}: In the setting of \Cref{t:LSF-is-equivalence}, it is immediate from the definition that the $\Delta$-action on $M\otimes_RR_\infty$ commutes with $\theta_M\otimes \id$. Hence $\theta_M$ descends to a Higgs field $\theta:V\to V\otimes \Omega^1_X(-1)$ on $V:=\LS_c^{\mathrm F}(M,\theta)$. One can show that for a given v-vector bundle, this Higgs field is independent of the chart $c$:
\begin{theorem}[{\cite[Thm.~4.8]{heuer-proper-correspondence},\cite[Thm.~3.2.1]{HX}}]\label{t:canonical-HF}
	Let $Z$ be any smoothoid adic space and let $V$ be a v-vector bundle on $Z$. Then there is a natural Higgs field $\theta_V:V\to V\otimes \Omega_Z^1$ such that \'etale-locally on any toric space $\X\to Z$ where $V$ becomes Faltings-small and any toric chart $c$ for $\X$, the induced map $\theta_V(\X_\infty):V(\X_\infty)\to V(\X_\infty)\otimes \Omega_X^1$ is given by $\theta_M$ where $(M,\theta_M)=(\LS_c^F)^{-1}(V)$.
\end{theorem}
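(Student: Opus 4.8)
The plan is to realise $\theta_V$ as the image, under the fully faithful functor $\alpha^\ast_X\colon\Bun(z_\ast X^{\HT},\mathcal O\tf)\to\Bun(\mathcal X_v)$ of \S\ref{s:recall-global-Simpson-geom}, of a Higgs field that is canonically attached to every bundle on the Hodge--Tate gerbe. Once this identification is available, independence of the chart and naturality become formal, and the theorem reduces to matching this intrinsic operator with the chart construction $(\LS^{\mathrm F}_c)^{-1}$.

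First I would fix the local picture and record the reduction. By the definition of smoothoid spaces, $Z$ can be covered by \'etale maps $U\to Z$ such that $U$ is the adic generic fibre of a smooth formal scheme $X_U$ over a perfectoid ring as in \Cref{setup:geometric} and $U$ admits a toric chart $c$; after replacing $U$ by one of the finite-\'etale covers $U_n$ furnished by \Cref{l:locally-Faltings-small}(2) we may moreover assume that $V|_U$ is Faltings-small. Set $(M,\theta_M):=(\LS^{\mathrm F}_c)^{-1}(V|_U)$, which exists by \Cref{t:LSF-is-equivalence}. By construction of $\LS^{\mathrm F}_c$ we have $V(U_\infty)=M\otimes_RR_\infty$ with $\sigma\in\Delta$ acting as $\exp(D_c(\sigma)\circ\theta_M)\otimes\sigma$; since $\theta_M$ is $R$-linear and commutes with each $\exp(D_c(\sigma)\circ\theta_M)$ by the Higgs condition, the operator $\theta_M\otimes\id$ is $\Delta$-equivariant, hence descends to an $\mathcal O_U$-linear map $\theta^{U,c}_V\colon V|_U\to V|_U\otimes\Omega^1_{X_U}\{-1\}$ with $\theta^{U,c}_V\wedge\theta^{U,c}_V=0$. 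It remains to show that $\theta^{U,c}_V$ depends only on $V|_U$ (in particular, not on $c$) and is compatible with \'etale refinement of $U$; the $\theta^{U,c}_V$ then glue to the desired $\theta_V$.

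The crux is the independence statement, where I would pass through the Hodge--Tate stack. By \Cref{p:comp-LS_F-LS_prism}, the chart $c$ induces an $A_2$-lift $\tilde X$ of $X_U$ together with an isomorphism $V|_U\cong\alpha^\ast_{X_U}(\mathcal E)$, where $\mathcal E\in\Bun(z_\ast X_U^{\HT},\mathcal O\tf)$ is the unique bundle with $\beta_{\tilde X}(\mathcal E)=(M,\theta_M)$. The key observation is that $\Bun(z_\ast X_U^{\HT},\mathcal O\tf)$ carries a \emph{canonical, splitting-independent} Higgs field $\mathcal E\mapsto(\theta_{\mathcal E}\colon\mathcal E\to\mathcal E\otimes_{\mathcal O_{X_U}}\Omega^1_{X_U}\{-1\})$: since $z_\ast X_U^{\HT}\to X_U$ is a gerbe banded by the commutative formal group $T^\sharp_{X_U}\{1\}$, its band acts canonically on $\mathcal E$, and the induced action of its Lie algebra $T_{X_U}\{1\}$ yields, by duality, an $\mathcal O_{X_U}$-linear operator $\theta_{\mathcal E}$ which is integrable because the band is commutative. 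Unwinding the definition of the equivalence \eqref{eq:constr-of-Higgs-field-via-Cariter-duality} via Cartier duality \eqref{eq:Cartier-duality-applied-to-Higgs}, one sees that $\theta_{\mathcal E}$ corresponds under $\beta_{\tilde X}$ precisely to the Higgs field $\theta_M$ of $\beta_{\tilde X}(\mathcal E)=(M,\theta_M)$. Now $\alpha^\ast_{X_U}$ is fully faithful and, by its construction in \cite{AHLB}, carries a bundle pulled back from $X_U$ along $z_\ast X_U^{\HT}\to X_U$ to the pullback of the corresponding analytic bundle along $\nu\colon U_v\to U_{\an}$; hence $\mathcal E$ — and with it the map $\theta^U_V:=\alpha^\ast_{X_U}(\theta_{\mathcal E})\colon V|_U\to V|_U\otimes\Omega^1_U\{-1\}$ — depends only on $V|_U$. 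Tracing the explicit isomorphism $\phi$ of the proof of \Cref{p:comp-LS_F-LS_prism}, which realises $\LS^{\mathrm F}_c(M,\theta_M)\cong\alpha^\ast_{X_U}(\mathcal E)$ and intertwines $\theta_M\otimes\id$ with the operator induced by $\theta_M\otimes\id_{\mathcal B_{\tilde X}}$ on the kernel \eqref{eq:compute-StX-using-period-sheaf} (which one checks is the restriction of $\theta_{\mathcal E}$), then identifies $\theta^U_V$ with $\theta^{U,c}_V$, up to the routine conversion between the twists $\{-1\}$ and $(-1)$. Compatibility with an \'etale refinement $U'\to U$ follows from $X_{U'}=X_U\times_U U'$ and the naturality in $X$ of $X\mapsto X^{\HT}$, of $\alpha^\ast_X$, and of the band; since such $U$ cover $Z$ and any overlap of two of them is again covered by such $U'$, the maps $\theta^U_V$ glue to a Higgs field $\theta_V$ on $Z$ that restricts to $\theta_M$ on every toric chart on which $V$ is Faltings-small, with naturality in $Z$ inherited from that of all ingredients.

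I expect the main obstacle to be the independence step: making precise the splitting-independent Higgs field $\theta_{\mathcal E}$ on $\Bun(z_\ast X^{\HT},\mathcal O\tf)$ coming from the band of the Hodge--Tate gerbe, and checking that it agrees with the Cartier-duality Higgs field $\theta_M$ under \eqref{eq:constr-of-Higgs-field-via-Cariter-duality} — together with the elementary but bookkeeping-heavy comparison of the twists $\{-1\}$ versus $(-1)$ and of the isomorphism $\phi$. Everything else (the reduction, and the \'etale descent and naturality) is routine once full faithfulness of $\alpha^\ast_X$ is available. Alternatively, one can instead import Rodr\'iguez Camargo's intrinsic construction of $\theta_V$ through $B_\dR^+$-period sheaves \cite{camargo2022geometric} and match it with the chart description $\theta_M$ by a direct local computation, bypassing the Hodge--Tate stack altogether.
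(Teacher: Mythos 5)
The paper does not prove \Cref{t:canonical-HF}: it is cited to \cite{heuer-proper-correspondence}, \cite{HX}, and (for the original construction) \cite{camargo2022geometric}, with the paper only recording the elementary fact that $\theta_M\otimes\id$ is $\Delta$-equivariant and so descends. So there is no in-paper proof to compare against; I am assessing your plan on its own merits.

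Your plan is plausible and, as far as I can see, correct in outline, but it is a genuinely different route from the cited sources (which proceed via geometric Sen theory / period sheaves or by a direct cocycle computation in toric charts). Your key idea — realize $V|_U$ as $\alpha^\ast_{X_U}(\mathcal E)$ and build $\theta_V$ from the canonical action of the band $T^\sharp_{X_U}\{1\}$ on $\mathcal E$, which is splitting-independent by definition of the band of a gerbe — is conceptually clean and buys you chart-independence and naturality essentially for free, since everything reduces to naturality of $X\mapsto X^{\HT}$ and of $\alpha^\ast_X$. What it costs you is three points that you flag but do not carry out, each of which needs genuine care: (a) the co-action of the band on a bundle over the gerbe is a map $\mathcal E\to\mathcal E\otimes\pi^\ast\O(T^\sharp_X\{1\})$ over $z_\ast X^{\HT}$, not over $X$, and you have to explain how Cartier duality (\cite[Lemma~6.4]{AHLB}), which is stated over $X$, applies there and produces a Higgs field $\mathcal E\to\mathcal E\otimes\pi^\ast\Omega^1_X\{-1\}$ together with its integrability; (b) you need a precise statement, with a reference inside \cite{AHLB}, that $\alpha^\ast_X$ carries bundles pulled back from $X$ to $\nu^\ast$ of the corresponding analytic bundles, and more specifically that it intertwines $\theta_{\mathcal E}$ with the operator on the kernel in \eqref{eq:compute-StX-using-period-sheaf} via the isomorphism $\phi$ of \Cref{p:comp-LS_F-LS_prism}; (c) the twist bookkeeping $\{-1\}$ versus $(-1)$ (note the theorem as stated in the paper even omits the twist on $\Omega^1_Z$, which is a small inconsistency you implicitly fix). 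None of these looks like a real obstruction, but all three need to be written out before this is a proof rather than a program; until then, the fastest complete argument remains the one you mention last, importing the construction of \cite{camargo2022geometric} and matching it with $\theta_M$ by a local computation.
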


\subsection{The essential image in v-vector bundles in the geometric case}\label{sec:essential-image-vVB-geom-case}
Let now $X$ be any smoothoid $p$-adic formal scheme with rigid generic fibre $\X$ and let $\tilde{X}$ be a lift of $X$.  The goal of this section is to describe the essential image of the functor
	\[ \mathrm{S}_{\tilde{X}}:\Higgs^\Hsm(\X)\hookrightarrow \mathrm{Bun}(\X_v).\]
 For this we will use the comparison to 
   $\LS^{\mathrm F}_{\tilde{X}}$ and the fact that every v-vector bundle becomes Faltings-small locally on toric towers over $X$ by \Cref{l:locally-Faltings-small}.
This will allow us to describe the essential image of $\LS_{\tilde{X}}$ using the Hitchin fibration, as we now explain.

\begin{definition}\label{def:Hitchin-fibration}
Denote by $\mathrm{Bun}_n(\X_v)$ the category of v-vector bundles of rank $n$ on $\X$, and similarly let $\Higgs_n(\X)$ be the category of Higgs bundles of rank $n$.
Let $\mathrm{A}_n(\X)$ be the Hitchin base of $\X$, defined as the $\O(\X)$-module
\[ \textstyle\mathrm A_{n}(\X):=\bigoplus_{i=1}^n H^0(\X,\mathrm{Sym}^i(\Omega_\X^1(-1))).\]
Consider the set $\mathrm{A}_n(\X)$ as a groupoid where any morphism is the identity, then we can make sense of the Hitchin fibration as being a functor
\[\mathcal H:\Higgs_n(\X)\to  \mathrm{A}_n(\X).\]
From \cite{heuer-sheafified-paCS}, on the side of v-vector bundles, we have an analogous functor
\[ \wt{\mathcal H}:\mathrm{Bun}_n(\X_v)\to \mathrm{A}_n(\X)\]
defined as the composition of $\mathcal H$ with the map $\HTlog$ (see \Cref{c:comp-of-LS-prism-with-HTlog}). 

We recall from \cite{heuer-sheafified-paCS} that $\wt{\mathcal H}$ can made explicit as follows: Let $V$ be a \mbox{v-vector} bundle on $\X$. Fix a toric chart $c$, then by \Cref{l:locally-Faltings-small} we can find a toric cover $\X_n\to \X$ on which $V$ becomes Faltings-small. We can thus associate to $V$ a Higgs bundle $(\LS^{\mathrm F}_{c})^{-1}(V)$ by \Cref{t:LSF-is-equivalence}. Via \Cref{l:comp-Higgs-v-bundle-on-wtX}, this endows $V(\wt \X)$ with a Higgs field
\[ \theta_n:V(\wt \X)\to V(\wt \X) \otimes \Omega_X^1(-1).\]
Since the image of $\Omega_X^1$ in $\Omega^1_{X_n}$ is precisely  $p^{-n}\Omega^1_{X_n}$, we can rescale this by $\theta:=p^{-n}\theta_n$ to obtain a Higgs field $\theta$ that is independent of $n$. Then $\widetilde{\mathcal H}(V)$ is given by the characteristic polynomial of $\theta$. It is clear from \Cref{l:comp-Higgs-v-bundle-on-wtX} that $\theta$ in fact commutes with the $\Delta$-action, thus defining a Higgs field
$V\to V\otimes \Omega_X^1(-1)$.
and it follows that $\theta$ indeed has coefficients in $\mathrm{A}_n(\X)$. 
\end{definition}
\begin{remark}
	It is clear from the construction that the above Higgs field on $V$ is precisely $\theta_V$ from \Cref{t:canonical-HF}. This shows that we can more conceptually regard $\widetilde{\mathcal H}$ as computing the characteristic polynomial of $\theta_V$. However, the definition of $\widetilde{\mathcal H}$ that we will use later is the one given above.
\end{remark}
\begin{lemma}\label{l:comp-with-Hitchin-fibration}
	For any $n\in \N$, the following diagram commutes:
	\[\begin{tikzcd}[column sep = 0.3cm,row sep =0.3cm]
		& \mathrm{A}_n(\X)                                 &                      \\
		\Higgs^{\Hsm}_n(\X) \arrow[ru,"\mathcal H"]\arrow[rr,"\mathrm{S}_{\tilde{X}}"] &                                      & \mathrm{Bun}_n(\X_v) \arrow[lu,"\widetilde{\mathcal H}"']         
	\end{tikzcd}\]
\end{lemma}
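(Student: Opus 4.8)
The plan is to reduce immediately to the Faltings-small, toric situation, where everything is computable, and then observe that the two paths in the triangle compute the same Higgs field on $V(\widetilde{\mathcal X})$. Since both $\mathcal H$ and $\widetilde{\mathcal H}$ land in the groupoid $\mathrm A_n(\X)$ where the only morphisms are identities, commutativity of the diagram is a \emph{property} rather than extra data: I just need to check that for a Hitchin-small Higgs bundle $(M,\theta_M)$, the characteristic polynomial of $\theta_M$ equals the characteristic polynomial of the canonical Higgs field on $\mathrm S_{\tilde X}(M,\theta_M)$ extracted by $\widetilde{\mathcal H}$.

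\textbf{Key steps.}
First I would note that the statement is local on $X$ for the Zariski (even étale) topology, since $\mathrm A_n(\X)$ is a sheaf on $\X_{\et}$ and both functors are compatible with localisation; so I may assume $X=\Spf(R)$ is affine, admits a toric chart $c:X\to\mathbb T^d_S$, and that $\tilde X$ is the $A_2$-lift induced by $c$ as in \S\ref{sec:explicit-toric-charts}. Second, I would reduce to the Faltings-small case: given $(M,\theta_M)\in\Higgs^\Hsm_n(\X)$, by \Cref{l:locally-Faltings-small}(1) its pullback along the toric cover $f_n\colon\X_n\to\X$ becomes Faltings-small for $n\gg0$, and this cover lifts to $\tilde f_n\colon\tilde X_n\to\tilde X$; by naturality of $\mathrm S_{\tilde X}$ in $\tilde X$ one has $f_n^\ast\mathrm S_{\tilde X}(M,\theta_M)\cong\mathrm S_{\tilde X_n}(f_n^\ast(M,\theta_M))$, and both Hitchin maps are compatible with the rescaling by $p^{-n}$ built into \Cref{def:Hitchin-fibration}, so it suffices to treat a Faltings-small input. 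Third, in that case $\mathrm S_{\tilde X}(M,\theta_M)\cong\LS^{\mathrm F}_c(M,\theta_M)$ by \Cref{p:comp-LS_F-LS_prism} (more precisely by the isomorphism $\phi$ of \Cref{l:comp-Higgs-v-bundle-on-wtX}), so $(\LS^{\mathrm F}_c)^{-1}(V)=(M,\theta_M)$ on the nose. Now unwind \Cref{def:Hitchin-fibration}: the Higgs field $\theta_n$ it puts on $V(\widetilde{\mathcal X})$ is, by construction, exactly the one transported through $\phi$ from $\theta_M\otimes\id$ on $M\otimes_RR_\infty$, and $\phi$ intertwines $\theta_M\otimes\id$ with this $\theta_n$ because $\phi$ is $R$-linear and $\theta_M$-equivariant by the very formula \eqref{eq:expl-descr-of-phi} (the operators $\exp(\theta_iY_i)$ commute with each $\theta_j$). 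Hence the Higgs field $\widetilde{\mathcal H}$ reads off from $V$ is identified with $\theta_M$, and therefore $\widetilde{\mathcal H}(\mathrm S_{\tilde X}(M,\theta_M))=\chi(\theta_M)=\mathcal H(M,\theta_M)$, which is the asserted commutativity.

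\textbf{Main obstacle.}
The only genuine subtlety is bookkeeping with the twists and the rescaling factor: $\mathrm S_{\tilde X}$ and the period sheaf $\mathcal B_{\tilde X}$ are formulated with the Breuil--Kisin--Fargues twist $\{-1\}$, whereas $\mathrm A_n(\X)$ and $\LS^{\mathrm F}_c$ use the Tate twist $(-1)$, and passing to the toric cover $\X_n$ multiplies $\Omega^1_X$ inside $\Omega^1_{X_n}$ by $p^n$ — so one must check that the $p^{-n}$ in \Cref{def:Hitchin-fibration}, the comparison of $\{-1\}$ with $(-1)$ (as in the Remark after \Cref{d:Hitchin-small}), and the explicit normalisation of $\phi$ in \Cref{l:comp-Higgs-v-bundle-on-wtX} all cancel to leave $\theta_M$ unchanged. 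Concretely, I would carry this out by fixing a compatible choice of $p$-power unit roots to trivialise $\{-1\}$, so that the generators $y_i$ of $\Omega^1_R\{-1\}$ correspond to $\frac{dT_i}{T_i}$ of $\Omega^1_R$, and then the commutation of $\phi$ with $\theta_M$ from \Cref{l:comp-Higgs-v-bundle-on-wtX} already produces the matching Higgs fields; the characteristic polynomials, being intrinsic to the operator $\theta_M$, then literally coincide. Everything else is formal from the definitions and from \Cref{p:comp-LS_F-LS_prism}.
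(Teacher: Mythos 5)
Your proof is correct, but you have effectively re-derived the content of \Cref{c:comp-of-LS-prism-with-HTlog}, whereas the paper's proof of this lemma is a one-line citation: since $\widetilde{\mathcal H}$ is \emph{defined} in \Cref{def:Hitchin-fibration} as the composition of $\mathcal H$ with $\HTlog$, the triangle commutes the moment one knows that the isomorphism class of $\mathrm S_{\tilde X}(M,\theta_M)$ in $H^1_v(\X,\GL_n)$ maps under $\HTlog$ to the class of $(M,\theta_M)$, which is exactly \Cref{c:comp-of-LS-prism-with-HTlog}. Your reduction chain (Zariski-localise to a toric chart, pull back along $\X_n\to\X$ to enter the Faltings-small regime via \Cref{l:locally-Faltings-small}, then invoke \Cref{p:comp-LS_F-LS_prism} and \Cref{l:comp-Higgs-v-bundle-on-wtX}) is precisely the argument the paper uses to \emph{prove} \Cref{c:comp-of-LS-prism-with-HTlog}, so you have not taken a different route so much as inlined the proof of the cited corollary. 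The extra care you spend on twist bookkeeping ($\{-1\}$ versus $(-1)$, and the $p^{-n}$ normalisation) is sound and worth being explicit about, but it is all already absorbed into the statement of \Cref{c:comp-of-LS-prism-with-HTlog} together with the definition of $\widetilde{\mathcal H}$ via $\HTlog$. One notational slip: $V(\widetilde{\mathcal X})$ in your write-up should read $V(\X_\infty)$ (the sections over the pro-finite-étale perfectoid cover), following \Cref{l:comp-Higgs-v-bundle-on-wtX}.
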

\begin{proof}
	Immediate from 
	\Cref{c:comp-of-LS-prism-with-HTlog} and the definition of $\wt{\mathcal H}$ via $\HTlog$.
\end{proof}
\begin{definition}\label{def:Hitchin-small-locus}
	\begin{enumerate}
		\item For any $\alpha\in \Q$, let $p^{<\alpha}:=\{x\in S\text{ s.t. } \|x\|< |p|^{\alpha}\}$. We call the subspace \[\textstyle\mathrm{A}^{\Hsm}_n(\X):=\bigoplus\limits_{i=1}^n  H^0(X,p^{<\frac{i}{p-1}}\cdot\mathrm{Sym}^i(\Omega_X^1\{-1\}))\subseteq \mathrm{A}_n(\X)\] the \textit{Hitchin--small locus}. If $X$ is proper, this is an open submodule of $\mathrm{A}_n(\X)$.
		\item 
		We call a v-vector bundle $V$ on $\X$ \textit{Hitchin-small} if $\widetilde{\mathcal H}(V)$ already lies in $\mathrm{A}^{\Hsm}_n(\X)\subseteq \mathrm{A}_n(\X)$. Let $\mathrm{Bun}^{\Hsm}(\X_v)\subseteq \mathrm{Bun}(\X_v)$ be the full subcategory of Hitchin--small v-vector bundles. 
		\item We call a Higgs bundle $(E,\theta)$ on $\X$ \textit{Hitchin-small} if ${\mathcal H}(E,\theta)$ lies in $\mathrm{A}^{\Hsm}_n(\X)\subseteq \mathrm{A}_n(\X)$. Let $\mathrm{Higgs}^{\Hsm}(\X)\subseteq \mathrm{Higgs}(\X)$ be the full subcategory of Hitchin--small Higgs bundles. 
		\item More generally, for $z\in S\tf$, we say that $V$ is $z$-Hitchin small if $\widetilde{\mathcal H}(V)$ lies in 
		\[\textstyle\mathrm{A}^{\zHsm}_n(\X):=\bigoplus\limits_{i=1}^n  H^0(X,z^i\cdot p^{<1}\cdot\mathrm{Sym}^i(\Omega_X^1\{-1\}))\subseteq \mathrm{A}_n(\X),\]
		and similarly for Higgs bundles. We define $\mathrm{Bun}^{\zHsm}(\X_v)$ and  $\mathrm{Higgs}^{\zHsm}(\X)$ accordingly.
	\end{enumerate}
\end{definition}
Part (3) generalises \Cref{d:Hitchin-small} as we no longer assume that $\Z_p^\cycl\subseteq S$.

Note that since $\mathcal{H}$ localises on $\mathcal X$ by definition, being Hitchin-small is a local notion, i.e., a v-vector bundle on $\X$ is $z$-Hitchin-small if and only if it is so locally on $X$, and similarly for Higgs bundles. Moreover, we have the analogue of \Cref{l:Faltings-implies-Hitchin-small} for v-vector bundles:
\begin{lemma}\label{l:locally-Hitchin-small}
	Any Faltings-small v-vector bundle is Hitchin-small. In particular, if $X$ is toric, then any v-vector bundle on $\X$ becomes Hitchin-small after pullback to a finite toric cover $\X_n\to \X$.
\end{lemma}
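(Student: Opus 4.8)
The plan is to reduce the statement to the corresponding fact for Higgs bundles, namely \Cref{l:Faltings-implies-Hitchin-small}, via the comparison between Faltings' local correspondence and the functor $\mathrm S_{\tilde X}$ established in this section. So suppose $V$ is a Faltings-small v-vector bundle on $\X$. Being Faltings-small and being Hitchin-small are both Zariski-local conditions on $X$ (the former by \Cref{def:faltings-small-v-bundles}, the latter by the remark following \Cref{def:Hitchin-small-locus}, since $\mathcal H$ localises), so we may assume $X=\Spf(R)$ is affine and admits a toric chart $c\colon X\to \mathbb T^d_S$, with $\tilde X$ the lift induced by $c$. Moreover, after replacing $X$ by a Zariski cover we may assume $V$ is Faltings-small already on $\X$ itself (not just on some toric cover).

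Next I would invoke \Cref{t:LSF-is-equivalence}: the toric chart gives an equivalence $\LS^{\mathrm F}_c\colon \Higgs^{\Fsm}(\X)\isomarrow \mathrm{Bun}^{\Fsm}(\X_v)$, so there is a Faltings-small Higgs bundle $(M,\theta_M)=(\LS^{\mathrm F}_c)^{-1}(V)$. By \Cref{l:Faltings-implies-Hitchin-small}, $(M,\theta_M)$ is Hitchin-small, i.e.\ $\mathcal H(M,\theta_M)\in \mathrm A^{\Hsm}_n(\X)$. It then remains to check that $\widetilde{\mathcal H}(V)=\mathcal H(M,\theta_M)$. But this is exactly the content of the compatibility of the correspondences with the Hitchin fibrations: in the Faltings-small case, $\widetilde{\mathcal H}$ is \emph{defined} (see \Cref{def:Hitchin-fibration} and \Cref{c:comp-of-LS-prism-with-HTlog}) by passing through $\LS^{\mathrm F}_c$ and taking the characteristic polynomial of the resulting (rescaled) Higgs field, and since here $V$ is already small on $\X$ no rescaling by $p^{-n}$ is needed, so $\widetilde{\mathcal H}(V)$ is literally the characteristic polynomial of $\theta_M$, i.e.\ $\mathcal H(M,\theta_M)$. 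Hence $\widetilde{\mathcal H}(V)\in \mathrm A^{\Hsm}_n(\X)$, which is the claim. (Alternatively one can cite \Cref{l:comp-with-Hitchin-fibration} directly once one knows $V$ lies in the image of $\mathrm S_{\tilde X}$ on this cover, via \Cref{p:comp-LS_F-LS_prism}.)

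For the second assertion, suppose $X$ is toric. By \Cref{l:locally-Faltings-small}(2), any v-vector bundle $V$ on $\X$ becomes Faltings-small on $\X_n\to \X$ for $n\gg 0$; by the first part of the lemma, it is then Hitchin-small on $\X_n$. This is precisely the claim.

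The only genuine subtlety, and the step I would be most careful about, is the bookkeeping of twists and powers of $p$: Faltings-smallness is phrased with a factor $p^{2\alpha}$, $\alpha>\tfrac1{p-1}$, relative to the Tate twist $(-1)$, whereas the Hitchin-small locus $\mathrm A^{\Hsm}_n(\X)$ is defined with the bound $p^{<\frac{i}{p-1}}$ in the $i$-th symmetric power relative to the Breuil--Kisin--Fargues twist $\{-1\}$. One must check that $\LS^{\mathrm F}_c$ sends a Higgs field trivial mod $p^{2\alpha}$ (Tate-twisted) to a characteristic polynomial whose $i$-th coefficient lies in $p^{<\frac{i}{p-1}}\cdot \mathrm{Sym}^i(\Omega^1_X\{-1\})$ — but this is exactly the comparison of twists discussed in the remark after \Cref{d:Hitchin-small}, and is already packaged into \Cref{l:Faltings-implies-Hitchin-small}, so no new estimate is required. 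Everything else is formal given the results assembled above.
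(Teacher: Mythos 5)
Your proof is correct and takes essentially the same route as the paper: the paper's one-line proof cites \Cref{p:comp-LS_F-LS_prism} and \Cref{l:comp-with-Hitchin-fibration} (together with the implicit \Cref{l:Faltings-implies-Hitchin-small}), which is exactly the alternative you mention in parentheses, while your primary phrasing unwinds the definition of $\widetilde{\mathcal H}$ directly — these are just two ways of expressing the same reduction. The second assertion via \Cref{l:locally-Faltings-small} is also the paper's argument.
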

\begin{proof}
	The first part follows from \Cref{p:comp-LS_F-LS_prism} and \Cref{l:comp-with-Hitchin-fibration}. The second part then follows from \Cref{l:locally-Faltings-small}.
\end{proof}

We can now prove the main result of this section:
\begin{theorem}\label{t:essential-surjectivity-geom-case}
	Let $X$ be any smoothoid formal scheme over a $p$-adic perfectoid base $S$. Let $\X$ be its adic generic fibre. Let $x\in S\tf^\times$ be such that $S\subseteq xS$. Let $a\in S$ be such that $\|a\|\leq \frac{1}{p-1}$ and set $z:=ax^{-1}$. For example, if $\zeta_p\in S$, we can simply take $a:=1-\zeta_p$.
	\begin{enumerate}
	 \item  Let $\tilde{X}$ be an $x$-lift of $X$ to $A_2(x)$ (in the sense of \cite[Definition 7.2]{AHLB}), then $\mathrm{S}_{\tilde{X},z}$ induces  a natural equivalence of categories
	 \[ \mathrm{S}_{\tilde{X},z}:  \Higgs^\zHsm(\X) \isomarrow \mathrm{Bun}^{\zHsm}(\X_v)      \]
	 \item $S_{\tilde{X},z}$ is natural in $\tilde{X}$ in the following sense: For any morphism $f:Y\to X$ of smoothoid formal schemes, any lift to an $A_2(x)$-linear morphism $\tilde{f}:\tilde{Y}\to\tilde{X}$ induces a natural transformation $\gamma(\tilde{f}):f^{\ast}\circ \mathrm S_{\tilde{X},z}\Rightarrow \mathrm S_{\tilde{Y},z}\circ f^{\ast}$ that is compatible with composition.
	 \end{enumerate}
 \end{theorem}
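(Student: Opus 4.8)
The plan is to prove (2) first, since the proof of (1) will invoke it for étale morphisms, and then to deduce (1) by combining the already-known full faithfulness of $\mathrm{S}_{\tilde X,z}$ with the comparison to Faltings' local correspondence. For (2): the functor $\mathrm S_{\tilde X,z}$ is built, as in \eqref{eq:def-alpha-beta}, out of the pullback $\alpha_X^\ast$, the splitting datum of $z_\ast X^{\HT}$ attached to $\tilde X$, and the Cartier-duality equivalence $\Phi_X$, and each of these ingredients is functorial in $X$ and in the lift by \cite{AHLB}. Hence a lift $\tilde f\colon\tilde Y\to\tilde X$ of a morphism $f\colon Y\to X$ yields compatible splittings, and composing the resulting naturality $2$-cells gives the transformation $\gamma(\tilde f)\colon f^\ast\circ\mathrm S_{\tilde X,z}\Rightarrow\mathrm S_{\tilde Y,z}\circ f^\ast$, compatibly with composition. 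When $f$ is étale one has $Y^{\HT}\cong X^{\HT}\times_X Y$ (as in the proof of \Cref{l:explicit-description-of-B}), hence also $z_\ast Y^{\HT}\cong(z_\ast X^{\HT})\times_X Y$, so all the $2$-cells involved are isomorphisms and $\gamma(\tilde f)$ is an isomorphism; this is the only case of (2) we will use.

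For (1), the full faithfulness and well-definedness are formal. By \cite[Thm~7.13]{AHLB}, $\mathrm S_{\tilde X,z}$ is fully faithful on all Higgs bundles $(M,\theta)$ with $\tfrac1z\theta$ topologically nilpotent, and $z$-Hitchin-smallness of $(M,\theta)$ implies this (the two descriptions of Hitchin-smallness agree, as noted after \Cref{def:Hitchin-small-locus}), so $\mathrm S_{\tilde X,z}$ is fully faithful on $\Higgs^{\zHsm}(\X)$; by \Cref{l:comp-with-Hitchin-fibration} (in its $z$-twisted form) it satisfies $\widetilde{\mathcal H}(\mathrm S_{\tilde X,z}(M,\theta))=\mathcal H(M,\theta)$, so it lands in $\Bun^{\zHsm}(\X_v)$. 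It remains to prove essential surjectivity. Since $z$-Hitchin-smallness is a local condition on $X$ and both Higgs bundles and v-vector bundles satisfy effective descent, $\Higgs^{\zHsm}(-)$ and $\Bun^{\zHsm}((-)_v)$ are stacks on $X_{\mathrm{Zar}}$ and $\mathrm S_{\tilde X,z}$ is a morphism of stacks (by (2) for open immersions); a fully faithful morphism of stacks that is locally essentially surjective is essentially surjective, so we may assume $X=\Spf(R)$ is affine with a toric chart $c\colon X\to\mathbb T^d_S$.

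Now let $V\in\Bun^{\zHsm}_n(\X_v)$. By \Cref{l:locally-Faltings-small} there is $m\gg0$ such that $V_m:=f_m^\ast V$ is Faltings-small on the toric cover $f_m\colon\X_m\to\X$, which is a finite étale $G$-torsor for $G:=\mu_{p^m}^d=\Aut(\X_m/\X)$. Put $(M_m,\theta_m):=(\LS^{\mathrm F}_{c})^{-1}(V_m)\in\Higgs^{\Fsm}(\X_m)$ by \Cref{t:LSF-is-equivalence}; since $(M_m,\theta_m)$ is Faltings-small it is $z$-Hitchin-small (\Cref{l:Faltings-implies-Hitchin-small}) with $\tfrac1z\theta_m$ topologically nilpotent, and $\mathrm S_{\tilde X_m,z}(M_m,\theta_m)\cong V_m$ by \Cref{p:comp-LS_F-LS_prism}, where $\tilde X_m$ is the $A_2(x)$-lift of $X_m$ induced by $c$. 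The descent datum on $V_m$ along $f_m$ amounts to a $G$-equivariant structure, and we transfer it through $(\LS^{\mathrm F}_c)^{-1}$: for $g\in G$ the construction of $\LS^{\mathrm F}_c$ uses the perfectoid tower attached to $c$, so $g^\ast\circ\LS^{\mathrm F}_c\cong\LS^{\mathrm F}_{c\circ g}\circ g^\ast$, and $\LS^{\mathrm F}_{c\circ g}\cong\LS^{\mathrm F}_c$ by the naturality of Faltings' correspondence in the toric chart (\cite[Thm~6.5.3]{heuer-sheafified-paCS}, as used in the proof of \Cref{t:LSF-is-equivalence}); thus $(\LS^{\mathrm F}_c)^{-1}$ intertwines the $G$-actions and transports the descent datum to a $G$-equivariant structure on $(M_m,\theta_m)$, compatible with $\theta_m$ since $\theta_m$ is the intrinsic canonical Higgs field $\theta_{V_m}$ of \Cref{t:canonical-HF}. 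By effective finite-étale descent we obtain $(M,\theta)\in\Higgs(\X)$ with $f_m^\ast(M,\theta)\cong(M_m,\theta_m)$ compatibly with the descent data; here $\tfrac1z\theta$ is topologically nilpotent, as this is detected along the faithfully flat $f_m$ where it follows from Faltings-smallness of $(M_m,\theta_m)$. Applying $\mathrm S_{\tilde X,z}$ and using (2) for the étale $f_m$ gives $f_m^\ast\mathrm S_{\tilde X,z}(M,\theta)\cong\mathrm S_{\tilde X_m,z}(M_m,\theta_m)\cong V_m$ compatibly with the descent data, so $\mathrm S_{\tilde X,z}(M,\theta)\cong V$ by descent. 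Finally $(M,\theta)\in\Higgs^{\zHsm}(\X)$, since $\mathcal H(M,\theta)=\widetilde{\mathcal H}(\mathrm S_{\tilde X,z}(M,\theta))=\widetilde{\mathcal H}(V)\in\mathrm A^{\zHsm}_n(\X)$ by \Cref{l:comp-with-Hitchin-fibration} and the hypothesis on $V$. This gives essential surjectivity, completing (1).

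I expect the \emph{descent step} in the last paragraph to be the main obstacle: one must make precise the transfer of the Galois descent datum through $(\LS^{\mathrm F}_c)^{-1}$ — the group $G$ does not fix the chart $c$, so the naturality of $\LS^{\mathrm F}$ in the toric chart is genuinely needed — and then carry along enough naturality (of $\LS^{\mathrm F}_c$, of $\mathrm S_{\tilde X,z}$ in $\tilde X$, and of the isomorphism of \Cref{p:comp-LS_F-LS_prism}) to guarantee that the isomorphism $\mathrm S_{\tilde X,z}(M,\theta)\cong V$, obtained a priori only after pullback to $\X_m$, is compatible with the descent data and hence descends to $\X$. A secondary point is to record \Cref{p:comp-LS_F-LS_prism} and the toric computations of \S\ref{sec:explicit-toric-charts} for the general parameters $x$ and $z$, rather than only for the normalisation $x=1$, $z=1-\zeta_p$ used in the period-sheaf computations; this is just a matter of the evident rescaling $\theta\mapsto\tfrac{a}{z}\theta$ and of passing from $A_2$ to $A_2(x)$, but should be spelled out.
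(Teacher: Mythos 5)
Your overall strategy---descend a Higgs bundle from a finite toric cover $\X_m$ on which $V$ becomes Faltings-small, then compare via the local correspondence---is a reasonable reorganisation of the argument and is genuinely close to the one in the paper, which likewise localises to a toric chart and uses $\LS^F_c$ and the tower $\X_n\to\X$. The paper, however, works directly with the $R_\infty[1/p]$-module $M_\infty = V(\X_\infty)$: it exhibits the explicit twisted $\Delta$-cocycle $\exp(D(\sigma)\circ\theta)$, defines $M:=M_\infty^{\Delta,\ast}$, and verifies the rank by the finite-level descent argument. Your version descends the whole Higgs bundle by transporting the $\Delta/\Delta_m$-equivariant structure through $(\LS^F_c)^{-1}$. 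This is fine in spirit, but note you silently need the reduction to $\Z_p^{\cyc}\subseteq S$ (\S\ref{sec:explicit-toric-charts}, and hence $\LS^F_c$ and \Cref{p:comp-LS_F-LS_prism}, carry this standing assumption), which the paper performs explicitly at the outset of its proof.

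There is one genuine error. You write that $\tfrac1z\theta$ is topologically nilpotent ``as this is detected along the faithfully flat $f_m$ where it follows from Faltings-smallness of $(M_m,\theta_m)$.'' This does not follow. The Faltings-smallness of $\theta_m$ is an integrality statement with respect to $\Omega^1_{X_m}$, but $f_m^\ast\Omega^1_X$ sits inside $\Omega^1_{X_m}$ as $p^m\Omega^1_{X_m}$; so smallness of $\theta_m\pmod{p^{2\alpha}}$ against $\Omega^1_{X_m}$ translates into smallness only $\pmod{p^{2\alpha-m}}$ against $f_m^\ast\Omega^1_X$, which degenerates as $m\to\infty$. Concretely, in rank $1$, take $V$ with $\widetilde{\mathcal H}(V)=c\,y$ for a generator $y$ of $\Omega^1_X$; then the pullback $V_m$ has Hitchin invariant $c\,p^m y_m$ in $\Omega^1_{X_m}$-coordinates, hence $V_m$ is Faltings-small for $m\gg0$ regardless of $c$, whereas the descended Higgs field is $c\,y$, which is $z$-Hitchin-small only if $c$ is. So Faltings-smallness on the cover gives you no control on $\tfrac1z\theta$. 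The correct argument---and the one the paper uses---is that by \Cref{t:canonical-HF} the Higgs field $\theta$ you construct by descent is the intrinsic canonical Higgs field $\theta_V$, so $\mathcal H(M,\theta)=\widetilde{\mathcal H}(V)$ directly (not via \Cref{l:comp-with-Hitchin-fibration}, which would be circular before you know $(M,\theta)$ is Hitchin-small); the hypothesis $\widetilde{\mathcal H}(V)\in\mathrm A^{\zHsm}_n(\X)$ then gives topological nilpotence of $\tfrac1z\theta$, after which you may apply $\mathrm S_{\tilde X,z}$. Once this step is repaired, the remaining compatibilities in the descent step are real but manageable, as you anticipate.
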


\begin{corollary}\label{c:induced-by-integral-model}
		Let $K$ be the completion of an algebraic closure of a $p$-adic field $K_0$.
		Assume that $X$ is a smooth $p$-adic formal scheme over $\O_K$ that  has a model $X_0$ over $\O_{K_0}$. By \cite[Corollary~7.4]{AHLB}, this induces an $e^{-1}$-lift $\tilde{X}$ of $X$ for some $e\in \O_K$. Then for $z:=(1-\zeta_p)e$, \Cref{t:essential-surjectivity-geom-case}.2 yields a canonical functor
		\[ \mathrm{S}_{\tilde{X},z}:  \Higgs^{z\text{-}\Hsm}(\X) \isomarrow \mathrm{Bun}^{z\text{-}\Hsm}(\X_v).\]
\end{corollary}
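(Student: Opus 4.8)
The plan is to obtain this corollary as the special case of \Cref{t:essential-surjectivity-geom-case} in which the perfectoid base is $S:=\O_K$ and the distinguished element ``$x$'' is $e^{-1}$.

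First I would check that the hypotheses apply. Since $K$ is the completion of an algebraic closure of the $p$-adic field $K_0$, the ring $S=\O_K$ is a $p$-torsionfree $p$-adic perfectoid ring containing $\zeta_p$, so it fits into \Cref{setup:geometric}, with associated perfect prism $(A,I)=(W(\O_{K^\flat}),\ker\theta)$ and $A_2(x)\subseteq B_{\dR}^+/\xi^2$; and $X$, being smooth over $\O_K$, is in particular a smoothoid formal scheme over $S$. Setting $x:=e^{-1}$ and $a:=1-\zeta_p$ we get $z=ax^{-1}=(1-\zeta_p)e$ as in the statement, and one checks the numerical conditions of \Cref{t:essential-surjectivity-geom-case}: since $e=E'(\pi)\neq 0$ we have $x=e^{-1}\in K^\times=S\tf^\times$ and $S=\O_K\subseteq e^{-1}\O_K=xS$ (as $e\in\O_K$), while $\|a\|=\|1-\zeta_p\|=\tfrac1{p-1}\le\tfrac1{p-1}$ is the required bound, with $a=1-\zeta_p$ being exactly the case singled out in the theorem.

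Next I would recall from \cite[Corollary~7.4]{AHLB} that the $\O_{K_0}$-model $X_0$ of $X$ produces a canonical $e^{-1}$-lift $\tilde X$ of $X$ to $A_2(e^{-1})$, obtained by base change of $X_0$ along a canonical ring map $\O_{K_0}\to A_2(e^{-1})$; the passage to the enlarged ring $A_2(e^{-1})$, rather than $A_2$, is dictated precisely by the ramification of $K_0$, i.e.\ by $\delta_{\O_{K_0}|\Z_p}=(e)$. Feeding this $\tilde X$ into \Cref{t:essential-surjectivity-geom-case}.(1) for the data $(S,x,a,z)$ above then yields at once the asserted equivalence
\[ \mathrm S_{\tilde X,z}\colon \Higgs^{z\text{-}\Hsm}(\X)\isomarrow \mathrm{Bun}^{z\text{-}\Hsm}(\X_v), \]
which, as in \Cref{l:comp-with-Hitchin-fibration}, moreover intertwines the two Hitchin fibrations. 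For the adjective ``canonical'' I would invoke part (2) of \Cref{t:essential-surjectivity-geom-case} together with the functoriality in $X_0$ of the lift of \cite[Corollary~7.4]{AHLB}: the lift $\tilde X$ depends only on $X_0$, and $\mathrm S_{-,z}$ is natural in the lift, so the equivalence depends only on $X_0$ (and on the fixed cyclotomic data and uniformiser entering $z$) and is natural for morphisms of $\O_{K_0}$-models.

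I do not expect a genuine obstacle here: all of the mathematical substance is contained in \Cref{t:essential-surjectivity-geom-case}, and the only points needing (routine) care are checking that $\O_K$ satisfies the perfectoid hypotheses of \Cref{setup:geometric}, verifying the two numerical conditions on $x$ and $a$, and unwinding \cite[Corollary~7.4]{AHLB} so as to recognise the $e^{-1}$-lift coming from $X_0$ as a legitimate input for the theorem.
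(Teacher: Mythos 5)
Your proof is correct and takes the natural route, which the paper leaves implicit since it gives no separate argument for this corollary. You verify that $S=\O_K$ fits \Cref{setup:geometric}, that the choices $x=e^{-1}$ and $a=1-\zeta_p$ satisfy the hypotheses on $x$ and $a$ in \Cref{t:essential-surjectivity-geom-case} and produce $z=(1-\zeta_p)e$, feed in the $e^{-1}$-lift from \cite[Corollary~7.4]{AHLB}, and read off the equivalence from part (1) with canonicity from part (2) together with the functoriality of the lift in $X_0$ --- exactly the intended deduction. One small remark: the statement's reference to ``\Cref{t:essential-surjectivity-geom-case}.2'' is a slightly imprecise citation (the equivalence itself is part (1); part (2) supplies the naturality behind the word ``canonical''), and you correctly disentangle these; likewise, writing $\|1-\zeta_p\|=\tfrac1{p-1}$ mirrors the paper's loose shorthand for the normalized valuation, which is harmless here.
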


This gives a completely canonical variant of the global Simpson correspondence for $e$-Hitchin small bundles in the case of arithmetic models. For example, in the setting of \Cref{c:induced-by-integral-model}, we will see in \Cref{l:pullback-of-v-vector-bundle-to-OC-nilpotent} that any v-vector bundle on $\X$ that comes via pullback from $\X_0$ lies in the image of $\mathrm S_{\tilde{X}}$. This generalises the $p$-adic Simpson functor of Liu--Zhu \cite{LiuZhu_RiemannHilbert} (in the case of good reduction) from $\Q_p$-local systems on $X_0$ to v-vector bundles on $X_0$.

\begin{proof}
	 We already know from \cite[Thm 7.13]{AHLB} that $\mathrm{S}_{\tilde{X},z}$ is fully faithful and natural in $\tilde{X}$ as described in (2). This naturality means that $S_{\wt X,z}$ identifies descent data on both sides: For this reason, we may without loss of generality assume that $S$ contains $\Z_p^\cycl$. Here we use that for the base-change $\X_{\Q_p^\cycl}$ of $\X$ along $\Spa(\Q_p^\cycl)\to \Spa(\Q_p)$, we have descent of (analytic) Higgs bundles along the v-cover $\X_{\Q_p^\cycl}\to \X$ by \cite[Corollary~7.4]{heuer-sheafified-paCS}.
	
	It thus remains to prove that $S_{\wt X,z}$ is essentially surjective when $S$ contains $\Z_p^\cycl$. The statement is Zariski-local on $X$, so we may assume that $X$ admits a toric chart and make choices as in the beginning of \S\ref{sec:explicit-toric-charts}. In particular, we obtain over $X$ a second $A_2(x)$-lift $\tilde{X}'$ induced by the toric chart via the natural map $A_2\to A_2(x)$. But since $X$ is smooth and $A_2(x)\to R$ is a square-zero thickening, any two $A_2$-lifts over the affine space $X$ are isomorphic. By naturality of $S_{\wt X,z}$ in the lift, this means that $S_{\wt X,z}\simeq S_{\wt X',z}$. To determine the essential image, we may therefore without loss of generality fix an isomorphism $\wt X\cong \wt X'$ and assume that $\wt X$ is as described in  \S\ref{sec:explicit-toric-charts}.
	
	Let now $V$ be a Hitchin-small v-vector bundle on $\X$. Choose a toric chart and let $M_\infty:=V(\X_\infty)$. As in \Cref{def:Hitchin-fibration}, we can endow $M_\infty$ with a Higgs field $\theta:M_\infty\to M_\infty\otimes_R \Omega_X^1(-1)$ as follows: After  passing to a cover $\X_n\to \X$ to make $V$ Faltings-small using \Cref{l:locally-Faltings-small}, we can use \Cref{l:comp-Higgs-v-bundle-on-wtX} to identify $M_\infty$ with the pullback of a Faltings-small Higgs bundle from $\mathcal X_n\to \X$, hence endowing it with a Higgs field $\theta$. Then by construction, $\mathcal H(V)$ is the characteristic polynomial of $\theta$. The assumption that $V$ is $z$-Hitchin-small means now that $\frac{1}{z}\theta$ is topologically nilpotent. Since $S\subseteq xS$, we have $zS\subseteq (1-\zeta_p)S$, so this implies that $(1-\zeta_p)^{-1}\theta$ is topologically nilpotent. 
	
	Note that by \Cref{t:canonical-HF}, the map $\theta:M_\infty\to M_\infty\otimes \Omega_X^1(-1)$ descends to a Higgs field $V\to V\otimes \Omega_X^1(-1)$. In other words, this means that $\theta$ is $\Delta$-equivariant.
	
	Recall now  that the homomorphism $D=D_c:\Delta\to \Hom(\Omega^1_X,R\{1\})$ from \eqref{d:def-D}. For any $\sigma \in \Delta$, we can compose $D(\sigma)$  with $\theta$ to obtain an $R_\infty$-linear $\Delta$-equivariant endomorphism \[D(\sigma)\circ \theta:M_\infty\to M_\infty.\] 
	The fact that $(1-\zeta_p)^{-1}\theta$ is topologically nilpotent now ensures that the map
	\[ \exp(D\circ \theta):\Delta\to \GL(M_\infty)\]
	converges. We claim that it is indeed a continuous 1-cocycle for the $\Delta$-action on $\GL(M_\infty)$ via conjugation. Indeed, for any $\sigma_1,\sigma_2\in \Delta$, due to the fact that $D(\sigma_2)\circ \theta$ commutes with $\sigma_1$ on $M_\infty$,
	\begin{align*}
	\exp(D(\sigma_1)\circ \theta)\cdot \sigma_1\exp(D(\sigma_2)\circ \theta)\sigma_1^{-1}&= \exp(D(\sigma_1)\circ \theta)\cdot \exp( \sigma_1 (D(\sigma_2)\circ \theta) \sigma_1^{-1})\\&=\exp(D(\sigma_1)\circ \theta)\cdot \exp(D(\sigma_2)\circ \theta)\\&=\exp(D(\sigma_1+\sigma_2)\circ \theta).
	\end{align*}
	
	This enables us to reverse the construction of a v-vector bundle out of a Higgs bundle described in \Cref{t:LSF-is-equivalence}, as follows. To find a preimage of $V$ under $\mathrm{S}_{\tilde{X},z}$, we need to find an $R\tf$-submodule $M\subseteq M_\infty$ of the same rank $m$ as $M_\infty$ over $R_\infty\tf$ such that for any $\sigma\in \Delta$, the following diagram commutes:
	\[
	\begin{tikzcd}[column sep = 1.5cm]
		M\otimes_RR_\infty \arrow[d] \arrow[r, "\id\otimes\sigma"] & M\otimes_RR_\infty \arrow[r, "\exp(D(\sigma)\circ \theta)"] & M\otimes_RR_\infty \arrow[d] \\
		M_\infty \arrow[rr, "\sigma"]                              &                                                             & M_\infty.                    
	\end{tikzcd}\]
	This can be reformulated as saying that $M$ is the submodule fixed by the $R_\infty\tf$-semilinear $\Delta$-action on $V(\X_\infty)$ defined by
	\[ \sigma\ast v=\exp(D(\sigma)\circ \theta)^{-1}\sigma v.\]
	This indeed defines a $\Delta$-action because $\exp(D\circ \theta)$ is a $1$-cocycle. Thus the $R\tf$-module $M$ is uniquely determined by the property that with respect to this twisted action, it satisfies
	\[ M=M_\infty^\Delta.\]
	By the Higgs field condition and the fact that the $\Delta$-action commutes with $\theta$, also the $\ast$-action commutes with $\theta$, so by taking $\Delta$-invariants of $\theta$,  we obtain a Higgs field $\theta_M:M\to M\otimes \Omega_X^1$.
	
	It remains to prove that the rank of $M$ is $m$. To see this, consider the action of the subgroup $\Delta_n$, corresponding to the pullback of $V$ to $\X_n$. By \Cref{l:locally-Faltings-small}, the v-vector bundle becomes Faltings-small on $\X_n\to \X$ for some $n$. It follows from the fact that $\LS_{c}^{\mathrm F}$ is an equivalence that there exists an $R_n\tf$-submodule $M_n\subset M_\infty$ of rank $m$ such that $M_n=M_\infty^{\Delta_n}$  by \Cref{p:comp-LS_F-LS_prism} and  \Cref{l:comp-Higgs-v-bundle-on-wtX}. We thus have
	\[M_\infty^\Delta=(M_n)^{\Delta/\Delta_n}.\]
	But the $R_n\tf$-semilinear action of the finite group $\Delta/\Delta_n$ now defines an \'etale descent datum for a vector bundle of rank $m$ along the $\Delta/\Delta_n$-Galois cover $\X_n\to \X$. By \cite[Lemma~2.6.5]{heuer-sheafified-paCS} (which is based on \cite[Theorem 8.2.22.(d)]{kedlaya_liu_relative_p_adic_hodge_theory_foundations}), this descent datum is effective and thus gives the desired $R\tf$-module $M$ of rank $m$.
	
	At this point, we have found a Higgs bundle $(M,\theta_M)$ such that $S_{\wt X,1}(M,\theta_M)=V$. It remains to observe that the nilpotence of $\frac{1}{z}\theta$ implies that $\frac{1}{z}\theta_M$ is nilpotent, hence $(M,\theta_M)$ is $z$-Hitchin small. It follows that $S_{\wt X,z}(M,\theta_M)=S_{\wt X,1}(M,\theta_M)=V$ is in the essential image of $S_{\wt X,z}$.
\end{proof}
\begin{remark}
	In fact, one can show that $D:\Delta\to \Hom(\Omega^1_X,R_\infty\{1\})$ is divisible by $(1-\zeta_p)$. Namely, in the setup of  (cf \cite[\S4.2]{AHLB}), $D(\sigma)$ can be described as sending $\frac{dT_i}{T_i}$ to $\mu^{c_i(\sigma)}[T_i^\flat]$, whose image in $R_\infty\{1\}$  is divisible by $(1-\zeta_p)$.
	This means that the above proof actually only used that $(1-\zeta_p)^{-1}\theta$ is nilpotent in the last paragraph, as for the convergence of $\exp(D\circ \theta)$ it would suffice that only  $\theta$ is nilpotent. Indeed, the above argument can therefore be used to show that the image of the \textit{local} correspondence \cite[Corollary~6.30]{AHLB} is given by v-vector bundles for which $\theta$ is nilpotent. The point is that the  stronger condition on $\theta$ of Hitchin-smallness is necessary to define the global  $p$-adic Simpson functor $S_{\tilde{X}}$ in the first place.
\end{remark}
\begin{remark}
	In \cite{AHLB}, we more generally worked in the derived category and constructed a functor 
	\[ \mathrm{S}_{\tilde{X}}:\{\text{Hitchin-small Higgs perfect complexes}\}\to \mathcal Perf(\X_v).\]
	We believe that it should be possible in principle to describe the essential image of this functor in a similar way. For this one would first have to show that every perfect complex on $\mathcal Perf(\X_v)$ carries a natural Higgs field.
	However, already describing the essential image of the local $p$-adic Simpson functor for coherent Higgs modules is likely more difficult, cf.\ \cite[Corollary 6.30]{AHLB}. 
\end{remark}

As an application of the generality of smoothoid spaces that we work with in this section, we can deduce a more geometric version in terms of moduli stacks: For this we consider for any $n\in \N$ the functors fibred in groupoids on the category of perfectoid spaces over $K$
\begin{eqnarray*}
 \sBun_{n}:Y&\mapsto& \{ \text{v-vector bundles on $\X\times Y$ of rank $n$}\},\\
 \sHiggs_{n}:Y&\mapsto& \{ \text{Higgs bundles on $\X\times Y$ of rank $n$}\}.
\end{eqnarray*}
By \cite[Thm~1.4]{heuer-sheafified-paCS}, these are small v-stacks, and the Hitchin fibrations from the last subsection can be assembled into morphisms of v-stacks over the Hitchin base:
\begin{definition}
	The \textit{Hitchin base} of $\X$ is the v-stack $\mathcal A_{n}$ defined by sending any perfectoid space $Y$ over $K$ to $\mathrm{A}_n(X\times Y)$ where $\mathrm{A}_n$ was defined in \Cref{def:Hitchin-small-locus}. Let \[\mathcal A_{n}^{\zHsm}\subseteq \mathcal A_{n}\]
	 be the Hitchin small locus given on $Y$ by $A_{n}^{\zHsm}(X\times Y)$  defined as in \Cref{def:Hitchin-small-locus}.
\end{definition}

The Hitchin fibrations are now morphisms of v-stacks over the same base
\[
	\begin{tikzcd}[row sep =-0.1cm,column sep = 1cm]
		{\sBun_{n}} \arrow[rd,"\widetilde{\mathcal H}"] &   \\                        &  \mathcal A_{n}\\
		\sHiggs_{n}\arrow[ru,"\mathcal H"']  &          
	\end{tikzcd}
\]
We thus obtain moduli spaces of Hitchin-small objects
\[ \sHiggs_{n}^\zHsm:=\sHiggs_{n}\times_{\mathcal{A}_n}\mathcal{A}_n^{\zHsm},\quad  \sBun^\zHsm_{n}:=\sBun_{n}\times_{\mathcal{A}_n}\mathcal{A}_n^{\zHsm}\]
as the respective preimages of the Hitchin-small locus.
\begin{corollary}
	\label{cor:smoothoid-case-2-comparison-of-moduli-spaces-over-the-hitchin-small-locus}
	Let $X$ be a $p$-adic smooth formal scheme over $\O_K$ for some perfectoid field $K$ containing $\Q_p^\cycl$. Then any choice of lift $\tilde{X}$ of $X$ to $A_2(x)$ induces an equivalence of small v-stacks
	\[\mathcal{S}_{\tilde{X},z}:
	\sHiggs_{n}^\zHsm\isomarrow \sBun^\zHsm_{n}
	\]
\end{corollary}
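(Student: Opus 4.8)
The plan is to deduce \Cref{cor:smoothoid-case-2-comparison-of-moduli-spaces-over-the-hitchin-small-locus} from \Cref{t:essential-surjectivity-geom-case} by evaluating on test objects. First I would unwind the moduli stacks: for an affinoid perfectoid space $Y=\Spa(R,R^+)$ over $K$, the base change $X_Y:=X\times_{\Spf\O_K}\Spf(R^+)$ is a smooth, hence smoothoid, formal scheme over the perfectoid ring $R^+$, with adic generic fibre $\X\times Y$; since $\Q_p^\cycl\subseteq K$ we have $\Z_p^\cycl\subseteq R^+$, and $R^+$ is $p$-torsionfree (being a subring of the $K$-Banach algebra $R$), so $(R^+,X_Y)$ is an instance of \Cref{setup:geometric}. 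The image of $x$ in $R^+$ still satisfies $R^+\subseteq xR^+$, and with $a:=1-\zeta_p$ and $z:=ax^{-1}$ one has $\|a\|\le\tfrac1{p-1}$; moreover the chosen $A_2(x)$-lift $\tilde X$ base-changes, along $A_2^{\O_K}(x)\to A_2^{R^+}(x)$, to an $x$-lift $\tilde{X}_Y$ of $X_Y$. Unwinding \Cref{def:Hitchin-small-locus} and the definitions of $\mathcal A_n$ and $\mathcal A_n^\zHsm$, the groupoid $\sHiggs_n^\zHsm(Y)$ is canonically $\Higgs^\zHsm(\X\times Y)$ (the identification respecting the Hitchin-small locus, as $\Omega^1_{X_Y|R^+}$ is the base change of $\Omega^1_{X|\O_K}$), and likewise $\sBun_n^\zHsm(Y)=\mathrm{Bun}^\zHsm((\X\times Y)_v)$, in rank $n$.

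Next I would assemble the equivalence. By \Cref{t:essential-surjectivity-geom-case}.(1) applied to $X_Y/R^+$ and $\tilde{X}_Y$, the functor $\mathrm{S}_{\tilde{X}_Y,z}$ is an equivalence of groupoids $\sHiggs_n^\zHsm(Y)\isomarrow\sBun_n^\zHsm(Y)$, compatible with the Hitchin fibrations by \Cref{l:comp-with-Hitchin-fibration}. For a morphism $f\colon Y_1=\Spa(R_1,R_1^+)\to Y$ in $\Perf_K$, the honest base change $\tilde f\colon\tilde{X}_{Y_1}=\tilde{X}_Y\times_{\Spf R^+}\Spf R_1^+\to\tilde{X}_Y$ is an $A_2(x)$-linear lift of $X_{Y_1}\to X_Y$, so \Cref{t:essential-surjectivity-geom-case}.(2) supplies a natural transformation $\gamma(\tilde f)\colon f^\ast\circ\mathrm{S}_{\tilde{X}_Y,z}\Rightarrow\mathrm{S}_{\tilde{X}_{Y_1},z}\circ f^\ast$, compatible with composition. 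The key point I would then check is that $\gamma(\tilde f)$ is an \emph{isomorphism}: it arises by base change along the map of perfectoid rings $R^+\to R_1^+$, and since $\Spf(R^+)^\HT=\Spf(R^+)$ and $\Spf(R_1^+)^\HT=\Spf(R_1^+)$, the Hodge--Tate stack commutes with this base change, $X_{Y_1}^\HT=X_Y^\HT\times_{X_Y}X_{Y_1}$; hence the stack $\Lft_{X,x}$, the splitting $\Phi_{\tilde{X}}$ and the period sheaf $\mathcal{B}_{\tilde{X}}$ computing $\mathrm{S}_{\tilde{X},z}$ via \eqref{eq:compute-StX-using-period-sheaf} are all compatible with perfectoid base change, Faltings' twisted pullback degenerating here to the ordinary pullback, so that $\gamma(\tilde f)$ is the canonical isomorphism. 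Together with the composition compatibility this exhibits $\{\mathrm{S}_{\tilde{X}_Y,z}\}_Y$ as a morphism of v-stacks
\[\mathcal{S}_{\tilde{X},z}\colon\sHiggs_n^\zHsm\to\sBun_n^\zHsm,\]
compatible with the Hitchin fibrations, using that $\sHiggs_n$ and $\sBun_n$ — hence $\sHiggs_n^\zHsm$ and $\sBun_n^\zHsm$ — are small v-stacks by \cite[Thm~1.4]{heuer-sheafified-paCS}.

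Finally, a morphism of small v-stacks is an equivalence if and only if it induces an equivalence of groupoids on every affinoid perfectoid test object, which is exactly what the first paragraph together with \Cref{t:essential-surjectivity-geom-case}.(1) provides; so $\mathcal{S}_{\tilde{X},z}$ is an equivalence of small v-stacks. The hard part will be the middle step: upgrading the pointwise equivalences of \Cref{t:essential-surjectivity-geom-case}.(1) and the merely lax functoriality of \Cref{t:essential-surjectivity-geom-case}.(2) to a genuine morphism of v-stacks, i.e.\ checking that $\gamma(\tilde f)$ is invertible for a base change $Y_1\to Y$ of perfectoid spaces. Everything else is bookkeeping; the genuine content is the compatibility of the Hodge--Tate stack $X^\HT$, and of the period-sheaf constructions of \cite{AHLB} built from it, with base change along maps of perfectoid rings, which is where one really uses $\Spf(R^+)^\HT\cong\Spf(R^+)$.
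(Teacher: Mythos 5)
Your proof takes essentially the same route as the paper: base-change the lift $\tilde{X}$ along $\O_K\to R^+$ for each affinoid perfectoid test object, apply \Cref{t:essential-surjectivity-geom-case}.(1) pointwise to get equivalences of groupoids, and use the functoriality of \Cref{t:essential-surjectivity-geom-case}.(2) to glue these into a morphism of v-stacks. The paper's own proof is a two-sentence version of exactly this. Where you go beyond the paper is in explicitly flagging and addressing the point that the natural transformation $\gamma(\tilde f)$ supplied by \Cref{t:essential-surjectivity-geom-case}.(2) must be an \emph{isomorphism} in order to obtain a genuine $1$-morphism of stacks. Your explanation — that for a base change of perfectoid rings the Hodge--Tate stack, the stack of lifts and the period sheaf all commute with base change since $\Spf(R^+)^\HT\cong\Spf(R^+)$, so twisted pullback degenerates to ordinary pullback and $\gamma(\tilde f)$ is the canonical base-change isomorphism — is correct, and is the content the paper silently relies on. One could also observe that $\gamma(\tilde f)$ is invertible because, per \S\ref{s:twisted-pullback}, it is assembled from the equivalences $\beta_{\tilde X}$, $\beta_{\tilde Y}$ and the fully faithful $\alpha^\ast$, hence is a natural isomorphism by construction; this is a slightly faster way to dispose of the invertibility worry and avoids having to re-argue base-change compatibility of $\mathcal B_{\tilde X}$. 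Either way, your proof is correct and your identification of the genuine content — the compatibility of the $X^\HT$-based constructions with perfectoid base change — is exactly right.
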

\begin{proof}
	For any affine perfectoid formal scheme $Y=\Spf(S^+)$ over $\O_K$, the lift $\tilde{X}$ induces a flat lift of $X\times_{\O_K}Y$ to $A_2$ given by $\tilde{X}\times_{\Spf(A)}\Spf(A_{\inf}(S^+))$, and this formation is functorial in $Y$. The statement thus follows from applying \Cref{t:essential-surjectivity-geom-case} to $X\times Y$ for the choice of lift just described, using the functoriality in \Cref{t:essential-surjectivity-geom-case}.(3).
\end{proof}
As a special case, we can deduce the first non-trivial example of a comparison between the full moduli stacks beyond $n=1$, namely for the special case of projective space:
\begin{corollary}
	Let $X=\mathbb P^m_{\O_K}$, then there is a canonical isomorphism of small $v$-stacks
	\[\mathcal{S}:
	\sHiggs_{n}\isomarrow \sBun_{n}.
	\]
	More generally, such an isomorphism exists for any smooth proper $p$-adic formal scheme $X$ over $\O_K$ with a lift $\tilde{X}$ of $X$ to $A_2$ such that $X$ has trivial Hitchin base $\mathcal A_{n}(K)=\{0\}$.
\end{corollary}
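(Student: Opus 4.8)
The plan is to deduce the statement from \Cref{cor:smoothoid-case-2-comparison-of-moduli-spaces-over-the-hitchin-small-locus} (equivalently, from \Cref{t:essential-surjectivity-geom-case} applied to $X\times Y$ for varying perfectoid $Y$, which works over an arbitrary perfectoid base) by showing that under the hypothesis the Hitchin base $\mathcal{A}_n$ is \emph{trivial as a v-sheaf}, not merely on $K$-points. Granting this, $\mathcal{A}_n^{z\text{-}\Hsm}=\mathcal{A}_n$ (the zero section is always Hitchin-small), so the natural maps $\sHiggs_n^{z\text{-}\Hsm}\to\sHiggs_n$ and $\sBun_n^{z\text{-}\Hsm}\to\sBun_n$ are isomorphisms, and the equivalence $\mathcal S_{\tilde X,z}$ of \Cref{cor:smoothoid-case-2-comparison-of-moduli-spaces-over-the-hitchin-small-locus} becomes the desired $\mathcal S\colon\sHiggs_n\isomarrow\sBun_n$.

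First I would prove that $\mathcal A_n(Y)=\{0\}$ for every affinoid perfectoid $Y=\Spa(R,R^+)$ over $K$. Since $X$, hence $\X$, is proper, writing $p_\X\colon\X\times Y\to\X$ and $p_Y\colon\X\times Y\to Y$ for the projections, we have $\Omega^1_{(\X\times Y)/Y}=p_\X^\ast\Omega^1_{\X/K}$, while the twist $\mathcal O(-1)$ is pulled back from the base; thus $\mathrm{Sym}^i(\Omega^1_\X(-1))\cong p_\X^\ast\mathrm{Sym}^i\Omega^1_\X\otimes p_Y^\ast L^{\otimes i}$ for a line bundle $L$ on $Y$. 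By Kiehl's finiteness theorem $R\Gamma(\X,\mathrm{Sym}^i\Omega^1_\X)$ is a perfect complex of $K$-vector spaces concentrated in non-negative degrees, so flat base change along $K\to R\tf$ and the projection formula give $H^0(\X\times Y,\mathrm{Sym}^i(\Omega^1_\X(-1)))\cong H^0(\X,\mathrm{Sym}^i\Omega^1_\X)\otimes_K H^0(Y,L^{\otimes i})$. The hypothesis $\mathcal A_n(K)=\{0\}$ says exactly that the first factor vanishes for $1\le i\le n$ (via GAGA for the proper rigid space $\X$, the $(-1)$-twist over $K$ being irrelevant), whence $\mathcal A_n(Y)=\{0\}$. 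This settles the case of general proper $X$ equipped with an $A_2$-lift.

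For $X=\mathbb P^m_{\mathcal O_K}$ two classical inputs complete the argument. The dual Euler sequence gives an inclusion $\Omega^1_{\mathbb P^m}\hookrightarrow\mathcal O(-1)^{m+1}$, hence $\mathrm{Sym}^i\Omega^1_{\mathbb P^m}\hookrightarrow\mathrm{Sym}^i(\mathcal O(-1)^{m+1})\cong\mathcal O(-i)^{\oplus\binom{m+i}{i}}$, which has no global sections for $i\ge 1$; so $\mathcal A_n(K)=\{0\}$. Moreover $\mathbb P^m_{\mathcal O_K}$ admits an essentially unique $A_2$-lift, namely $\mathbb P^m_{A_2}$, because $H^1(\mathbb P^m,T_{\mathbb P^m})=H^2(\mathbb P^m,T_{\mathbb P^m})=0$ (again from the Euler sequence $0\to\mathcal O\to\mathcal O(1)^{m+1}\to T_{\mathbb P^m}\to 0$). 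Hence the resulting $\mathcal S$ is canonical, and by the functoriality of \Cref{t:essential-surjectivity-geom-case}.(3) it is independent of the remaining auxiliary choices (e.g.\ of $z$, or of a base change to enforce $\Q_p^{\cycl}\subseteq K$, which one can always undo by v-descent of Higgs and v-vector bundles exactly as in the proof of \Cref{t:essential-surjectivity-geom-case}).

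The step I expect to be the main obstacle is the first one: making precise the passage from ``$\mathcal A_n$ vanishes on $K$-points'' to ``$\mathcal A_n$ vanishes as a v-sheaf'', i.e.\ the (completed) flat base change identity $H^0(\X\times Y,p_\X^\ast\mathcal F)\cong H^0(\X,\mathcal F)\otimes_K\mathcal O(Y)$ for coherent $\mathcal F$ on the proper rigid space $\X$ and affinoid perfectoid $Y$, together with the comparison of algebraic versus analytic global sections of symmetric differentials needed to match this with the hypothesis $\mathcal A_n(K)=\{0\}$. Everything after that is formal manipulation of fibre products of v-stacks and a direct appeal to \Cref{cor:smoothoid-case-2-comparison-of-moduli-spaces-over-the-hitchin-small-locus}.
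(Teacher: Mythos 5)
Your argument matches the paper's: deduce the statement from \Cref{cor:smoothoid-case-2-comparison-of-moduli-spaces-over-the-hitchin-small-locus} by showing the Hitchin base is trivial over every affinoid perfectoid $Y$, using the Euler sequence to see $\mathcal A_n(K)=\{0\}$ for $\mathbb P^m$. The paper's proof leaves the passage from $\mathcal A_n(K)=\{0\}$ to $\mathcal A_n(Y)=\{0\}$ implicit, whereas you spell it out via Kiehl finiteness, flat base change, and the projection formula; this is a correct and welcome elaboration, not a different method.
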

\begin{proof}
	For $X=\mathbb P^m$, we see from the Euler sequence that $\mathcal A_{n}(K)=\{0\}$: 
	Indeed, we have $\Omega_X^1\subseteq \O_{\mathbb P^m}(-1)^{m+1}$ and hence $\mathrm{Sym}^k\Omega_X^1\subseteq \mathrm{Sym}^k\O_{\mathbb P^m}(-1)^{m+1}\cong \O^{{m+1}\choose k}\otimes \O_{\mathbb P^m}(-1)^{\otimes k}$ has no non-trivial global sections.
	The statement now follows from \Cref{cor:smoothoid-case-2-comparison-of-moduli-spaces-over-the-hitchin-small-locus}: For any affine perfectoid $Y$, any Higgs bundle, respectively v-vector bundle, on $X\times Y$ is in the fibre of $0$, hence is Hitchin-small.
\end{proof} 
  
As a second application, we deduce the following analogue of \cite[Thm~1.2]{analytic_HT}:
\begin{corollary}
	Let $X$ be a toric smoothoid formal scheme over a perfectoid base $S$ that contains $\Z_p^\cycl$. Choose a toric chart $c:X\to \mathbb T^d_S$ and consider the induced toric tower $\dots \to X_{n+1}\to X_n\to \dots \to X$ of formal schemes, so that the adic generic fibre of $X_n\to X$ is Galois with group $\Delta_n$. Then there is an equivalence of categories
\[ 2\text{-}\varinjlim\limits_{n\in\N} \Vec([X_n^{\rm HT}/\Delta_n],\O\tf)\isomarrow \mathrm{Bun}(\X_v) \]
\end{corollary}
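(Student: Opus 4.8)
The plan is to build the equivalence one finite level at a time and then pass to the filtered $2$-colimit. Since $X\mapsto X^{\HT}$ is functorial, the deck transformation action of $\Delta_n=\Gal(\X_n|\X)$ on $X_n$ induces an action on $X_n^{\HT}$, and descent along the resulting $\Delta_n$-torsor $X_n^{\HT}\to[X_n^{\HT}/\Delta_n]$ identifies $\Vec([X_n^{\HT}/\Delta_n],\O\tf)$ with the category of $\Delta_n$-equivariant objects in $\Bun(X_n^{\HT},\O\tf)$. First I would observe that the fully faithful functor $\alpha^\ast_{X_n}$ of \eqref{eq:alpha-ast-Bun(XHT)}, being natural in $X_n$, is $\Delta_n$-equivariant, so it induces a fully faithful functor on $\Delta_n$-equivariant objects; composing with the v-descent equivalence $\Bun((\X_n)_v)^{\Delta_n}\isomarrow\Bun(\X_v)$ along the finite étale Galois cover $\X_n\to\X$ yields a fully faithful functor
\[ F_n\colon\Vec([X_n^{\HT}/\Delta_n],\O\tf)\hookrightarrow\Bun(\X_v).\]
Using naturality of $\alpha^\ast$ for the maps $X_{n+1}\to X_n$ (which intertwine the actions of $\Delta_{n+1}$ and $\Delta_n$ along the surjection $\Delta_{n+1}\twoheadrightarrow\Delta_n$) together with transitivity of descent, the $F_n$ are compatible with the transition functors of the colimit, hence glue to a functor $F$ out of $2\text{-}\varinjlim_n\Vec([X_n^{\HT}/\Delta_n],\O\tf)$. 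Full faithfulness of $F$ is then formal, because any two objects of a filtered $2$-colimit are already defined at a common finite level, where $F$ agrees with the fully faithful $F_n$.

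The real content is essential surjectivity. Let $V$ be a v-vector bundle on $\X$. By \Cref{l:locally-Faltings-small}.(2) the pullback $V|_{\X_n}$ is Faltings-small, hence Hitchin-small by \Cref{l:locally-Hitchin-small}, for $n\gg0$. The natural toric chart $c_n\colon X_n\to\mathbb T^d_S$ makes $X_n$ a smoothoid formal scheme over $S$ and induces an $A_2$-lift $\tilde X_n$ as in \S\ref{sec:explicit-toric-charts}, so \Cref{t:essential-surjectivity-geom-case} (applied with $x=1$, $a=1-\zeta_p$, $z=1-\zeta_p$) identifies the essential image of $\mathrm S_{\tilde X_n}$ with $\Bun^{\Hsm}((\X_n)_v)$. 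By its construction in \S\ref{s:recall-global-Simpson-geom}, $\mathrm S_{\tilde X_n}$ factors through $\Bun(z_\ast X_n^{\HT},\O\tf)\xrightarrow{\alpha^\ast_{X_n}}\Bun((\X_n)_v)$, and $\alpha^\ast_{X_n}$ on $z_\ast X_n^{\HT}$ is the composite of pullback along the canonical map $X_n^{\HT}\to z_\ast X_n^{\HT}$ with $\alpha^\ast_{X_n}$ on $X_n^{\HT}$; hence for $n\gg0$ the bundle $V|_{\X_n}$ lies in the essential image of $\alpha^\ast_{X_n}\colon\Bun(X_n^{\HT},\O\tf)\hookrightarrow\Bun((\X_n)_v)$, say $V|_{\X_n}\cong\alpha^\ast_{X_n}(W)$. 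To conclude I would transport the canonical $\Delta_n$-descent datum on $V|_{\X_n}$ — the one whose descent along $\X_n\to\X$ recovers $V$ — through the $\Delta_n$-equivariance isomorphisms of $\alpha^\ast_{X_n}$ to $W$: full faithfulness of $\alpha^\ast_{X_n}$ produces the isomorphisms $\sigma^\ast W\isomarrow W$ for $\sigma\in\Delta_n$ and forces the cocycle identity, so $W$ upgrades to an object of $\Vec([X_n^{\HT}/\Delta_n],\O\tf)$, and by construction $F_n(W)\cong V$.

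The main obstacle is precisely this reconciliation step: \Cref{t:essential-surjectivity-geom-case} naturally outputs a bundle on the $z$-twisted stack $z_\ast X_n^{\HT}$ — the twist being forced by the convergence condition built into the global $p$-adic Simpson functor — whereas the statement demands a bundle on $X_n^{\HT}$ itself. This gap is bridged by the canonical morphism $X_n^{\HT}\to z_\ast X_n^{\HT}$ and, crucially, by the freedom to enlarge $n$: it is only after pullback along a sufficiently high stage $\X_n\to\X$ of the toric tower that an \emph{arbitrary} v-vector bundle $V$ becomes Hitchin-small, which is exactly why the colimit over $n$ is needed. The remaining subtlety — descending the cocycle datum from $(\X_n)_v$ to $X_n^{\HT}$ — is routine but genuinely relies on the full faithfulness of $\alpha^\ast_{X_n}$ rather than on any further geometry of the Hodge--Tate stack.
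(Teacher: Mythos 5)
Your argument is correct and follows essentially the same approach as the paper: the canonical fully faithful functor is obtained by combining the fully faithful pullback $\alpha^\ast_{X_n}$ of \eqref{eq:alpha-ast-Bun(XHT)} with Galois descent on both sides, and essential surjectivity is established by invoking \Cref{l:locally-Hitchin-small} to make an arbitrary $V$ Hitchin-small on some $\X_n$ and then \Cref{t:essential-surjectivity-geom-case} to produce a preimage on $X_n^{\HT}$. The paper's proof is terser (three sentences), but the route — including your bridging of the $z$-twist via the canonical map $X_n^{\HT}\to z_\ast X_n^{\HT}$ through which $\alpha^\ast_{X_n}$ on the twisted stack factors, and the transport of the $\Delta_n$-descent datum via full faithfulness — is exactly what the paper's citations to Galois descent, \Cref{t:essential-surjectivity-geom-case}, and \Cref{l:locally-Hitchin-small} compress.
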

\begin{proof}
	By Galois descent, we have a canonical functor from left to right, which is fully faithful by \cite[Thm 7.13]{AHLB}.
	 By \Cref{t:essential-surjectivity-geom-case}, any Hitchin-small v-vector bundle is in the essential image. By \Cref{l:locally-Hitchin-small}, any v-vector bundle becomes Hitchin-small on some $\X_n$.
\end{proof}

\section{Twisted pullback}\label{s:twisted-pullback}
As the second goal for the geometric \Cref{setup:geometric}, we now clarify the functoriality of our constructions, i.e.\ how the small $p$-adic Simpson correspondences we defined  can be compared under morphisms of smoothoids $f:Y\to X$. While \Cref{t:essential-surjectivity-geom-case}.(2)  explains how to do this if there exists an $A_2$-lift of $f$, it is interesting for applications what one can say in the absence of such an $A_2$-lift of $f$. To give a concrete example, in arithmetic situations, one can often obtain lifts to $B^+_{\dR}/\xi^2$ by base-change from arithmetic base fields, but these may not respect the integral structures. A solution to this issue is then to choose integral lifts of $X$ and $Y$, but this may not be possible in such a way that a lift of $f$ exists.

Indeed, for this reason, the question of functoriality of the $p$-adic Simpson correspondence is already treated by Faltings (in his slightly different setting of what we call Faltings-small bundles): It is very briefly sketched in three sentences on \cite[p855]{faltings2005p} and is implicit in construction of ``twisted pullback'' \cite[p858]{faltings2005p}. In \cite{faltings2005p}, this is further developed by way of explicit formulas in the proof of the full $p$-adic Simpson correspondence for curves. The goal of this section can also be described as reinterpreting Faltings' definition in a more geometric way.

\medskip

Let $S$ be a $p$-adic perfectoid base over $\Z_p$ and let $f:Y\to X$ be a morphism of smoothoid formal schemes over $S$. Let $\X$ and $\mathcal Y$ be the respective generic fibres. Given any $A_2$-lifts $\tilde X$ of $X$ and $\tilde Y$ of $Y$ respectively, the functors $\beta_{\tilde X}\tf$ and $\beta_{\tilde Y}\tf$ from \eqref{eq:def-alpha-beta} fit by \cite[Proposition 6.28]{AHLB} and functoriality of the Hodge--Tate stack into a commutative diagram
\[
\begin{tikzcd}
	\Higgs^\Hsm(\mathcal Y) & \Bun(z_{\ast}X^\HT)\tf. \arrow[l,"\beta_{\tilde Y}"',"\sim"]\\
	\Higgs^\Hsm(\X)  \arrow[dotted,u]& \Bun(z_{\ast}X^\HT)\tf \arrow[u,"f^{\ast}"] \arrow[l,"\beta_{\tilde X}"',"\sim"]
\end{tikzcd}\]
\begin{definition}\label{d:twisted-pullback}
	We denote by $f^{\circ}_{\tilde X,\tilde Y}:	\Higgs^\Hsm(\X)\to \Higgs^\Hsm(\mathcal Y)$ the composition $f^{\circ}_{\tilde X,\tilde Y}:=\beta_{\tilde Y}\circ f^\ast \circ \beta_{\tilde X}^{-1}$, which is the essentially unique dotted arrow making the above diagram commute.
\end{definition}
The goal of this section is to give a more explicit description of this functor. For this, we use a construction due to Abbes--Gros  \cite[II.10.3]{abbes2016p}:
\begin{definition}
	The Higgs--Tate torsor $\mathcal {HT}_f$ is the sheaf on $Y_{\mathrm{Zar}}$ defined as the subsheaf of 
	\[ \underline{\Hom}(f^{-1}\O_{\tilde X},\O_{\tilde Y})\]
	of $A_2$-algebra homomorphisms that reduce to the map $f^{-1}\O_{X}\to \O_{Y}$ defined by $f:Y\to X$.
\end{definition}
\begin{lemma}
	$\mathcal {HT}_f$ is a  $f^{\ast}T_X\{1\}$-torsor on $Y$.
\end{lemma}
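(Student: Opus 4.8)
The statement is local on $Y$, so I may assume $Y=\Spf(R_Y)$ and $X=\Spf(R_X)$ are affine, with $\tilde X=\Spf(\tilde R_X)$ and $\tilde Y=\Spf(\tilde R_Y)$ the chosen $A_2$-lifts; here $\tilde R_X\to R_X$ and $\tilde R_Y\to R_Y$ are square-zero thickenings with kernels $I/I^2\otimes_{R_X}R_X$ and $I/I^2\otimes_{R_Y}R_Y$ respectively, i.e.\ the Breuil--Kisin--Fargues twists $R_X\{1\}$ and $R_Y\{1\}$. The first step is to check that the presheaf $\mathcal{HT}_f$ is nonempty, at least locally: this is exactly the obstruction-theoretic statement that the two square-zero lifts $\tilde R_X$ and $\tilde R_Y$ are compatible along $f$. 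Since $X$ is smooth over $S$, the cotangent complex $L_{R_X|A}$ is (after $I$-adic completion) concentrated in degree $0$ and locally free, so the deformation of the map $f^\sharp\colon R_X\to R_Y$ to a map $\tilde R_X\to \tilde R_Y$ over $A_2$ is unobstructed locally on $Y$ — the obstruction lies in $\mathrm{Ext}^1_{R_Y}(f^\ast\Omega^1_{X|S},R_Y\{1\})=H^1$ of a locally free module, which vanishes on affines. Hence $\mathcal{HT}_f$ is locally nonempty.

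**Torsor structure.** Next I would identify the action. Given two $A_2$-algebra homomorphisms $g_0,g_1\colon f^{-1}\O_{\tilde X}\to \O_{\tilde Y}$ that both reduce to $f^\sharp$ modulo the square-zero ideal, their difference $g_1-g_0$ lands in the ideal $\ker(\O_{\tilde Y}\to \O_Y)=\O_Y\{1\}$, and the $A_2$-algebra condition forces $g_1-g_0$ to be a derivation $f^{-1}\O_X\to \O_Y\{1\}$, i.e.\ an element of $\underline{\Hom}_{\O_Y}(f^\ast\Omega^1_{X|S},\O_Y\{1\})=f^\ast T_X\{1\}$. Conversely, adding such a derivation to a given $g_0$ produces another element of $\mathcal{HT}_f$; one checks this is a well-defined action (using that the ideal is square-zero, so the derivation property is exactly what is needed for the sum to remain a ring homomorphism), and that it is simply transitive on the (nonempty) fibres. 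Together with the local nonemptiness from the first step, this shows $\mathcal{HT}_f$ is an $f^\ast T_X\{1\}$-torsor on $Y_{\mathrm{Zar}}$.

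**Main obstacle.** The only genuine content is the nonemptiness (local existence of a lift of $f$ to the chosen integral $A_2$-lifts); the torsor structure is then a formal consequence of the square-zero deformation formalism. The existence step is where one must invoke smoothness of $X$ over $S$ and the identification of the relevant deformation-theoretic group with $f^\ast T_X\{1\}$ — note that it is crucial here that the ideals defining $\tilde X\to X$ and $\tilde Y\to Y$ are both the Breuil--Kisin--Fargues twist $\{1\}$ pulled back appropriately, so that the difference of two lifts is naturally valued in $f^\ast T_X\{1\}$ and not some other twist; this compatibility is built into the construction of the lifts $\tilde X$, $\tilde Y$ as reductions mod $I^2$ of (local) prismatic lifts, or more directly into the fact that $A_2=A/I^2$ and $A_{\inf}$-base-change is functorial. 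I would phrase the argument using the standard exact sequence for deformations of a morphism to a square-zero thickening, e.g.\ as in the deformation theory underlying $\Lft_{X}$ recalled in \S\ref{s:recall-global-Simpson-geom}, which makes both the existence (locally) and the torsor structure immediate.
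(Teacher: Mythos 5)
Your proof is correct and follows essentially the same route as the paper's: reduce to the affine case, obtain a (global) section by formal smoothness of $X$ over $S$ (equivalently, vanishing of the obstruction in $\mathrm{Ext}^1$ against the locally free $\Omega^1_{X|S}$ over an affine), and identify the difference of two lifts as a derivation $\O_X\to\O_Y\{1\}$, i.e.\ a section of $f^\ast T_X\{1\}$. The paper's proof is just a more condensed version of the same deformation-theoretic argument.
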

\begin{proof}
	The statement is local on $Y$, so we may assume that $X$ and $Y$ are affine. Then a global section of $\mathcal {HT}_f(Y)$ exists by formal smoothness. By standard deformation theory, the space of $A_2$-lifts is then a principal homogeneous space under the group of derivations 
	\[\O(X)\to \ker(\mathcal O(\tilde Y)\to \O(Y))=\O(Y)\{1\},\]
	which are given by $\Hom_{\O(X)}(\Omega_X(X),\O(Y)\{1\})=f^{\ast}T_X\{1\}(Y)$.
\end{proof}
\begin{proposition}\label{p:twisted-pullback}
	Let $(M,\theta)\in \Higgs^\Hsm(\X)$. Then there is a canonical isomorphism
	\[ f^{\circ}_{\tilde X,\tilde Y}(M,\theta)=f^\ast(M,\theta)\times^{T_X\{1\}}\mathcal{HT}_f\]
	where $T_X\{1\}$ acts on $(M,\theta)$ via the homomorphism \[\exp(\theta):T_X\{1\}\to \underline{\mathrm{Aut}}(M).\]
\end{proposition}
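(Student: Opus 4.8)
The plan is to unwind both sides of the claimed identity through the equivalence $\beta_{\tilde Y}$ and reduce to a statement about $T_X\{1\}$-torsors and vector bundles with $T_X^\sharp\{1\}$-action on the Hodge--Tate stack. First I would recall that, under $\beta_{\tilde X}$, a Hitchin-small Higgs bundle $(M,\theta)$ on $\X$ corresponds to the vector bundle on $z_\ast X^\HT \cong BT_X^\sharp\{1\}$ obtained by equipping $M$ with the $T_X^\sharp\{1\}$-action given by the homomorphism $\exp(\theta)\colon T_X^\sharp\{1\}\to \underline{\Aut}(M)$ (this is exactly the content of the Cartier-duality description recalled around \eqref{eq:Cartier-duality-applied-to-Higgs}, applied to the rescaled Higgs field). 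So the left-hand side $f^\circ_{\tilde X,\tilde Y}(M,\theta)$ is, by definition, the Higgs bundle on $\mathcal Y$ obtained by transporting the pullback $f^\ast$ of this $T_X^\sharp\{1\}$-equivariant bundle along the isomorphism $z_\ast Y^\HT\cong BT_Y^\sharp\{1\}$ induced by $\tilde Y$, and then applying $\Phi_Y$.

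The key geometric input is that the two splittings $\tilde X$ and $\tilde Y$ do \emph{not} in general intertwine the functoriality map $Y^\HT\to Y^\HT\times_{X^\HT\to X}$ ... more precisely, the discrepancy between the pullback of the splitting $s_{\tilde X}\colon X\to X^\HT$ along $f$ and the splitting $s_{\tilde Y}\colon Y\to Y^\HT$ is measured by a torsor, and that torsor is precisely $\mathcal{HT}_f$. Concretely, I would argue as follows: over a Zariski cover of $Y$ on which $\mathcal{HT}_f$ admits a section, i.e.\ on which $f$ lifts to an $A_2$-linear $\tilde f\colon \tilde Y\to \tilde X$, Theorem \ref{t:essential-surjectivity-geom-case}.(2) (or rather \cite[Proposition 6.28]{AHLB}, which is what the diagram preceding Definition \ref{d:twisted-pullback} records) gives a canonical identification $f^\circ_{\tilde X,\tilde Y}(M,\theta)\cong f^\ast(M,\theta)$. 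Two choices of such a local lift differ by a section of $f^\ast T_X\{1\}$, and the induced automorphism of $f^\ast M$ is exactly the one coming from $\exp(\theta)$ applied to that section — this is the compatibility of $\beta$ with the $T_X^\sharp\{1\}$-action, together with the identification of the banding of $\Lft_X$ (resp.\ $z_\ast X^\HT$) with $T_X\{1\}$ (resp.\ $T_X^\sharp\{1\}$) via $u\cdot\mathrm{can}$ as in \S\ref{s:recall-global-Simpson-geom}. Therefore the local identifications glue, after twisting by $\mathcal{HT}_f$, into the global isomorphism
\[
f^\circ_{\tilde X,\tilde Y}(M,\theta)\;\cong\; f^\ast(M,\theta)\times^{T_X\{1\}}\mathcal{HT}_f,
\]
where $T_X\{1\}$ acts on $f^\ast M$ through $\exp(\theta)$ as stated. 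One should also check that this isomorphism is compatible with the Higgs fields on both sides, which follows because the $T_X\{1\}$-action by $\exp(\theta)$ commutes with $\theta$ itself (the Higgs condition), so twisting by $\mathcal{HT}_f$ does not disturb the Higgs field.

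The main obstacle I anticipate is bookkeeping the twists and the constant $u\in\Z_p^\times$ correctly: one has to be careful that the $T_X\{1\}$ appearing as the banding of the lift-stack $\Lft_{X,x}$ matches, under the comparison maps $X^\HT\to\Lft_{X,x}\to z_\ast X^\HT$, the group $T_X^\sharp\{1\}$ through which $\exp(\theta)$ is really defined, and that the exponential $\exp(\theta)$ genuinely converges on the PD-completion — this is where Hitchin-smallness of $(M,\theta)$ is used, exactly as in \S\ref{s:recall-global-Simpson-geom}. A secondary subtlety is verifying that the "$A_2$-algebra homomorphisms reducing to $f$" description of $\mathcal{HT}_f$ (Abbes--Gros's definition) is the correct torsor: this amounts to matching the deformation-theoretic description of $\Lft_f$-type data with the Hodge--Tate stack picture, which the preceding Lemma already does by identifying $\mathcal{HT}_f$ as an $f^\ast T_X\{1\}$-torsor via standard square-zero deformation theory. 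Once these identifications are pinned down, the gluing argument is formal.
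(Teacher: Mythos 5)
Your proposal is correct and takes essentially the same approach as the paper's proof: identify $\mathcal{HT}_f$ as the torsor of local lifts $\tilde f$, use the existence of a local lift to trivialize the comparison $f^\circ\cong f^\ast$, compute that the discrepancy between two lifts acts on $f^\ast M$ via $\exp(\theta)$ (this is exactly Lemma~\ref{l:action-via-exp}), and conclude by twisting. The paper organizes this a bit more cleanly by constructing a single universal $2$-arrow over $\mathcal{HT}_f$ itself and then taking $T_X\{1\}$-equivariant sections, rather than gluing local identifications, but the substance is the same and your anticipated subtleties (the banding comparison $T_X^\sharp\{1\}\to T_X\{1\}$ via $u\cdot\mathrm{can}$, and convergence of $\exp(\theta)$ from Hitchin-smallness) are precisely the points the paper has to address.
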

For small Higgs bundles on curves of good reduction, this recovers the definition of Faltings \cite[p858]{faltings2005p}, phrased by him in terms of the spectral curve.

\begin{corollary}
	Any datum of a lift $\tilde f:\tilde X\to \tilde Y$ induces a natural equivalence $\gamma_{\tilde f}:f^{\circ}_{\tilde X,\tilde Y}\Rightarrow f^{\ast}$.
\end{corollary}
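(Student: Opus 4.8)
The plan is to deduce the statement formally from \Cref{p:twisted-pullback}. The first observation is that a lift $\tilde f$ of $f$ to $A_2$ is, by definition, an $A_2$-algebra homomorphism $f^{-1}\O_{\tilde X}\to\O_{\tilde Y}$ reducing to the structure map $f^{-1}\O_X\to\O_Y$; that is, it is precisely a global section of the Higgs--Tate torsor $\mathcal{HT}_f$. Since $\mathcal{HT}_f$ is an $f^\ast T_X\{1\}$-torsor, such a section trivialises it: it determines an isomorphism of torsors $t\colon f^\ast T_X\{1\}\isomarrow\mathcal{HT}_f$ carrying the identity section to $\tilde f$.

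Next I would substitute $t$ into \Cref{p:twisted-pullback}. For any $(M,\theta)\in\Higgs^\Hsm(\X)$ this gives a chain of canonical identifications
\[ f^{\circ}_{\tilde X,\tilde Y}(M,\theta)= f^\ast(M,\theta)\times^{T_X\{1\}}\mathcal{HT}_f \isomarrow f^\ast(M,\theta)\times^{T_X\{1\}}f^\ast T_X\{1\} = f^\ast(M,\theta), \]
where the middle isomorphism is induced by $t^{-1}$ and the last equality is the canonical trivialisation of a contracted product along the trivial torsor. Every map here is natural in $(M,\theta)$, so the composite is a natural isomorphism $f^{\circ}_{\tilde X,\tilde Y}\isomarrow f^\ast$; taking its inverse (equivalently, reading the chain backwards) produces the desired natural equivalence $\gamma_{\tilde f}\colon f^{\circ}_{\tilde X,\tilde Y}\Rightarrow f^\ast$, and it is an equivalence because each of its components is an isomorphism.

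The point that genuinely needs to be checked — and which I expect to be the only real, if mild, obstacle — is that \Cref{p:twisted-pullback} and hence the display above are identities in $\Higgs^\Hsm(\mathcal Y)$ rather than merely of the underlying $\O_{\mathcal Y}$-modules, i.e.\ that the Higgs field carried by $f^{\circ}_{\tilde X,\tilde Y}(M,\theta)$ (the one making $f^{\circ}_{\tilde X,\tilde Y}$ land in $\Higgs^\Hsm(\mathcal Y)$) corresponds to $f^\ast\theta$ under $\gamma_{\tilde f}$. I would settle this by recalling that the Higgs field on $f^\ast M\times^{T_X\{1\}}\mathcal{HT}_f$ is the one which restricts to $f^\ast\theta$ after pullback along any local section of $\mathcal{HT}_f$; this is consistent precisely because the components of $\theta$ commute by the Higgs field condition, so that the transition automorphisms $\exp(\delta\circ\theta)$ commute with $f^\ast\theta$ — the same mechanism already used in the proof of \Cref{l:comp-Higgs-v-bundle-on-wtX}. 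In the presence of the \emph{global} section $\tilde f$ there are no transition terms at all, so the Higgs field is globally $f^\ast\theta$, as wanted. Alternatively, one can avoid \Cref{p:twisted-pullback} entirely and argue on Hodge--Tate stacks: the splitting section of $z_\ast X^\HT\to X$ attached to an $A_2$-lift is functorial in the lift, so $\tilde f$ makes the sections attached to $\tilde X$ and $\tilde Y$ compatible along the morphism $Y^\HT\to X^\HT$ induced by $f$; together with \cite[Proposition~6.28]{AHLB} and functoriality of $X\mapsto X^\HT$ this shows that $\beta_{\tilde X}$ and $\beta_{\tilde Y}$ intertwine the pullback along $z_\ast Y^\HT\to z_\ast X^\HT$ with the naive analytic pullback $f^\ast$, and the essential uniqueness of the dotted arrow in \Cref{d:twisted-pullback} then yields $f^{\circ}_{\tilde X,\tilde Y}\cong f^\ast$ directly.
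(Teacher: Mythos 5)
Your proof is correct and follows exactly the route the paper has in mind: the corollary is stated without a separate proof because it is an immediate consequence of \Cref{p:twisted-pullback} once one observes that a lift $\tilde f$ is precisely a global section of $\mathcal{HT}_f$ and hence a trivialisation of the torsor, so the contracted product collapses to the plain pullback; your alternative argument via compatible sections of $\Lft_X$ and $\Lft_Y$ is also essentially what happens inside the proof of \Cref{p:twisted-pullback}, where $\gamma_{\tilde f}$ is first built as a $2$-arrow. One small slip: the composite you build is already a natural isomorphism $f^{\circ}_{\tilde X,\tilde Y}\isomarrow f^{\ast}$, which is the direction asked for, so there is no need to ``take its inverse'' — doing so would give $f^{\ast}\Rightarrow f^{\circ}_{\tilde X,\tilde Y}$ instead. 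Your remark about the Higgs field being preserved is right and well-taken; the key point, which you correctly identify, is that the $T_X\{1\}$-action on $(M,\theta)$ via $\exp(\theta)$ commutes with $\theta$ itself by the integrability of the Higgs field, so the contracted product naturally carries the Higgs field $f^\ast\theta$ and \Cref{p:twisted-pullback} is indeed an identification in $\Higgs^\Hsm(\mathcal Y)$.
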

\begin{proof}[Proof of \Cref{p:twisted-pullback}]
	The statement is Zariski-local on $Y$, so we may assume that $Y=\Spf(B)\to X=\Spf(A)$ is affine. We now first prove that any lift $\tilde f:\tilde Y\to \tilde X$ induces a 2-arrow
	\[
	\begin{tikzcd}
		X \arrow[r,"\rho_{\tilde X}"]           & \Lft_{X}                                  \\
		Y \arrow[r,"\rho_{\tilde Y}"] \arrow[u] & \Lft_{Y}. \arrow[u] \arrow[lu, Rightarrow,"\gamma_{\tilde f}"',shorten >=2.5ex,shorten <=2.0ex]
	\end{tikzcd} \]
	Indeed, let $T$ be any $S$-algebra, then for any $T$-point $B\to T$ of $Y$, the image in $\Lft_{X}(T)$ defined by going around the upper left corner is  the composition
	\[ x:A\xrightarrow{f} B\xrightarrow{\delta_{\tilde Y}} B\oplus B\{1\}[1]\to T\oplus T\{1\}[1],\]
	where $\delta_{\tilde{Y}}$ is the morphism of animated rings associated to the square zero extension of $B$ defined by $\tilde{Y}$, cf.\ \cite[\S7.2]{AHLB}\cite[\S 5.1.9]{cesnavicius2019purity}.
	Going around the bottom right corner, we instead obtain the composition
	\[A\xrightarrow{\delta_{\tilde X}}A\oplus A\{1\}[1]\xrightarrow{f} B\oplus B\{1\}[1] \to T\oplus T\{1\}[1],\]
	where $\delta_{\tilde X}$ is the morphism of animated algebra defined by $\tilde{X}$.
	The lift $\tilde f$ now defines a map between
	$A\xrightarrow{f\circ \delta_{\tilde X}} B\oplus B\{1\}[1]$ and $A\xrightarrow{\delta_{\tilde Y}\circ f} B\oplus B\{1\}[1]$, which induces the desired isomorphism in $\Lft_{X}(T)$.
	
	It is clear from these explicit descriptions that for any two lifts $\tilde f_1,\tilde f_2:\tilde Y\to \tilde X$, the map 
	\begin{equation}\label{eq:homotopy-two-different-lifts}
	\gamma_{\tilde f_1}\circ \gamma_{\tilde f_2}^{-1}\in \Aut(x:Y\to X\to \Lft_{X})=T_X\{1\}(Y)
	\end{equation}
	can be identified with the difference $\tilde f_1-\tilde f_2\in T_X\{1\}(Y)$ defined by the fact that the Higgs--Tate torsor $\pi:\mathcal{HT}_f\to Y$ is a  $T_X\{1\}$-torsor.
	
	We now pass to the universal situation: There is a canonical transformation
	\[
	\begin{tikzcd}
		X \arrow[r,"\rho_{\tilde X}"]           & \Lft_{X}                                  \\
		\mathcal{HT}_f \arrow[r,"\rho_{\tilde Y}\circ \pi"'] \arrow[u,"f\circ \pi"] & \Lft_{Y} \arrow[u] \arrow[lu, Rightarrow,"\gamma"',shorten >=2.5ex,shorten <=2.0ex]
	\end{tikzcd} \]
	such that for any lift $\tilde f\in  \mathcal{HT}_f(Y)$, we obtain $\gamma_{\tilde f}$ by specialisation at the section $\tilde f:Y\to \mathcal{HT}_f$. This is clearly natural in $X$ and $Y$, so it glues beyond the affine case, and we may now assume that $X$ and $Y$ are general. The commutativity of the diagram then says that for any small Higgs bundle $(M,\theta)$ on $X$, there is a canonical isomorphism over $\mathcal{HT}_f$
	\[ \varphi_\gamma:\pi^{\ast}f^{\circ}M\isomarrow\pi^{\ast}f^{\ast}M .\]
	We now compute the effect which the $T_X\{1\}$-action on $\mathcal{HT}_f$ has on this isomorphism: Let $\delta \in T_X\{1\}(Y)$ be any local section and consider the diagram
	\[
	\begin{tikzcd}
		X \arrow[r,equal]           &X \arrow[r]           & \Lft_{X}                                  \\
		\mathcal{HT}_f \arrow[r,"\delta\cdot "] \arrow[u,"f\circ \pi"] &\mathcal{HT}_f \arrow[r] \arrow[u,"f\circ \pi"] & \Lft_{Y}. \arrow[u] \arrow[lu, Rightarrow,"\gamma"',shorten >=2.5ex,shorten <=2.0ex]
	\end{tikzcd} \]
	Let $\gamma_{\delta}$ be the resulting homotopy between the two outer compositions. We wish to compute 
	\[\gamma_{\delta}\circ \gamma^{-1}\in  \Aut_Y(\mathcal{HT}_f\to \Lft_{X})=T_X\{1\}(\mathcal{HT}_f).\]
	 For this we use that locally on $Y$ where $\mathcal{HT}_f$ admits a section $\wt f\in \mathcal{HT}_f(Y)$, the above diagram becomes isomorphic to 
	\[
	\begin{tikzcd}
			X \arrow[r,equal]           &X \arrow[r,equal]           &X \arrow[r]           & \Lft_{X}                                  \\
			f^\ast T_X\{1\} \arrow[r,"\delta\cdot "] \arrow[u,"f\circ \pi"] &f^\ast T_X\{1\} \arrow[r] \arrow[u,"f\circ \pi"] &Y \arrow[r] \arrow[u,"f"] & \Lft_{Y}. \arrow[u] \arrow[lu, Rightarrow,"\gamma_{\wt f}"',shorten >=2.5ex,shorten <=2.0ex]
	\end{tikzcd} \]
	If follows that $\gamma_{\delta}\circ \gamma^{-1}$ can be decomposed into the difference $\gamma_{\wt f+\delta}\circ \gamma^{-1}_{\wt f}$ composed with $f^\ast T_X\{1\}$, plus the difference between $\mathcal{HT}_f\xrightarrow{\delta}\mathcal{HT}_f\to Y$ and  $\mathcal{HT}_f\to Y$.
	By the computation surrounding \eqref{eq:homotopy-two-different-lifts}, we have  $\gamma_{\wt f+\delta}\circ \gamma^{-1}_{\wt f}=\delta$. It follows that $\gamma_{\delta}\circ \gamma^{-1}$ is given by the image of $(\delta,\delta)$ under
	\[\Aut(Y\to \Lft_{X})\times \Aut_Y(\mathcal{HT}_f) = T_X\{1\}(Y)\times T_X\{1\}(Y) \xrightarrow{m} T_X\{1\}(\mathcal{HT}_f).\]
	
	As a consequence, we can regard $\varphi_\gamma$ as a $T_X\{1\}$-equivariant isomorphism 
	\[\varphi_\gamma: \pi^{-1}f^{\circ}M\otimes_{\pi^{-1}\O_Y}\O_{\mathcal{HT}_f}\isomarrow \pi^{-1}f^{\ast}M\otimes_{\pi^{-1}\O_Y}\O_{\mathcal{HT}_f}\]
	where the left hand side is endowed with the natural action via the second factor, whereas the right hand side is endowed with the diagonal action of $T_X$. Here the action on the first factor $\pi^{-1}f^{\ast}M$ has to be the one induced by $T_X\{1\}=\Aut(x:Y\to \Lft_{X})$. By \Cref{l:action-via-exp} below, this is given by the natural map $\exp:T_X\to \Aut(M)$. 
	
	Taking $T_X\{1\}$-equivariant sections on both sides with respect to the natural actions just described, this defines a natural isomorphism
	\[ f^{\circ}M=(\pi^{\ast}f^{\circ}M)^{T_X\{1\}}\xisomarrow{\varphi_\gamma}( \pi^{-1}f^{\ast}M\otimes_{\pi^{-1}\O_Y}\O_{\mathcal{HT}_f})^{T_X\{1\}}=f^{\ast}M\times ^{T_X\{1\}}\mathcal{HT}_f\qedhere\]
\end{proof}
\begin{lemma}\label{l:action-via-exp}
	Assume $X=\Spf(R)$ is affine. Let $N\in \Bun(BT_X^\sharp\{1\})$ and let $(M,\theta)=\Phi_X(N)$ be the corresponding Higgs bundle via \eqref{eq:constr-of-Higgs-field-via-Cariter-duality}. Then for any $p$-complete $R$-algebra $S$ and any $s\in T_X^\sharp(S)$, the  action of $s$ on $M$ obtained from the $T_X^\sharp\{1\}$-action on $X\to BT_X^\sharp\{1\}$ is given by
	\[\exp(s\circ \theta):M\otimes_R S\isomarrow M\otimes_RS.\]
	Here we have identified $s$ first with the induced $\O_X$-linear map $s:\wtOm_X\to \O_X$ and then the map $ s\circ \theta$ is the resulting composition $(\id \otimes s)\circ \theta:M\to M\otimes \wtOm\to M \in \End(M)$.
\end{lemma}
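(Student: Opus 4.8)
The plan is to unwind the definition of $\Phi_X$ via Cartier duality and then evaluate the resulting co-action at the point $s$. First I would recall from \cite[Remark~6.2 and Lemma~6.4]{AHLB} that pulling $N$ back along $X\to BT_X^\sharp\{1\}$ produces the $\O_X$-module $M$ equipped with a co-action $c\colon M\to M\otimes_R\O(T_X^\sharp\{1\})$, and that Cartier duality identifies the Hopf algebra $\O(T_X^\sharp\{1\})$ with the divided-power algebra $\Gamma_R^\bullet(\wtOm)$, whose continuous $R$-linear dual is the completed symmetric algebra $\widehat{\mathrm{Sym}}_R(\wtOm^\vee)$. Under this duality $c$ corresponds exactly to the module map $\widehat{\mathrm{Sym}}_R(\wtOm^\vee)\otimes_R M\to M$ out of which $(M,\theta)$ is built: its restriction to $\wtOm^\vee\otimes_R M$ is dual to $\theta\colon M\to M\otimes_R\wtOm$, and because $\theta$ commutes with itself (the Higgs condition), the iterate $\theta^n\colon M\to M\otimes_R\wtOm^{\otimes n}$ factors through $M\otimes_R\mathrm{Sym}^n\wtOm$ and its dual is the action of $\mathrm{Sym}^n(\wtOm^\vee)$.

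Next, by construction the action of $s$ on $M\otimes_R S$ coming from the $T_X^\sharp\{1\}$-action is the composite $(\id_M\otimes\mathrm{ev}_s)\circ(c\otimes_R S)$, where $\mathrm{ev}_s\colon\O(T_X^\sharp\{1\})\otimes_R S\to S$ is evaluation at $s$. The heart of the argument is to identify $\mathrm{ev}_s$, viewed inside the dual $\widehat{\mathrm{Sym}}_R(\wtOm^\vee)\otimes_R S$, with the group-like element $\exp(s):=\sum_{n\geq 0}s^{[n]}$, where $s$ is regarded as an element of $\wtOm^\vee\otimes_R S$ via the map $s\colon\wtOm_X\to\O_X$ and $s^{[n]}=s^n/n!\in\mathrm{Sym}^n(\wtOm^\vee)\otimes_R S$. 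Indeed, the $R$-algebra homomorphism $\Gamma_R^\bullet(\wtOm)\to S$ attached to $s$ sends a divided power $\xi^{[n]}$ of $\xi\in\wtOm$ to $\langle\xi,s\rangle^n/n!$, and under the standard pairing between $\Gamma^n(\wtOm)$ and $\mathrm{Sym}^n(\wtOm^\vee)$ the element pairing to these values is precisely $s^{[n]}$; here one uses that $s\in T_X^\sharp\{1\}(S)$ lies in the PD-completion, hence is PD-nilpotent, which is exactly what makes $\exp(s)$ converge $p$-adically (and the reason one must work with $T_X^\sharp$ rather than $T_X$). Concretely, after localising so that $\wtOm$ becomes free with basis $\eta_1,\dots,\eta_d$ and writing $\theta=\sum_i\theta_i\otimes\eta_i$ and $s(\eta_i)=e_i$, this is the identity obtained by substituting $Y_i=e_i$ into the explicit Cartier-duality formula $\phi_0(m)=\prod_i\exp(\theta_iY_i)(m)$ already recorded in \eqref{eq:expl-descr-of-phi}.

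Combining the two steps, the action of $s$ is the module action of $\exp(s)\in\widehat{\mathrm{Sym}}_R(\wtOm^\vee)\otimes_R S$ on $M\otimes_R S$, which by the above description of the module structure sends $m$ to $\sum_{n\geq 0}\tfrac1{n!}(\id_M\otimes s^{\otimes n})(\theta^n(m))=\sum_{n\geq 0}\tfrac1{n!}(s\circ\theta)^n(m)=\exp(s\circ\theta)(m)$, where the middle equality again uses that $\theta$ commutes with itself to rewrite $(\id_M\otimes s^{\otimes n})\circ\theta^n=(s\circ\theta)^n$. This is the claimed formula.

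The step I expect to be the main obstacle is pinning down the Cartier-duality normalisations so that the exponential --- rather than its inverse, or a version carrying a spurious combinatorial factor --- comes out on the nose. Concretely, this means matching the divided-power comultiplication on $\O(T_X^\sharp\{1\})=\Gamma_R^\bullet(\wtOm)$ with the multiplication on $\widehat{\mathrm{Sym}}_R(\wtOm^\vee)$ exactly as normalised in \cite[Lemma~6.4]{AHLB} (equation~(9) of its proof), and checking that the co-action constructed there is indeed the one whose linear dual is $\theta$ with the sign convention used throughout the paper. Once these conventions are fixed the remaining verification is a short formal manipulation, essentially already carried out inside the proof of \Cref{p:comp-LS_F-LS_prism}.
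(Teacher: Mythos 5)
Your proposal is correct and follows essentially the same route as the paper's proof: both unwind $\Phi_X$ via Cartier duality, identify evaluation at $s$ with the group-like element $\exp(s)$ using the pairing from \cite[Lemma~6.4, eq.~(9)]{AHLB}, and conclude via the explicit formula \eqref{eq:expl-descr-of-phi}. The paper's version is terser and leaves the divided-power/symmetric-algebra bookkeeping implicit by citing the earlier lemma directly, but the content and the crucial normalisation check you flag at the end are exactly what the reference is being used for.
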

\begin{proof}
	The $T_X^\sharp\{1\}$-action on $M$ is encoded on the level of sheaves by a co-action map $M\to M\otimes \wtOm_X$.
	Unravelling the definition of $\Phi_X$ in \eqref{eq:constr-of-Higgs-field-via-Cariter-duality}, we see that the action of $\Symvw(\wtOm_X^\vee)$ on $M$ is obtained from this via duality: More precisely, like in \eqref{eq:expl-descr-of-phi}, we see from \cite[(9) in the proof of Lemma~6.4]{AHLB} that this identifies $s:\wtOm_X\to S$ with the map $\exp(\langle - ,s\rangle):\widehat{\mathrm{Sym}^\bullet_X}(\wtOm^\vee)\to \mathbb G_m$, which corresponds to the section $\exp(s)\in \Symvw(\wtOm^\vee)\otimes S$.
\end{proof}

We can now use twisted pullback to explain the functoriality of the small global $p$-adic Simpson correspondence, generalizing the statement of \Cref{t:essential-surjectivity-geom-case}.(2): 

\begin{proposition}
	Let $f:Y\to X$ be any morphism of smoothoid $p$-adic formal schemes. Let $\tilde X$ be an $A_2$-lift of $X$ and let $\tilde Y$ be an $A_2$-lift of $Y$. Let $\mathcal X$ and $\mathcal Y$ be the respective adic generic fibres of $X$ and $Y$. Then there is a canonical natural transformation
	\[
	\begin{tikzcd}
				{\Higgs^\Hsm(\mathcal Y)} \arrow[r,"\mathrm{S}_{\tilde{Y}}"]          & {\mathrm{Bun}^{\Hsm}(\mathcal Y_v)}   \\ \Higgs^\Hsm(\X)\arrow[r,"\mathrm{S}_{\tilde{X}}"]
		\arrow[u,"f^\circ_{\tilde X,\tilde Y}"]  & {\mathrm{Bun}^{\Hsm}(\X_v)} \arrow[u,"f^\ast"]\arrow[lu,Rightarrow,shorten >=3.5ex,shorten <=3.0ex]  
	\end{tikzcd}\]
\end{proposition}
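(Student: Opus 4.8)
The plan is to reduce the statement to the functoriality of the functors $\beta_{\tilde X}$ and $\beta_{\tilde Y}$ already packaged in Definition~\ref{d:twisted-pullback}, together with the functoriality of the pullback $\alpha^\ast$ along the Hodge--Tate stack. Recall from \S\ref{s:recall-global-Simpson-geom} that each of $\mathrm S_{\tilde X}$ and $\mathrm S_{\tilde Y}$ is obtained by composing the (inverse of the) equivalence $\beta$ onto $\Bun(z_\ast X^\HT,\O\tf)$, respectively $\Bun(z_\ast Y^\HT,\O\tf)$, with the pullback functor $\alpha^\ast$ to v-vector bundles, via the diagram~\eqref{eq:def-alpha-beta}. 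So $\mathrm S_{\tilde X}=\alpha^\ast_X\circ\beta_{\tilde X}^{-1}$ and $\mathrm S_{\tilde Y}=\alpha^\ast_Y\circ\beta_{\tilde Y}^{-1}$, where for brevity I suppress the Hitchin-small restriction and the twist $z$. Here $\alpha^\ast_X\colon\Bun(z_\ast X^\HT,\O\tf)\to\Bun(\X_v)$ is the functor from \eqref{eq:alpha-ast-Bun(XHT)} precomposed with pullback along the canonical map $X^\HT\to z_\ast X^\HT$; since $z_\ast X^\HT$ is the pushout of the gerbe $X^\HT$ along multiplication by $z$, this map is functorial in $X$, hence so is $\alpha^\ast$.

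The first step is to write down the natural transformation. By \cite[Proposition~6.28]{AHLB} (already recalled in the excerpt in the square diagram preceding Definition~\ref{d:twisted-pullback}), we have a canonical commuting square relating $\beta_{\tilde X}$, $\beta_{\tilde Y}$ and $f^\ast\colon\Bun(z_\ast X^\HT)\tf\to\Bun(z_\ast Y^\HT)\tf$ to the geometric pullback $f^\circ_{\tilde X,\tilde Y}$; this is exactly what $f^\circ_{\tilde X,\tilde Y}$ was defined to make commute. On the other hand, the functoriality of the Hodge--Tate stack gives a canonical $2$-commutative square
\[
\begin{tikzcd}
\Bun(z_\ast X^\HT,\O\tf)\arrow[r,"f^\ast"]\arrow[d,"\alpha^\ast_X"'] & \Bun(z_\ast Y^\HT,\O\tf)\arrow[d,"\alpha^\ast_Y"]\\
\Bun(\X_v)\arrow[r,"f^\ast"'] & \Bun(\mathcal Y_v)\arrow[lu,Rightarrow,shorten >=2ex,shorten <=2ex]
\end{tikzcd}
\]
which is the base-change compatibility of $\alpha^\ast$ established in \cite[Thm~1.2]{AHLB} (naturality of $\alpha^\ast_X$ in $X$; cf.\ the naturality statement of \eqref{eq:alpha-ast-Bun(XHT)}). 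Pasting these two squares along the common functor $f^\ast$ on $\Bun(z_\ast(-)^\HT,\O\tf)$ and inserting the identifications $\mathrm S_{\tilde X}=\alpha^\ast_X\circ\beta_{\tilde X}^{-1}$, $\mathrm S_{\tilde Y}=\alpha^\ast_Y\circ\beta_{\tilde Y}^{-1}$ yields the desired natural transformation $f^\ast\circ\mathrm S_{\tilde X}\Rightarrow\mathrm S_{\tilde Y}\circ f^\circ_{\tilde X,\tilde Y}$.

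The second step is to check that everything lands in the Hitchin-small subcategories, so that the diagram makes sense as stated. For this one uses \Cref{l:comp-with-Hitchin-fibration}: the Hitchin fibration commutes with $\mathrm S_{\tilde X}$ and $\mathrm S_{\tilde Y}$, and both $f^\ast$ on v-vector bundles and $f^\circ_{\tilde X,\tilde Y}$ on Higgs bundles are compatible with pullback of the Hitchin base $\mathrm A_n(\X)\to\mathrm A_n(\mathcal Y)$ — for $f^\circ_{\tilde X,\tilde Y}$ this is visible from the explicit formula in \Cref{p:twisted-pullback}, since tensoring with the $f^\ast T_X\{1\}$-torsor $\mathcal{HT}_f$ does not change the underlying Higgs field up to the pullback, the spectral data being pulled back from $X$. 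One also notes the pullback of the Hitchin-small locus $\mathrm A_n^\Hsm(\X)$ lands in $\mathrm A_n^\Hsm(\mathcal Y)$ since the norm bound defining $p^{<\frac{i}{p-1}}$ is preserved under pullback of differential forms along $f$ (the image of $\Omega^1_X$ in $\Omega^1_Y$ can only become more divisible). Hence Hitchin-small objects pull back to Hitchin-small objects on both sides, and the transformation restricts as claimed.

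The main obstacle I anticipate is \emph{bookkeeping the pushout $z_\ast(-)^\HT$ and the twist along multiplication by $z$}: one must verify that the canonical map $X^\HT\to z_\ast X^\HT$ and the splitting isomorphism $\Phi_{\tilde X}\colon z_\ast X^\HT\cong BT^\sharp_X\{1\}$ are functorial in $X$ in a way strictly compatible with the chosen $A_2(x)$-lifts $\tilde X$, $\tilde Y$ and with the lift datum implicit in $f^\circ_{\tilde X,\tilde Y}$ — i.e.\ that the constant $u\in\Z_p^\times$ and the linearity data of \cite[Proposition~7.8, Definition~7.9]{AHLB} are respected under $f$. Once this compatibility is in hand, which amounts to unwinding the definitions of \cite[\S7.2]{AHLB}, the assembly of the two $2$-commutative squares is formal. (Alternatively, one can avoid some of this by working directly with the period sheaf description~\eqref{eq:compute-StX-using-period-sheaf}: the natural map $f^{-1}\mathcal B_{\tilde X}\to\mathcal B_{\tilde Y}$ of period sheaves induced by the lift of $f$ intertwines the Higgs fields, and one checks the resulting map on kernels is the sought transformation; but the stacky argument above is cleaner and more obviously compatible with composition.)
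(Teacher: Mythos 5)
Your proof is correct and follows essentially the same route as the paper's: factor $\mathrm S_{\tilde X}$ and $\mathrm S_{\tilde Y}$ through $\Bun(z_\ast(-)^\HT)\tf$, observe that the left square commutes by the very definition of $f^\circ_{\tilde X,\tilde Y}$, and that the right square commutes up to canonical natural equivalence by naturality of $\alpha^\ast$. The extra care you take about Hitchin-smallness being preserved and the bookkeeping of the $z_\ast$ pushout is not spelled out in the paper but is sound.
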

\begin{proof}
	By definition of $\mathrm{S}_{\tilde{X}}$,
	this follows by considering the natural diagram:
	\[
	\begin{tikzcd}		\Higgs^\Hsm(\mathcal Y)  & \Bun(z_{\ast}Y^\HT)\tf \arrow[r,"\alpha^{\ast}"]\arrow[l,"\beta_{\tilde Y}"',"\sim"] &\mathrm{Bun}^{\Hsm}(\mathcal Y_v)\\
		\Higgs^\Hsm(\X)  \arrow[u,"f^\circ_{\tilde X,\tilde Y}"]& \Bun(z_{\ast}X^\HT)\tf \arrow[l,"\beta_{\tilde X}"',"\sim"] \arrow[r,"\alpha^{\ast}"]\arrow[u,"f^{\ast}"] &\mathrm{Bun}^{\Hsm}(\mathcal X_v) \arrow[u,"f^{\ast}"]
	\end{tikzcd}\]
	The left diagram commutes by \Cref{d:twisted-pullback}. The right diagram commutes up to a canonical natural equivalence by naturality of $\alpha$.
\end{proof}

\section{Essential image: arithmetic case}\label{sec:essential-image-arithmetic}
We now switch to the arithmetic \Cref{setup:arithmetic}. In particular, $X\to \Spf(\O_K)$ is a smooth \mbox{$p$-adic} formal scheme over $\O_K$ for a $p$-adic field $K$, and throughout we have fixed the choice of a uniformizer $\pi \in \mathcal{O}_K$. Let $k=\O_K/\pi$ be the residue field. Let $E$ be the characteristic polynomial of $\pi$ over $W(k)$ and $e=E'(\pi)$ the induced generator of the different $\delta_{\O_K|W(k)}$. Let $\X$ be the adic generic fibre of $X$.
In this section, we study the functor 
\[\mathrm{S}_\pi: \left\{ \begin{array}{@{}c@{}l}\text{Hitchin-small}\\\text{Higgs--Sen bundles on $\X$}\end{array}\right\}\to \left\{ \begin{array}{@{}c@{}l}\text{vector bundles}\\\text{on $\X_v$}\end{array}\right\}\]
whose construction via the Hodge--Tate stack we recalled in \S\ref{s:recall-global-Simpson-arith}.
In analogy to the geometric case of \S\ref{sec:essential-image-vVB-geom-case}, the goal of this section is to describe the essential image of this functor, by introducing an appropriate Hitchin fibration. 
The basic idea is to use the geometric $p$-adic Simpson correspondence on the base-change $\X_{C}$ to the completed algebraic closure $C$ of $K$, and then to argue by Galois descent. The constructions in this section are therefore closely related to those of \cite[\S7]{MinWang22}, with the Hitchin fibration being the crucial additional ingredient.

\subsection{The arithmetic Hitchin fibration for Higgs--Sen-bundles}
We begin by defining the Hitchin fibration for Higgs--Sen bundles:
\begin{definition}
	Let $n\in \N$.
	The Hitchin fibration of rank $n$ is the natural morphism of groupoids
	\[\{\text{Higgs--Sen modules on $\X$ of rank $n$}\}\to \mathbb A^n(\X)\]
	defined by sending $(N,\theta_N,\phi_N)$ to the characteristic polynomial of $\phi_N$. 
	Here and in the following, we regard $\mathbb A^n$ as the parameter space of monic polynomials  $F(T)=T^n+a_{1}T^{n-1}+\dots +a_n$ of degree $n$ in terms of the tuple $(a_1,\dots,a_n)$.
\end{definition}
\begin{definition}
	The Hitchin-small locus $\mathbb{A}^{\mathrm{sm},n}(\X)\subseteq \mathbb{A}^n(\X)$ is the open subgroup defined as
	\[\mathbb{A}^{\mathrm{sm},n}(\X):=e\Z+\mathfrak m_K\O(X)^n.\] More explicitly, if $K$ is absolutely unramified, then $e=1$ and we have $\mathbb{A}^{\mathrm{sm},n}(\X)=\Z+\mathfrak m_K\O(X)^n$. Otherwise, $e\in \mathfrak m_K$ and we thus have $\mathbb{A}^{\mathrm{sm},n}(\X)=\mathfrak m_K\O(X)^n$.
\end{definition}
\begin{definition}
	A Higgs-Sen module $(N,\theta_N,\phi_N)$ on $\X$ is Hitchin-small if $\mathcal H(N,\theta_N,\phi_N)\in \mathbb{A}^{\mathrm{sm},n}(\X)$.
\end{definition}
It will follow from \Cref{l:comp-between-Kummer-and-cycl} below that this is equivalent to the definition of Hitchin-smallness given in the introduction in terms of Hodge--Tate--Sen weights.
\begin{lemma}\label{l:small-locus-arithmetic-Hitchin}
	For an $\O(\X)$-linear map $F:\O(\X)^n\to \O(\X)^n$, the following are equivalent:
	\begin{enumerate}
		\item  $F^p-e^{p-1}F$ is topologically nilpotent.
		\item  The characteristic polynomial of $F$ lies in $\mathbb{A}^{\mathrm{sm},n}(\X)\subseteq \mathbb{A}^n(\X)$.
	\end{enumerate}
\end{lemma}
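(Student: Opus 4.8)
The plan is to reduce the statement to a pointwise calculation over the residue fields of $\X$, where $\O(\X)/\mathfrak m_K$ is an $\F_p$-algebra, and then to analyze the eigenvalues of $F$ modulo $\mathfrak m_K$. First I would observe that since $\O(\X)$ is a Banach $\O_K$-algebra with the property that an element is topologically nilpotent if and only if it lies in $\mathfrak m_K \cdot \O(\X)^{\circ} + (\text{topologically nilpotent part})$, the condition (1) is really a condition on the reduction $\bar F$ of $F$ modulo $\mathfrak m_K$: an $\O(\X)$-linear endomorphism is topologically nilpotent if and only if its reduction modulo $\mathfrak m_K$ is (pointwise) nilpotent, at least after passing to the reduced quotient. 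So (1) is equivalent to $\bar F^p - \bar e^{\,p-1}\bar F = \bar F^p$ being nilpotent in $\mathrm{End}(\O(\X)_{/\mathfrak m_K}^n)$ in the absolutely unramified case (where $\bar e = 1$ and this reads $\bar F^p - \bar F$ nilpotent), respectively $\bar F^p$ nilpotent in the ramified case (where $\bar e = 0$).

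Next I would translate condition (2) the same way: the characteristic polynomial of $F$ lies in $\mathbb A^{\mathrm{sm},n}(\X) = e\Z + \mathfrak m_K \O(X)^n$ if and only if its reduction modulo $\mathfrak m_K$ lies in the image of $e\Z$, i.e.\ $\mathrm{char}(\bar F) \in \bar e\,\F_p \subseteq \O(\X)_{/\mathfrak m_K}$ in each coefficient. In the ramified case $\bar e = 0$, so this says $\mathrm{char}(\bar F) = T^n$, i.e.\ $\bar F$ is nilpotent, which matches the reduction of (1). In the unramified case $\bar e = 1$, so the condition is that every coefficient of $\mathrm{char}(\bar F)$ lies in the prime field $\F_p$, i.e.\ $\mathrm{char}(\bar F) \in \F_p[T]$; equivalently the eigenvalues of $\bar F$ (in an algebraic closure of the residue field) are roots of a polynomial with $\F_p$-coefficients. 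The key elementary fact is then: for a matrix $\bar F$ over a field of characteristic $p$, one has $\bar F^p - \bar F$ nilpotent if and only if all eigenvalues of $\bar F$ lie in $\F_p$, if and only if $\mathrm{char}(\bar F) \in \F_p[T]$. This is because $x \mapsto x^p - x$ is the Artin--Schreier map whose kernel is exactly $\F_p$, so $\lambda^p = \lambda$ iff $\lambda \in \F_p$; and a matrix has $\bar F^p - \bar F$ nilpotent iff each eigenvalue $\lambda$ satisfies $\lambda^p - \lambda = 0$ (using that $\bar F^p - \bar F$ has eigenvalues $\lambda^p - \lambda$ on the associated graded of a filtration refining the Jordan/generalized-eigenspace decomposition, and nilpotence is detected on eigenvalues). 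The equivalence of "all eigenvalues in $\F_p$" with "$\mathrm{char}(\bar F) \in \F_p[T]$" follows since the Frobenius permutes the multiset of eigenvalues and fixes it exactly when each eigenvalue is individually fixed, the multiset of eigenvalues being Galois-stable.

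The main obstacle I anticipate is making the reduction "topologically nilpotent over $\O(\X)$ $\iff$ pointwise nilpotent modulo $\mathfrak m_K$ on each stalk" precise and uniform: one must be careful that $\O(\X)$ need not be reduced or Noetherian in the generality of smooth formal schemes over $\O_K$, and that topological nilpotence of an operator is about convergence of $F^N \to 0$ in the Banach topology, not just nilpotence of a reduction. The clean way around this is to invoke that $\X$ is a smoothoid (hence sheafy, with $\O(\X)$ a uniform Banach ring), so that an element $t \in \O(\X)$ is topologically nilpotent iff $\|t\| < 1$ iff its image in $\O(\X)/\mathfrak m_K = \O(\X)^{\circ}/\mathfrak m_K \O(\X)^{\circ}$ is nilpotent after further reduction to $(\O(\X)^{\circ}/\mathfrak m_K \O(\X)^{\circ})_{\mathrm{red}}$; applying this to the entries of powers of $F$ (equivalently, to $\det(1 - T \cdot (F^p - e^{p-1}F))$ and its companions), topological nilpotence of the operator becomes nilpotence of its image in $\mathrm{End}$ over this reduced $\F_p$-algebra, and then one checks the eigenvalue criterion over the residue field at each point, which suffices since $(\O(\X)^{\circ}/\mathfrak m_K)_{\mathrm{red}}$ embeds into a product of such residue fields. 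Once this local-global and integrality bookkeeping is set up, the heart of the argument is just the Artin--Schreier observation above, which is short.
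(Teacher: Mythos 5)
Your argument has a concrete gap in the unramified case, at the step where you assert that $\mathrm{char}(\bar F)\in\F_p[T]$ is equivalent to all eigenvalues of $\bar F$ lying in $\F_p$. The justification you give --- that Frobenius ``fixes the multiset exactly when each eigenvalue is individually fixed'' --- is false: an $\F_p$-rational multiset need not consist of $\F_p$-rational elements. The standard example is $T^2+T+1$ over $\F_2$, whose two roots lie in $\F_4\setminus\F_2$. Lifting this: take $K=\Q_2$ (so $e=1$, $p=2$) and let $F$ be the companion matrix of $T^2+T+1$ acting on $\Q_2^2$. Then the characteristic polynomial coefficients $(1,1)$ lie in $\Z+\mathfrak m_K$, so condition (2), read coefficient-wise, holds; but a direct computation gives $(F^2-F)^2=-3\cdot\mathrm{Id}$, which is invertible, so $F^2-F$ is not topologically nilpotent and (1) fails. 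Your chain of equivalences therefore only establishes (1)$\Rightarrow$(2); the implication (2)$\Rightarrow$(1) breaks at the Galois-stability step. (The ramified case is unaffected: there ``coefficients in $\mathfrak m$'' and ``roots in $\mathfrak m$'' genuinely coincide by the Newton polygon, as you correctly argue.)

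What the example shows is that in the unramified case the coefficient-wise condition ``$a_i\in\Z+\mathfrak m_K\O(X)$ for each $i$'' is strictly weaker than the root-wise condition ``all eigenvalues of $F$ lie in $\Z+\mathfrak m$'', and only the latter is equivalent to (1). The paper's proof works exclusively with the eigenvalue condition: after reducing to fibers at points $\Spa(L)\to\X$ in unramified $L|K$, it invokes the single congruence $T^p-e^{p-1}T\equiv\prod_{i=0}^{p-1}(T-e\cdot i)\pmod\pi$ in $\O_K[T]$ to conclude that $F^p-e^{p-1}F$ is topologically nilpotent if and only if every eigenvalue of $F$ lies in $e\Z+\mathfrak m_L$, and reads off membership in the Hitchin-small locus from that. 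Your Artin--Schreier observation is exactly the reduction of this congruence mod $\pi$ when $e$ is a unit, so your approach is close in spirit and your case split is a reasonable variant of the paper's uniform factorization. The fix is to interpret ``$\mathrm{char}(F)\in\mathbb A^{\mathrm{sm},n}(\X)$'' as the condition that the \emph{roots} of $\mathrm{char}(F)$ lie in $e\Z+\mathfrak m$ (which is what the nearly-Hodge--Tate condition and the downstream applications of the lemma actually use), rather than translating it into $\F_p$-rationality of the reduced characteristic polynomial. With that change your reduction to residue fields and the step $\bar\lambda^p=\bar\lambda\Leftrightarrow\bar\lambda\in\F_p$ do carry the proof through; the false step was only the attempt to recover the root condition from the weaker coefficient condition.
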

\begin{proof}
	Both properties can be checked on fibers of points $\Spa(L)\to \X$ in unramified extensions $L$ of $K$. Using that $T^p-e^{p-1}T\equiv \prod_{i=0}^{p-1}(T-e\cdot i)\bmod \pi$ in $\O_K[T]$, one verifies that  $F^p-e^{p-1}F$ is topologically nilpotent $\Leftrightarrow$ all eigenvalues of $F$ lie in 
	$e\Z+\mathfrak{m}_{L}$.
\end{proof}

\subsection{Comparison to geometric setup}
Our next goal is to define the Hitchin fibration for v-vector bundles  $\wt {\mathcal H}$  in the arithmetic setting. This is supposed to associate to any v-vector bundle $V$ of rank $n$ on $\X_v$ a spectral datum in $\O(\X)^n$, functorially in $X$, and which for $V=\mathrm{S}_\pi(N,\theta_N,\phi_N)$ is given by the characteristic polynomial of $\phi_N$.

In order to construct this, we may work locally and assume that $X$ is affinoid and admits a prismatic lift $(A,I)$, for example induced by a toric chart. We consider the base-change 
\[ h:\X_{C}\to \X\]
to the completed algebraic closure $C|K$
like in \S\ref{s:recall-global-Simpson-arith} and study the pullback $h^{\ast}V$ with its natural $G_K$-equivariant structure on $\X_C$. Recall from \S\ref{s:recall-global-Simpson-arith} that there is a canonical lift
\[ s:\O_K\to A_2(e^{-1}).\]
Via base-change, we obtain an $e$-lift
\[
\tilde X_{\mathcal{O}_C} := X \times_{\Spf(\O_K)}\Spf(A_{2}(e^{-1}))
\]
which is a Galois-equivariant lift of $X_{\mathcal{O}_C}$ to the square-zero thickening \[A_{2}(e^{-1})=e^{-1}\xi+A_{\inf}(\O_C)/\xi^2\subseteq B_{\dR}^+/\xi^2\] of $\O_C$. Let $z:=e(1-\zeta_p)$, then by \Cref{c:induced-by-integral-model}, this defines a fully faithful functor
\[\mathrm{S}_{\tilde X_{\mathcal{O}_C}}: \left\{ \begin{array}{@{}c@{}l}\text{$z$-Hitchin-small}\\\text{Higgs bundles on $\X_C$}\end{array}\right\}\to \left\{ \begin{array}{@{}c@{}l}\text{vector bundles}\\\text{on $\X_{C,v}$}\end{array}\right\}.\]
As a  first step, we now use the Galois action on $\tilde X_{\mathcal{O}_C}$ to see:
\begin{lemma}\label{l:pullback-of-v-vector-bundle-to-OC-nilpotent}
	Let $V$ be any v-vector bundle on $\X$. Then the v-vector bundle $h^{\ast}V$ on $\X_C$ is \mbox{$e$-Hitchin-small}, in fact even $0$-Hitchin small. Its associated Higgs bundle $(M,\theta_M)$ under the functor $\mathrm{S}_{\tilde X_{\mathcal{O}_C}}$ from \Cref{c:induced-by-integral-model} is nilpotent.
\end{lemma}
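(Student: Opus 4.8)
The plan is to use that $h^{\ast}V$ descends along $h\colon\X_C\to\X$, hence carries a natural semilinear $\Gal(C|K)$-action, and to combine this with the naturality of the geometric Hitchin fibration $\widetilde{\mathcal H}$ of \Cref{def:Hitchin-fibration} on the smoothoid space $\X_C$. The key geometric input is that $\widetilde{\mathcal H}(h^{\ast}V)$ lands in the Hitchin base $\mathrm{A}_n(\X_C)=\bigoplus_{i=1}^{n}H^0(\X_C,\mathrm{Sym}^i(\Omega^1_{\X_C}(-1)))$, which is a direct sum of \emph{Tate twists} of coherent cohomology groups of $\X_C$; together with Tate's vanishing $C(-i)^{\Gal(C|K)}=0$ for $i\geq 1$, Galois invariance will force the class $\widetilde{\mathcal H}(h^{\ast}V)$ to vanish.

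Concretely, I would proceed as follows. For $g\in\Gal(C|K)$ write $\tilde g=\mathrm{id}_{\X}\times g\colon\X_C\isomarrow\X_C$ for the induced semilinear automorphism; since $h\circ\tilde g=h$ we have $\tilde g^{\ast}h^{\ast}V\cong h^{\ast}V$ canonically, which is exactly the natural $\Gal(C|K)$-equivariant structure on $h^{\ast}V$. As $\widetilde{\mathcal H}$ is obtained by composing $\mathcal H$ with $\HTlog$, both natural in the smoothoid space, the element $\widetilde{\mathcal H}(h^{\ast}V)\in\mathrm{A}_n(\X_C)$ is fixed by the induced action of $\Gal(C|K)$ on $\mathrm{A}_n(\X_C)$. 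Now, using $\mathrm{Sym}^i(\Omega^1_{\X_C}(-1))=\mathrm{Sym}^i(\Omega^1_{\X_C})(-i)$ and $H^0(\X_C,\mathrm{Sym}^i\Omega^1_{\X_C})=H^0(\X,\mathrm{Sym}^i\Omega^1_{\X})\,\widehat{\otimes}_K C$ with trivial Galois action on the first tensor factor, each summand of $\mathrm{A}_n(\X_C)$ is a completed base change of a Banach $K$-space twisted by $C(-i)$ with $i\geq 1$; combining $C(-i)^{\Gal(C|K)}=0$ with the standard continuity argument identifying invariants of such a completed twist then gives $\mathrm{A}_n(\X_C)^{\Gal(C|K)}=0$. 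Hence $\widetilde{\mathcal H}(h^{\ast}V)=0$, which is exactly the assertion that $h^{\ast}V$ is $0$-Hitchin-small in the sense of \Cref{def:Hitchin-small-locus}, and a fortiori $e$-Hitchin-small and $z$-Hitchin-small for $z=e(1-\zeta_p)$. In particular $h^{\ast}V$ lies in the essential image of $\mathrm{S}_{\tilde X_{\O_C}}$, so $(M,\theta_M):=\mathrm{S}_{\tilde X_{\O_C}}^{-1}(h^{\ast}V)$ is defined by \Cref{c:induced-by-integral-model}, and \Cref{l:comp-with-Hitchin-fibration} yields $\mathcal H(M,\theta_M)=\widetilde{\mathcal H}(h^{\ast}V)=0$; since the components of $\theta_M$ commute by the Higgs field condition, the vanishing of its characteristic polynomial forces $\theta_M$ to be nilpotent.

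The only genuinely delicate point is the equivariance bookkeeping: one must check that $\widetilde{\mathcal H}$ really is natural for the non-$C$-linear isomorphisms $\tilde g$, and that the resulting $\Gal(C|K)$-action on $\mathrm{A}_n(\X_C)$ is the expected semilinear one \emph{including} the cyclotomic twist on the rank-one factor $\Z_p(-1)$ — it is precisely this twist (equivalently, the fact that $H^0(\X_C,\mathrm{Sym}^i\Omega^1_{\X_C}(-1))$ has all Hodge--Tate--Sen weights equal to $-i\neq 0$) that is responsible for the vanishing of the invariants. The passage from $(H^0(\X,\mathrm{Sym}^i\Omega^1_{\X})\,\widehat{\otimes}_K C(-i))^{\Gal(C|K)}$ to $H^0(\X,\mathrm{Sym}^i\Omega^1_{\X})\otimes_K C(-i)^{\Gal(C|K)}$ is a routine continuity/approximation argument based on Tate's computations; alternatively, one may verify $\widetilde{\mathcal H}(h^{\ast}V)=0$ fibrewise at classical points $\Spa(L)\to\X$, but the global argument above is the cleanest. (In the special case where $V$ comes from a $\Z_p$-local system on $\X$, this recovers the nilpotence of the associated geometric Higgs bundle.)
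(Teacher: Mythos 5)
Your proof is correct and takes essentially the same route as the paper: both use that $h^{\ast}V$ is canonically $G_K$-equivariant, that the Hitchin fibration $\wt{\mathcal H}$ is Galois-equivariant (the paper cites \cite[Proposition 8.13]{heuer-sheafified-paCS} for this, while you argue it via naturality), and Tate's vanishing $C(-i)^{G_K}=0$ to force $\wt{\mathcal H}(h^{\ast}V)=0$, whence nilpotence of $\theta_M$ follows from \Cref{l:comp-with-Hitchin-fibration}. The only difference is that you spell out the completed-tensor-product continuity step that the paper leaves to the reference to Tate, which is a harmless elaboration.
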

\begin{proof}
	This follows from functoriality of the Hitchin fibration \[\wt{\mathcal H}:\mathrm{Bun}_n(\X_{C,v})\to \mathcal A_n(\X).\]
	Indeed, by \cite[Proposition 8.13]{heuer-sheafified-paCS}, $\wt{\mathcal H}$ is equivariant for the $G_K$-action.
	 Since $h^{\ast}V$ admits a $G_K$-equivariant structure, and $\wt{\mathcal H}$ is a morphism of groupoids, it follows that $\wt{\mathcal H}(h^{\ast}V)$ is Galois invariant. But due to the presence of Tate twists in the definition of  $\mathcal A_n$, we have by \cite[\S3.3 Thm. 2]{tate1967p} that
	\[  \mathcal A_n(C)^{G_K}=\oplus_{i=1}^n \big( H^0(\X,\mathrm{Sym}^i(\Omega_\X^1))\otimes_K C(-i)\big)^{G_K} =0.\]
	In particular, $\wt{\mathcal H}(h^{\ast}V)\in \mathcal A_n^\Hsm(C)$, hence $h^\ast V$ is Hitchin small.
	By \Cref{l:comp-with-Hitchin-fibration}, it also follows that the associated Higgs bundle $(M,\theta_M)$ has characteristic polynomial $T^n$, hence $\theta_M^n=0$.
\end{proof}

By the naturality of the functor $\mathrm{S}_{\tilde{X}}$ with respect to the datum of $\tilde{X}$ explained in \Cref{t:essential-surjectivity-geom-case}.2, it follows that the natural Galois action on $\tilde X_{\mathcal{O}_C}$ and the Galois-equivariant structure on $h^\ast V$ induce on the Higgs bundle $(M,\theta_M)$ in \Cref{l:pullback-of-v-vector-bundle-to-OC-nilpotent} a Galois-equivariant structure (cf \cite[Thm 3.3]{MinWang22}). In our situation,  
this can also be seen geometrically, as follows: Assume that $X$ admits a toric chart, inducing a prismatic lift $(A,I)$. Let $G_A$ be the automorphism group of the section $X\to X^\HT$ defined by $(A,I)$ like in 
\cite[Section 6.4]{AHLB}. We consider the induced toric cover $X_{\O_C,\infty}\to X_{\O_C}\to X$. The Galois group of its adic generic fibre over $\X$ is a semi-direct product $0\to \Delta\to \Gamma\to G_K\to 0$ where $\Delta=\Gal(\X_{C,\infty}|\X_{C})=\Z_p(1)^d$. We thus obtain a commutative diagram of stacks
\begin{equation}\label{eq:Galois-equivariance-of-correspondences}
\begin{tikzcd}
	{[X_{\O_C,\infty}/\Delta]} \arrow[d] \arrow[r] & X_{\O_{C}}^{\HT} \arrow[d] & {[X_{\O_C}/ T^\sharp_{X_{\O_C}}\{1\}] }\arrow[d] \arrow[l,"\sim"'] & X_{\O_C}\arrow[l]  \arrow[d]\\
	{[X_{\O_C,\infty}/\Gamma]}  \arrow[r]                              & X^\HT                      & {[X/G_A]  }\arrow[l,"\sim"']          & X\arrow[l]  
\end{tikzcd}
\end{equation}
for which the horizontal arrows define $\mathrm{S}_{\tilde X_{\mathcal{O}_C}}$ and the vertical arrows have a compatible $G_K$-action. Explicitly, this means that $G_K$ acts $C$-semilinearly on $M$ in a way that commutes with $\theta_M:M\to M\otimes \Omega^1_X(-1)$, where the Galois action on $\Omega^1_X(-1)$ is via the Tate twist. We deduce from this the following basic compatibility between the arithmetic and geometric $p$-adic Simpson functors:
\begin{proposition}\label{l:comp-geom-arithm-correspondence}
	There is a canonical natural transformation making the following diagram \mbox{$2$-commutative}:
	\[\begin{tikzcd}
		{\left\{ \begin{array}{@{}c@{}l}
				\text{$G_K$-equivariant}\\
				\text{$z$-Hitchin-small}\\\text{Higgs bundles on $\X_C$}\end{array}\right\}} \arrow[r, "{\mathrm{S}_{\tilde X_{\mathcal{O}_C}}}"] & {\left\{ \begin{array}{@{}c@{}l}
			\text{$G_K$-equivariant}\\
		\text{$z$-Hitchin-small}\\\text{v-vector bundles on $\X_C$}\end{array}\right\}}  \\
\left\{ \begin{array}{@{}c@{}l}\text{Hitchin-small}\\\text{Higgs--Sen bundles on $\X$}\end{array}\right\}\arrow[r,"\mathrm S_\pi"]\arrow[u,"h^\ast"]& \left\{ \begin{array}{@{}c@{}l}\text{v-vector bundles}\\\text{on $\X$}\end{array}\right\}     \arrow[u,"h^\ast"]    
	\end{tikzcd}\]
\end{proposition}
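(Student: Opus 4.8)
The plan is to unwind the definition of $\mathrm S_\pi$ from \S\ref{s:recall-global-Simpson-arith}: that functor was constructed precisely by Galois descent from the geometric functor $\mathrm S_{\tilde X_{\mathcal O_C}}$ applied to the base-change $X_{\O_C}$, together with the canonical isomorphism $X_{\O_C}^\HT = X^\HT\times_{\Spf(\O_K)^\HT}\Spf(\O_C)$. Concretely, a Higgs--Sen bundle on $\X$ corresponds to a vector bundle on $X^\HT$ (via the local identification $X^\HT\cong BG$, with $\theta_N$ coming from $T^\sharp_X\{1\}\subseteq G$ and $\phi_N$ from the quotient $H$); pulling this back along $X_{\O_C}^\HT\to X^\HT$ gives a vector bundle on $X_{\O_C}^\HT$ with a $G_K$-equivariant structure, and forgetting the $H$-part leaves exactly a $G_K$-equivariant $T^\sharp_{X_{\O_C}}\{1\}$-representation, i.e.\ a $G_K$-equivariant topologically nilpotent Higgs bundle on $\X_C$. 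This is precisely the functor $h^\ast$ on the left column, if one checks that "pull back the $X^\HT$-bundle, then restrict structure group to $T^\sharp\{1\}$" agrees with "take the underlying Higgs bundle $(N,\theta_N)$ of $(N,\theta_N,\phi_N)$, then base-change to $\X_C$ with its induced Galois action." This compatibility is essentially \cite[Thm 3.3]{MinWang22} together with the diagram \eqref{eq:Galois-equivariance-of-correspondences}, which already exhibits the needed commutative square of stacks.

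The steps in order are as follows. First, reduce to the local affine case where $X$ admits a toric chart, hence a prismatic lift $(A,I)$; the general case follows by gluing since all functors involved are natural in $X$ (Zariski descent). Second, invoke the diagram \eqref{eq:Galois-equivariance-of-correspondences}: the bottom row defines $\mathrm S_\pi$ as $\alpha^\ast_X$ composed with the identification $\Bun(X^\HT,\O\tf)\cong\{\text{Higgs--Sen bundles}\}$, while the top row defines $\mathrm S_{\tilde X_{\mathcal O_C}}$ analogously over $\O_C$, and the vertical maps are pullback along $X_{\O_C}\to X$ (resp. $[X_{\O_C,\infty}/\Delta]\to[X_{\O_C,\infty}/\Gamma]$ etc.), which carry a compatible $G_K$-action. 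Third, observe that the left vertical arrow $\Bun(X^\HT)\to\Bun(X_{\O_C}^\HT)^{G_K}$, read through the identifications $X^\HT\cong BG$ and $X_{\O_C}^\HT\cong BT^\sharp_{X_{\O_C}}\{1\}\rtimes\cdots$, sends a Higgs--Sen triple $(N,\theta_N,\phi_N)$ to the $G_K$-equivariant Higgs bundle whose underlying module is $N\otimes_{\O_\X}\O_{\X_C}$ with Higgs field $\theta_N\otimes 1$ — this is the content of the semi-direct product description recalled in \S\ref{s:recall-local-Simpson-arith}, where "the Higgs field comes from the normal subgroup $T^\sharp_X\{1\}$" — and that this is exactly what we mean by $h^\ast$ on the left column of the diagram to be proved. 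Fourth, the right square of \eqref{eq:Galois-equivariance-of-correspondences} (the one built from $\alpha^\ast$) commutes up to canonical natural equivalence by naturality of $\alpha^\ast_X$ in $X$ (\cite[Thm~1.2]{AHLB}, applied to $X_{\O_C}\to X$). Concatenating the left square (which commutes by construction of $\mathrm S_\pi$ in \S\ref{s:recall-global-Simpson-arith}) with the right square yields the desired $2$-commutative diagram. Finally, one must check that the image of $h^\ast$ on the top left genuinely lands in the $z$-Hitchin-small Higgs bundles: this is because Hitchin-smallness of a Higgs--Sen bundle forces $\theta_N$ to be topologically nilpotent (condition (3) of \Cref{d:Higgs--Sen} makes $\theta_N$ nilpotent pointwise, and base-change preserves this), and a fortiori $\tfrac1z\theta_N$ is topologically nilpotent, so that $\mathrm S_{\tilde X_{\mathcal O_C}}$ is defined on it — indeed \Cref{l:pullback-of-v-vector-bundle-to-OC-nilpotent} already records that $h^\ast V$ is even $0$-Hitchin small.

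The main obstacle is the third step: carefully matching the two a priori different descriptions of "the underlying Higgs bundle after base-change to $C$" — one coming from the group-theoretic decomposition $0\to T^\sharp_X\{1\}\to G\to H\to 0$ of the band of $X^\HT$ (so $\theta_N$ literally is the coaction restricted to the normal subgroup), the other coming from Definition~\ref{d:Higgs--Sen} as a piece of abstract linear-algebraic data. One has to verify that the isomorphism $X_{\O_C}^\HT\cong X^\HT\times_{\Spf(\O_K)^\HT}\Spf(\O_C)$ identifies the normal subgroup $T^\sharp_{X_{\O_C}}\{1\}$ of the $\O_C$-band with the base-change of the normal subgroup $T^\sharp_X\{1\}$ of the $\O_K$-band (compatibly with the splitting induced by $s\colon\O_K\to A_2(e^{-1})$), so that restricting the $\O_C$-side vector bundle to $T^\sharp_{X_{\O_C}}\{1\}$-structure gives back $\theta_N$ rather than some twist of it. Once this identification of bands is in place — and it is essentially \cite[\S6.4]{AHLB} combined with \eqref{eq:Galois-equivariance-of-correspondences} — the rest is formal diagram-chasing plus the easy observation about nilpotence, and the naturality of the resulting transformation in $X$ follows from the naturality of each square.
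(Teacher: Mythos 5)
Your proposal is correct and takes essentially the same route as the paper: the paper's proof is just the terse observation that the proposition follows by comparing the stack diagram \eqref{eq:Galois-equivariance-of-correspondences} with the definitions, since $\mathrm S_\pi$ is pullback along its bottom row and $\mathrm S_{\tilde X_{\O_C}}$ is pullback along its top row, with well-definedness of the vertical arrows coming from \Cref{l:pullback-of-v-vector-bundle-to-OC-nilpotent}. What you add — the explicit identification of bands under $X_{\O_C}^\HT\cong X^\HT\times_{\Spf(\O_K)^\HT}\Spf(\O_C)$ and the nilpotence argument from condition (3) of \Cref{d:Higgs--Sen} — is a fair unpacking of what the paper leaves implicit.
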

\begin{proof}
	The vertical arrows are well-defined by \Cref{l:pullback-of-v-vector-bundle-to-OC-nilpotent}. The natural transformation making the diagram commutative then exists by comparing \eqref{eq:Galois-equivariance-of-correspondences} to the definitions of the  $p$-adic Simpson correspondences. Namely, $S_\pi$ is given by forming the pullback of vector bundles along the bottom row, whereas  $\mathrm{S}_{\tilde X_{\mathcal{O}_C}}$ is given by forming the pullback of vector bundles along the top row.
\end{proof}
\subsection{Sen theory via the Kummer tower}
Recall that  the goal of this section is to determine the essential image of the bottom functor in \Cref{l:comp-geom-arithm-correspondence}. Since the map on the right is clearly fully faithful, and ${\mathrm{S}_{\tilde X_{\mathcal{O}_C}}}$ is an isomorphism by \Cref{t:essential-surjectivity-geom-case}, it suffices to determine the essential image of the map on the left.
Our central object of study in this section is therefore the $G_K$-module $M$. We can regard this as a v-vector bundle on $\X$ that becomes trivial upon pullback along $h$. We can describe this bundle using the stack $\Spf(\O_K)^\HT$ studied in \cite{analytic_HT}, as follows: Set $G:=G_A$.
The natural morphism $X^\HT\to \Spf(\O_K)^\HT$, induced by $X\to \Spf(\O_K)$ due to the functoriality of the Hodge--Tate stack,  induces a commutative diagram
\[\begin{tikzcd}
{[X_{\O_C,\infty}/\Gamma]}\arrow[r]\arrow[d]&	X^\HT \arrow[d]        & {[X/G]} \arrow[l,"\sim"'] \arrow[d]    & X\arrow[l] \arrow[d,equal] \\
		{[X_{\O_C}/G_K]}\arrow[r]&\Spf(\O_K)^\HT\times X & {[X/H]} \arrow[l,"\sim"']    & X\arrow[l]
\end{tikzcd}\]
where $H$ is the automorphism group of the morphism $\Spf(\O_K)\to \Spf(\O_K)^\HT$ induced by the choice of uniformiser $\pi$.
Here the third vertical map is the pushout along $G\to H$, which admits a canonical splitting defined by the section $H\to G$. Given a Higgs--Sen module $(N,\theta_N,\phi_N)$ on $[X/G]$ with $V=\mathrm S_\pi(N,\theta_N,\phi_N)$, it follows by comparing to the diagram \eqref{eq:Galois-equivariance-of-correspondences} that the Galois module $M$ on $\X_{C}$ agrees with the Galois module $h^\ast N$ associated to the Sen module $(N,\phi_N)$ on $[X/H]$.

Let now $\pi^\flat=(\dots,\pi^{1/p},\pi)$ be a choice of a compatible system of $p$-power roots of $\pi$. This induces an element $[\pi^\flat]\in A_\inf(\O_C)$. We now recall from \cite[Lemma~4.1]{AHLB}  the 1-cocycle
\[\chi_{\pi^\flat}:G_K\to \O_C^\times, \quad \chi_{\pi^\flat}(\sigma):=\theta(\tfrac{E(\sigma([\pi^\flat]))}{E([\pi^\flat])})\in 1+e\pi (1-\zeta_p)\O_C.\]
We refer to \cite[Lemma~4.1]{AHLB} for some more background on this. For us, its relevance stems from:
\begin{lemma}\label{l:Galois-action-on-vVB-assoc-to-Higgs--Sen}
	If $V=\mathrm{S}_\pi(N,\theta_N,\phi_N)$ comes from a Hitchin--small Higgs--Sen module on $X$, then there is a natural isomorphism $M=N\otimes_{\O_K}\O_C$ with respect to which the Galois action by $G_K$ is for any $\sigma \in G_K$ given by 
	\[\sigma (m\otimes c)=\chi_{\pi^\flat}(\sigma)^{\tfrac{\phi_N}{e}}(m\otimes 1)\sigma(c).\]
	Here the Hitchin-smallness implies that $\chi_{\pi^\flat}(\sigma)^{\tfrac{\phi_N}{e}}:=\exp(\tfrac{\phi_N}{e}\log(\chi_{\pi^\flat}(\sigma)))$
	converges inside $\End(M)$.
\end{lemma}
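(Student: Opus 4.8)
The plan is to bypass the Higgs field entirely and reduce the computation of the $G_K$-action on $M$ to Sen theory organised through $\Spf(\mathcal{O}_K)^{\HT}$, as studied in \cite{analytic_HT} and \cite[\S4]{AHLB}. Working locally on $X$, we may assume $X$ is affinoid with a toric chart, hence with a prismatic lift, so that the diagram \eqref{eq:Galois-equivariance-of-correspondences} is available. The starting point is that, by the discussion preceding the lemma together with the nilpotence of $\theta_M$ from \Cref{l:pullback-of-v-vector-bundle-to-OC-nilpotent}, the $\mathcal{O}_{\mathcal X_C}$-module $M$ with its $G_K$-equivariant structure is the pullback of the vector bundle on $[X/H]\cong\Spf(\mathcal{O}_K)^{\HT}\times X$ encoded by the Sen datum $(N,\phi_N)$ along the left-hand vertical map $[X_{\mathcal{O}_C}/G_K]\to\Spf(\mathcal{O}_K)^{\HT}\times X$. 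Since this map is the product of $X_{\mathcal{O}_C}\to X$ with $[\Spf(\mathcal{O}_C)/G_K]\to\Spf(\mathcal{O}_K)^{\HT}$, and $N$ is pulled back through the $X$-factor, the identification $M\cong N\otimes_{\mathcal{O}_K}\mathcal{O}_C$ is immediate, and the only nontrivial point is to compute the $G_K$-action, which is entirely governed by the second factor.

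For that, I would use the uniformiser $\pi$ to split $\Spf(\mathcal{O}_K)^{\HT}\cong[\Spf(\mathcal{O}_K)/H]$ and observe that the composite $\Spf(\mathcal{O}_C)\to\Spf(\mathcal{O}_K)\to[\Spf(\mathcal{O}_K)/H]$ fails to be $G_K$-equivariant by a continuous $1$-cocycle $\kappa\colon G_K\to H(\mathcal{O}_C)$. By \cite[Lemma~4.1]{AHLB}, in the natural coordinate on $H$ (which is $\Ga^\sharp$ or $\Gm^\sharp$ according to whether $K$ is ramified) this cocycle is $\sigma\mapsto \tfrac1e\log\chi_{\pi^\flat}(\sigma)$, the factor $e$ being exactly the normalisation in condition~(3) of \Cref{d:Higgs--Sen}. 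On the other hand, by the dictionary between Higgs--Sen modules and vector bundles on $X^{\HT}$ recalled in \S\ref{s:recall-local-Simpson-arith}, in the same coordinate the $H$-action on $N$ is $t\mapsto\exp(\tfrac{\phi_N}{e}t)$. Combining these, the $G_K$-action on $M=N\otimes_{\mathcal{O}_K}\mathcal{O}_C$ is $\sigma(m\otimes c)=\exp\big(\tfrac{\phi_N}{e}\log\chi_{\pi^\flat}(\sigma)\big)(m\otimes 1)\,\sigma(c)$, which is the asserted formula; naturality in $X$ then glues the local statements.

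It remains to justify the convergence claim, and this is the step I expect to require the most care. From \cite[Lemma~4.1]{AHLB} we have $\chi_{\pi^\flat}(\sigma)\in 1+e\pi(1-\zeta_p)\mathcal{O}_C$, so $\log\chi_{\pi^\flat}(\sigma)$ converges and $\tfrac1e\log\chi_{\pi^\flat}(\sigma)\in\pi(1-\zeta_p)\mathcal{O}_C$ has $p$-adic valuation $>\tfrac1{p-1}$. Hitchin-smallness of $(N,\theta_N,\phi_N)$ is, by \Cref{l:small-locus-arithmetic-Hitchin}, equivalent to $\phi_N^p-e^{p-1}\phi_N$ being topologically nilpotent; writing $\tfrac{\phi_N}{e}$ as a sum of an operator with $\Z_p$-integral spectrum — for which raising the principal unit $\chi_{\pi^\flat}(\sigma)$ to that power makes sense as genuine $p$-adic exponentiation — and a topologically nilpotent remainder, one checks that $\exp\big(\tfrac{\phi_N}{e}\log\chi_{\pi^\flat}(\sigma)\big)$ converges in $\End(M)$. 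The genuinely delicate parts are therefore the bookkeeping of the normalisation factors $e$ and $(1-\zeta_p)$ and, above all, the identification of the geometric cocycle $\kappa$ with $\tfrac1e\log\chi_{\pi^\flat}$ via \cite[Lemma~4.1]{AHLB}; everything else is formal.
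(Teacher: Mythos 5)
Your proof follows essentially the same route as the paper's: reduce to the Sen datum $(N,\phi_N)$ on $\Spf(\O_K)^\HT\times X$ by using the section $H\to G$ to kill $\theta_N$ without changing the Galois module structure, then read off the $G_K$-action from the cocycle obtained by splitting $\Spf(\O_K)^\HT$ via $\pi$ and identifying that cocycle with $\tfrac1e\log\chi_{\pi^\flat}$ through \cite[Lemma~4.1]{AHLB}. The paper's proof is exactly this reduction, with the second step delegated wholesale to the base change of \cite[Lemma~4.5]{analytic_HT} along $X\to\Spf(\O_K)$; you simply unwind that citation.

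The one point where your sketch is weaker than what the paper relies on is the convergence of $\chi_{\pi^\flat}(\sigma)^{\phi_N/e}$. The proposed decomposition of $\phi_N/e$ into a commuting sum of an operator with $\Z_p$-integral spectrum and a topologically nilpotent remainder is not obviously available over $\O(\X)$ (it would require a refined Jordan--Chevalley type splitting in families), and this decomposition is not used anywhere in the paper. The cleaner argument, which is the one the paper uses in \Cref{l:1-cocycle-from-Hitchin-sm-v-VB} (and which underlies the cited \cite[Lemma~4.5]{analytic_HT}), is to expand $\chi_{\pi^\flat}(\sigma)^{\phi_N/e}$ as the binomial series
\[
\sum_{n\geq 0}\frac{(\chi_{\pi^\flat}(\sigma)-1)^n}{e^n\,n!}\prod_{i=0}^{n-1}(\phi_N-ei)
\]
and then use the topological nilpotence of $\phi_N^p-e^{p-1}\phi_N$ from \Cref{l:small-locus-arithmetic-Hitchin} (together with $\chi_{\pi^\flat}(\sigma)-1\in e\pi(1-\zeta_p)\O_C$) to see that the products $\prod_{i=0}^{n-1}(\phi_N-ei)$ tend to zero; this needs no decomposition of $\phi_N$. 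I would replace your last paragraph with that argument.
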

\begin{proof}
	As we have just seen, we may use the section $H\to G$ to set $\theta_N$ to $0$ without changing the Galois module structure on $M$, and we may thus assume that $V$ comes from a vector bundle on $\Spf(\O_K)^\HT\times X$ via pullback in the above diagram.
	
	In this case, the statement follows by the same proof as \cite[Lemma~4.5]{analytic_HT} by base-changing the whole discussion there along $X\to \Spf(\O_K)$.
\end{proof}
\subsection{Sen theory via the cyclotomic tower}

We now use that there is a good formalism of Sen theory for the tower $\X_{C}\to \X$, as developed by Shimizu  and Petrov. More precisely, the decompletion they work with is with respect to the cyclotomic tower $\X_{K^\cycl}\to \X$ where $K^\cycl$ is the completion of $K(\zeta_{p^n},n\in \N)$.
\begin{proposition}[{\cite[Lemma~2.10]{Shimizu_constancy}, \cite[Proposition~3.2]{petrov2020geometrically}}]\label{p:sen-theory-Shimizu-Petrov}
Let $M$ be a $G_K$-equivariant vector bundle on $\X_C$. Then there is a finite extension $K'=K(\zeta_{p^n})|K$ for which there is a vector bundle $N'$ on $\X_{K'}$ with an $\O_{\X_{K'}}$-linear endomorphism $\phi':N'\to N'$ and an isomorphism \[M=N'\otimes_{\O_{\X_{K'}}}\O_{\X_{C}}\]
 with respect to which the $G_{K'}$-action on $M$ can be described as being for any $\sigma \in G_{K'}$ given by
\[ \sigma (x\otimes c)=\chi(\sigma)^{\phi'}\cdot \sigma(c)\]
where $\chi:G_K\to \Z_p^\times$ is the cyclotomic character.
For fixed $K'$ large enough, the pair of the submodule $N'\subseteq M$ and the endomorphism $\phi'$ is uniquely determined by this property. The characteristic polynomial of $\phi'$ has coefficients in $\O(\X)$ and does not depend on the choice of $K'$.
\end{proposition}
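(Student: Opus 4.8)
The statement is essentially the rigid-analytic version of Sen's classical decompletion theorem for the cyclotomic tower, applied to families: starting from a $G_K$-equivariant vector bundle $M$ on $\mathcal X_C$, we must descend it (after a finite cyclotomic base change) to a vector bundle $N'$ on $\mathcal X_{K'}$ equipped with a Sen endomorphism $\phi'$, so that the $G_{K'}$-action is recovered by the formula $\sigma(x\otimes c)=\chi(\sigma)^{\phi'}\sigma(c)$. The natural approach is simply to cite \cite[Lemma~2.10]{Shimizu_constancy} and \cite[Proposition~3.2]{petrov2020geometrically}, which prove exactly this decompletion statement in the rigid-analytic setting (building on the Sen--Colmez--Fontaine decompletion machinery adapted to families). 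So the ``proof'' here is largely a matter of assembling the cited inputs and then verifying the two supplementary assertions that the references may not state in precisely the form we need: (i) uniqueness of the pair $(N',\phi')$ once $K'$ is large enough, and (ii) that the characteristic polynomial of $\phi'$ has coefficients in $\O(\mathcal X)$ (not just $\O(\mathcal X_{K'})$) and is independent of the choice of $K'$.

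**Steps in order.** First I would invoke the Shimizu--Petrov decompletion result to obtain, for some $K'=K(\zeta_{p^n})$, the vector bundle $N'$ on $\mathcal X_{K'}$, the endomorphism $\phi'$, the isomorphism $M=N'\otimes_{\O_{\mathcal X_{K'}}}\O_{\mathcal X_C}$, and the Galois formula for $\sigma\in G_{K'}$. Second, for uniqueness: given two such decompletions $(N_1',\phi_1')$ and $(N_2',\phi_2')$ over the same $K'$, the isomorphism of the associated $G_{K'}$-modules is determined by an element of $\operatorname{GL}(M)^{G_{K'}}$; by the overconvergence/decompletion part of the theory (or directly, since $N_i'$ is the subspace of $M$ on which the $G_{K'}$-action is "analytic" in the Sen sense), any such element must preserve both $N_i'$, whence $N_1'=N_2'$ inside $M$, and then differentiating the Galois cocycle along the cyclotomic direction forces $\phi_1'=\phi_2'$ — this is the standard argument that the Sen operator is intrinsic. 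Third, for the characteristic polynomial: passing from $K'$ to a larger $K''=K(\zeta_{p^m})$ replaces $(N',\phi')$ by $(N'\otimes_{\O_{\mathcal X_{K'}}}\O_{\mathcal X_{K''}},\phi'\otimes 1)$ (by uniqueness over $K''$), so $\chi(\phi')$ is unchanged under enlarging $K'$; since any two choices $K(\zeta_{p^n})$, $K(\zeta_{p^m})$ have a common enlargement, $\chi(\phi')$ is well-defined. Finally, to see the coefficients lie in $\O(\mathcal X)$ rather than merely $\O(\mathcal X_{K'})$: the characteristic polynomial is $\operatorname{Gal}(K'|K)$-invariant, because $\operatorname{Gal}(K'|K)$ permutes the decompletions — conjugating $(N',\phi')$ by $g\in\operatorname{Gal}(K'|K)$ yields another decompletion of the (isomorphic, since $M$ is defined over $\mathcal X$ with its $G_K$-structure) data, hence $\chi(g\cdot\phi')=\chi(\phi')$ — and by Galois descent for coherent sheaves along $\mathcal X_{K'}\to\mathcal X$ the invariants of $\O(\mathcal X_{K'})$ are $\O(\mathcal X)$.

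**Main obstacle.** The genuinely nontrivial content is entirely contained in the cited decompletion results of Shimizu and Petrov, so from the point of view of this paper there is no real obstacle — the work is bookkeeping. If one wanted to be self-contained, the hard part would be the decompletion step itself: showing that the "analytic vectors" for the $G_{K'}$-action form a vector bundle $N'$ over $\mathcal X_{K'}$ of the correct rank and that the derived action along the cyclotomic $\Z_p$ is $\O_{\mathcal X_{K'}}$-linear, which requires the Tate--Sen formalism in families (almost-vanishing of certain cohomology groups with the Tate twist, producing the decompletion operator). Since we are entitled to assume everything stated earlier in the excerpt and we are explicitly citing \cite{Shimizu_constancy} and \cite{petrov2020geometrically}, I would keep the proof short: cite the decompletion, then dispatch uniqueness and the two rationality claims for $\chi(\phi')$ by the Galois-invariance arguments above.
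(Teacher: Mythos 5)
Your proposal follows the same broad outline as the paper (cite the Shimizu--Petrov decompletion, then handle uniqueness and rationality of the characteristic polynomial), but there are a few concrete gaps and one genuine alternative route worth noting.

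First, you skip the step of descending from $\mathcal X_C$ to $\mathcal X_{K^\cycl}$. The decompletion results of Shimizu and Petrov are formulated for $G_K$-equivariant vector bundles over $\mathcal X_{K^\cycl}$, not over $\mathcal X_C$; the paper first invokes v-descent of vector bundles along $\mathcal X_C\to \mathcal X_{K^\cycl}$ (\cite[Corollary~6.11]{heuer-sheafified-paCS}) to reduce to that case. Without this reduction, the citation does not directly apply.

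Second, and more substantively, you write that ``from the point of view of this paper there is no real obstacle --- the work is bookkeeping,'' but the paper flags a genuine gap in the literature it is citing: \cite[Lemma~2.10]{Shimizu_constancy}, which is used in the proof of \cite[Proposition~3.2]{petrov2020geometrically}, is stated only when $K$ is a local field (so $\O(X)=\O_K$), whereas here one needs the relative version over $\O(X)$. The paper closes this gap with a direct cocycle computation following \cite[Lemma~1]{faltings2005p}: any continuous $1$-cocycle $H=\Gal(K^\cycl|K)\to \GL_n(\O(X_{\O_{K^\cycl}}))$ that is trivial modulo $p^\alpha$ for $\alpha>\tfrac{2}{p-1}$ comes from a homomorphism valued in $\GL_n(\O(X))$. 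Your proposal treats this as already done in the cited sources.

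Third, your uniqueness argument is not quite a proof: it is not automatic that a $G_{K'}$-equivariant automorphism of $M$ must carry $N_1'$ to $N_2'$. One needs an intrinsic characterization of $N'$ inside $M$, which is precisely what the $H$-finite-vector description (cited from \cite[Thm~7.4]{MinWang22}) supplies: $\phi'$ is recovered as $\log(\sigma)/\log(\chi(\sigma))$ on the $H$-finite vectors. You gesture at this (``analytic vectors,'' ``the Sen operator is intrinsic'') but do not pin it down. Finally, for the rationality of the characteristic polynomial, you argue via $\Gal(K'|K)$-invariance plus Galois descent for functions, which is a legitimate alternative to the paper's argument (specializing at $\O_L$-points for unramified $L|K$ and invoking classical Sen theory); the paper's route has the side benefit of re-deriving uniqueness, while yours is cleaner at the level of moduli but depends on the uniqueness step, which as noted is not yet fully justified.
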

 We will clarify the relation to \Cref{l:Galois-action-on-vVB-assoc-to-Higgs--Sen}  in \S\ref{s:comparing-Sens}.
\begin{proof}
	By \cite[Corollary~6.11]{heuer-sheafified-paCS}, we have v-descent of vector bundles along the map $\X_{C}\to \X_{K^\cycl}$, and it therefore suffices to prove the result for Galois equivariant vector bundles $M'$ on $\X_{K^\cycl}$. For these, the existence and description of the action are given by \cite[Proposition~3.2]{petrov2020geometrically}.
	
	Since strictly speaking,  \cite[Lemma~2.10]{Shimizu_constancy} (which is used in the proof) is written only in the case that $K$ is a local field, we note that one can also check directly that for $H=\Gal(K^\cycl|K)$, any continuous $1$-cocycle $H\to \GL_n(\O(X_{\O_{K^\cycl}}))$ that becomes trivial modulo $p^\alpha$ for $\alpha>\tfrac{2}{p-1}$ is in the image of
	\[ \Hom(H,\GL_n(\O(X)))\to H^1_{\cts}(H,\GL_n(\O(X_{\O_{K^\cycl}}))),\]
	by following the argument in \cite[Lemma~1]{faltings2005p}.

	The uniqueness of $(N',\phi')$ follows from considering $H$-finite vectors in $M'$, where we can recover $\phi'$ as $\log(\sigma)/\log(\chi(\sigma))$, see \cite[Thm 7.4]{MinWang22}.  
	
	A quick way to see the statement about the characteristic polynomial is to simply deduce this from classical Sen theory by specialising at all points $\Spf(\O_L)\to X$ valued in unramified extensions $L|K$. This also gives another way to see that $\phi'$ is uniquely determined.
\end{proof}

\subsection{The arithmetic Hitchin fibration for v-vector bundles}

\begin{definition}\label{def:arithmetic-Hitchin}
	Let $\mathrm{Bun}_{n}(\X_v)$ be the groupoid of v-vector bundles of rank $n$ on $\X$. We define the \textit{arithmetic Hitchin fibration} of rank $n$
	\[ \widetilde{\mathcal H}=\widetilde{\mathcal H}_\X:\mathrm{Bun}_{n}(\X_v)\to \mathbb{A}^n(\X)\]
	of $\X$ as follows: For any v-vector bundle $V$ on $\X$, let $(N,\theta_N)=\mathrm{S}^{-1}_{\tilde X_{\mathcal{O}_C}}(h^{\ast}V)$ be the associated $G_K$-equivariant Higgs bundle on $\X_{C}$ from \Cref{l:comp-geom-arithm-correspondence}. Let $(N',\phi')$ be the Sen module associated to $N$ by \Cref{p:sen-theory-Shimizu-Petrov}, then we define $\widetilde{\mathcal H}(V)$ to be the characteristic polynomial of $e\phi'$. This is well-defined as it does not depend on the choice of $K'$.
\end{definition}
\begin{definition}
We call a v-vector bundle $V$ on $\X$ \textit{Hitchin--small} if $\widetilde{\mathcal H}(V)\in\mathbb{A}^{\mathrm{sm},n}(\X)\subseteq \mathbb{A}^n(\X)$.
\end{definition}
\begin{lemma}\label{l:Hitchin-small-in-terms-of-topnil}
	$V$ is Hitchin-small if and only if
	 $e^p(\phi'^p-\phi')$ is topologically nilpotent.
\end{lemma}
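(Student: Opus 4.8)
The plan is to reduce the statement to \Cref{l:small-locus-arithmetic-Hitchin}, the only subtlety being that $\phi'$ is an endomorphism of the bundle $N'$ on $\X_{K'}$ rather than on $\X$. To bridge this, the first step I would take is to observe that both sides of the asserted equivalence depend only on the characteristic polynomial $P(T):=T^n+a_1T^{n-1}+\dots+a_n$ of $e\phi'$, which by \Cref{p:sen-theory-Shimizu-Petrov} is a well-defined monic degree-$n$ polynomial with coefficients $a_i\in\O(\X)$, independent of the auxiliary extension $K'$. Indeed, by \Cref{def:arithmetic-Hitchin} and the definition of Hitchin-smallness, "$V$ is Hitchin-small" means exactly "$\widetilde{\mathcal H}(V)=P\in\mathbb A^{\mathrm{sm},n}(\X)$"; and, writing $g(S):=S^p-e^{p-1}S$, one has $e^p(\phi'^p-\phi')=g(e\phi')$, whose characteristic polynomial $Q(T)$ is obtained from $P$ by the universal polynomial operation sending $\prod_i(T-\lambda_i)$ to $\prod_i(T-g(\lambda_i))$ (so $Q\in\O(\X)[T]$, with coefficients $\Z[e]$-polynomial in the $a_i$, independent of $K'$).

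Second, I would record the standard fact — proved fibrewise via Newton polygons over the reduced affinoid $\O(\X)$, exactly as in the proof of \Cref{l:small-locus-arithmetic-Hitchin} — that an endomorphism of a vector bundle on a smooth rigid space over $K$ is topologically nilpotent if and only if every coefficient of its characteristic polynomial is topologically nilpotent in the structure ring. Applied to $e^p(\phi'^p-\phi')$ on $N'$ over $\X_{K'}$: it is topologically nilpotent iff every coefficient of $Q$ is topologically nilpotent in $\O(\X_{K'})$; since these coefficients already lie in $\O(\X)$ and the base change $\X_{K'}\to\X$ preserves the supremum seminorm, this is equivalent to the coefficients of $Q$ being topologically nilpotent in $\O(\X)$.

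Third, to close the loop I would apply \Cref{l:small-locus-arithmetic-Hitchin} to the companion matrix $F$ of $P$, an $\O(\X)$-linear endomorphism of $\O(\X)^n$ with characteristic polynomial $P$: condition (2) of that lemma is precisely "$P\in\mathbb A^{\mathrm{sm},n}(\X)$", i.e. "$V$ is Hitchin-small", while condition (1) is "$F^p-e^{p-1}F=g(F)$ is topologically nilpotent". Now $g(F)$ has characteristic polynomial $Q$ (the same universal operation), so by the fact recorded above it is topologically nilpotent iff all coefficients of $Q$ are topologically nilpotent in $\O(\X)$ — which is exactly the condition shown in the second step to be equivalent to topological nilpotence of $e^p(\phi'^p-\phi')$. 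Concatenating the equivalences yields the lemma.

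The only point requiring care is the bookkeeping in step two: that "topological nilpotence of an endomorphism $\iff$ topological nilpotence of all characteristic-polynomial coefficients" and that this is insensitive to replacing $\O(\X)$ by $\O(\X_{K'})$; everything else is a purely formal reduction to \Cref{l:small-locus-arithmetic-Hitchin}. If one prefers to avoid the companion matrix, an equivalent route is to verify both conditions directly at rigid points $\Spa(L)\to\X$ with $L|K$ finite unramified, as in the proof of \Cref{l:small-locus-arithmetic-Hitchin}, using that for an element $x$ of any complete valued extension of $K$ one has $x^p-e^{p-1}x$ topologically nilpotent $\iff x\in e\Z+\mathfrak m$, regardless of ramification (the case $e\in\mathfrak m_K$ being immediate and the unramified case $e=1$ reducing to $\bar x\in\F_p$).
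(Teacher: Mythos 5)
Your proof is correct, and it is fundamentally the same reduction the paper has in mind: the paper's own proof is the single line ``This follows from \Cref{l:small-locus-arithmetic-Hitchin},'' i.e.\ it treats the lemma as an immediate application of the eigenvalue criterion, implicitly invoking that both conditions can be checked on rigid points (exactly as in the proof of \Cref{l:small-locus-arithmetic-Hitchin} itself) and that the characteristic polynomial of $e\phi'$ already lives in $\O(\X)$ by \Cref{p:sen-theory-Shimizu-Petrov}. What you add is a careful handling of the one genuine wrinkle --- that $\phi'$ is an endomorphism of a bundle on $\X_{K'}$ rather than on $\X$, so \Cref{l:small-locus-arithmetic-Hitchin} does not literally apply to $e\phi'$ --- by routing everything through the characteristic polynomial $P$: the universal identity ``char.\ poly of $g(A)$ is a $\Z[e]$-polynomial function of char.\ poly of $A$,'' the Newton-polygon fact that topological nilpotence of an endomorphism is equivalent to topological nilpotence of its characteristic-polynomial coefficients, and the invariance of the supremum seminorm under the finite flat base change $\X_{K'}\to\X$. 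The companion-matrix step is a clean way to bring the problem back over $\O(\X)$, where \Cref{l:small-locus-arithmetic-Hitchin} can then be applied verbatim. One cosmetic remark: in your first step you assert up front that both sides depend only on $P$, but for the right-hand side this assertion itself relies on the Newton-polygon fact recorded in step two, so the logical ordering would read more cleanly with step two stated before that claim. The alternative fibrewise route you sketch at the end is also fine and is closer to the argument the paper gives for \Cref{l:small-locus-arithmetic-Hitchin} itself.
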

\begin{proof}
	This follows from
\Cref{l:small-locus-arithmetic-Hitchin}.
\end{proof}
The technical key property of Hitchin-small v-vector bundles will be the following:
\begin{lemma}\label{l:1-cocycle-from-Hitchin-sm-v-VB}
	Let $V$ be a Hitchin-small v-vector bundle. Then for any $\sigma\in G_K$, 
	\[\textstyle \chi_{\pi^\flat}(\sigma)^{\phi'}=\sum\limits_{n=0}^\infty\frac{(\chi_{\pi^\flat}(\sigma)-1)^n}{e^n\cdot n!}\prod\limits_{i=0}^{n-1}(\phi'e-e\cdot i)\in \Aut_{C}(M)\]
	is well-defined. This defines a 1-cocycle $\chi_{\pi^\flat}^{\phi'}:G_K\to \Aut_{C}(M)$.
\end{lemma}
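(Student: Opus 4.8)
The plan is to establish well-definedness by a direct norm estimate on the $n$-th term of the series, and then to deduce the $1$-cocycle property formally from the fact (recalled in \cite[Lemma~4.1]{AHLB}) that $\chi_{\pi^\flat}$ is a $1$-cocycle $G_K\to\O_C^\times$.

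For well-definedness I would use two inputs. First, by \cite[Lemma~4.1]{AHLB} one has $\chi_{\pi^\flat}(\sigma)\in 1+e\pi(1-\zeta_p)\O_C$, so $|\chi_{\pi^\flat}(\sigma)-1|\le|e|\,|\pi|\,p^{-1/(p-1)}$; together with $|n!|^{-1}\le p^{n/(p-1)}$ this gives
\[
\frac{|\chi_{\pi^\flat}(\sigma)-1|^{\,n}}{|e|^{\,n}\,|n!|}\ \le\ |\pi|^{\,n}.
\]
Second, since $V$ is Hitchin-small, the monic polynomial $F(T)=\widetilde{\mathcal H}(V)$ — the characteristic polynomial of $\psi:=e\phi'$, by \Cref{def:arithmetic-Hitchin} — has coefficients in $e\Z+\mathfrak m_K\O(X)$, hence coefficients that are power-bounded functions on $\X_{K'}$, so $F\in\O^{+}(\X_{K'})[T]$. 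Now the estimate: the claim is local on $X$, so I may assume $X$ affine, fix an integral model of $N'$, and let $C_0:=\|\psi\|<\infty$ be the operator norm of $\psi$ with respect to this model — note that $C_0$ need not be $\le 1$, and this potential largeness of the decompleted Sen operator is the step I expect to be the main obstacle. I would handle it by reduction modulo $F$: writing $r=\rk V=\deg F$ and performing polynomial long division of $\prod_{i=0}^{n-1}(T-ei)\in\O^{+}(\X_{K'})[T]$ by the monic $F$ yields $\prod_{i=0}^{n-1}(T-ei)=Q(T)F(T)+R(T)$ with $\deg R<r$ and $R$ again having power-bounded coefficients; since $F(\psi)=0$ by Cayley--Hamilton this gives
\[
\textstyle\prod_{i=0}^{n-1}(\phi'e-ei)=R(\psi),\qquad \|R(\psi)\|\ \le\ \max(1,C_0)^{\,r-1}=:C_1,
\]
a bound independent of $n$. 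Combining the two inputs, the $n$-th term of the series has norm $\le C_1\,|\pi|^{n}\to 0$, so the series converges in $\End_C(M)$ by completeness. Running the same estimate for $\chi_{\pi^\flat}(\sigma)^{-1}\in 1+e\pi(1-\zeta_p)\O_C$ and using the identity $(ab)^{\phi'}=a^{\phi'}b^{\phi'}$ for commuting scalars $a,b$ (an identity of the binomial series valid whenever both sides converge, by ultrametric rearrangement of the double sum) exhibits $\chi_{\pi^\flat}(\sigma)^{-\phi'}$ as a two-sided inverse, so indeed $\chi_{\pi^\flat}(\sigma)^{\phi'}\in\Aut_C(M)$.

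For the cocycle property I would first note that the $G_K$-equivariant structure $g$ on $M$ commutes with $\phi'$: for $\sigma\in G_{K'}$ this is \Cref{p:sen-theory-Shimizu-Petrov} (there $\phi'$ is recovered as $\log\sigma/\log\chi(\sigma)$), and for general $\sigma\in G_K$ it follows because $K'/K$ is Galois, so $g_\sigma$ normalises the $G_{K'}$-action on $M$ and hence preserves the uniquely determined pair $(N',\phi')$ of \Cref{p:sen-theory-Shimizu-Petrov}. Consequently $g_\sigma$ commutes with each $\prod_{i=0}^{n-1}(\phi'e-ei)$ and conjugates the scalar $\chi_{\pi^\flat}(\tau)$ to $\sigma(\chi_{\pi^\flat}(\tau))$ (which again lies in $1+e\pi(1-\zeta_p)\O_C$, as $\sigma$ fixes $e,\pi$ and scales $1-\zeta_p$ by a unit), so conjugation by $g_\sigma$ sends $\chi_{\pi^\flat}(\tau)^{\phi'}$ to $\sigma(\chi_{\pi^\flat}(\tau))^{\phi'}$. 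Feeding in the cocycle relation $\chi_{\pi^\flat}(\sigma\tau)=\chi_{\pi^\flat}(\sigma)\cdot\sigma(\chi_{\pi^\flat}(\tau))$ from \cite[Lemma~4.1]{AHLB} together with the multiplicativity $(ab)^{\phi'}=a^{\phi'}b^{\phi'}$, I obtain
\[
\chi_{\pi^\flat}(\sigma\tau)^{\phi'}=\chi_{\pi^\flat}(\sigma)^{\phi'}\cdot\sigma(\chi_{\pi^\flat}(\tau))^{\phi'}=\chi_{\pi^\flat}(\sigma)^{\phi'}\cdot g_\sigma\,\chi_{\pi^\flat}(\tau)^{\phi'}g_\sigma^{-1},
\]
which is exactly the $1$-cocycle identity for $\chi_{\pi^\flat}^{\phi'}\colon G_K\to\Aut_C(M)$ relative to the conjugation action of $G_K$ coming from $g$. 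The only nonformal ingredient is the norm estimate above; everything else uses only the divisibility of $\chi_{\pi^\flat}(\sigma)-1$ by $e$ supplied by the arithmetic of $\chi_{\pi^\flat}$ and the uniqueness in \Cref{p:sen-theory-Shimizu-Petrov}.
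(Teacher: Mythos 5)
Your proof is correct, but the convergence argument takes a genuinely different route from the paper's one-line proof. The paper invokes \Cref{l:Hitchin-small-in-terms-of-topnil}, i.e.\ that Hitchin-smallness is equivalent to $e^p(\phi'^{p}-\phi')$ being topologically nilpotent, and uses this to argue that the \emph{product} factors $\prod_{i=0}^{n-1}(\phi'e-e\cdot i)$ themselves tend to $0$; the binomial-coefficient factor only needs to stay bounded. You instead take the other characterisation of Hitchin-smallness — that the characteristic polynomial $F$ of $\psi=e\phi'$ has power-bounded coefficients — and use Cayley--Hamilton together with monic division in $\O^+(\X_{K'})[T]$ to show that $\prod_{i=0}^{n-1}(\psi-e\cdot i)=R(\psi)$ with $\deg R<\mathrm{rk}\,V$ and $R$ power-bounded, giving a bound on the products that is uniform in $n$ but need not decay; the convergence then comes entirely from the estimate $\bigl|\tfrac{(\chi_{\pi^\flat}(\sigma)-1)^n}{e^n n!}\bigr|\le|\pi|^n$, which exploits the divisibility $\chi_{\pi^\flat}(\sigma)-1\in e\pi(1-\zeta_p)\O_C$. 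Both arguments exploit Hitchin-smallness, but at different points: the paper puts the decay in the operator factor, you put it in the scalar factor and only need boundedness of the operator factor — arguably a cleaner division of labour, and your handling of the possibly-large operator norm $\|\psi\|$ via Cayley--Hamilton is a real point that the paper's terse proof does not make explicit. On the cocycle property the paper's proof is silent (it is treated as formal), and your argument — deducing $[g_\sigma,\phi']=0$ for $\sigma\in G_K$ from the uniqueness statement in \Cref{p:sen-theory-Shimizu-Petrov} and $K'/K$ being Galois, then combining the scalar cocycle relation for $\chi_{\pi^\flat}$ with multiplicativity of exponentials of commuting elements — is correct and supplies the detail that was left implicit. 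The one piece of the statement you do not explicitly address is the ``equality'' clause, for which the paper cites \cite[Lemma~2.14]{analytic_HT}: this compares the falling-factorial series with the $\exp(\phi'\log(\cdot))$ definition used elsewhere, a rearrangement you gesture at but do not carry out; it is harmless since it is exactly the kind of ultrametric double-sum reshuffling you invoke for the multiplicativity identity.
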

\begin{proof}
	The equality holds by \cite[Lemma~2.14]{analytic_HT}. Since \Cref{l:Hitchin-small-in-terms-of-topnil} shows that the products on the right hand side converge to $0$ for $n\to \infty$ when $V$ is Hitchin-small, this shows the convergence.
\end{proof}
\begin{lemma}\label{l:functoriality-arithmetic-Hitchin}
	The arithmetic Hitchin fibration is functorial, in the following sense: Let $L|K$ be a finite extension with uniformiser $\pi'\in \O_L$ and associated element $e'\in \delta_{\mathcal{O}_L/\Z_p}$. Then for any morphism $f:Y\to X$ of $p$-adic formal schemes where $X$ is smooth over $\O_K$ with rigid generic fibre $\mathcal X$ and $Y$ is smooth over $\O_L$ with rigid generic fibre $\mathcal Y$, the following diagram commutes
	\[
	\begin{tikzcd}
		{\mathrm{Bun}_{n}(\mathcal Y_v)} \arrow[r,"\widetilde{\mathcal H}_{\mathcal Y}"]                     & \mathbb A^n(\mathcal Y)      &       (\frac{e'}{e}f(a_1),\dots,(\frac{e'}{e})^nf(a_n))        \\
		{\mathrm{Bun}_{n}(\mathcal{X}_v)} \arrow[r,"\widetilde{\mathcal H}_{\mathcal X}"] \arrow[u, "f^\ast"] & \mathbb A^n(\mathcal X) \arrow[u]&(a_1,\dots,a_n)\arrow[u,mapsto].
	\end{tikzcd}\]
	where the right map in the square is defined as indicated on the right.
\end{lemma}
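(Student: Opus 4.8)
The plan is to unwind the definition of $\widetilde{\mathcal H}$ from \Cref{def:arithmetic-Hitchin} into its constituent operations and to check that each of them, except for the final normalisation, is compatible with pullback along $f$; the factors $(e'/e)^i$ in the statement are then forced by that normalisation. Recall that $\widetilde{\mathcal H}_{\mathcal X}(V)$ is the characteristic polynomial of $e\cdot\phi'_{\mathcal X}$, where $\phi'_{\mathcal X}$ is the cyclotomic Sen operator (\Cref{p:sen-theory-Shimizu-Petrov}) of the $G_K$-equivariant bundle $N_{\mathcal X}:=\mathrm S^{-1}_{\tilde X_{\mathcal O_C}}(h^{\ast}_X V)$ on $\mathcal X_C$ produced by \Cref{l:comp-geom-arithm-correspondence}. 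Since both $\widetilde{\mathcal H}_{\mathcal X}$ and $\widetilde{\mathcal H}_{\mathcal Y}$ are defined locally on $X$ resp.\ $Y$, I may assume $X$ (hence $Y$) affinoid with a toric chart, so that all auxiliary lifts below exist. Finally I record, from the tower formula $\delta_{\mathcal O_L/\Z_p}=\delta_{\mathcal O_L/\mathcal O_K}\cdot(\delta_{\mathcal O_K/\Z_p}\mathcal O_L)$, that $e\mid e'$ in $\mathcal O_L$; hence $e'/e\in\mathcal O_L$ (so the right vertical map makes sense integrally) and $A_2(e^{-1})\subseteq A_2(e'^{-1})$ inside $B_{\dR}^+/\xi^2$.

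First I would dispose of the base change: by functoriality of the generic fibres, $h^{\ast}_Y(f^{\ast}V)\cong f^{\ast}_C(h^{\ast}_XV)$ as $G_L$-equivariant v-vector bundles on $\mathcal Y_C$, where $f_C\colon\mathcal Y_C\to\mathcal X_C$ is the base change of $f$ and the $G_L$-structure on the right is the restriction of the $G_K$-structure along $G_L\subseteq G_K$. The main work is then to identify $N_{\mathcal Y}:=\mathrm S^{-1}_{\tilde Y_{\mathcal O_C}}(h^{\ast}_Yf^{\ast}V)$ with $f^{\ast}_C N_{\mathcal X}$ \emph{$G_L$-equivariantly}. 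For this I would first base-change the $e^{-1}$-lift $\tilde X_{\mathcal O_C}$ along $A_2(e^{-1})\hookrightarrow A_2(e'^{-1})$ to an $e'^{-1}$-lift $\tilde X'_{\mathcal O_C}$ of $X_{\mathcal O_C}$, using that for an $x$-lift, viewed via $A_2(x)\subseteq A_2(x')$ as an $x'$-lift, the resulting correspondences agree on $z$-Hitchin-small — in particular nilpotent, which by \Cref{l:pullback-of-v-vector-bundle-to-OC-nilpotent} is all that is relevant here — Higgs bundles. Now $\tilde X'_{\mathcal O_C}$ and $\tilde Y_{\mathcal O_C}$ are lifts over the common ring $A_2(e'^{-1})$, and by formal smoothness $f$ lifts, locally on $Y$, to an $A_2(e'^{-1})$-linear morphism $\tilde f\colon\tilde Y_{\mathcal O_C}\to\tilde X'_{\mathcal O_C}$. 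Applying the functoriality statement \Cref{t:essential-surjectivity-geom-case}.(2) (equivalently the twisted-pullback formalism of \S\ref{s:twisted-pullback}), together with its $G_L$-equivariant refinement — the same mechanism that produces $\mathrm S_\pi$ by Galois descent in \S\ref{s:recall-global-Simpson-arith}, cf.\ the diagram \eqref{eq:Galois-equivariance-of-correspondences} — yields the desired $G_L$-equivariant isomorphism $N_{\mathcal Y}\cong f^{\ast}_C N_{\mathcal X}$.

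With this in hand, the compatibility of the cyclotomic Sen module with $f$ is formal: applying \Cref{p:sen-theory-Shimizu-Petrov} to $N_{\mathcal X}$ over some $K'=K(\zeta_{p^n})$, and enlarging $n$ so that $K'\subseteq L':=L(\zeta_{p^m})$, the pair $(f^{\ast}N'_{\mathcal X},f^{\ast}\phi'_{\mathcal X})$ on $\mathcal Y_{L'}$ satisfies — because the cyclotomic character of $K$ restricts to that of $L'$ — the characterising property of the Sen module of $f^{\ast}_C N_{\mathcal X}\cong N_{\mathcal Y}$, so by the uniqueness in \Cref{p:sen-theory-Shimizu-Petrov} one gets $\phi'_{\mathcal Y}=f^{\ast}\phi'_{\mathcal X}$. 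Writing $\widetilde{\mathcal H}_{\mathcal X}(V)=T^n+a_1T^{n-1}+\dots+a_n$, this is the characteristic polynomial of $e\phi'_{\mathcal X}$, so $\phi'_{\mathcal X}$ has characteristic polynomial $T^n+e^{-1}a_1T^{n-1}+\dots+e^{-n}a_n$, and hence $e'\phi'_{\mathcal Y}=e'f^{\ast}\phi'_{\mathcal X}$ has characteristic polynomial
\[
\widetilde{\mathcal H}_{\mathcal Y}(f^{\ast}V)=T^n+\tfrac{e'}{e}\,f(a_1)\,T^{n-1}+\dots+\Big(\tfrac{e'}{e}\Big)^{n}f(a_n),
\]
which is exactly the image of $(a_1,\dots,a_n)$ under the right vertical arrow, as desired.

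The main obstacle is the second paragraph: synchronising the functoriality of the geometric $p$-adic Simpson correspondence with the change of arithmetic base field, i.e.\ keeping track of the two lifts $\tilde X_{\mathcal O_C}$ and $\tilde Y_{\mathcal O_C}$ over $A_2(e^{-1})\subseteq A_2(e'^{-1})$ and verifying that the comparison isomorphism can be taken $G_L$-equivariant, so that it survives the cyclotomic decompletion. No conceptual input beyond \cite{AHLB} and \S\ref{s:twisted-pullback} is needed — the factors $(e'/e)^i$ are entirely an artefact of the normalisation $e$ in $\widetilde{\mathcal H}$ — but the bookkeeping around the lifts and the Galois actions requires care; an alternative that sidesteps some of it is to argue directly on the Hodge--Tate stacks (the diagram \eqref{eq:Galois-equivariance-of-correspondences} is designed for exactly this equivariant functoriality), or to use the intrinsic description of $\phi'$ as $\log(\sigma)/\log\chi(\sigma)$ on the locally finite vectors for $\Gal(K^{\cycl}|K)$, for which functoriality under $f$ is transparent.
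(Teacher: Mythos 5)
Your proposal is correct and follows essentially the same route as the paper's proof, which is just two sentences long: it cites the functoriality of the cyclotomic Sen operator $\phi'$ from \cite[Proposition~3.2]{petrov2020geometrically}, remarks that this functoriality follows from the unique characterisation in \Cref{p:sen-theory-Shimizu-Petrov}, and notes that the $(e'/e)^i$ are a normalisation artefact of the choice $e\phi'$ in \Cref{def:arithmetic-Hitchin}. The crux in both your argument and the paper's is the last paragraph of your write-up: once one knows the relevant $G_L$-equivariant bundle over $\mathcal Y_C$ is $f_C^\ast N_{\mathcal X}$, the pulled-back pair $(f^\ast N'_{\mathcal X}, f^\ast\phi'_{\mathcal X})$ satisfies the defining property of the Sen decompletion for the larger field, so uniqueness forces $\phi'_{\mathcal Y}=f^\ast\phi'_{\mathcal X}$, and the characteristic polynomial transforms exactly by the rescaling on the right of the diagram.

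The difference is one of granularity. You make explicit the step the paper treats as implicit: that the intermediate $G_L$-equivariant Higgs bundle $N_{\mathcal Y}=\mathrm S^{-1}_{\tilde Y_{\mathcal O_C}}(h_Y^\ast f^\ast V)$ used in \Cref{def:arithmetic-Hitchin} can be identified, $G_L$-equivariantly, with $f_C^\ast N_{\mathcal X}$. Your route for this --- base-changing $\tilde X_{\mathcal O_C}$ along $A_2(e^{-1})\subseteq A_2(e'^{-1})$, lifting $f$ by formal smoothness, and invoking the naturality of \Cref{t:essential-surjectivity-geom-case}.(2) together with the Galois equivariance packaged in \eqref{eq:Galois-equivariance-of-correspondences} --- is sound, and the nilpotence input from \Cref{l:pullback-of-v-vector-bundle-to-OC-nilpotent} justifies passing between the different Hitchin-smallness thresholds attached to $e^{-1}$- versus $e'^{-1}$-lifts. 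The paper simply outsources this to Petrov's functoriality and the uniqueness in \Cref{p:sen-theory-Shimizu-Petrov}, which absorb the lift-bookkeeping, so your version is a useful unpacking of what that citation is secretly doing. The only cosmetic caution is to make sure the canonical lifts $\tilde X_{\mathcal O_C}$ and $\tilde Y_{\mathcal O_C}$ are compared via the canonical maps $\O_K\to A_2(e^{-1})$ and $\O_L\to A_2(e'^{-1})$ attached to $\pi$ and $\pi'$; these need not be compatible with the inclusion $\O_K\hookrightarrow\O_L$, but since any two lifts of $X_{\mathcal O_C}$ to the same square-zero thickening are isomorphic (non-canonically), and since the Sen operator only depends on the underlying equivariant module up to isomorphism, this does not affect the conclusion --- but it is worth noting when one says ``by formal smoothness $f$ lifts $G_L$-equivariantly.''
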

\begin{proof}
	Immediate from the functoriality properties of the operator $\phi'$ in \cite[Proposition~3.2]{petrov2020geometrically}, which follow from the unique characterisation in \Cref{p:sen-theory-Shimizu-Petrov}. The factor $e'/e$ appears as $\widetilde{\mathcal H}$ is defined in terms of $e\phi'$, which results in the indicated rescaling of the characteristic polynomials.
\end{proof}

We have the analogue of \Cref{l:locally-Hitchin-small}.
\begin{lemma}\label{l:locally-Hitchin-small-arithmetic-case}
 	Any v-vector bundle $V$ on $\X$ becomes Hitchin-small after pullback to the base-change $\X_L$ for some finite extension $L|K$. In fact, one can always take $L=K(\pi^{1/p^n})$ for $n\gg 0$.
\end{lemma}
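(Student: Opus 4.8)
The plan is to deduce the statement from the functoriality of the arithmetic Hitchin fibration (\Cref{l:functoriality-arithmetic-Hitchin}) together with the fact that the Kummer tower $K(\pi^{1/p^{m}})$ is deeply ramified. Under base change from $\O_K$ to the ring of integers $\O_L$ of a finite extension, \Cref{l:functoriality-arithmetic-Hitchin} says that $\widetilde{\mathcal H}$ transforms by rescaling its $i$-th coordinate by $(e'/e)^{i}$, where $e'$ generates the different $\delta_{\O_L/\Z_p}$; and for $L=K(\pi^{1/p^{m}})$ the valuation of $e'/e$ grows without bound with $m$, which eventually forces the image of $\widetilde{\mathcal H}_{\X}(V)$ into $\mathfrak m_L\O(X_{\O_L})^{n}\subseteq \mathbb A^{\mathrm{sm},n}(\X_L)$.

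First I would reduce to the case $X=\Spf(R)$ affine: since $\widetilde{\mathcal H}$ is functorial for open immersions over $\O_K$ (the case $L=K$, $f$ an open immersion, of \Cref{l:functoriality-arithmetic-Hitchin}) and membership in $\mathbb A^{\mathrm{sm},n}$ can be tested Zariski-locally on $X$, this is harmless; then $\X$ is affinoid with $\O(\X)=R[\tfrac1p]$, and writing $\widetilde{\mathcal H}_{\X}(V)=(a_1,\dots,a_n)$ with $a_i\in R[\tfrac1p]$ (here $n=\rk V$), the fact that $R$ is $p$-torsionfree and $p$-complete — so $R[\tfrac1p]$ is its localisation at $p$ — lets me pick $N\in\N$ with $p^{N}a_i\in R$ for all $i$, i.e.\ the coordinates of $\widetilde{\mathcal H}_{\X}(V)$ are $p$-adically bounded. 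Next I would record the different of $L=K(\pi^{1/p^{m}})$: as $T^{p^{m}}-\pi$ is Eisenstein over $K$, $\varpi:=\pi^{1/p^{m}}$ is a uniformiser of $\O_L=\O_K[\varpi]$, so $\delta_{\O_L/\O_K}=(p^{m}\varpi^{p^{m}-1})$; by multiplicativity of the different in towers and $\delta_{\O_K/\Z_p}=(e)$, a generator of $\delta_{\O_L/\Z_p}$ is $e'=e\cdot p^{m}\varpi^{p^{m}-1}$ up to a unit of $\O_L$. In particular $e'/e\in\O_L$ and, normalising the valuation $v$ on $\overline{K}$ by $v(p)=1$, one gets $v(e'/e)=m+(1-p^{-m})v(\pi)\ge m$.

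Finally I would apply \Cref{l:functoriality-arithmetic-Hitchin} to the base-change morphism $f\colon X_{\O_L}\to X$, which is smooth over $\O_L$ with adic generic fibre $\X_L$: the $i$-th coordinate of $\widetilde{\mathcal H}_{\X_L}(f^{\ast}V)$ is $(e'/e)^{i}f^{\ast}a_i=\big((e'/e)^{i}p^{-N}\big)\big(p^{N}f^{\ast}a_i\big)$, where $p^{N}f^{\ast}a_i\in\O(X_{\O_L})$ and $v\big((e'/e)^{i}p^{-N}\big)\ge m-N$; so as soon as $m>N$, every coordinate of $\widetilde{\mathcal H}_{\X_L}(f^{\ast}V)$ lies in $\mathfrak m_L\O(X_{\O_L})$, whence $\widetilde{\mathcal H}_{\X_L}(f^{\ast}V)\in\mathfrak m_L\O(X_{\O_L})^{n}\subseteq\mathbb A^{\mathrm{sm},n}(\X_L)$ and $f^{\ast}V$ is Hitchin-small. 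This proves the assertion with $L=K(\pi^{1/p^{m}})$ for any $m>N$ ($m$ playing the role of the index called $n$ in the statement). One could alternatively run the same computation through \Cref{l:Hitchin-small-in-terms-of-topnil}, using that a local trivialisation of the Sen bundle makes $\phi'$ $p$-adically bounded and that $(e'/e)^{p}$ has large valuation. I do not expect a genuine obstacle here: the substance is entirely \Cref{l:functoriality-arithmetic-Hitchin}, and the only points needing a little care are the different computation showing $v(e'/e)\to\infty$ and the (automatic) $p$-adic boundedness of $\widetilde{\mathcal H}_{\X}(V)$ over a quasi-compact $X$.
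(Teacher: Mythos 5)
Your proof is correct and follows the same route as the paper's: the paper's one-sentence argument is precisely that $\widetilde{\mathcal H}$ transforms under base change by rescaling the $i$-th coordinate by $(e'/e)^i$ (\Cref{l:functoriality-arithmetic-Hitchin}), that the small locus is given by $\mathfrak m_L\O(X_{\O_L})^n$ once the base field is ramified, and that $e'/e$ gains $p$-adic valuation as $L$ ranges over the Kummer tower. You simply make explicit what the paper leaves to the reader — the different computation $v(e'/e)\ge m$ for $L=K(\pi^{1/p^m})$, and the $p$-adic boundedness of $\widetilde{\mathcal H}_\X(V)$ on a quasi-compact $X$ (an assumption the paper makes in the corollaries that use this lemma but does not restate here, and which is indeed needed for a single $L$ to work).
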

\begin{proof}
	  This follows from \Cref{l:functoriality-arithmetic-Hitchin} because the locus $\mathbb A^{\sm,n}\subseteq \mathbb A^n$ is independent of the base field as soon as the base field is ramified over $W(k)$, but $(e')=\delta_{L|\Z_p}$ becomes more and more divisible by $p$ as $L$ becomes more ramified.
\end{proof}

\subsection{Comparing Sen theories for the two towers}\label{s:comparing-Sens}
We now clarify the relation between the Sen operators $\phi_N:N\to N$ and $\phi':N'\to N'$ in the case that $V=S_{\tilde X}(N,\theta_N,\phi_N)$ for some Hitchin-small Higgs--Sen bundle. A priori, these are defined using a ``Sen theory'' formalism for two different towers: The first is defined in terms of the Kummer tower $K_\infty:=K(\pi^{1/p^\infty})^\wedge|K$, the second using the cyclotomic tower $K^\cycl$. There is, however, a non-canonical way to compare the two: For this, we need to extend both operators, $\phi_N$ and $\phi'$, in a $C$-linear way to $N\otimes \O_{X_{\O_C}}$ and  $N'\otimes \O_{X_{\O_C}}$. This way, we are able to regard them as $C$-semilinear endomorphisms of the same module $M$. Then we have the following comparison (which is closely related to \cite[Thm 7.12]{MinWang22}):
\begin{lemma}\label{l:comp-between-Kummer-and-cycl}
	We have \[\phi_N=e\phi'\] inside $\End(M)$. Furthermore,
	after choosing $K'$ large enough, there exists a (non-canonical) element $z'\in 1+e(1-\zeta_p)\O_C$ for which $w:=z'^{\phi_N/e}$ in $\End(M)$ sends $N$ isomorphically onto $N'$.
\end{lemma}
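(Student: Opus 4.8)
The plan is to track, on the one hand, the Galois action on $M$ coming from \Cref{l:Galois-action-on-vVB-assoc-to-Higgs--Sen} via the Kummer cocycle $\chi_{\pi^\flat}$ and the Sen operator $\phi_N$, and on the other hand the Galois action coming from \Cref{p:sen-theory-Shimizu-Petrov} via the cyclotomic character $\chi$ and the operator $\phi'$, and to match them up after a finite extension. First I would recall that by \Cref{l:Galois-action-on-vVB-assoc-to-Higgs--Sen}, for $\sigma\in G_K$ the action on $M=N\otimes_{\O_K}\O_C$ is $m\otimes c\mapsto \chi_{\pi^\flat}(\sigma)^{\phi_N/e}(m\otimes 1)\,\sigma(c)$, while by \Cref{p:sen-theory-Shimizu-Petrov}, after passing to $K'=K(\zeta_{p^n})$ large enough, on $M=N'\otimes_{\O_{\X_{K'}}}\O_{\X_C}$ the action of $\sigma\in G_{K'}$ is $x\otimes c\mapsto \chi(\sigma)^{\phi'}\sigma(c)$. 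The key computational input is the standard relation between the Kummer and cyclotomic cocycles: after replacing $K$ by $K'$ large enough, $\log\chi_{\pi^\flat}$ and $\log\chi$ differ (up to the factor $e$ built into the exponents) by a coboundary, i.e.\ there is $z'\in 1+e(1-\zeta_p)\O_C$ with $\log\chi_{\pi^\flat}(\sigma)=\log\chi(\sigma)\cdot e + (\sigma-1)\log z'$ for all $\sigma\in G_{K'}$; this is essentially the content underlying \cite[Lemma~4.1]{AHLB} together with the fact that both towers give the same $C$-line with $G_K$-action up to the relevant twist. I would then set $w:=z'^{\phi_N/e}=\exp\big(\tfrac{\phi_N}{e}\log z'\big)\in\End(M)$, which converges by Hitchin-smallness exactly as in \Cref{l:1-cocycle-from-Hitchin-sm-v-VB}.

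The main identity $\phi_N=e\phi'$ I would establish first, before constructing $w$: both $\phi_N$ and $e\phi'$ are $C$-linear extensions of $\O_{\X}$-linear operators, and by the uniqueness clauses in \Cref{p:sen-theory-Shimizu-Petrov} (and the analogous uniqueness for $\phi_N$ via $H$-finite vectors, cf.\ \cite[Thm~7.4]{MinWang22}) it suffices to check that $e\phi'$ satisfies the defining property of the Sen operator for the \emph{Kummer} tower, or conversely that $\phi_N/e$ satisfies the defining property for the cyclotomic tower. Concretely: taking $\log$ of the two descriptions of the $G_{K'}$-action and using that both towers are ``deeply ramified'' so that the Sen operator is recovered as $\lim \log(\sigma)/\log(\text{cocycle}(\sigma))$ on finite vectors, the operator governing the Kummer action is $\phi_N/e$ paired with $\log\chi_{\pi^\flat}$, and the one governing the cyclotomic action is $\phi'$ paired with $\log\chi$; since $\log\chi_{\pi^\flat}(\sigma)/\log\chi(\sigma)\to e$ in the appropriate sense (this is where \cite[Lemma~4.1]{AHLB} enters, $\chi_{\pi^\flat}(\sigma)\in 1+e\pi(1-\zeta_p)\O_C$ and its $\log$ is $e\cdot\log\chi(\sigma)$ up to a coboundary), we get $\phi_N = e\phi'$ as endomorphisms of $M$. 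Alternatively, and perhaps cleaner, I would deduce $\phi_N=e\phi'$ by specialising at all points $\Spf(\O_L)\to X$ in unramified extensions $L|K$ and invoking classical Sen theory for $\Spf(\O_K)$, where the comparison of Kummer-Sen and cyclotomic-Sen operators with the factor $e$ is known; the characteristic-polynomial statements in \Cref{l:comp-between-Kummer-and-cycl} then propagate the pointwise identity to a global one since both sides are $\O(\X)$-linear.

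For the second assertion, once $\phi_N=e\phi'$ is known, I would check that $w=z'^{\phi_N/e}$ intertwines the two $G_{K'}$-actions on $M$: for $\sigma\in G_{K'}$, using the cocycle relation and the fact that $\phi_N$ (equivalently $\phi'$) is $G_{K'}$-equivariant, one computes $w^{-1}\circ(\text{Kummer action of }\sigma)\circ w = \exp\big(\tfrac{\phi_N}{e}(\log\chi_{\pi^\flat}(\sigma)-(\sigma-1)\log z')\big)\sigma(-) = \exp\big(\phi'\log\chi(\sigma)\big)\sigma(-) = (\text{cyclotomic action of }\sigma)$, where in the middle step I substituted the coboundary relation between the logarithms of the two cocycles. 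Hence $w$ carries the $G_{K'}$-submodule of finite vectors defining $N$ onto the one defining $N'$, i.e.\ $w(N)=N'$; the $C$-semilinearity of $w$ and the fact that $z'\in 1+e(1-\zeta_p)\O_C$ ensure $w\in\Aut_C(M)$ by the same convergence argument as before. The main obstacle is getting the normalisation of the comparison element $z'$ exactly right — i.e.\ verifying that the discrepancy between $\log\chi_{\pi^\flat}$ and $e\log\chi$ is a genuine $1$-coboundary valued in $e(1-\zeta_p)\O_C$ after a finite extension, with the correct integrality so that $z'^{\phi_N/e}$ converges — which is a careful but ultimately routine unravelling of \cite[Lemma~4.1]{AHLB} and \cite[Lemma~2.14]{analytic_HT}; everything else is formal manipulation of the two cocycle descriptions plus the uniqueness statements already available.
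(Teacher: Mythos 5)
Your overall plan — relate the Kummer cocycle $\chi_{\pi^\flat}$ to the cyclotomic character $\chi$ by a coboundary, conjugate by $w=z'^{\phi_N/e}$, and invoke the uniqueness clause of \Cref{p:sen-theory-Shimizu-Petrov} — is the right one and matches the paper's strategy. However, there is a concrete error in the coboundary relation you assert, and it would lead you to the wrong conclusion if used consistently.

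You state the relation as
\[
\log\chi_{\pi^\flat}(\sigma) = e\,\log\chi(\sigma) + (\sigma-1)\log z',
\]
with a factor of $e$. The correct relation, the one used by the paper via \cite[Lemma~3.6 and Remark~3.8]{analytic_HT} after passing to a large enough finite extension of $K$, is \emph{multiplicative without any $e$}:
\[
\chi_{\pi^\flat}(\sigma)\,\sigma(z') = \chi(\sigma)\,z', \qquad \text{equivalently}\quad \log\chi_{\pi^\flat}(\sigma) = \log\chi(\sigma) - (\sigma-1)\log z'.
\]
The factor $e$ in the statement $\phi_N=e\phi'$ does not come from a relation between the two cocycles; it comes from the fact that in \Cref{l:Galois-action-on-vVB-assoc-to-Higgs--Sen} the Kummer exponent is $\phi_N/e$, not $\phi_N$. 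Concretely: for $x\in N$ one computes, using the Kummer description,
\[
\sigma(w\cdot x) = \big(\chi_{\pi^\flat}(\sigma)\,\sigma(z')\big)^{\phi_N/e}(x) = \big(\chi(\sigma)\,z'\big)^{\phi_N/e}(x) = \chi(\sigma)^{\phi_N/e}\,(w\cdot x),
\]
so $(w\cdot N,\ \phi_N/e)$ satisfies the defining property of the cyclotomic Sen pair, whence $(N',\phi')=(wN,\phi_N/e)$ by uniqueness, and then $\phi'=\phi_N/e$ in $\End_C(M)$ since $w$ commutes with $\phi_N/e$. If instead your relation with the extra $e$ held, the same computation would give $\exp(\phi_N\log\chi(\sigma))$ in the last step and force $\phi_N=\phi'$, contradicting the lemma. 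You actually end up writing $\exp(\phi'\log\chi(\sigma))$ in your final conjugation computation, which is only compatible with the \emph{correct} coboundary relation — so your write-up is internally inconsistent: the relation you state and the one you use do not match.

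A second, smaller point: the paper does not (and need not) establish $\phi_N=e\phi'$ as a separate step before constructing $w$; that identity falls out for free from the uniqueness clause once $(N',\phi')=(wN,\phi_N/e)$ is known. Your alternative suggestion of deducing $\phi_N=e\phi'$ by specializing at unramified points and citing classical Sen theory would, at best, identify characteristic polynomials of the two $C$-linear operators, not the operators themselves; passing from pointwise agreement of spectra to the operator identity requires an extra argument that you do not supply. Ordering the argument as the paper does — first $w$ and the uniqueness clause, then read off $\phi_N=e\phi'$ — avoids this gap entirely.
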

\begin{proof}
	We first note that since $\phi_N/e$ is stable under extending the base field, we may without loss of generality replace $K$ by a finite extension. 
	By \cite[Lemma~3.6 and Remark~3.8]{analytic_HT}, we can then find $z'\in 1+e(1-\zeta_p)\O_C$ satisfying
	\begin{equation}\label{eq:chi-vs-chi_piflat}
	\chi_{\pi^\flat}(\sigma) \sigma(z')=\chi(\sigma)z'
	\end{equation}
	for every $\sigma \in G_K$.
	Let now $x\in N$, then by \Cref{l:Galois-action-on-vVB-assoc-to-Higgs--Sen}, we have for any $\sigma\in G_K$ that
	\[ 	\sigma(w\cdot x)=(\chi_{\pi^\flat}(\sigma)\sigma(z'))^{\phi_N/e}(x)=(\chi(\sigma)z')^{\phi_N/e}(x)=\chi(\sigma)^{\phi_N/e}(w\cdot x).\]
	It follows from the uniqueness part of \Cref{p:sen-theory-Shimizu-Petrov} that $(N',\phi')=(w\cdot N,\phi_N/e)$ inside $M$. After extending $\O_C$-linearly, this shows that $\phi'w=w\phi_N/e$. Since $w=z'^{\phi_N/e}$ commutes with $\phi_N/e$, it follows that $\phi'=\phi_N/e$ inside $\End(M)$.
\end{proof}
In particular, even though the Hitchin fibration was defined in terms of Sen theory for the Kummer tower, we still have the following, which is the reason for the rescaling by $e$ in \Cref{def:arithmetic-Hitchin}.

\begin{corollary}\label{cor:compairing-Hitchin-fibrations-arithm-case}
	There is a commutative diagram of groupoids
	\[
	\begin{tikzcd}[column sep =0.4cm,row sep=0.2cm]
		{\left\{ \begin{array}{@{}c@{}l}\text{Hitchin-small rank $n$}\\\text{Higgs--Sen modules on $\X$}\end{array}\right\}} \arrow[rd,"\mathcal H"'] \arrow[rr,"\mathrm{S}_\pi"] &         & {\left\{ \begin{array}{@{}c@{}l}\text{v-vector bundles}\\\text{on $\X$ of rank $n$}\end{array}\right\}} \arrow[ld,"\widetilde{\mathcal H}"] \\
		& \mathbb A^n(\X) &              
	\end{tikzcd}\]
\end{corollary}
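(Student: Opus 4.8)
The plan is to trace the definitions of $\mathcal{H}$, $\widetilde{\mathcal{H}}$, and $\mathrm{S}_\pi$ and show that going around the two sides of the triangle produces the same monic polynomial in $\mathbb{A}^n(\X)$. First I would fix a Hitchin-small Higgs--Sen module $(N,\theta_N,\phi_N)$ of rank $n$ on $\X$, together with $V:=\mathrm{S}_\pi(N,\theta_N,\phi_N)$. The left leg of the triangle is by definition of the arithmetic Hitchin fibration for Higgs--Sen modules the characteristic polynomial of $\phi_N$ regarded as an $\O(\X)$-linear endomorphism of $\O(\X)^n$ (after trivialising $N$ locally; one reduces to the affine case since $\mathcal{H}$ localises on $\X$, and characteristic polynomials glue). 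For the right leg, one unwinds \Cref{def:arithmetic-Hitchin}: pull back $V$ along $h:\X_C\to \X$, apply $\mathrm{S}^{-1}_{\tilde X_{\mathcal{O}_C}}$ to obtain the $G_K$-equivariant nilpotent Higgs bundle $(N\otimes_{\O_K}\O_C,\theta_{M})$ of \Cref{l:pullback-of-v-vector-bundle-to-OC-nilpotent} (via \Cref{l:comp-geom-arithm-correspondence} the underlying bundle is $N\otimes\O_C$), extract the Sen operator $(N',\phi')$ of \Cref{p:sen-theory-Shimizu-Petrov}, and take the characteristic polynomial of $e\phi'$.

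Thus the proof reduces to the single identity $\chi(e\phi')=\chi(\phi_N)$ in $\mathbb{A}^n(\X)$, which is precisely what \Cref{l:comp-between-Kummer-and-cycl} provides: it asserts $\phi_N=e\phi'$ inside $\End(M)$ (after extending both operators $\O_C$-linearly), and it shows the submodule $N'\subseteq M$ is the image $w\cdot N$ of an automorphism $w=z'^{\phi_N/e}$ that commutes with $\phi_N/e$. The key point I would emphasise is that $w$ conjugates $\phi_N$ on $N$ to $\phi_N/e = \phi'$ on $N'$, so $\phi_N$ and $e\phi'=\phi_N$ have the same characteristic polynomial; moreover that characteristic polynomial, computed over $\O_C$, already has coefficients in $\O(\X)$ (by \Cref{p:sen-theory-Shimizu-Petrov} for $\phi'$, hence for $e\phi'$, and matching the $\O(\X)$-linear datum $\phi_N$), and it is independent of the auxiliary choice of $K'$. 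I would also note that by \Cref{l:small-locus-arithmetic-Hitchin}/\Cref{l:Hitchin-small-in-terms-of-topnil} the Hitchin-smallness of $(N,\theta_N,\phi_N)$ — which puts $\chi(\phi_N)$ in $\mathbb{A}^{\mathrm{sm},n}(\X)$ — matches the Hitchin-smallness of $V$, so the diagram restricts correctly to the small loci as drawn.

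Finally, for naturality in $\X$ (so that this is a diagram of groupoids commuting up to the stated compatibility, matching the introduction's Hitchin-triangle): I would invoke the functoriality of $\mathrm{S}_\pi$ in $X$ recalled in \S\ref{s:recall-global-Simpson-arith}, the functoriality of $\widetilde{\mathcal{H}}$ from \Cref{l:functoriality-arithmetic-Hitchin}, and the evident functoriality of $\mathcal{H}$ on the Higgs--Sen side; the only subtlety is bookkeeping the rescaling factors $e$, but these are built into both legs symmetrically (the left via the $e$ in $\mathbb{A}^{\mathrm{sm},n}=e\Z+\mathfrak{m}_K\O(X)^n$ and the right via the $e\phi'$ in \Cref{def:arithmetic-Hitchin}), so they cancel. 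The main obstacle is purely organisational: making sure the two a priori different $C$-linear extensions of the two Sen operators in \Cref{l:comp-between-Kummer-and-cycl} are identified on the nose with the operators appearing in \Cref{def:arithmetic-Hitchin} and in $\mathcal{H}$ respectively, and that the descent of the characteristic polynomial from $\O_C$ to $\O(\X)$ is the same descent on both sides; once the identity $\phi_N=e\phi'$ of \Cref{l:comp-between-Kummer-and-cycl} is in hand, the commutativity of the triangle is immediate.
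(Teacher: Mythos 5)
Your proof is correct and follows exactly the route the paper intends: the corollary is stated without proof in the text precisely because it follows immediately from \Cref{l:comp-between-Kummer-and-cycl}, which gives $\phi_N = e\phi'$ inside $\End(M)$, together with the definitions of $\mathcal H$ (characteristic polynomial of $\phi_N$) and $\widetilde{\mathcal H}$ (characteristic polynomial of $e\phi'$). Your additional observations about the conjugating automorphism $w$, the matching of the small loci, and functoriality are all consistent with the paper but not strictly needed for the bare commutativity claim; the one slight redundancy is that once you have $\phi_N = e\phi'$ as operators on $M$, the equality of characteristic polynomials is immediate without further appeal to the conjugation by $w$.
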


\subsection{The essential image in the arithmetic case}
We are now ready to prove the main result:
\begin{theorem}\label{t:essential-image-vVB-arithmetic}
	Let $X$ be a $p$-adic smooth formal scheme over $\O_K$. The essential image of $\mathrm{S}_\pi$ is given by the Hitchin-small v-vector bundles, i.e.\ we have an equivalence of categories
	\[ \mathrm{S}_\pi:{\left\{ \begin{array}{@{}c@{}l}\text{Hitchin-small}\\\text{Higgs--Sen modules on $\X$}\end{array}\right\}}\isomarrow {\left\{ \begin{array}{@{}c@{}l}\text{Hitchin-small}\\\text{v-vector bundles on $\X$}\end{array}\right\}}.\]
\end{theorem}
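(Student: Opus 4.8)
The plan is to combine the facts already in place — $\mathrm{S}_\pi$ is fully faithful (see \S\ref{s:recall-global-Simpson-arith}), it is compatible with the two Hitchin fibrations (\Cref{cor:compairing-Hitchin-fibrations-arithm-case}), and it sits in the $2$-commutative square of \Cref{l:comp-geom-arithm-correspondence} relating it to the geometric correspondence $\mathrm{S}_{\tilde X_{\mathcal{O}_C}}$ via pullback $h^\ast$ to $\mathcal X_C$ — with the description of the geometric essential image from \Cref{t:essential-surjectivity-geom-case} and the cyclotomic Sen theory of \Cref{p:sen-theory-Shimizu-Petrov}. By \Cref{cor:compairing-Hitchin-fibrations-arithm-case} the image of $\mathrm{S}_\pi$ is already contained in the Hitchin-small v-vector bundles, so the real task is essential surjectivity. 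Given a Hitchin-small $V$ on $\mathcal X$: since the right-hand $h^\ast\colon\mathrm{Bun}(\mathcal X_v)\to\mathrm{Bun}(\mathcal X_{C,v})$ is fully faithful (v-descent along the v-cover $\mathcal X_C\to\mathcal X$) and $\mathrm{S}_{\tilde X_{\mathcal{O}_C}}$ is an equivalence, $V$ lies in the image of $\mathrm{S}_\pi$ if and only if the $G_K$-equivariant Higgs bundle $(M,\theta_M):=\mathrm{S}_{\tilde X_{\mathcal{O}_C}}^{-1}(h^\ast V)$ on $\mathcal X_C$ — which is nilpotent by \Cref{l:pullback-of-v-vector-bundle-to-OC-nilpotent} — is the base change, with its equivariant structure, of a Hitchin-small Higgs--Sen module on $\mathcal X$.

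To produce that Higgs--Sen module I would first run cyclotomic Sen theory on $M$: \Cref{p:sen-theory-Shimizu-Petrov} gives a finite extension $K'=K(\zeta_{p^n})$, an analytic vector bundle $N'$ on $\mathcal X_{K'}$ with an endomorphism $\phi'\colon N'\to N'$, and an isomorphism $M\cong N'\otimes_{\mathcal O_{\mathcal X_{K'}}}\mathcal O_{\mathcal X_C}$ under which $\sigma\in G_{K'}$ acts by $\chi(\sigma)^{\phi'}\cdot\sigma(-)$. Applying the same decompletion to $M\otimes\Omega^1_{\mathcal X_C}(-1)$ and invoking the uniqueness clause, the equivariant $\theta_M$ descends to $\theta_{N'}\colon N'\to N'\otimes\Omega^1_{\mathcal X_{K'}|K'}(-1)$ intertwining $\phi'$ with $\phi'\otimes 1-\id$, the shift $-\id$ coming from the Tate twist. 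The hypothesis that $V$ is Hitchin-small says exactly that $\widetilde{\mathcal H}(V)$, which by \Cref{def:arithmetic-Hitchin} is the characteristic polynomial of $e\phi'$, lies in $\mathbb A^{\mathrm{sm},n}(\mathcal X)$; by \Cref{l:Hitchin-small-in-terms-of-topnil} this makes $e^p(\phi'^p-\phi')$ topologically nilpotent, and then \Cref{l:1-cocycle-from-Hitchin-sm-v-VB} yields a convergent continuous $1$-cocycle $\chi_{\pi^\flat}^{\phi'}\colon G_K\to\Aut_C(M)$.

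Next I would re-twist. Using the element $z'\in 1+e(1-\zeta_p)\mathcal O_C$ of \Cref{l:comp-between-Kummer-and-cycl} (from \cite{analytic_HT}) satisfying $\chi_{\pi^\flat}(\sigma)\,\sigma(z')=\chi(\sigma)\,z'$, and $w:=z'^{\phi'}$, I would pass from $(N',\phi')$ to $N:=w^{-1}(N')$ and $\phi_N:=e\phi'$. The computation underlying \Cref{l:comp-between-Kummer-and-cycl} converts the cyclotomic action into the Kummer-type action $\sigma(m\otimes c)=\chi_{\pi^\flat}(\sigma)^{\phi_N/e}(m\otimes 1)\,\sigma(c)$ of \Cref{l:Galois-action-on-vVB-assoc-to-Higgs--Sen}, which involves only $\phi_N$ — an endomorphism over $\mathcal X$ — and hence furnishes an effective descent datum for the triple, first along $\mathcal X_C\to\mathcal X_{K'}$ and then along the finite Galois cover $\mathcal X_{K'}\to\mathcal X$, producing an analytic bundle $N$ on $\mathcal X$ with $\phi_N$ and a descended $\theta_N$. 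The intertwining relation becomes condition (3) of \Cref{d:Higgs--Sen}, and $(N,\theta_N,\phi_N)$ is Hitchin-small since the characteristic polynomial of $\phi_N=e\phi'$ is $\widetilde{\mathcal H}(V)\in\mathbb A^{\mathrm{sm},n}(\mathcal X)$. To conclude, note that by construction the base change of $(N,\theta_N,\phi_N)$ with its equivariant structure is exactly $(M,\theta_M)$, so $h^\ast\mathrm{S}_\pi(N,\theta_N,\phi_N)\cong\mathrm{S}_{\tilde X_{\mathcal{O}_C}}(M,\theta_M)\cong h^\ast V$ as $G_K$-equivariant v-vector bundles by \Cref{l:comp-geom-arithm-correspondence}, whence $\mathrm{S}_\pi(N,\theta_N,\phi_N)\cong V$ by full faithfulness of $h^\ast$.

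The step I expect to be the main obstacle is this re-twisting: reconciling the cyclotomic decompletion — naturally defined over $K'\supsetneq K$, with a $\chi$-twisted action valued in honest analytic bundles — with the $\chi_{\pi^\flat}$-twisted picture of \Cref{l:Galois-action-on-vVB-assoc-to-Higgs--Sen} needed to land in genuine Higgs--Sen modules over $\mathcal X$. One must simultaneously keep track of the factor $z'$, of the distinction between the Breuil--Kisin twist $\{-1\}$ and the Tate twist $(-1)$, and of the shifted compatibility $\phi_N\otimes 1-e\cdot\id$ of \Cref{d:Higgs--Sen}, and verify that the two-step descent of $(N,\theta_N,\phi_N)$ to $\mathcal X$ is effective and outputs an \emph{analytic} vector bundle rather than merely a v-vector bundle. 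The convergence supplied by \Cref{l:Hitchin-small-in-terms-of-topnil} and \Cref{l:1-cocycle-from-Hitchin-sm-v-VB}, the comparison \Cref{l:comp-between-Kummer-and-cycl}, and finite Galois descent for analytic bundles are the inputs that should make this work.
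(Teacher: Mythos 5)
Your proposal is correct and follows essentially the same route as the paper: inclusion of the image via the Hitchin-fibration compatibility (\Cref{cor:compairing-Hitchin-fibrations-arithm-case}), then essential surjectivity by pulling back to $\X_C$, applying cyclotomic Sen theory (\Cref{p:sen-theory-Shimizu-Petrov}), using Hitchin-smallness and \Cref{l:1-cocycle-from-Hitchin-sm-v-VB} to make $\chi_{\pi^\flat}^{\phi'}$ converge, re-twisting by $w=z'^{\phi'}$ via \Cref{l:comp-between-Kummer-and-cycl}, descending $(N,\theta_N,\phi_N)$ with $\phi_N=e\phi'$ along $\X_{K'}\to\X$, and closing the loop with \Cref{l:comp-geom-arithm-correspondence} and full faithfulness of $h^\ast$. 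The paper packages your two-step descent slightly more economically by introducing the auxiliary $\ast$-action $\sigma\ast m=\chi_{\pi^\flat}(\sigma)^{-\phi'}\sigma(m)$ and defining $N:=M^{G_K,\ast}$ directly, proving finite projectivity by exhibiting $w^{-1}(N')$ as a $G_{K'}$-fixed $\O(\X_{K'})$-lattice; the content is identical.
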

\begin{proof}
	That the essential image is contained in the Hitchin-small v-vector bundles follows from \Cref{cor:compairing-Hitchin-fibrations-arithm-case}.
	To see the converse, it suffices to work locally on $X$, and we may therefore assume that $X$ admits a toric chart. Let $V$ be a Hitchin-small v-vector bundle on $\X$ of rank $n$. Let \[(M,\theta_M):=S_{\tilde X_{\O_C}}^{-1}(V)\] be the associated $G_K$-equivariant Higgs bundle on $\X_{C}$ from \Cref{l:pullback-of-v-vector-bundle-to-OC-nilpotent}. Let $(N',\phi')$ be the Sen module associated to the $G_K$-module $M$ by \Cref{p:sen-theory-Shimizu-Petrov}. Then the assumption that $V$ is Hitchin-small means by definition that $e^p(\phi'^{p}-\phi')$ is topologically nilpotent.
	
	With the preparations from this section, we can now implement the strategy of \cite[Lemma~4.6]{analytic_HT}: By \Cref{l:1-cocycle-from-Hitchin-sm-v-VB}, the assumption on $V$ ensures that  the series
	\[ \chi_{\pi^\flat}(\sigma)^{\phi'} \in \End_C(M)\]
	converges and defines a continuous $1$-cocycle $\chi_{\pi^\flat}^{\phi'}:G_K\to \Aut_C(M)$. We can use this to define a new semi-linear $G_K$-action on $M$ given on $m\in M$ by
	\[ \sigma \ast m:=\chi_{\pi^\flat}(\sigma)^{-\phi'}\cdot \sigma(m).\] 
	Let now $z'$ and $K'$ be as in \Cref{l:comp-between-Kummer-and-cycl} and set $w:=z'^{\phi'}\in \Aut_C(M)$. Then for any $x\in N'\subseteq M$ and $\sigma\in G_{K'}$, 
	\[ \sigma \ast (w^{-1}x)=\chi_{\pi^\flat}(\sigma)^{-\phi'}  (\sigma(z')^{-1}\chi(\sigma))^{\phi'}(x)=w^{-1}(x).\]
	Thus $w^{-1}(N')\subseteq M$ is a finite free $\O(\X_{K'})$-submodule of rank $n$ that is fixed by the $\ast$-action of $G_{K'}$. It follows by Galois descent along the finite \'etale cover $\X_{K'}\to \X$ that
	\[N:=M^{G_K,\ast},\]
	the $G_K$-invariants under the $\ast$-action, are still a finite projective $\O(\X)$-module of rank $n$. Hence the Galois descent of $M$ for the $\ast$-action along $\X_{C}\to \X_{K}$ is effective, namely $M$ is isomorphic as a Galois-module to the pullback of $N$. 

	Note that the original $G_K$-action on $N$ is given for $\sigma\in G_K$ by
	\[ \sigma(x)=\chi_{\pi^\flat}(\sigma)^{\phi'}(x).\]
	Since $\phi'$ preserves $N'$ and $w^{-1}=z'^{-\phi'}$ clearly commutes with $\phi'$, we see that $\phi'$ preserves $N$.  Finally,
	since the Tate twist $\O(1)$ corresponds to the Sen module $(\O,1)$ by \Cref{p:sen-theory-Shimizu-Petrov}, we see by taking Galois invariants for the $\ast$-action that the Higgs field $\theta_M:M\to M\otimes \Omega^1_{X_{\O_C}}(-1)$ descends to a morphism $\theta_N:N\to N\otimes \Omega^1_X\{-1\}$ of Sen modules, where the Sen operator on  $N\otimes \Omega^1_X\{-1\}$ is given by $\phi'\otimes 1+\id$.

	We can therefore set $\phi_N:=e\phi'\in \End(N)$ to obtain a Higgs--Sen module  $(N,\theta_N,\phi_N)$ on $\X$. We claim that now $\mathrm{S}_\pi(N,\theta_N,\phi_N)=V$. This follows from \Cref{l:comp-geom-arithm-correspondence} because by construction, we have $h^{\ast}(N,\theta_N,\phi_N)=(M,\theta_M)$, hence
	\[h^{\ast}\mathrm{S}_{\pi}(N,\theta_N,\phi_N)=\mathrm{S}_{\tilde X_{\mathcal{O}_C}}(h^{\ast}(N,\theta_N,\phi_N))=\mathrm{S}_{\tilde X_{\mathcal{O}_C}}(M,\theta_M)=h^\ast V,\]
	 and the vertical arrows in the diagram in \Cref{l:comp-geom-arithm-correspondence} are clearly fully faithful.
\end{proof}
This finishes the proof of the main result of this subsection. We now give some applications. Firstly, we deduce that we can describe all v-vector bundles when we allow finite extensions of $K$:
\begin{corollary}\label{cor:arithm-vVB-locally-Hitchin-small}
	Let $X$ be a quasi-compact smooth $p$-adic formal scheme over $\O_K$. Let $V$ be a v-vector bundle on $\X$. Then there is a finite field extension $L|K$ such that for $f:\X_L\to \X_K$, we have  $f^\ast V=\mathrm S_{\pi}(N,\theta_N,\phi_N)$ for some Higgs--Sen module $(N,\theta_N,\phi_N)$ on $\X_{L}$.
	In fact, we can always take $L=K(\pi^{1/p^n})$ for $n$ large enough.
\end{corollary}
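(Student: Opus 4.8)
The plan is to deduce this directly from \Cref{t:essential-image-vVB-arithmetic} together with \Cref{l:locally-Hitchin-small-arithmetic-case}. First I would recall that by \Cref{l:locally-Hitchin-small-arithmetic-case}, given the quasi-compact $X$ over $\O_K$ and a v-vector bundle $V$ on $\X$, there exists a finite extension $L|K$ such that the pullback $f^\ast V$ along $f\colon \X_L\to \X_K$ is Hitchin-small as a v-vector bundle on $\X_L$; moreover, that lemma tells us we may take $L=K(\pi^{1/p^n})$ for $n\gg 0$. (Here quasi-compactness is used so that "topologically nilpotent" can be checked uniformly, hence a single $n$ works — the divisibility of $e'=E'_L(\pi^{1/p^n})$ by larger powers of $p$ as $n$ grows, as in the proof of \Cref{l:locally-Hitchin-small-arithmetic-case}, pushes the whole Hitchin image into the small locus.)

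Next I would apply \Cref{t:essential-image-vVB-arithmetic} to the smooth $p$-adic formal scheme $X_{\O_L}$ over $\O_L$: since $f^\ast V$ is a Hitchin-small v-vector bundle on its generic fibre $\X_L$, that theorem produces a Higgs--Sen module $(N,\theta_N,\phi_N)$ on $\X_L$ — with respect to a chosen uniformiser of $\O_L$, e.g.\ $\pi^{1/p^n}$ — together with an isomorphism $\mathrm S_\pi(N,\theta_N,\phi_N)\cong f^\ast V$. This is exactly the asserted conclusion. The only minor bookkeeping point is that \Cref{t:essential-image-vVB-arithmetic} is stated for a smooth formal scheme over $\O_K$ with a fixed uniformiser, so one should note that $X_{\O_L}\to\Spf(\O_L)$ is again smooth and apply the theorem verbatim in that setting; the functor $\mathrm S_\pi$ there is the one attached to the chosen uniformiser of $\O_L$.

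I do not expect a serious obstacle here: the statement is essentially a formal corollary, combining the "everything becomes small after a ramified base change" input (\Cref{l:locally-Hitchin-small-arithmetic-case}) with the essential-surjectivity half of the main theorem. If anything, the one place requiring a sentence of care is making sure the choice of $L=K(\pi^{1/p^n})$ is compatible between the two inputs — but \Cref{l:locally-Hitchin-small-arithmetic-case} already phrases its conclusion in exactly this form, so there is no mismatch. Thus the proof is simply: invoke \Cref{l:locally-Hitchin-small-arithmetic-case} to pass to $\X_L$ with $L=K(\pi^{1/p^n})$, then invoke \Cref{t:essential-image-vVB-arithmetic} over $\O_L$.

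\begin{proof}
	By \Cref{l:locally-Hitchin-small-arithmetic-case}, there is a finite extension $L|K$, which may be taken of the form $L=K(\pi^{1/p^n})$ for $n$ large enough, such that the pullback $f^\ast V$ along $f\colon \X_L\to \X_K$ is a Hitchin-small v-vector bundle on $\X_L$. Here we use that $X$ is quasi-compact so that the topological nilpotence condition cutting out the Hitchin-small locus is satisfied for a single $n$. Since $X_{\O_L}\to \Spf(\O_L)$ is again a smooth $p$-adic formal scheme, we may apply \Cref{t:essential-image-vVB-arithmetic} to $X_{\O_L}$ with respect to the uniformiser $\pi^{1/p^n}$ of $\O_L$: the Hitchin-small v-vector bundle $f^\ast V$ lies in the essential image of $\mathrm S_\pi$, so there is a Higgs--Sen module $(N,\theta_N,\phi_N)$ on $\X_L$ together with an isomorphism $\mathrm S_\pi(N,\theta_N,\phi_N)\cong f^\ast V$.
\end{proof}
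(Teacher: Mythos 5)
Your proof is correct and follows exactly the route the paper takes: the paper's own proof is the one-sentence "This follows from combining \Cref{t:essential-image-vVB-arithmetic} and \Cref{l:locally-Hitchin-small-arithmetic-case}," which is precisely what you spell out. Your additional remarks — that $X_{\O_L}\to\Spf(\O_L)$ is again smooth so the theorem applies over $\O_L$ with uniformiser $\pi^{1/p^n}$, and that quasi-compactness is the hypothesis ensuring a single $n$ suffices — are accurate and worth having explicit; the only slight imprecision is framing the role of quasi-compactness as "topological nilpotence can be checked uniformly," when the cleaner way to say it is that quasi-compactness gives a uniform bound on the Hitchin datum $\widetilde{\mathcal H}(V)\in\O(\X)^n$ so that the rescaling by powers of $e'/e$ from \Cref{l:functoriality-arithmetic-Hitchin} eventually lands the whole tuple in the small locus.
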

\begin{proof}
	This follows from combining \Cref{t:essential-image-vVB-arithmetic} and \Cref{l:locally-Hitchin-small-arithmetic-case}.
\end{proof}
\subsection{Applications}
	One can use this to prove analogues in the arithmetic setting of some of the analyticity criteria for v-vector bundles of \cite[\S7]{heuer-sheafified-paCS}. For example:
\begin{corollary}
	Let $X$ be a smooth $p$-adic formal scheme over $\O_K$ and let $V$ be a $v$-vector bundle on the  rigid generic fibre $\X$. Then the following are equivalent:
	\begin{enumerate}
		\item $V$ is analytic-locally trivial.
		\item For each point  $x\colon\Spf(\O_L)\to X$ valued in a finite extension of $K$, the Sen operator associated to $x^{\ast}V$ is trivial.
		\item For each point $x\colon\Spf(\O_L)\to X$ valued in a finite extension of $K$, the pullback $x^{\ast}V$ is trivial.
	\end{enumerate}
\end{corollary}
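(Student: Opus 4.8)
\emph{Proof strategy.} I would prove the cycle of implications $(1)\Rightarrow(3)\Rightarrow(2)\Rightarrow(1)$, with essentially all of the content in the last one. For $(1)\Rightarrow(3)$: if $V$ is analytic-locally trivial and $x\colon\Spf(\O_L)\to X$ is a point valued in a finite extension, then pulling back the trivialising analytic cover of $\X$ along the induced map $\Spa(L)\to\X$ gives a cover of the one-point space $\Spa(L)$, one member of which is all of $\Spa(L)$; restricting $V$ to it shows $x^{\ast}V\cong\O^{n}$. For $(3)\Rightarrow(2)$: the Sen operator attached to a v-vector bundle on $\Spa(L)$ is functorial and vanishes on the trivial bundle — for $X=\Spf(\O_L)$ one has $\mathrm S_\pi(\O^{n},0,0)=\O^{n}$ and $\mathrm S_\pi$ is fully faithful, so $\mathrm S_\pi^{-1}(\O^{n})=(\O^{n},0,0)$ — hence $x^{\ast}V$ trivial forces its Sen operator to be trivial.

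The main step is $(2)\Rightarrow(1)$. First I would show that (2) forces $V$ to be Hitchin-small. Each point $x\colon\Spf(\O_L)\to X$ with $L|K$ finite is precisely a morphism of the type considered in \Cref{l:functoriality-arithmetic-Hitchin} (with the base field changing from $K$ to $L$), so $x^{\ast}\widetilde{\mathcal H}(V)$ equals, up to the rescaling $(e'/e)$ of that lemma, the characteristic polynomial of the Sen operator of $x^{\ast}V$; by (2) this is $T^{n}$, i.e.\ the coefficients of $\widetilde{\mathcal H}(V)\in\O(\X)^{n}$ vanish at every good-reduction point of $\X$. Since $X$ is smooth over $\O_K$, such points are Zariski-dense in $\X$ — they meet every nonempty Zariski-open, by Hensel-lifting $k'$-points of the smooth special fibre along finite unramified extensions — and $\O(\X)$ is reduced, so $\widetilde{\mathcal H}(V)=0\in\mathbb A^{\mathrm{sm},n}(\X)$ and $V$ is Hitchin-small. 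By \Cref{t:essential-image-vVB-arithmetic} we may then write $V=\mathrm S_\pi(N,\theta_N,\phi_N)$ for a Higgs--Sen module on $\X$.

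It remains to identify $(N,\theta_N,\phi_N)$ and the bundle it produces. From \Cref{cor:compairing-Hitchin-fibrations-arithm-case}, $\widetilde{\mathcal H}(V)$ is the characteristic polynomial of $\phi_N$, so $\phi_N$ is at least nilpotent; but by naturality of $\mathrm S_\pi$ — concretely, by specialising the explicit Galois-action formula of \Cref{l:Galois-action-on-vVB-assoc-to-Higgs--Sen} along each point $x$ — the Sen operator of $x^{\ast}V$ is a nonzero scalar multiple of $x^{\ast}\phi_N$, which vanishes by (2); applying the density argument once more (locally on $X$, after trivialising $N$) gives $\phi_N=0$. Plugging $\phi_N=0$ into the compatibility square of \Cref{d:Higgs--Sen} yields $-e\cdot\theta_N=\theta_N\circ\phi_N=0$, and since $e$ is a nonzerodivisor in $\O(\X)$ this forces $\theta_N=0$. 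Finally $\mathrm S_\pi(N,0,0)$ is, by construction, just the analytification $\nu^{\ast}N$ of the analytic vector bundle $N$ (when both the Higgs field and the Sen operator are zero, the corresponding bundle on $X^{\HT}$ is pulled back from $X$; compare \Cref{l:Galois-action-on-vVB-assoc-to-Higgs--Sen} with $\phi_N=0$). As $N$ is locally free of rank $n$, it is analytic-locally trivial, hence so is $V$.

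I expect the only genuine obstacle to be inside $(2)\Rightarrow(1)$, and it is of a bookkeeping rather than conceptual nature: one must (a) justify that the good-reduction points are Zariski-dense and that the relevant structure sheaves are reduced, and (b) match the Sen operator of a geometric fibre $x^{\ast}V$ with the fibre of the global datum $\phi_N$ across the change of base field $\O_K\leadsto\O_L$. Point (b) is what lets one upgrade ``$\phi_N$ nilpotent'' (all the Hitchin fibration by itself sees) to the actual vanishing $\phi_N=0$, which is exactly what is needed to land in the analytic-locally trivial bundles; everything else is a formal consequence of \Cref{t:essential-image-vVB-arithmetic}.
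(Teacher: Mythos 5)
Your proposal is correct and its core content coincides with the paper's, but the route through $(2)\Rightarrow(1)$ is organized differently. The paper first invokes \Cref{cor:arithm-vVB-locally-Hitchin-small} to pass to a finite extension $L|K$ so that $V$ becomes Hitchin-small, writes $V=\mathrm S_\pi(N,\theta_N,\phi_N)$ over $\X_L$, applies the points-and-reducedness argument once to get $\phi_N=0$, then kills $\theta_N$ by the relation $[\phi_N,\theta_N]=-E'(\pi)\theta_N$ (the same relation you extract from the compatibility square of \Cref{d:Higgs--Sen}), and implicitly descends analytic-local triviality back along $\X_L\to\X$. You instead avoid the base change entirely: you run the density argument a first time against the coefficients of $\widetilde{\mathcal H}(V)\in\O(\X)^n$ — using the functoriality \Cref{l:functoriality-arithmetic-Hitchin} of the Hitchin fibration under points $\Spf(\O_L)\to X$ — to see that $\widetilde{\mathcal H}(V)=0$, hence $V$ is already Hitchin-small over $K$; then a second time to get $\phi_N=0$. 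This buys you a cleaner statement (no descent step at the end) at the cost of an extra application of the density lemma, and it makes more visible which input is doing what: Hitchin-smallness is detected on points by $\widetilde{\mathcal H}$, and then the full vanishing (not merely nilpotence) of $\phi_N$ is what condition (2) actually buys. Two minor caveats: the density of integral points on a smooth affine $p$-adic formal scheme and the reducedness of $\O(\X)$ are used in both arguments without proof, and are as elementary as you say but worth a citation; and you should not restrict to unramified $L|K$ when invoking density, as you write at one point — any finite extensions suffice and the argument is cleaner without that constraint. Your remaining steps, including $(1)\Rightarrow(3)\Rightarrow(2)$ and the identification $\mathrm S_\pi(N,0,0)=\nu^\ast N$, match what the paper takes for granted.
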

\begin{proof}
	The implications  $(1)\Rightarrow (3)\Rightarrow (2)$ are clear. To see $(2) \Rightarrow (1)$, we can localise on $X$  to assume that $X$ is affine. Then by \Cref{cor:arithm-vVB-locally-Hitchin-small} we can  further pass to a finite extension of $K$ so that $V=\mathrm S_{\pi}(N,\theta_N,\phi_N)$. Then condition (2) means that $\phi_N$ vanishes on points. Since $X$ is reduced, this shows $\phi_N=0$. Due to the relation $[\phi_N,\theta_N]=-E'(\pi)\theta_N$ in the sheaf $\mathcal{HS}_{\X,\pi}$ before \cite[Proposition 6.36]{AHLB}, this implies that $\theta_N=0$. Hence $V$ is analytic-locally trivial.
\end{proof}

As another application, we obtain a generalisation of \cite[Thm~1.2]{analytic_HT}, which is the case of $X=\Spf(\O_K)$, to the case of smooth $p$-adic formal schemes over $\O_K$.
\begin{corollary}
	Let $X$ be a quasi-compact smooth $p$-adic formal scheme over $\O_K$. Then there is an equivalence of categories
	\[2\text{-}\varinjlim\limits_{L|K} \Bun([X_{\O_L}^{\HT}/\Gal(L|K)],\O\tf)\to  \Bun(\X_v) \]
	where $L|K$ runs through the finite Galois extensions inside a fixed algebraic closure of $K$.
\end{corollary}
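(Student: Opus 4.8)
The plan is to bootstrap from \Cref{t:essential-image-vVB-arithmetic}, applied not just over $K$ but over every finite extension $L|K$, and to glue along the $\Gal(L|K)$-actions. The key input is that, combining \cite[Thm~6.37]{AHLB} with \Cref{t:essential-image-vVB-arithmetic}, for any smooth $p$-adic formal scheme $Y$ over the ring of integers of a $p$-adic field $F$ the canonical functor $\alpha_Y^\ast\colon\Bun(Y^\HT,\O\tf)\hookrightarrow\Bun(\mathcal Y_v)$ is fully faithful with essential image the Hitchin-small v-vector bundles on $\mathcal Y$.

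First I would fix a finite Galois extension $L|K$ inside the chosen algebraic closure and apply this to $Y=X_{\O_L}$, which is smooth over $\O_L$ and carries a semilinear $\Gal(L|K)$-action over $\O_K$, hence, by functoriality of the Hodge--Tate stack, a $\Gal(L|K)$-action on $X_{\O_L}^\HT$. By the naturality of $X\mapsto(X^\HT,\alpha_X^\ast)$ from \cite[Thm~1.2]{AHLB}, the functor $\alpha_{X_{\O_L}}^\ast\colon\Bun(X_{\O_L}^\HT,\O\tf)\hookrightarrow\Bun(\X_{L,v})$ is $\Gal(L|K)$-equivariant. Passing to $\Gal(L|K)$-equivariant objects and using finite étale Galois descent of v-vector bundles along $f_L\colon\X_L\to\X$ (as in \cite[Lemma~2.6.5]{heuer-sheafified-paCS}), this yields a fully faithful functor
\[
\Phi_L\colon\Bun([X_{\O_L}^\HT/\Gal(L|K)],\O\tf)\;\hookrightarrow\;\Bun(\X_v),
\]
and I claim that its essential image is exactly the full subcategory of those $V$ on $\X$ for which $f_L^\ast V$ is Hitchin-small on $\X_L$. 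Indeed, if $f_L^\ast V$ is Hitchin-small it equals $\alpha_{X_{\O_L}}^\ast(\mathcal N)$ for an essentially unique $\mathcal N$, so full faithfulness transports the descent datum of $V$ into a $\Gal(L|K)$-equivariant structure on $\mathcal N$; conversely $\widetilde{\mathcal H}$ is Galois-equivariant, so any $V$ in the essential image has $f_L^\ast V$ Hitchin-small.

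Next I would pass to the filtered colimit over $L$. For $L\subseteq L'$ the transition functor is pullback along $X_{\O_{L'}}^\HT\to X_{\O_L}^\HT$ together with restriction along $\Gal(L'|K)\twoheadrightarrow\Gal(L|K)$; since the triangle over $\Bun(\X_v)$ formed by $\Phi_L$, $\Phi_{L'}$ and this transition functor commutes and $\Phi_L,\Phi_{L'}$ are fully faithful, the transition functor is fully faithful. A filtered colimit of fully faithful functors with fully faithful transition functors into a fixed category is fully faithful, so the induced functor
\[
\Phi\colon 2\text{-}\varinjlim_{L|K}\Bun([X_{\O_L}^\HT/\Gal(L|K)],\O\tf)\;\longrightarrow\;\Bun(\X_v)
\]
is fully faithful. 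For essential surjectivity I would invoke \Cref{l:locally-Hitchin-small-arithmetic-case}: since $X$ is quasi-compact, every $V$ on $\X$ becomes Hitchin-small after pullback to $\X_{K(\pi^{1/p^n})}$ for $n\gg0$; enlarging $K(\pi^{1/p^n})$ to its Galois closure $L|K$ --- pullback preserves Hitchin-smallness by \Cref{l:functoriality-arithmetic-Hitchin} together with multiplicativity of the different in towers --- the bundle $f_L^\ast V$ is Hitchin-small, so $V$ lies in the essential image of $\Phi_L$, hence of $\Phi$. Thus $\Phi$ is an equivalence.

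All the substantive inputs are imported (\Cref{t:essential-image-vVB-arithmetic}, the resulting description of the essential image of $\alpha_{X_{\O_L}}^\ast$, and \Cref{l:locally-Hitchin-small-arithmetic-case}); the remainder is descent bookkeeping. I expect the only point needing genuine care to be the compatibility asserted in the second paragraph --- that the $\Gal(L|K)$-equivariant structure, the functor $\alpha^\ast$ and the arithmetic Hitchin fibration match up so that ``$f_L^\ast V$ Hitchin-small'' is precisely the condition for the descent datum on $V$ to lift through $\alpha_{X_{\O_L}}^\ast$ --- which is nonetheless formal given the naturality statements in \cite{AHLB} and the Galois-equivariance of $\widetilde{\mathcal H}$ underlying \Cref{l:functoriality-arithmetic-Hitchin}.
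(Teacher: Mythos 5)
Your proof is correct and follows essentially the same route as the paper's: the canonical fully faithful functor $\alpha^\ast$ on $\Bun(X_{\O_L}^\HT,\O\tf)$, Galois descent to get $\Phi_L$ fully faithful, compatibility over the tower, and essential surjectivity from \Cref{l:locally-Hitchin-small-arithmetic-case}. Your explicit handling of the Galois-closure step (noting that $K(\pi^{1/p^n})$ need not be Galois over $K$ and that Hitchin-smallness persists after enlarging to the Galois closure via \Cref{l:functoriality-arithmetic-Hitchin}) is a small but genuine refinement that the paper's proof leaves implicit.
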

\begin{proof}
	As the fully faithful functor $\Bun(X_{\O_L}^{\HT},\O\tf)\to \Bun(\X_{L,v})$ from \eqref{eq:alpha-ast-Bun(XHT)} in \S\ref{s:recollections} is completely canonical and natural in $\X_{L}$, it defines by Galois descent a fully faithful functor \[\Bun([X_{\O_L}^{\HT}/\Gal(L|K)],\O\tf)\to\Bun(\X_v).\] These functors are compatible as $L$ varies and thus define the desired functor in the 2-colimit. To see that this is essentially surjective, it suffices by \Cref{t:essential-image-vVB-arithmetic} to see that any v-vector bundle on $\X_v$ becomes Hitchin-small after pullback to $\X_{L}$ for some $L$. This follows from \Cref{l:locally-Hitchin-small-arithmetic-case}.
\end{proof}
As a third application, we can answer a question of Min--Wang \cite[Remark~7.25]{MinWang22}:
\begin{corollary}
	For  a v-vector bundle $V$ on $\X$, the following are equivalent:
	\begin{enumerate}
		\item $V$ lies in the essential image of $\mathrm{S}_\pi$.
		\item For every point $x:\Spa(L)\to \X$  with values in a finite extension $L|K$, the Galois representation $V_x$ is ``nearly Hodge--Tate'' in the sense of \cite{gao2022hodge}, i.e.\ the Hodge--Tate--Sen weights of $V_x$ all lie in   $\Z+\delta_{\mathcal{O}_L/\Z_p}^{-1}\cdot \mathfrak{m}_{\overline{L}}$. 
	\end{enumerate}
\end{corollary}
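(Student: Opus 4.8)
The plan is to deduce the statement from \Cref{t:essential-image-vVB-arithmetic} by matching its description of the essential image — the Hitchin-small v-vector bundles — with the pointwise ``nearly Hodge--Tate'' condition. By \Cref{t:essential-image-vVB-arithmetic}, (1) holds if and only if $\widetilde{\mathcal{H}}(V)\in\mathbb{A}^{\mathrm{sm},n}(\X)=e\Z+\mathfrak{m}_K\O(X)^n$, so it remains to see that this is equivalent to (2). The first step is to identify, for a classical point $x\colon\Spa(L)\to\X$ with $L|K$ finite, the restriction $\phi'|_x$ of the cyclotomic Sen operator $\phi'$ of $V$ from \Cref{p:sen-theory-Shimizu-Petrov} with the classical Sen operator of the semilinear Galois representation $V_x$: this follows by specialising the defining relation of \Cref{p:sen-theory-Shimizu-Petrov} (that $\sigma$ acts as $\chi(\sigma)^{\phi'}\sigma$ on $M$) at $x$ and using that the Sen operator is unchanged under finite base extension, so that the Hodge--Tate--Sen weights of $V_x$ are the eigenvalues of $\phi'|_x$. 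Applying \Cref{l:small-locus-arithmetic-Hitchin} over $L$ (together with \Cref{l:comp-between-Kummer-and-cycl}), $V_x$ is then nearly Hodge--Tate precisely when $\widetilde{\mathcal{H}}_{\Spa L}(V_x)\in\mathbb{A}^{\mathrm{sm},n}(\Spa L)=e_L\Z+\mathfrak{m}_L^n$, with $e_L$ a generator of $\delta_{\O_L/\Z_p}$; and \Cref{l:functoriality-arithmetic-Hitchin}, applied to $x$ itself, identifies $\widetilde{\mathcal{H}}_{\Spa L}(V_x)$ with the value of $\widetilde{\mathcal{H}}(V)$ at $x$ followed by the rescaling $(a_1,\dots,a_n)\mapsto\big((e_L/e)a_1,\dots,(e_L/e)^na_n\big)$. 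Thus the statement reduces to the elementary claim that membership of $\widetilde{\mathcal{H}}(V)$ in the \emph{global} Hitchin-small locus is equivalent to the collection of these \emph{pointwise} conditions.

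For $(1)\Rightarrow(2)$ I would pull back: if $\widetilde{\mathcal{H}}(V)\in e\Z+\mathfrak{m}_K\O(X)^n$ then $\widetilde{\mathcal{H}}(V)|_x\in e\Z+\mathfrak{m}_K\O_L^n\subseteq e\Z+\mathfrak{m}_L^n$, and the tower formula $\delta_{\O_L/\Z_p}=\delta_{\O_L/\O_K}\cdot\delta_{\O_K/\Z_p}\O_L\subseteq\delta_{\O_K/\Z_p}\O_L$ gives $e\mid e_L$ in $\O_L$, so that $(e_L/e)^i\mathfrak{m}_K\O_L\subseteq\mathfrak{m}_L$ and $e_L^{\,i}e^{1-i}\Z\subseteq e_L\Z+\mathfrak{m}_L$ for all $i\geq1$; hence $\widetilde{\mathcal{H}}_{\Spa L}(V_x)\in e_L\Z+\mathfrak{m}_L^n$ and $V_x$ is nearly Hodge--Tate.

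The real work, and the expected main obstacle, is $(2)\Rightarrow(1)$: globalising the pointwise conditions. Since both conditions are Zariski-local and componentwise on $X$, we may assume $X=\Spf(R)$ with $R$ connected. The difficulty is that the rescaling factor $e_L/e$ degrades as $L|K$ becomes more ramified, and that the points of $\X$ valued in unramified extensions of $K$ are \emph{not} dense in $\X$. The plan is to test (2) only at these unramified points, where $e_L=e$ up to a unit so that the rescaling is harmless; such points are precisely the lifts of closed points of the special fibre $X_k=\Spec(R/\mathfrak{m}_K R)$ (they exist by smoothness of $X$ over $\O_K$), so at such an $x$, writing $\widetilde{\mathcal{H}}(V)=(a_1,\dots,a_n)$, condition (2) says $a_i(x)\in e\Z+\mathfrak{m}_L\subseteq\O_L$ for all $i$. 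One first bootstraps integrality: as $X$ is affine, $a_i\in\O(\X)=R\tf$ is bounded, so $\pi_K^{N}a_i\in R$ for some minimal $N\geq0$; if $N\geq1$ then $\overline{\pi_K^{N}a_i}\in R/\mathfrak{m}_K R=\O(X_k)$ vanishes at every closed point of $X_k$ (because $a_i(x)\in\O_L$ at the corresponding unramified point $x$), hence vanishes since $X_k$ is reduced and Jacobson, so $\pi_K^{N-1}a_i\in R$, contradicting minimality; thus $a_i\in R$. Then $\bar a_i\in\O(X_k)$ takes values in the image of $e\Z$ at every closed point of $X_k$, and since $X_k$ is connected, reduced and smooth over the finite residue field $k$ it is integral; the Frobenius relation forced by this — as in the proof of \Cref{l:small-locus-arithmetic-Hitchin} — then puts $\bar a_i$ into the image of $e\Z$ on all of $X_k$, i.e.\ $a_i\in e\Z+\mathfrak{m}_K R$. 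Hence $\widetilde{\mathcal{H}}(V)\in\mathbb{A}^{\mathrm{sm},n}(\X)$, and $V\in\mathrm{im}(\mathrm{S}_\pi)$ by \Cref{t:essential-image-vVB-arithmetic}. (Alternatively one could use \Cref{l:locally-Hitchin-small-arithmetic-case} to pass to $L=K(\pi^{1/p^{n}})$, over which $V_L$ is Hitchin-small and hence lies in $\mathrm{im}(\mathrm{S}_{\pi_L})$, and then descend the resulting Higgs--Sen module along $\X_L\to\X$ using its Galois-equivariant structure; but the special-fibre argument seems the most direct way to reconcile the single factor $e$ of the global locus with the varying factors $e_L$ of the pointwise conditions.)
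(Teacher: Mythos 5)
Your proposal follows the same overall structure as the paper's: identify ``$V_x$ nearly Hodge--Tate'' with ``$V_x$ Hitchin-small over $L$'' via \Cref{def:arithmetic-Hitchin} and \Cref{l:small-locus-arithmetic-Hitchin}, obtain $(1)\Rightarrow(2)$ from the functoriality of \Cref{l:functoriality-arithmetic-Hitchin}, and reduce $(2)\Rightarrow(1)$ to checking that a global membership condition on $\widetilde{\mathcal H}(V)$ follows from its pointwise version, using \Cref{t:essential-image-vVB-arithmetic} to conclude. Where you differ is in the level of detail for $(2)\Rightarrow(1)$: the paper simply states that membership of $\widetilde{\mathcal H}(V)$ in the open subspace $\mathbb A^{\mathrm{sm},n}$ ``can be checked on points due to functoriality of $\widetilde{\mathcal H}$,'' whereas you correctly observe that this is not automatic, because the pointwise ``nearly Hodge--Tate'' condition at a point valued in a ramified extension $L$ involves $e_L$ rather than $e$ and is hence strictly weaker than the global condition evaluated at that point. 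Your resolution --- restrict the pointwise test to unramified points (where $e_L=e$ up to a unit and the conditions match exactly), identify these with closed points of the special fibre $X_k$, bootstrap integrality of the $a_i$ using that $X_k$ is reduced and Jacobson, and then run the Frobenius argument of \Cref{l:small-locus-arithmetic-Hitchin} on the integral scheme $X_k$ --- is a sound and complete way of making the paper's ``check on points'' precise. This is a genuinely more careful argument than what the paper writes; it both pinpoints the subtlety and resolves it, and the resulting proof is fully rigorous where the paper's is terse. The citation of \Cref{l:comp-between-Kummer-and-cycl} in your first paragraph is unnecessary (the identification of Sen weights with eigenvalues of $\phi'$ is already part of \Cref{p:sen-theory-Shimizu-Petrov} and \Cref{def:arithmetic-Hitchin}), but harmless.
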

\begin{proof}
	Since the Hodge--Tate--Sen weights are by definition the eigenvalues of the cyclotomic Sen operator, it is clear from \Cref{def:arithmetic-Hitchin} and \Cref{l:small-locus-arithmetic-Hitchin} that the v-vector bundle $V_x$ on $\Spa(L)$ is Hitchin-small if and only if it is nearly Hodge--Tate.
	The implication (1) $\Rightarrow$ (2) thus follows from functoriality of the arithmetic Hitchin fibration, \Cref{l:functoriality-arithmetic-Hitchin}, applied to the morphism $x$.  The direction (2) $\Rightarrow$ (1) follows from \Cref{t:essential-image-vVB-arithmetic}, as being Hitchin small means that $\widetilde{\mathcal H}(V)\in \O(\X)^n$ lies in some open subspace, and this can be checked on points due to functoriality of $\widetilde{\mathcal H}$.
\end{proof}


\appendix 
\section{$p$-adic Simpson correspondences -- a one-page overview}
	\begin{center}
		{
			\setlength{\tabcolsep}{3pt}
			\begin{tabular}{ | c || c | c | c | c | }
				\hline
				&\makecell{\textbf{local}\\ \textbf{$p$-adic Simpson}} &\makecell{\textbf{small}\\ \textbf{$p$-adic Simpson}} & \makecell{\textbf{proper} \\\textbf{$p$-adic Simpson}} & \makecell{\textbf{arithmetic}\\ \textbf{$p$-adic Simpson}} \\\hline\hline 
				\makecell{non-archimedean\\
				base field $K|\Q_p$}& perfectoid& perfectoid&\makecell{algebraically\\ closed}&\makecell{discretely valued\\ with perfect \\residue field}\\\hline 
				\makecell{smooth rigid\\ space $\X$ over $K$} &\makecell{$\X$ is toric\\ affinoid}&\makecell{$\X$ has a smooth\footnote{\label{footnote1}More generally, this should work for $X$ of semi-stable reduction, see the references to \cite{faltings2005p}, \cite{abbes2016p}, \cite{MinWang22semistable}.} \\formal model $X$\\ with lift to $A_2(x)$}& $\X$ is proper&\makecell{$\X$ has a smooth\cref{footnote1}\\ formal model $X$}\\\hline
				\makecell{Betti side of\\ correspondence} &\makecell{Faltings-small\\ v-vector bundles} &\makecell{Hitchin-small\\ v-vector bundles}&\makecell{v-vector bundles}&\makecell{Hitchin-small\\ v-vector bundles}\\\hline
				\makecell{Dolbeault side of\\ correspondence} &\makecell{Faltings-small\\ Higgs bundles} &\makecell{Hitchin-small\\ Higgs bundles}&\makecell{Higgs bundles}&\makecell{Hitchin-small\\ Higgs--Sen bundles}\\\hline
				\makecell{choice of\\lifting data} &toric chart of $\X$&
				\makecell{ $A_2(x)$-lift of  \\ formal model $X$ } & $B_2$-lift of $\X$&\makecell{ uniformiser $\pi$ \\ of $\O_K$ with 
					\\$\pi^\flat$ of $\pi$ along\\ $\sharp:\O_C^\flat\to \O_C$}\\\hline
				\makecell{essential \\further choices} &none& none & 	\makecell{exponential \\map for $K$}&none\\\hline
				\makecell{naturality \\for given\\ $f\colon\mathcal Y\to \mathcal X$} &\makecell{morphism of\\ charts induces\\ canonical natural\\ equivalence}& \makecell{canonically \\natural with\\ respect to\\twisted pullback}   & \makecell{non-canonically\\ natural}&\makecell{canonically\\ natural given \\compatible $\pi^\flat$,\\ otherwise only\\ non-canonically \\natural}\\\hline
				
				\makecell{reference for \\correspondence\\as  stated above,\\ then further \\related references} &	\makecell{\cite[Thm~6.5]{heuer-sheafified-paCS},\\\cite[Thm~3]{faltings2005p}\\\cite[\S II.13]{abbes2016p}\\\cite[Thm~13.7]{Tsuji-localSimpson}}&\makecell{Thm \ref{t:essential-surjectivity-geom-case},\\\cite[Thm~5]{faltings2005p}\\\cite[Thm III.12.26]{abbes2016p}\\\cite[Thm 5.3]{wang2021p}\\\cite[Thm~6.4]{MinWang22}\\\cite[Thm~4.10]{MinWang22semistable}} &\makecell{ \cite[Thm~1.1]{heuer-proper-correspondence},\\\cite[Thm~6]{faltings2005p}\\\cite[Thm~1.3]{heuer-v_lb_rigid}}&\makecell{Thm \ref{t:essential-image-vVB-arithmetic},\\\cite[Thm 15.1]{Tsuji-localSimpson}\\\cite[Thm~2.1]{LiuZhu_RiemannHilbert}\\\cite[Thm~3.3, \S7]{MinWang22}}\\\hline
		\end{tabular}   }
	\end{center}

\end{document}